\renewcommand{\le}{\leqslant}
\renewcommand{\leq}{\leqslant}
\renewcommand{\geq}{\geqslant}
\renewcommand{\ge}{\geqslant}
\newcommand{\id}{\mathbbmss 1}
\newcommand {\matl}{\left[ \begin{matrix}}
\newcommand {\matr}{\end{matrix}\right]}
\newcommand {\Exp}{ \mathbb E }
\renewcommand {\Pr}{ \mathbb P }
\newcommand{\CI}{\operatorname{CI}}
\newcommand{\e}{\mathrm e}
\renewcommand{\d}{\mathrm{d}}
\newcommand{\cP}{\mathcal{P}}
\newcommand{\iid}{\overset{\mathrm{iid}}{\sim}}
\newcommand{\avgX}[1]{\widehat{\mu}_{#1}}
\newcommand{\avgXsq}[1]{\overline{X^2_{#1}}}
\newcommand{\normal}[2]{N_{#1,#2}}
\DeclareMathAlphabet{\mathbbmsl}{U}{bbm}{m}{sl}
\DeclareMathOperator*{\Prw}{\Pr}
\DeclareMathOperator*{\erf}{erf}
\newcommand{\eR}{\overline{\mathbb R^+}}
\newcommand{\subgaussian}[2]{\mathcal{G}_{#1,#2}}
\newcommand{\subgaussianproc}[2]{\mathfrak{G}_{#1,#2}}
\newcommand{\px}{\mathsf{xb}}
\newcommand{\rev}[1]{#1}
\title{The extended Ville's inequality for \\  nonintegrable nonnegative supermartingales\footnote{to appear in \emph{Bernoulli}}}
\author[1]{Hongjian Wang}
\author[1,2]{Aaditya Ramdas}
\affil[1]{Department of Statistics and Data Science, Carnegie Mellon University}
\affil[2]{Machine Learning Department, Carnegie Mellon University} 
\affil[ ]{\texttt{ \{hjnwang,aramdas\}@cmu.edu  }}
\date{\today}
\newtheorem{theorem}{Theorem}[section]
\newtheorem{definition}[theorem]{Definition}
\newtheorem{proposition}[theorem]{Proposition}
\newtheorem{corollary}[theorem]{Corollary}
\newtheorem{lemma}[theorem]{Lemma}
\newtheorem{remark}[theorem]{Remark}
\newtheorem{example}[theorem]{Example}
\begin{document}

\maketitle

\begin{abstract}
Following the initial work by Robbins, we rigorously present an extended theory of nonnegative supermartingales, requiring neither integrability nor finiteness. In particular, we derive a key maximal inequality foreshadowed by Robbins, which we call the extended Ville's inequality, that strengthens the classical Ville's inequality (for integrable nonnegative supermartingales), and also applies to our nonintegrable setting. We derive an extension of the method of mixtures, which applies to $\sigma$-finite mixtures of our extended nonnegative supermartingales. We present some implications of our theory for sequential statistics, such as the use of \emph{improper} mixtures (priors) in deriving nonparametric confidence sequences and (extended) e-processes.

\end{abstract}

{
    
    \setcounter{tocdepth}{1}
    \hypersetup{linkcolor=black}
    \tableofcontents
}

 \newpage

\section{Introduction}

The textbook theory of martingales is based centrally on conditional expectations, which in turns is typically formulated for \emph{integrable} random variables, viz., $X$ such that $\Exp[|X|] < \infty$. Thus, when we speak of a martingale, or a supermartingale, it is by default understood to be an $L^1$ process. 
\rev{However, it is less known that} %We show that 
for \emph{nonnegative} (super)martingales, integrability can in fact be dropped \citep[Chapter 5.2]{stroock2010probability}.
%We present a new and coherent
\rev{In this work, we first introduce from scratch the self-contained theory of} nonintegrable nonnegative martingales, supermartingales, and e-processes (the latter being an object of recent fascination in the literature, that we will define later on). \rev{A topic visited by very few authors, this theory generalizes the usual $L^1$ theory of these classes of processes.}
%, and apart from being mathematically novel, is also useful. 
% It is motivated by the use of improper priors in Bayesian inference and $\sigma$-finite mixtures in frequentist inference, both in parametric settings. 
We \rev{then} describe the utility of such nonintegrable nonnegative martingales in parametric and nonparametric statistics: it allows a frequentist handling of improper priors, a common (but controversial) idea in the Bayesian literature.

As background, the use of nonnegative martingales (NMs) and nonnegative supermartingales (NSMs) dates back at least to 
the doctoral thesis by~\cite{ville1939etude} involving wealths of gambling strategies, as well as Wald's sequential test based on likelihood ratios \citep{wald1945sequential}: Wald did not explicitly use martingales, but Ville did. 

Gambler wealths and likelihood ratios are of course the prototypical example of NMs.
Since Wald's work, NMs and NSMs have been central objects in sequential statistics~\citep{robbins1970statistical,lai1976confidence},  game-theoretic probability~\citep{shafer2005probability,shafer2019game} and game-theoretic statistics~\citep{ramdas2022game}. 
A large part of this progress stems from the central \emph{Ville's inequality}, which states that for any NSM $M$,
\begin{equation}\label{eqn:vil}
     \Pr[ \exists n \ge 0, \ M_n \ge \Exp[M_0]/\varepsilon   ] \le \varepsilon.
\end{equation}
% When a family of NSMs are combined with the ``method of mixtures''
Ville's inequality \eqref{eqn:vil} has been used to design tight self-normalized concentration inequalities \citep{howard2020time}, power-one sequential tests~\citep{darling1968some}, and confidence sequences~\citep{darling1967confidence}. A confidence sequence is a sequence of confidence intervals that are valid at arbitrary stopping times.
% which are time-uniformly valid confidence intervals for a parameter $\theta$:
% \begin{equation}\label{eqn:cs}
%     \Pr[ \forall n \in \mathbb N^+, \ \theta \in \CI_n ] \ge 1- \alpha.
% \end{equation}
\cite{howard2021time} showed how to construct \emph{nonparametric} confidence sequences by combining composite NSMs (processes that are NSMs under a family of distributions), along with the ``method of mixtures'' and Ville's inequality. See also \cite{waudby2020estimating} for a historical review and recent advances in the method of mixtures.

To elaborate on the method of mixtures, it has long been known that a finite sum or a $\sigma$-finite mixture of NSMs is again a NSM, and this was most notably exploited by Robbins to design some parametric confidence sequences \citep{darling1967confidence}. These mixtures oftentimes encode the experimenter's prior belief on the parameter. In the language of testing by betting \citep{shafer2021testing}, using the likelihood ratio $\frac{p(x;\theta_1)}{p(x;\theta_0) }$ to test the null hypothesis that $\theta=\theta_0$ can be expressed as follows. A prudent gambler, believing that the true parameter is actually not $\theta_0$ but $\theta_1$, uses one dollar to purchase a financial instrument that yields a value $\prod_{i=1}^t \frac{p(X_i;\theta_1)}{p(X_i;\theta_0)}$, which is a fair purchase under the null hypothesis. The instrument will grow exponentially in value if the true parameter is (close to) $\theta_1$. Using the mixture $\frac{p(x;\theta_1)}{p(x;\theta_0) } + \frac{p(x;\theta_2)}{p(x;\theta_0) }$ is analogous to the investor using two initial dollars to hold both instruments, so that the portfolio value will exponentially grow if \emph{either} of $\theta_1, \theta_2$ is (close to) the true one. This is not a problem because \eqref{eqn:vil} adjusts the wealth by the initial investment, and one can always judge the gambler's evidence against the null hypothesis by the ratio of final wealth to initial investment. The method of mixtures is simply an extension of this simple idea.

A bewildering extension of the aforementioned scheme was developed  by \citet[pp.1400-1401]{robbins1970statistical}, who shows by a concrete calculation that an \emph{infinite} ($\sigma$-finite) mixture of likelihood ratios also satisfies a maximal inequality similar to \eqref{eqn:vil}. Since every likelihood ratio is a martingale with expected value 1,  any (unnormalized) infinite mixture of likelihood ratios will results in a nonintegrable process, instead of a martingale. The maximal inequality is:
\begin{equation}\label{eqn:ext-ville-robbins}
    \Pr[ \exists n \ge m, \  M_n \ge C ] \le \Pr[ M_m \ge C ] + C^{-1}\Exp[ M_m \id_{\{ M_m < C \}} ],
    %\le \varepsilon^{-1} \Exp[M_m],
\end{equation}
where $M_n = \int \prod_{i=1}^n  \frac{p(X_i;\theta_1)}{p(X_i;\theta_0)} \d F(\theta_1)$  is a mixture of likelihood ratios over any $\sigma$-finite measure $F$, and $m$ is any nonnegative integer. Notably, the process $M$ can be nonintegrable and the truncated mean $\Exp[ M_m \id_{\{ M_m < C \}} ]$ in \eqref{eqn:ext-ville-robbins} avoids the complications caused by its nonintegrability. 

\cite{lai1976confidence} also uses the same technique, crediting Robbins, but these ideas have not gained much attention since then. Both Lai and Robbins do not appear to make much of a deal about processes like $M$, and they did not define or analyze such processes in any generality beyond the mixture likelihood ratio above, with specific attention to the parametric Gaussian setting where these bounds were studied when $F$ is a flat ``improper'' prior, meaning that $\d F(\theta)=\d\theta$.

\paragraph{\rev{Contextualized} contributions.}
We begin where  \cite{robbins1970statistical} and  \cite{lai1976confidence} left off. We will show that the above technique and inequality holds in greater generality than they noted; \eqref{eqn:ext-ville-robbins} is an instance of what we later call the \emph{extended Ville's inequality} (Theorem~\ref{thm:ext-ville}). 
To warm up, we shall show, by a much shorter proof, that \eqref{eqn:ext-ville-robbins} actually holds for any NSM, not just a mixture of likelihood ratios. This opens the door to many nonparametric settings where reference martingale measures do not exist; we demonstrate an example later on. Theorem~\ref{thm:ext-ville} is a strengthening of the classical Ville's inequality \eqref{eqn:vil} that requires no further assumption. 

More importantly however, we point out that there is a unified, abstract theoretical framework hidden beneath the concrete calculation by \cite{robbins1970statistical} on likelihood ratios. We define and study the class of \emph{nonintegrable} nonnegative supermartingales (also called \emph{extended} NSMs in this paper), elevated to objects that are worthy of study in and of themselves. It turns out that the extended Ville's inequality \eqref{eqn:ext-ville-robbins} holds for this large class of processes (Theorem~\ref{thm:evi-2}). To the best of our knowledge, extended NSMs have not been defined or studied in a dedicated manner before our paper. Indeed, as mentioned before it almost appears contradictory to say that a martingale is not integrable, since that is often the first property used to define martingales (and we return to the different concept of locality later). \rev{The textbook by \cite{stroock2010probability}, for example, formulates the theory of martingales and submartingales \emph{whose negative parts are integrable} in the opening parts of Chapter 5.2. However, most of the classical results that follow, including Doob's optional stopping theorem and martingale convergence theorem, are stated and proved with the additional assumption of (full) integrability in that textbook. We, on the other hand, shall commence with a slightly stronger condition than \cite{stroock2010probability} to break free of the constraint of $L^1$ integrability, namely \emph{nonnegativity}. This empowers us to recover in the nonintegrable regime these central results of the classical martingale theory. Such integrability-removing role of nonnegativity, we note, 
}
is not without precedence: in Tonelli's extension of Fubini's theorem on the switch of integration order, nonnegativity is assumed in lieu of $L^1$ integrability. Not coincidentally, Tonelli's theorem will be a recurring tool in our proofs.

%We shall develop the theory of extended NSMs from scratch, by defining nonintegrable conditional expectation. The key element that allows our construction to break free of the constraint of $L^1$ integrability is \emph{nonnegativity}, which is not without precedence: in Tonelli's extension of Fubini's theorem on changing the order of integration, nonnegativeness is assumed in lieu of $L^1$ integrability. Indeed, Tonelli's theorem will be a recurring tool in our proofs.

%The theoretical formulation of these processes sheds light on a very central concept in Bayesian statistics, \emph{improper priors}, including flat priors that do not carry any subjective information and therefore charge $\infty$ on the entire parameter space $\mathbb R$.

%our exposition.

\paragraph{Paper outline.}
The rest of this paper is organized as follows. In Section~\ref{sec:prelim}, we first hasten through the well-known theory of integrable NSMs, the classical Ville's inequality, the arithmetics of $[0,\infty]$, and the (extended) conditional expectation of nonintegrable nonnegative random variables.
% and sequential statistics, then prove the extended Ville's inequality \eqref{eqn:ext-ville-robbins} in this setting. 
%In Section~\ref{sec:arith} we prepare reader for the nonintegrability by reviewing the arithmetics of $[0, \infty]$, the extended nonnegative reals. We develop the theory of nonintegrable condition expectation and martingales respectively in Section~\ref{sec:condexp} and Section~\ref{sec:mtg}, leading to our full proof of the extended Ville's inequality in Section~\ref{sec:evi}. Section~\ref{sec:mixture} extends the famous ``method of mixtures'' to $\sigma$-finite mixtures of extended NSMs. 
In Section~\ref{sec:ensm-new} we describe how the well-studied theory supermartingales is extended to the nonintegrable but nonnegative case while keeping most of its key properties, leading to our full proof of the extended Ville's inequality in Section~\ref{sec:evi}.
An interesting statistical application of nonparametric subGaussian mean estimation will be discussed in Section~\ref{sec:estimation}. Following a recent trend in nonparametric, composite hypothesis testing, we define the notion of an extended e-process in Section~\ref{sec:e-proc}. 
Proofs that are omitted in the sections above are in Section~\ref{sec:kpf} or sometimes \cref{sec:pf}.
We end with a discussion and some open directions in Section~\ref{sec:disc}. We put our fully rigorous, textbook-style theoretical foundations of conditional expectations and supermartingales without integrability in \cref{sec:tech}.

\section{Preliminaries}\label{sec:prelim}

In most of this paper, we restrict ourselves to discrete time $n \in \mathbb N_0 = \{0,1,2,\dots\}$, usually just written as $n \ge 0$ for clarity and simplicity.
We fix a probability space $(\Omega, \mathcal{A}, \Pr)$ and a filtration $\mathcal F = \{ \mathcal{F}_n \}_{n \ge 0}$, where $\mathcal F_0   \subseteq \mathcal{F}_{1} \cdots \subseteq \mathcal{A}$.
%Here the subscript ``$n\ge0$'' means that $n$ takes nonnegative integers $0,1,2,\dots$. 
%The sets of nonnegative integers and positive integers will be referred to as $\mathbb N$ and $\mathbb N^+$ respectively.
We call $X = \{ X_n \}_{n \ge 0}$ a stochastic process adapted to $\mathcal{F}$ if $X_n$ is $\mathcal F_n$-measurable for every $n$. Processes and filtrations are indexed by $n = 0,1,2,\dots$ by default unless otherwise noted.
%We sometimes drop the ``$n \ge 0$'' subscript as processes and filtrations are indexed by $n = 0,1,2,\dots$ by default. 
We shall make a few forays into the continuous-time case, where we would temporarily switch from $n \ge 0$ to $t \ge 0$ to distinguish between discrete time and continuous time.

%Let $A \in \mathcal{A}$, the measure $\Pr_A$ is defined as $\Pr_A[B] = {\Pr[A \cap B]}$.

\subsection{Review of integrable nonnegative supermartingales}
\label{sec:l1}

Let $M$ be an $L^1$ stochastic process adapted to $\mathcal{F}$, meaning each $\Exp[|M_n|] < \infty$.
Recall that $M$ is called a 
supermartingale if it satisfies $\Exp[M_n | \mathcal{F}_{n-1}] \le M_{n-1}$ for every $n \ge 1$; and it is called a submartingale or martingale if ``$\le$'' is replaced by ``$\ge$'' and ``$=$'' respectively. 

A variety of maximal inequalities have been established for martingales, supermartingales, and submartingales, especially when the process has a uniform lower bound. For example, an inequality by~\cite{ville1939etude} states that, if $M$ is a nonnegative supermartingale (NSM), then
\begin{equation}\label{eqn:vil-maximal}
    \Pr\left[ \sup_{ 0 \le n < \infty } M_n \ge C \right] \le C^{-1} \Exp[M_0].
\end{equation}
The time-uniform nature of Ville's inequality \eqref{eqn:vil-maximal} makes it the fundamental building block of many recent advances in sequential nonparametric statistics. 
However, Ville's inequality \eqref{eqn:vil-maximal} is not the tightest as its expectation term $\Exp[M_0]$ can be improved to be a \emph{truncated} expectation, which is what we shall call the \emph{extended Ville's inequality} for NSMs. 
% We present a theorem below for NSMs, in anticipation of a further generalization later on to the (more interesting) nonintegrable case.
\begin{theorem}[Extended Ville's inequality; integrable case]\label{thm:ext-ville}  Let $M$ be a nonnegative supermartingale and $C>0$, $m \ge 0$. Then,
\begin{equation}\label{eqn:ext-ville}
    \Pr[ \exists n \ge m,  M_n \ge C ] \le  \Pr[ M_m \ge C ] + C^{-1}\Exp[ M_m \id_{\{ M_m < C \}} ] = C^{-1}\Exp[ M_m \wedge C ]. %\le C^{-1} \Exp[M_m].
\end{equation}
\end{theorem}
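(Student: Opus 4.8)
The plan is to reduce the extended inequality \eqref{eqn:ext-ville} to the classical Ville's inequality \eqref{eqn:vil-maximal} by constructing an auxiliary supermartingale that has been ``switched off'' on the event $\{M_m \ge C\}$. First I would dispense with the final equality, which is purely algebraic: the pointwise decomposition $M_m \wedge C = C\,\id_{\{M_m \ge C\}} + M_m \id_{\{M_m < C\}}$, upon taking expectations and dividing by $C$, immediately yields $C^{-1}\Exp[M_m \wedge C] = \Pr[M_m \ge C] + C^{-1}\Exp[M_m \id_{\{M_m < C\}}]$. So the content lies entirely in the inequality $\Pr[\exists n \ge m,\ M_n \ge C] \le \Pr[M_m \ge C] + C^{-1}\Exp[M_m \id_{\{M_m < C\}}]$.

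Next I would split the event $\{\exists n \ge m,\ M_n \ge C\}$ according to the value of $M_m$, writing it as the disjoint union of $\{M_m \ge C\}$ and $A \cap \{\exists n \ge m,\ M_n \ge C\}$, where $A := \{M_m < C\}$. The first piece contributes at most $\Pr[M_m \ge C]$, which is already the first term on the right-hand side, so by subadditivity it remains to show $\Pr[A \cap \{\exists n \ge m,\ M_n \ge C\}] \le C^{-1}\Exp[M_m \id_{\{M_m < C\}}]$.

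The key step---and the only genuinely clever one---is the construction of the indicator-damped process $Z_n := \id_A\, M_{m+n}$ for $n \ge 0$, adapted to the shifted filtration $\{\mathcal{F}_{m+n}\}_{n \ge 0}$. Since $A \in \mathcal{F}_m \subseteq \mathcal{F}_{m+n}$, the indicator pulls out of the conditional expectation, and the supermartingale property of $M$ gives $\Exp[Z_{n+1} \mid \mathcal{F}_{m+n}] = \id_A \Exp[M_{m+n+1} \mid \mathcal{F}_{m+n}] \le \id_A M_{m+n} = Z_n$; thus $Z$ is itself a nonnegative (and integrable) supermartingale with initial value $Z_0 = M_m \id_A$. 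Applying \eqref{eqn:vil-maximal} to $Z$ yields $\Pr[\sup_{n \ge 0} Z_n \ge C] \le C^{-1}\Exp[Z_0] = C^{-1}\Exp[M_m \id_{\{M_m < C\}}]$, which is exactly the bound we seek.

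To finish I would match the event $\{\sup_n Z_n \ge C\}$ with the target event: on $A$ one has $Z_n = M_{m+n}$, so $\sup_n Z_n = \sup_{n \ge m} M_n$, while on $A^c$ the process $Z$ is identically zero and hence, using $C > 0$, never reaches $C$. Therefore $\{\sup_n Z_n \ge C\} = A \cap \{\exists n \ge m,\ M_n \ge C\}$, and combining the two displayed bounds delivers the claim. I expect no real obstacle beyond spotting the damped process $Z$; once it is in hand, everything follows from the classical inequality and the elementary event decomposition. This same damping device is presumably what will need adapting---replacing the $L^1$ Ville's inequality by its nonintegrable analogue---when the result is later generalized to extended NSMs in Theorem~\ref{thm:evi-2}.
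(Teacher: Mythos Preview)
Your proof is correct and follows essentially the same route as the paper: the paper also multiplies $M$ by the indicator $\id_{\{M_m < C\}}$ to obtain an integrable NSM on the shifted filtration, applies the classical Ville's inequality \eqref{eqn:vil-maximal} to it, and then matches events exactly as you do. Your final remark is also on target---the proof of Theorem~\ref{thm:evi-2} is this same argument verbatim, with the additional observation that $M\id_{\{M_m<C\}}$ has bounded (hence integrable) initial value even when $M$ itself is nonintegrable.
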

A special case of the above theorem was presented by \cite{robbins1970statistical}  for mixtures of likelihood ratios, and he anticipated (but did not formally present) a generalization to NSMs. Nevertheless, the above theorem is superceded by our generalization in~Theorem~\ref{thm:evi-2}.

{Theorem~\ref{thm:ext-ville} can simply be proved by applying the ordinary Ville's inequality \eqref{eqn:vil} on the NSM $\{ M_n \id_{\{ 
M_m < C \}} \}_{n \ge m}$. Since it is a special case of Theorem~\ref{thm:evi-2}, we omit the proof.}
In what follows, we shall see that the truncated mean in  Theorem~\ref{thm:ext-ville} enables handling nonintegrable ``supermartingales''.

\subsection{Arithmetics of $[0, \infty]$}
\label{sec:arith}
In abstract analysis, it is often the case that the inclusion of infinities $\pm \infty$ enables one to conveniently formulate the theory in greater generality, and it has been a common practice in numerous measure theory textbooks \citep{klenke2013probability, conway2012course, tao2011introduction} to adopt the set of \emph{extended real numbers} $\overline{\mathbb R} = \mathbb R \cup \{ \pm 
\infty \}$ as the working set of numbers. For our extension, we work with the extended \emph{nonnegative} real numbers $\overline{\mathbb R^+} =  [0, \infty]$, and shall allow
random variables and processes to take values in $\overline{\mathbb R^+}$.

We briefly overview the standard arithmetics and measure-theoretic properties of $\overline{\mathbb R^+}$. The order $\le$ is extended completely to $\eR$ by $x \le \infty$ for any $x \in \eR$. Addition is extended as $\infty + x = x + \infty = \infty$ for any $x \in \eR$, but without the cancellation law{, we therefore need to verify finiteness for every subtraction}.
While $x \cdot \infty = \infty \cdot x = \infty$ holds for any positive $x$, we further \emph{stipulate} that $0 \cdot \infty = \infty \cdot 0 = 0$, as is standard in abstract measure theory. For division, we shall specify the quotient each time we encounter a possible division by 0 or $\infty$. The distributivity of multiplication over addition still holds. The addition or multiplication by a same constant still preserves the $\le$ order. Note that the identity $\left(\lim_{n \to \infty} a_n \right) \left( \lim_{n \to \infty} b_n \right) = \lim_{n \to \infty} a_nb_n$ does not hold in general, but holds when both sequences are \emph{increasing} (called the upward continuity of multiplication) --- for example, infinite sums are upward limits, so $y\sum_i x_i = \sum_i y x_i$ always holds. We shall be very careful when manipulating limits that are not upward in the entire paper.
%all of the limits (including infinite sums) in the entire paper will be increasing ones. 
When speaking of the measurability of $\eR$-valued functions, the Borel $\sigma$-algebra on $\eR$ is the one generated by all intervals $[x, y]$, $x, y \in \eR$. The reader is referred to \citet[Preface]{tao2011introduction} for a more detailed justification of the system $\eR$. We also remark that many important theorems in measure theory hold in $\overline{\mathbb R}$, and hence  in $\eR$. The most important ones in this paper include 
\begin{itemize}
    \item 
the measurability of the pointwise supremum of measurable functions (see e.g.\ \citet[Theorem 1.92]{klenke2013probability}),
\item Beppo Levi theorem of monotone convergence (the upward limit of $\overline{\mathbb R^+}$-valued functions commutes with integral, see \citet[Theorem 4.20]{klenke2013probability}), 
\item Fatou's lemma (for functions taking values in $\overline{\mathbb R^+}$, the ``$\liminf$'' of integrals is larger than the integral of ``$\liminf$''s, see \citet[Corollary 1.4.47]{tao2011introduction}), and 
\item Tonelli's theorem (Fubini's switch of integration order theorem on functions taking values in $\overline{\mathbb R^+}$; see e.g.\ \citet[Theorem 14.16]{klenke2013probability}). 
\end{itemize}
All four shall be used throughout our development either in the subsequent sections or in \cref{sec:tech}. To conclude, the inclusion of $\infty$ to nonnegative reals does not bring any substantial difference in the way we manipulate numbers, except for a few caveats mentioned above.

\subsection{Extended conditional expectation}

Let $X$ be a random variable and $\mathcal{F}$ a sub-$\sigma$-algebra of $\mathcal{A}$. The theory of martingales as is most commonly known is based on the conditional expectation $\Exp[X|\mathcal{F}]$ defined \emph{when $X$ is integrable}, i.e.\ $\Exp[|X|]<\infty$, which leads to those results (among many others) in Section~\ref{sec:l1}. 

Our extension begins with the fact that  $\Exp[X|\mathcal{F}]$ can still be defined when the random variable $X$ takes values in $\overline{\mathbb R^+} = [0,\infty]$ and is not necessarily integrable --- this fact is sometimes mentioned in probability textbooks without much elaboration, but given its centrality in the current paper, we elaborate on it briefly. Multiple ways to define $\Exp[X|\mathcal{F}]$ for nonnegative ($\eR$-valued), possibly nonintegrable $X$ exists. One may define it, for example, as the limit
\begin{equation}\label{eqn:cond-exp-text}
    \Exp[X|\mathcal{F}] := \lim_{n\to\infty}  \Exp[X_n |\mathcal{F}],
\end{equation}
where $\{X_n\}$ is a sequence of integrable nonnegative random variables that converges \emph{upwards} to $X$. One can actually show that such limit does not depend on the choice of the sequence $\{ X_n \}$ \citep[Remark 8.16]{klenke2013probability}. Alternatively, one can define $\Exp[X|\mathcal{F}]$ as the nonnegative random variable $Y$ such that $\Exp[ \id_A Y ] = \Exp[\id_A X]$ for any $A \in \mathcal{A}$, where both sides may equal $\infty$; or as a Radon-Nikodym derivative.
We show in \cref{sec:condexp} that all these ways to define $\Exp[X|\mathcal{F}]$ are equivalent, while sticking to \eqref{eqn:cond-exp-text} with the particular choice $X_n = X \wedge n$ as the working definition of $\Exp[X|\mathcal{F}]$ in our paper.
%To formally study conditional expectations in this \emph{extended} regime, a bit of generalization in our arithmetics is required, as we need to perform operations (order, addition and multiplication) and calculus under $\overline{\mathbb R^+}$ --- which, as is outlined in Section~\ref{sec:arith} and the referenced textbooks therein, does not bring in any substantial difference in the way we manipulate numbers, except for a few caveats including that the cancellation rule of addition only holds for non-$\infty$ numbers, and that only upward limits commute with multiplication. 
The arithmetics of $\eR$ in Section~\ref{sec:arith} enable us to carefully verify that the conditional expectation $\Exp[X|\mathcal{F}]$ in this \emph{extended} regime enjoys (nontrivially) all the usual properties of its integrable counterpart.
% insomuch that we can safely treat the operator $\Exp[ \, \cdot  \, | \, \cdot \, ]$ the way we are used to. 
These include:
\begin{itemize}
    %\item $\Exp[X|\mathcal{F}]$ is uniquely defined for any $\eR$-valued random variable $X$ and any $\sigma$-algebra $\mathcal{F}$.
    %\item $X \mapsto \Exp[ X | \mathcal{F}]$ is linear and monotone (Proposition~\ref{prop:properties}).
    \item $\Exp[ X Y |\mathcal{F} ] = Y \Exp[ X | \mathcal{F} ]$ if $Y$ is $\mathcal{F}$-measurable (\cref{prop:properties}).
    \item Monotone convergence theorem (\cref{prop:cond-beppo-levi}).
    \item Jensen's inequality (\cref{prop:jensen}).
    \item $Y \le \Exp[X|\mathcal{F}]$ if and only if $\Exp[ \id_A Y ] \le \Exp[\id_A X]$ for any $A \in \mathcal{A}$ (\cref{prop:cond-exp-order-rule}).
\end{itemize}
The theory of extended conditional expectation with the all the propositions we prove,
lays the foundation of our upcoming study of nonintegrable supermartingales in $\eR$.

%Technical details of conditional expectations in this \emph{extended} regime are in Section~\ref{sec:condexp}.

\section{Extended nonnegative supermartingales}
\label{sec:ensm-new}

Much of the theory of nonnegative supermartingales still holds if integrability assumption is dropped, which we develop in full rigor in \cref{sec:mtg}.

\begin{definition}[Extended nonnegative supermartingales]\label{def:ensm}
Let $M$ be a stochastic process taking values in $\overline{\mathbb{R}^+}$, adapted to a filtration $\mathcal{F}$. We call it an extended nonnegative supermartingale (ENSM for short) if, for all $n \ge 1$,
\begin{equation}\label{eqn:def-ensm}
    \Exp[M_n | \mathcal{F}_{n-1}] \le M_{n-1}, \quad \Pr\text{-almost surely}.
\end{equation}
\end{definition}

When we speak of an ENSM henceforth, we by default allow it to take values in $\overline{\mathbb R^+}$, unless otherwise stated. We shall refer to integrable nonnegative martingales and supermartingales as ``classical NMs'' and ``classical NSMs'' for clarity. As in the classical case, if equality holds in \eqref{eqn:def-ensm}, the process is referred to as an extended nonnegative martingale (ENM for short). 
% But we shall see very soon (as well as in Section~\ref{sec:mtg}) that ENMs are far more akin to ENSMs than to classical NMs. 
ENSMs have numerous common properties and connection with classical NSMs:
\begin{itemize}
    \item An ENSM $M$ is a classical NSM if and only if $M_0$ is integrable (\cref{prop:finite-moment-ensm-is-nsm}).  
    \item A $\overline{\mathbb R^+}$-valued process is an ENSM if and only if it is an upward limit of NSMs. In fact, for any ENSM $M$, we have two simple but important sequences of classical NSMs that converge upwards to $M$: $\{  M \wedge k  \}_{k \ge 0}$ and $\{ M \id_{ \{ M_0 \le k\} }  \}_{k \ge 0}$ (Propositions~\ref{prop:concave-ensm} and Corollary~\ref{cor:ensm-class}).
    \item Recall that a classical NSM must converge almost surely to an integrable limit due to Doob's martingale convergence theorem. We show that an ENSM $M$ must also converge almost surely, but the limit $M_\infty$ can be nonintegrable and takes values in $\overline{\mathbb R^+}$ (\cref{prop:converge}). In fact, there are ENSMs (and even ENMs) $M$ such that $\Exp[M_n] = \infty$ for any finite $n$ but $M_\infty = 0$ almost surely (Example~\ref{ex:cauchy}).
    \item Like classical NSMs, ENSMs satisfy the optional stopping theorems: if $M$ is an ENSM and $\tau$ is any stopping time, $M^\tau$ is also an ENSM (\cref{prop:stopped-ensm}); for stopping times $\nu_1 \le \nu_2$, $\Exp[M_{\nu_2} | \mathcal{F}_{\nu_2}] \le M_{\nu_1}$ (\cref{prop:optional-sampling}).
\end{itemize}
We encourage the reader to refer to \cref{sec:mtg} in full to see how we carefully develop the theory of ENSMs. Besides those mentioned above, there we also prove some further interesting properties of ENSMs, including
\begin{itemize}
    \item Local ENSMs are ENSMs (\cref{prop:local}). However, we emphasize that local NSMs are a smaller class than ENSMs.
    \item Nonnegative strong solutions of nonpositive-drift SDEs are ENSMs (\cref{prop:sde}).
\end{itemize}

In many regards, one might think of ENSMs are nonnegative processes that, like classical NSMs, expectedly shrink, but without an integrable initial distribution. We further clarify this concept by some elementary examples of ENSMs in the following.

\begin{example}\label{ex:cauchy} Let $Y\sim \operatorname{Cauchy}(0,1)$ be $\mathcal{F}_0$-measurable, and let $\{\xi_n\}_{n\ge 1}$ be a sequence of conditionally $\operatorname{Ber}(\theta)$ random variables adapted to $\{\mathcal{F}_n\}_{n \ge 1}$ (i.e.\ $\Pr[ \xi_n = 1 | \mathcal{F}_{n-1} ] = \theta$ and $\Pr[ \xi_n = 0 | \mathcal{F}_{n-1} ] = 1-\theta$). Let $M_0 =|Y|$, and for $n \ge 1$, let $M_n = M_{n-1} ( 1/2 + \xi_n )$. Then, although $\Exp[M_n] = \infty$, the extended conditional expectation $\Exp[M_n | \mathcal{F}_{n-1}] = M_{n-1}( 1/2 + \theta )$ is well-defined (based on the property (iii) of Proposition~\ref{prop:properties}), and hence the process $M$ is an ENSM adapted to $\mathcal{F}$ if $\theta\le 1/2$, and when $\theta=1/2$, it is an ENM. When this happens, $M_\infty = 0$ (due to martingale strong law of large numbers).
\end{example}

% In the above example, the only source of nonintegrability was the very first step. However, one could in general evolve in a nonintegrable manner until a stopping time changes the behavior to a more classical evolution, as explained next.
% \begin{example}\label{ex:cauchy2}
%     Let $Y_0,Y_1,Y_2,\dots$ be independent $\operatorname{Cauchy}(0,1)$. Let $W_0=|Y_0|$ and $W_n = W_{n-1}|Y_n|$ if $W_{n-1} < 100$, and $W_n=W_{n-1}$ otherwise. Then $\{W_n\}$ is an ENM. % I wouldn't say ''ENM''. It might be good to just avoid the term. NMs already behave pretty much like NSMs and more so for ''ENM''s
% \end{example}

Above, the process is still almost surely finite, we drop this below.

\begin{example}\label{ex:inf-ber}
    Let $N_0 \sim \frac{1}{2}\delta_{1} + \frac{1}{2}\delta_{\infty}$ (i.e.\ $\Pr[N_0  = 1] = \Pr[N_0  = \infty] = 1/2$) be $\mathcal{F}_0$-measurable. Assuming the same $\{ \xi_n \}$ sequence as in Example~\ref{ex:cauchy}, with $N_n = N_{n-1} ( 1/2 + \xi_n )$ we again have $\Exp[N_n | \mathcal{F}_{n-1}] = N_{n-1}( 1/2 + \theta )$ and that $N$ is an ENSM adapted to $\mathcal{F}$ if $\theta\le 1/2$ (and an ENM if $\theta=1/2$). This time, the process is not a.s.\ finite, but it still qualifies as an ENSM under our formulation. $N_\infty$ has distribution $\frac{1}{2}\delta_{0} + \frac{1}{2}\delta_{\infty}$.
\end{example}

The examples above can be thought of the process starting with a heavy-tailed nonnegative distribution, so heavy that the expectation is infinite, and evolving later on {via an ``update rule'' $M_n = M_{n-1} ( 1/2 + \xi_n )$ that would have led to classical supermartingales, if there was integrability}. In \cref{ex:inf-ber}, in each instantiation, $N_n$ is either infinite forever or finite forever (fully determined by $N_0$). In the following example, it can switch from being infinite to being finite at some stopping time.

\begin{example}\label{ex:infinity}
Of course, if for all $n$, $R_n$ almost surely equals a constant $c \geq 0$, then it is a classical NM. But if $R_n = \infty$ almost surely for all $n$, it is an ENM. But ENMs that are not NMs can be more complex. For example, the process defined by $R_0 = \infty, R_n=M_n$, where process $M$ was defined in Example~\ref{ex:cauchy} (with $\theta=1/2$) is also an ENM that starts at $\infty$, becomes a.s.\ finite, and then evolves as a classical NM thereafter.
\end{example}

\begin{example}\label{ex:infinity2}
Consider the same $\{ \xi_n \}$ and $\{M_n\}$ sequence as in Example~\ref{ex:cauchy} with $0 \le \theta \le 1/2$. Suppose the process $T$ starts at $T_0=\infty$ and define $T_n=\infty$ if $\sum_{i=1}^n \xi_i = 0$. Otherwise, let $T_n=M_n$. Then $T$ is an ENSM that starts off infinite but will eventually become finite if $\theta>0$ --- actually $T_\infty = 0$.
%after some stopping time if $\theta>0$.
\end{example}

%Besides the processes that have \eqref{eqn:def-ensm} obviously satisfied, like those above, examples of ENSMs also arise in \emph{mixtures}, which ultimately leads to the involved examples
%to be presented in Section~\ref{sec:estimation}.

\section{The extended Ville's inequality}
\label{sec:evi}

Recall from the beginning of the paper that if $M$ is an \emph{integrable} nonnegative supermartingale, \emph{Ville's inequality} states that  for any $C>0$,
\begin{equation}\label{eqn:ville-inq}
    \Pr[ \exists n, M_n \ge C] \le \Exp[M_0]/C.
\end{equation}
In the following, we present a novel version of Ville's inequality such that (1) it is stronger than \eqref{eqn:ville-inq} in the integrable case; (2) it works for possibly non-integrable, possibly infinite ENSMs we defined.
% avoiding the complication of \eqref{eqn:ville-inq} that arises in the ``$\infty \ge \infty$'' comparison.

\begin{theorem}[Extended Ville's inequality; possibly  nonintegrable case]\label{thm:evi-2} Let $M$ be an extended nonnegative supermartingale and $
C > 0$, $m \ge 0$. Then,
\begin{equation}
    \Pr[\exists n\ge m, M_n \ge C] \le \Pr[M_m \ge C] + C^{-1}\Exp[\id_{\{ M_m < C \}} M_m] =  C^{-1}\Exp[ M_m \wedge C ].
\end{equation}
\end{theorem}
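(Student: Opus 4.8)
The plan is to mimic the one-line argument sketched for the integrable case (\cref{thm:ext-ville}) — apply the classical Ville's inequality to the process truncated at the $\mathcal{F}_m$-measurable event $\{M_m < C\}$ — but to justify each step carefully so that it survives the loss of integrability. Fix $C>0$ and $m \ge 0$, write $A := \{M_m < C\} \in \mathcal{F}_m$, and define $N_n := M_n \id_A$ for $n \ge m$, re-indexed to start at time $m$.

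First I would check that $N$ is an ENSM started at time $m$. For $n > m$ the indicator $\id_A$ is $\mathcal{F}_{n-1}$-measurable, since $A \in \mathcal{F}_m \subseteq \mathcal{F}_{n-1}$, so the pull-out property of extended conditional expectation (\cref{prop:properties}) gives $\Exp[N_n \mid \mathcal{F}_{n-1}] = \id_A \Exp[M_n \mid \mathcal{F}_{n-1}] \le \id_A M_{n-1} = N_{n-1}$, which is precisely the ENSM inequality \eqref{eqn:def-ensm}. The crucial observation is then that the initial coordinate $N_m = M_m \id_{\{M_m < C\}}$ is bounded above by $C$, hence \emph{integrable}, even though $M$ itself may be nonintegrable or infinite. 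By \cref{prop:finite-moment-ensm-is-nsm}, an ENSM with integrable initial value is a classical NSM; therefore $N$ is a genuine integrable NSM, and the ordinary Ville's inequality \eqref{eqn:ville-inq} applies, yielding $\Pr[\exists n \ge m,\ N_n \ge C] \le C^{-1}\Exp[N_m] = C^{-1}\Exp[M_m \id_{\{M_m < C\}}]$.

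It remains to transfer this bound back to $M$ by an event decomposition. On $A$ we have $N_n = M_n$ for every $n \ge m$, so $\{\exists n \ge m,\ M_n \ge C\} \cap A \subseteq \{\exists n \ge m,\ N_n \ge C\}$; on the complement $\{M_m \ge C\}$ the event $\{\exists n \ge m,\ M_n \ge C\}$ holds automatically (take $n = m$). Splitting over $A$ and its complement and using the previous display gives $\Pr[\exists n \ge m,\ M_n \ge C] \le \Pr[M_m \ge C] + C^{-1}\Exp[M_m \id_{\{M_m < C\}}]$. The final identity with $C^{-1}\Exp[M_m \wedge C]$ then follows from the elementary decomposition $\Exp[M_m \wedge C] = \Exp[M_m \id_{\{M_m < C\}}] + C\,\Pr[M_m \ge C]$, which is fully rigorous even for infinite $M_m$ because $M_m \wedge C \le C$ keeps every term finite.

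The one genuinely delicate point — and the reason the classical proof does not transfer verbatim — is that after truncation the tail values $N_n = M_n \id_A$ for $n > m$ can still be nonintegrable or infinite, so $N$ is \emph{not} obviously integrable at every time merely because $N_m$ is bounded. What rescues the argument is exactly \cref{prop:finite-moment-ensm-is-nsm}: integrability of the single initial coordinate $N_m$ propagates, through the supermartingale structure, to integrability at all later times, promoting $N$ to a classical NSM to which \eqref{eqn:ville-inq} may be applied. I would therefore invoke this proposition explicitly rather than attempt to verify $\Exp[N_n]<\infty$ by hand, and I would take care that the pull-out identity used in \cref{prop:properties} is the extended, possibly infinite-valued version, since that is what legitimizes conditioning on $M_n$ when $M_n$ fails to be integrable.
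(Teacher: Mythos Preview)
Your proof is correct and follows essentially the same route as the paper's: truncate by the $\mathcal{F}_m$-measurable indicator $\id_{\{M_m<C\}}$, use \cref{prop:finite-moment-ensm-is-nsm} to deduce that the truncated process is a classical NSM, apply the ordinary Ville's inequality, and decompose the target event over $\{M_m<C\}$ and its complement. The only cosmetic difference is that the paper first reduces to $m=0$ and cites \cref{prop:mult-F0} for the ENSM property of $M\id_B$, whereas you work directly at general $m$ and invoke the pull-out rule from \cref{prop:properties}; these are equivalent.
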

In Section~\ref{sec:l1} we presented 
%and proved 
Theorem~\ref{thm:ext-ville}, the ``classical'' special case of Theorem~\ref{thm:evi-2} before introducing non-integrability. The proof of Theorem~\ref{thm:evi-2} {below is also valid for the special case} of Theorem~\ref{thm:ext-ville}.
\begin{proof}[Proof of Theorem~\ref{thm:evi-2}] {Since if we define $M_n ' = M_{m+n}$ and $\mathcal{F}_n' = \mathcal{F}_{m+n}$ then $M'$ is an ENSM on $\mathcal{F}'$,} it suffices to prove the $m=0$ case.
%Again, it suffices to prove the $m=0$ case.
Let $B = \{ M_0 < C \}$. First, $M \id_B$ is an ENSM since $\id_B$ is $\mathcal{F}_0$-measurable, due to \cref{prop:mult-F0}. Next, since $\Exp[M_0 \id_B] \le C < \infty$, due to \cref{prop:finite-moment-ensm-is-nsm} we see that
 $M \id_B$ is always an \emph{integrable} nonnegative supermartingale, regardless whether $M$ is integrable or not. Apply the classical Ville's inequality \eqref{eqn:ville-inq} on $M \id_B $,
\begin{equation}\label{eqn:evipf-ville}
    \Pr[\exists n \ge 0, M_n \id_B \ge C  ] \le C^{-1} \Exp[ M_0 \id_B ].
\end{equation}
Note that
\begin{align}
     \Pr[\exists n \ge 0, M_n \id_B \ge C  ]
     = & \Pr[M_0 < C \wedge \exists n \ge 1, M_n \ge C  ]
    \\ = & \Pr[ \exists n \ge 0, M_n \ge C  ] - \Pr[ M_0 \ge C ],
\end{align}
where the last step uses {$ \Pr[A \cap B] = \Pr[A \cup B^\mathsf{c}] - \Pr[B^\mathsf{c}]$}.
The proof is complete.
\end{proof}

Several remarks are in order. 
First, the truncated mean $\Exp[\id_{\{ M_m < C \}} M_m]$ in Theorem~\ref{thm:evi-2} is always finite, despite the fact that $M_n$ can be nonintegrable. This makes sure that 
Theorem~\ref{thm:evi-2} always gives a non-trivial maximal inequality for an ENSM. 
Let us see in action how our extended Ville's inequality above work on an extended supermartingale which takes $\infty$ with positive probability. 

\begin{example}[Example~\ref{ex:inf-ber} continued]
In Example~\ref{ex:inf-ber}, let $\theta = 1/2$; applying Theorem~\ref{thm:evi-2} with $C = 2$ yields
\begin{equation}
    \Pr[ \exists n \ge 0, N_n \ge 2 ] \le \Pr[ M_0 \ge 2 ] + (1/2)\Exp[ \id_{\{ N_0 < 2\}}  N_0] = 1/2 + 1/4 = 3/4.
\end{equation}    
\end{example}
The above is a nontrivial and accurate conclusion, since the ENM $N$ remains $\infty$ with probability half (thus exceeding $2$), and if $N$ is finite (which also happens with probability half), then it exceeds $2$ with probability $1/2$ by the standard Ville's inequality.

Second, we can alternatively prove Theorem~\ref{thm:evi-2} by applying the extended Ville's inequality for classical NSMs (Theorem~\ref{thm:ext-ville}) on $M \wedge m$ and taking an upward limit $m \to \infty$. This idea of approximating an ENSM with classical NSMs when using the extended Ville's inequality leads to the following statement which links the concentration inequalities (hence also confidence sequences) produced by ENSMs to those by classical NSMs, and it actually only requires the convergence \emph{in distribution} of the \emph{initial} random variables in the processes.

\begin{proposition}\label{prop:vil-conv} 
    Let $ M^{1} , M^{2}, \dots $  and $ M$ be 
    ENSMs, such that %for each $n$, 
    $\{ M_{0}^k \}_{k\ge 1}$ converges in distribution to $ M_0 $. {Define the functions $\px_k(C) = C^{-1}\Exp[M_{0}^k \wedge C] $ and $ \px(C) = C^{-1}\Exp[M_{0} \wedge C]$ where $C > 0$.} 
    Then, $\lim_{k \to \infty} \px_k(C) = \px(C)$. Consequently, if all $\px_k$ and $\px$ are bijective and bi-continuous, $\lim_{k\to \infty} 
 \px^{-1}_k(\alpha) = \px^{-1}(\alpha)$ for any $\alpha$ within their common range.
\end{proposition}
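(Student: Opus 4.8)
The plan is to reduce everything to the single observation that, for each fixed $C>0$, the map $g_C\colon[0,\infty]\to[0,C]$ given by $g_C(x)=x\wedge C$ is a \emph{bounded continuous} function on the compactified half-line $\eR=[0,\infty]$. Boundedness is immediate ($g_C\le C$), and continuity needs to be checked only at the point $\infty$: since $g_C(x)=C$ for all $x\ge C$, we have $\lim_{x\to\infty}g_C(x)=C=g_C(\infty)$. I would also point out up front that $\px_k$ and $\px$ depend on the processes only through the laws of $M_0^k$ and $M_0$, so the ENSM structure is irrelevant here; the sole hypothesis driving the proof is the convergence in distribution $M_0^k\to M_0$ on $[0,\infty]$.

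First I would establish the pointwise limit. By the definition of convergence in distribution on the compact metric space $[0,\infty]$ (equivalently, by the portmanteau theorem), testing against the bounded continuous function $g_C$ gives $\Exp[M_0^k\wedge C]=\Exp[g_C(M_0^k)]\to\Exp[g_C(M_0)]=\Exp[M_0\wedge C]$, and dividing by the constant $C$ yields $\px_k(C)\to\px(C)$. Every quantity appearing here is finite and bounded by $C$, so no arithmetic of $\infty$ is needed -- this finiteness is exactly the benefit of the truncation built into $\px$, and it is what lets the weak-convergence machinery apply cleanly even when the $M_0^k$ are nonintegrable or charge $\{\infty\}$ with positive mass.

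Next I would upgrade pointwise convergence to convergence of the inverses by a monotone squeeze. A continuous bijection of an interval is automatically strictly monotone; moreover, writing $\Exp[M_0\wedge C]=\Exp[M_0\id_{\{M_0\le C\}}]+C\Pr[M_0> C]$ shows directly that $\px$ (and likewise each $\px_k$) is non-increasing, so under the stated bijectivity hypothesis all of $\px,\px_1,\px_2,\dots$ are in fact strictly \emph{decreasing}. Fix $\alpha$ in the common range and put $C^{\ast}=\px^{-1}(\alpha)$. For small $\epsilon>0$ with $C^{\ast}\pm\epsilon\in(0,\infty)$, strict monotonicity gives $\px(C^{\ast}-\epsilon)>\alpha>\px(C^{\ast}+\epsilon)$; by the pointwise limit just proved, for all large $k$ we also get $\px_k(C^{\ast}-\epsilon)>\alpha>\px_k(C^{\ast}+\epsilon)$. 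Continuity and injectivity of $\px_k$ then force its unique preimage to lie in $(C^{\ast}-\epsilon,C^{\ast}+\epsilon)$, i.e.\ $\px_k^{-1}(\alpha)\in(C^{\ast}-\epsilon,C^{\ast}+\epsilon)$, and since $\epsilon$ is arbitrary we conclude $\px_k^{-1}(\alpha)\to\px^{-1}(\alpha)$.

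The main (really the only) subtlety I anticipate is the correct interpretation of ``convergence in distribution'' for $\eR$-valued random variables: the whole argument rests on treating it as weak convergence on the \emph{compactified} space $[0,\infty]$, which is precisely what makes $g_C$ -- with its finite endpoint value $g_C(\infty)=C$ -- a legitimate bounded continuous test function and thereby absorbs any mass at $\infty$. Once this is pinned down, both halves are routine; the second half is a standard squeeze for inverses of continuous monotone maps, whose only mild wrinkle is checking that all the $\px_k$ are monotone in the same direction, which I obtain from the explicit formula rather than from the bijectivity hypothesis alone.
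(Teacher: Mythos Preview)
Your proposal is correct and follows essentially the same approach as the paper: apply the Portmanteau theorem to the bounded (Lipschitz/continuous) function $x\mapsto x\wedge C$ to get $\px_k(C)\to\px(C)$, then invoke a standard inverse-convergence argument. The paper dismisses the second half as ``a standard calculus result,'' whereas you spell out the monotone squeeze in full; you also make explicit the compactification point about $[0,\infty]$, which the paper glosses over.
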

The $\px$ functions in the proposition above are so named as to stand for ``{crossing probability bound'' as it follows from Theorem~\ref{thm:evi-2}  that,
    \begin{equation}
    \Pr[\exists n\ge 0, M_{n}^k \ge C] \le  \px_k(C) \quad \text{and} \quad  \Pr[\exists n\ge 0, M_{n} \ge C] \le  \px(C).
    \end{equation}} It is proved in Section~\ref{sec:kpf}.

{Third}, we note that there is another way to avoid the infinite expectation in Ville's inequality for ENSMs.

\begin{theorem}[Ville's inequality on ENSMs for stopping times]\label{thm:evi-stop}
    Let $ M $ be an ENSM. For any stopping times $\pi$ and $\tau$ and constant $C > 0$, $\Pr[ M_{\tau \vee \pi } \ge C  ] \le C^{-1} \Exp[M_\pi]$.
\end{theorem}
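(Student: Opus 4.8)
The plan is to reduce the claim to one application of the optional sampling theorem for ENSMs followed by Markov's inequality, exploiting that the maximum of two stopping times is itself a stopping time dominating each of them. Writing $\sigma := \tau \vee \pi$, the event $\{\sigma \le n\} = \{\tau \le n\} \cap \{\pi \le n\}$ lies in $\mathcal{F}_n$, so $\sigma$ is a stopping time, and $\pi \le \sigma$ holds pointwise (with the convention that both may take the value $+\infty$). Thus $\pi$ and $\sigma$ form an ordered pair of stopping times, which is exactly the input required by the ENSM optional sampling result.

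I would then invoke \cref{prop:optional-sampling} on the pair $\pi \le \sigma$ to obtain $\Exp[M_\sigma \mid \mathcal{F}_\pi] \le M_\pi$ almost surely. Taking unconditional expectations of both sides --- legitimate by monotonicity of the extended expectation for $\eR$-valued variables --- and using the tower property $\Exp[\Exp[M_\sigma \mid \mathcal{F}_\pi]] = \Exp[M_\sigma]$ (the $A=\Omega$ instance of the defining identity for the extended conditional expectation) yields $\Exp[M_\sigma] \le \Exp[M_\pi]$ in $[0,\infty]$. Finally, Markov's inequality in the extended sense follows from the pointwise bound $C\,\id_{\{M_\sigma \ge C\}} \le M_\sigma$, giving $C\,\Pr[M_\sigma \ge C] \le \Exp[M_\sigma]$; chaining the two inequalities produces $\Pr[M_{\tau \vee \pi} \ge C] \le C^{-1}\Exp[M_\sigma] \le C^{-1}\Exp[M_\pi]$. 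The argument is vacuously fine when $\Exp[M_\pi] = \infty$, and otherwise every quantity in sight is finite.

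The delicate point --- and the only real obstacle --- is that this route leans entirely on optional sampling being valid at \emph{possibly infinite} stopping times and \emph{without any integrability} hypothesis on $M$. This is precisely the content of the ENSM optional sampling theorem, which itself rests on the a.s.\ convergence of an ENSM to its $\eR$-valued limit $M_\infty$ and on the extended conditional-expectation calculus; consequently no new estimate is needed, since the tower property and Markov's inequality both transfer verbatim to $[0,\infty]$-valued random variables.
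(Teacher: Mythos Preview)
Your proof is correct and follows essentially the same route as the paper: apply \cref{prop:optional-sampling} to the ordered pair $\pi \le \tau \vee \pi$ to obtain $\Exp[M_{\tau\vee\pi}] \le \Exp[M_\pi]$, then finish with Markov's inequality. The paper's proof is terser (it states the expectation inequality directly without unpacking the conditional step), but you have simply spelled out the details it leaves implicit.
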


\begin{proof}
% [Proof of Theorem~\ref{thm:evi-stop}] 
Due to \cref{prop:optional-sampling}, $\Exp[ M_{\tau \vee \pi} ] \le \Exp[M_{\pi}]$. Applying Markov's inequality on $M_{\tau\vee\pi}$ concludes the proof.
\end{proof}

To see how Theorem~\ref{thm:evi-stop} generalizes the classical Ville's inequality \eqref{eqn:ville-inq} for classical NSMs, we may put $\pi = 0$ and it is known that the resulted stopped bound $\Pr[ M_\tau \ge C ] \le C^{-1} \Exp[M_0]$ for all stopping times $\tau$ is equivalent to the any-time bound $\Pr[ \sup_n M_n \ge C ] \le C^{-1} \Exp[M_0]$ \citep[Lemma 3]{howard2021time}. The role of the stopping time $\pi$ here is actually a ``starting time'', from which we bound the upcrossing probability. In practice, we can choose it so that $\Exp[M_\pi]$, despite the nonintegrability of $M$, is finite. 

\begin{example}[Example~\ref{ex:cauchy} continued]   
For the $M$ in Example~\ref{ex:cauchy}, consider {the hitting time of $[0,2]$, $\pi = \min\{ n : M_n \le 2 \}$}. Then $\Exp[M_\pi] \le 2$.
\end{example}

{Fourth, let us discuss an interpretation of the extended Ville's inequality in financial terms. Consider an efficient, arbitrage-free market where an asset evolves stochastically as an NM. The classical Ville's inequality \eqref{eqn:vil} is often interpreted as the epigram that \emph{you cannot double your wealth with probability more than 1/2} by holding the asset. To set the stage for interpreting the extended Ville's inequality, let us allow the asset $M$ to be an ENSM, and recall the notion of a \emph{covered European call option}, where one already owns the asset $M$, but has the obligation to sell the asset at some \emph{strike price} $K$, on the \emph{expiration day} $m$, if the asset value $M_m > K$. This essentially caps one's final wealth as $M_m \wedge K$. Our \cref{thm:evi-2} can simply be restated as follows.
\begin{corollary}[Extended Ville's inequality for covered calls] Let $\alpha \in (0,1)$.
    The probability of the asset ever valuing more than $C$ after day $m$ is at most $\alpha$, where $C$ is a strike price at which a covered European call option expiring on day $m$ is priced at or less than $\alpha C$.
\end{corollary}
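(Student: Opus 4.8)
The plan is to recognize this corollary as a pure translation of Theorem~\ref{thm:evi-2} into the language of option pricing, so that the proof reduces to identifying the relevant financial quantities and substituting them into the maximal inequality. The only genuine work is to pin down what ``the price of the covered call'' means and to check that it coincides with the truncated mean $\Exp[M_m \wedge C]$ that already appears on the right-hand side of Theorem~\ref{thm:evi-2}.

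First I would make the terminal payoff explicit. A covered European call with strike $C$ expiring on day $m$ leaves the holder with the asset when $M_m \le C$ and forces a sale at the price $C$ when $M_m > C$; in either case the terminal wealth is exactly $M_m \wedge C$. In an arbitrage-free market the time-zero price of a contingent claim paying $M_m \wedge C$ on day $m$ is its expectation under the pricing measure $\Pr$, namely $\Exp[M_m \wedge C]$. I would therefore read the hypothesis ``the covered call is priced at or less than $\alpha C$'' as the inequality $\Exp[M_m \wedge C] \le \alpha C$. Dividing by $C > 0$ gives $C^{-1}\Exp[M_m \wedge C] \le \alpha$, and then Theorem~\ref{thm:evi-2} supplies $\Pr[\exists n \ge m,\ M_n \ge C] \le C^{-1}\Exp[M_m \wedge C]$. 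Chaining the two bounds yields $\Pr[\exists n \ge m,\ M_n \ge C] \le \alpha$, and since the event that the asset ever values strictly more than $C$ after day $m$ is contained in $\{\exists n \ge m,\ M_n \ge C\}$, the same bound holds for it, which is precisely the asserted crossing probability.

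The main obstacle, such as it is, is interpretive rather than mathematical: ensuring that ``price'' is read as the risk-neutral expected payoff $\Exp[M_m \wedge C]$, and not as some initial-wealth or supermartingale surrogate for it. Because $M$ is only assumed to be an ENSM rather than a martingale, I would stress that the pricing identity we invoke is a statement about the claim's own payoff $M_m \wedge C$ — whose fair value is its expectation — and hence does not require $M$ itself to be a martingale. I would also note, as remarked after Theorem~\ref{thm:evi-2}, that the truncated mean $\Exp[M_m \wedge C]$ is always finite; this is exactly what makes the hypothesis $\Exp[M_m \wedge C] \le \alpha C$ meaningful even when $M$ is nonintegrable. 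Once this identification is granted, the corollary follows immediately by the substitution above, and no further estimation is needed.
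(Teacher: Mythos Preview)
Your proposal is correct and matches the paper's own treatment: the paper explicitly frames this corollary as a direct restatement of Theorem~\ref{thm:evi-2}, having just identified the covered-call payoff with $M_m \wedge C$, so the entire content is the translation you describe followed by an invocation of the extended Ville's inequality. Your additional care about the interpretation of ``price'' and the finiteness of $\Exp[M_m \wedge C]$ is welcome but not strictly required beyond what the paper already notes.
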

In particular, $C$ always exists and is finite, and is smaller than $\alpha^{-1}\Exp[M_m]$ which may or may not be finite.

Finally, let us mention the recent discovery of the \emph{randomized} Markov's inequality by \cite{ramdas2023randomized} which losslessly tightens the original Markov's inequality by an independent factor of $U\sim \operatorname{Unif}_{(0,1)}$. Let us state a randomized version of \cref{thm:evi-2}. %and \cref{thm:evi-stop}.
\begin{proposition}[Randomized extended Ville's inequality]\label{prop:evi-ur} Let $M$ be an extended nonnegative supermartingale and $
C > 0$, $\tau$ a stopping time adapted to $\mathcal{F}$, and $U\sim \operatorname{Unif}_{(0,1)}$ independent from $\mathcal{F}$. Then,
\begin{equation}
    \Pr\Bigl[\sup_{n \le \tau} M_n \ge C \text{ or } M_\tau \ge UC\Bigr] \le \Pr[M_0 \ge C] + C^{-1}\Exp[\id_{\{ M_0 < C \}} M_0] =  C^{-1}\Exp[ M_0 \wedge C ].
\end{equation}
The proof is by applying the randomized Ville's inequality (Corollary 4.1.1 by \cite{ramdas2023randomized}) instead in step \eqref{eqn:evipf-ville}, to $M\id_B$. We omit the details.
\end{proposition}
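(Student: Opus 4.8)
The plan is to imitate the proof of Theorem~\ref{thm:evi-2} almost verbatim, changing exactly one ingredient: the single appeal to the classical Ville's inequality at step~\eqref{eqn:evipf-ville} is swapped for its randomized counterpart. First I would set $B = \{M_0 < C\}$, just as before. Since $\id_B$ is $\mathcal{F}_0$-measurable, \cref{prop:mult-F0} shows that the truncated process $M\id_B$ is again an ENSM, and because $\Exp[M_0\id_B] \le C < \infty$, \cref{prop:finite-moment-ensm-is-nsm} upgrades it to a genuine \emph{integrable} classical NSM. This integrability is precisely what licenses applying the randomized Ville's inequality of \cite{ramdas2023randomized} (their Corollary~4.1.1) to $M\id_B$, with the same stopping time $\tau$ and independent $U\sim\operatorname{Unif}_{(0,1)}$, to obtain
\begin{equation*}
\Pr\Bigl[\sup_{n\le\tau} M_n\id_B \ge C \text{ or } M_\tau\id_B \ge UC\Bigr] \le C^{-1}\Exp[M_0\id_B].
\end{equation*}

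The second step is the same set-theoretic bookkeeping over $B$ and $B^{\mathsf{c}}$ that underlies the identity $\Pr[A\cap B] = \Pr[A\cup B^{\mathsf{c}}] - \Pr[B^{\mathsf{c}}]$ used in Theorem~\ref{thm:evi-2}. On $B$ one has $\id_B = 1$, so the event above coincides with $B \cap \{\sup_{n\le\tau} M_n \ge C \text{ or } M_\tau \ge UC\}$; on $B^{\mathsf{c}} = \{M_0 \ge C\}$ the truncated process vanishes identically, so that event contributes nothing (using $U>0$, $C>0$ almost surely). I would then decompose the target probability $\Pr[\sup_{n\le\tau} M_n \ge C \text{ or } M_\tau \ge UC]$ across $B$ and $B^{\mathsf{c}}$: on $B^{\mathsf{c}}$ the index $n=0$ already lies in the range $n\le\tau$ (since $\tau\ge 0$) and $M_0 \ge C$, so the supremum clause fires automatically and this piece contributes exactly $\Pr[M_0\ge C]$. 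Adding the two pieces and feeding in the displayed randomized bound yields $\Pr[\cdots] \le \Pr[M_0\ge C] + C^{-1}\Exp[\id_{\{M_0<C\}}M_0]$, and the final equality with $C^{-1}\Exp[M_0\wedge C]$ is the same elementary identity $\Exp[M_0\wedge C] = \Exp[M_0\id_{\{M_0<C\}}] + C\,\Pr[M_0\ge C]$ already invoked earlier.

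The only genuinely new point relative to Theorem~\ref{thm:evi-2}, and hence the step I expect to require the most care, is checking that the extra randomized clause ``$M_\tau \ge UC$'' behaves correctly under the truncation. Concretely, on $B^{\mathsf{c}}$ I must confirm that $M_\tau\id_B = 0$ cannot trigger $M_\tau\id_B \ge UC$, which is exactly where independence of $U$ from $\mathcal{F}$ together with the strict positivity $U>0$ and $C>0$ enters; and on $B$ I must confirm that dropping the indicator leaves the randomized threshold untouched. Everything else is a transcription of the existing argument, so no machinery beyond \cref{prop:mult-F0}, \cref{prop:finite-moment-ensm-is-nsm}, and the cited randomized Ville inequality is needed.
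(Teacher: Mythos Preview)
Your proposal is correct and is exactly the approach the paper indicates: truncate to $M\id_B$ with $B=\{M_0<C\}$, use \cref{prop:mult-F0} and \cref{prop:finite-moment-ensm-is-nsm} to see it is a classical NSM, swap the classical Ville step~\eqref{eqn:evipf-ville} for the randomized Ville inequality of \cite{ramdas2023randomized}, and finish with the same $B$/$B^{\mathsf c}$ decomposition. Your additional care about the randomized clause on $B^{\mathsf c}$ (using $U>0$, $C>0$) is the only new wrinkle, and you handle it correctly.
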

}

\section{A nonparametric example of inference with improper priors}
\label{sec:estimation}

``Improper priors'' is a Bayesian term for mixture distributions that are not probability measures, but are $\sigma$-finite. Despite some philosophical debate around their use, these priors are routinely used today in Bayesian inference since they inherently require no subjective information. Here, we show how to incorporate improper priors into non-Bayesian (``frequentist'' or ``game-theoretic'') analyses using the tools developed earlier. 
In particular, the utility of nonintegrable nonnegative supermartingales is greatly expanded by the fact that $\sigma$-finite mixtures of ENSMs, in our example likelihood ratios, are also ENSMs, which we shall state as \cref{lem:mix}. That is, Bayesian improper priors in our examples here serve as mixing distributions over likelihood ratios, leading to ENSMs that are statistically useful in hypothesis testing and uncertainty quantification.

We remark that improper priors are often used by both Bayesians and non-Bayesians to deal with \emph{nuisance} parameters (in both the null and the alternative), leading to a \emph{ratio of improper mixtures} of null and alternative likelihoods; this situation, however, is quite different from the one to be presented in this section, where an improper prior is used on the \emph{main} parameter of interests, leading to an \emph{improper mixtures of the likelihood ratios}. For details on the former case, see papers by, for example \cite{jeffreys1946invariant}, \citet[Section 5]{lai1976confidence}, \cite{berger1998bayes}, \cite{gonen2005bayesian}, and recently \cite{perez2022statistics},
%[Lai, Berger and Sun and maybe one more from the 80s, Grunwald], 
showing that it lends itself in a straightforward way to a non-Bayesian analysis by being interpreted as the (standard) likelihood ratio in a reduced/coarsened filtration. 

We now discuss only the latter of the two cases mentioned above, focusing on the nonparametric problem
%We show an application of our ``extended method of mixtures'' by studying the (nonparametric) problem 
of forming a \emph{confidence sequence} (defined soon) for the mean of a sequence of subGaussian random variables. 
The resulting bounds are nearly analytically identical to the (parametric) Gaussian case studied by \cite{robbins1970statistical} and \cite{lai1976confidence}, but since these authors omitted many key steps in the proof, the current section simultaneously serves partly as a pedagogical tool for the Gaussian special case, as well as a new generalization to the subGaussian case which requires some slightly different calculations. 
In \cref{sec:cs-appendix}, we give 3 other ways of deriving such subGaussian confidence sequences (some old, some new) and we contrast the resulting confidence sequences to the one derived below.  

\paragraph{Additional notation.} The following notational conventions will be observed throughout this (and next) section. We use upper case italic letters (e.g.\ $P$) to denote distributions on $\mathbb R$, and calligraphic letters (e.g.\ $\mathcal{P}$) to denote sets of distributions on $\mathbb R$. In particular, we denote by $\normal{\mu}{\sigma^2}$ the normal distribution of mean $\mu$ and variance $\sigma^2$ 
% (as opposed to the $\mathcal{N}(\mu ,\sigma^2)$ more commonly used elsewhere); 
and by $p_{\mu ,\sigma^2}$ its density. {The joint distribution of an infinite i.i.d.\ stream $X_1, X_2,\dots$  of scalar with common distribution $X_1 \sim P$ is written by convention as $P^{\otimes \mathbb N}$.} To allow the stream of data to be non-i.i.d.,
% as this will lead to an example that necessitates extended e-processes which are \emph{not} supermartingales, just like the classical case . 
we use blackboard bold italic letters like $\mathbbmsl P$ to denote distributions on $\mathbb R \times \mathbb R \times \dots$, i.e.\ the {joint} distributions of the entire stochastic processes; and Fraktur letters like $\mathfrak P$ for sets of these distributions.
The indexed letters $X_1, X_2, \dots$ are reserved for the data stream, independently and identically distributed according to a distribution that will be clear from context. We denote by $S_n$ the sum $\sum_{i=1}^n X_i$, by $V_n$ the sum of squares $\sum_{i=1}^n X_i^2$. Finally, we define $\avgX{n}= S_n/n$ and $\avgXsq{n} = V_n/n$.

In what follows, we shall frequently say a process is a (super)martingale with respect to a \emph{class of distributions}, {a concept that shall come very useful in statistical applications as the class of distributions is often taken the set of all null distribution in interest. We define this as follows.}  %, and we clarify what this means next.

%Let us first restate Definition~\ref{def:P-mart} in these terms.

\begin{definition}[Composite martingales]\label{def:P-mart}
We say that a stochastic process $M$, each $M_n$ being a function of $X_1, \dots, X_n$, is a $\mathbbmsl P$-martingale if, under $\{X_n\}_{n \ge 1} \sim \mathbbmsl P$, it is a martingale.
    If $M$ is a $\mathbbmsl P$-martingale for all $\mathbbmsl P \in \mathfrak{P}$, we call it a $\mathfrak{P}$-martingale.
    %If $P$ is a distribution on $\mathbb R$,
    We say $M$ is a $P$-martingale if it is a $P^{\otimes \mathbb N}$-martingale; 
    %If $\mathcal{P}$ is a set of distributions on $\mathbb R$, we say $\{M_n\}$ is 
    a $\mathcal{P}$-martingale if it is a $P^{\otimes \mathbb N}$-martingale for every $P \in \mathcal P$.
    % \begin{equation}
    %     % \{ P_1 \otimes P_2 \otimes \dots : P_1, P_2, \dots \in \mathcal{P} \}\text{-martingale.}
    %       \{ P_1 \otimes P_2 \otimes \dots : P_1, P_2, \dots \in \mathcal{P} \}\text{-martingale.}
    % \end{equation}
    An analogous definition applies if martingale is replaced by supermartingale or ENSM.
    % to $\mathbbmsl P$-supermartingale, $\mathfrak{P}$-supermartingale, $P$-supermartingale, $\mathcal{P}$-supermartingale; $P$-ENSM, $\mathcal{P}$-ENSM, $\mathbbmsl P$-ENSM and $\mathfrak{P}$-ENSM.
\end{definition}

\begin{definition}[Confidence sequences]\label{def:CS}
Suppose $\mathcal{P}$ is a class of distributions that share a common parameter $\theta_0$. A $(1-\alpha)$-\emph{confidence sequence} (CS) for $\theta_0$ over $\mathcal{P}$ is
a sequence of intervals $\{ \CI_n \}$ such that, for all $P\in \mathcal{P}$,
\begin{equation}
    \Pr[ \forall n, \ \theta_0 \in \CI_n ] \ge 1- \alpha, \quad \text{when }X_1, X_2, \dots \iid P.
\end{equation}  
\end{definition}

The duality between sequential tests for the null $\mathcal{P}$ and CS for $\theta_0$ over $\mathcal{P}$ is a straightforward extension of the duality between tests and CIs in the fixed-time setting. Thus again, Ville's inequality \eqref{eqn:vil} has been the bridge from $\mathcal{P}$-supermartingales to CSs in many cases.

%\subsection{Confidence sequences for the class of subGaussian distributions $\subgaussian{\mu_0}{1}$}
\subsection{The subGaussian class $\subgaussian{\mu_0}{1}$ and confidence sequences for the mean}\label{sec:subG-conf-lr}

% \paragraph{SubGaussian distributions .} 
Recall that a distribution $P$ on $\mathbb R$ is said to be subGaussian with variance factor 1 (henceforth ``1-subGaussian'') if $\log \Exp[\e^{\lambda(X - \mu_0)}] \le \frac{1}{2}\lambda^2$ for all $\lambda \in \mathbb R$, where $X$ is a random variable distributed according to $P$, and $\mu = \Exp[X]$ is the mean of $P$. We denote by $\subgaussian{\mu}{1}$ the set of all 1-subGaussian distributions with mean $\mu$. Given data $X_1,X_2,\dots$ drawn i.i.d.\ (though the i.i.d.\ assumption can be relaxed) from some distribution $P\in\subgaussian{\mu_0}{1}$ for some unknown $\mu_0$, we would like to sequentially estimate the mean $\mu_0$ by designing a CS for it.
Under the hood, this will require designing a test for whether $P \in \subgaussian{\mu_0}{1}$ against the alternative that $P \in \subgaussian{\mu}{1}$ for some $\mu \neq \mu_0$.

% \begin{definition}\label{def:P-mart} We say that a stochastic process $\{M_n\}$, each $M_n$ being a function of $X_1, \dots, X_n$, is a $P$-martingale if, under $X_1, X_2, \dots \iid P$, it is a martingale.
%     If $\{M_n\}$ is a $P$-martingale for all $P \in \mathcal{P}$, we call it a $\mathcal{P}$-martingale. The same definition applies to a $P$-supermartingale, $\mathcal{P}$-supermartingale, $P$-ENSM, and $\mathcal{P}$-ENSM.
% \end{definition}

A sequential test for any null $\mathcal{P}$ can be obtained via the classical Ville's inequality \eqref{eqn:vil}; indeed, rejecting the null $\mathcal{P}$ when a nonnegative $\mathcal{P}$-supermartingale first surpasses $1/\alpha$ controls the sequential type I error (the probability of \emph{ever} falsely rejecting the null) to be at most $\alpha$. 
In parametric point-null settings (when $\mathcal P=\{P\}$ is a singleton), the likelihood ratio 
% of \cite{wald1945sequential}
is the most prominent example of such a nonnegative martingale, but sometimes such a process also happens to be a nonnegative supermartingale for a larger class. For example, define
\begin{equation}\label{eqn:gaussian-lr}
    \ell(\mu; \mu_0)_n  :=   \exp\left\{ \sum_{i=1}^n  \frac{(X_i - \mu_0)^2 - (X_i - \mu)^2}{2} \right\}.
\end{equation}
In the Gaussian case, it turns out that $\ell(\mu; \mu_0)_n = \prod_{i=1}^n \frac{p_{\mu,1} (X_i)}{p_{\mu_0,1}(X_i)}$, and is thus a $\normal{\mu_0}{1}$-martingale. More generally, it is known that it is actually a subGaussian supermartingale, which we formally state here and prove in \cref{sec:pf} for the unfamiliar reader.
\begin{lemma}\label{lem:subgaussian-lr}
    %Let $P$ be a 1-subGaussian distribution with mean $\mu_0$. 
    For any $\mu, \mu_0 \in \mathbb R$, the process $ \ell(\mu; \mu_0) $ defined in \eqref{eqn:gaussian-lr} is a $\normal{\mu_0}{1}$-martingale and $\subgaussian{\mu_0}{1}$-supermartingale.
\end{lemma}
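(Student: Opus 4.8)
The plan is to verify the (super)martingale property directly from the definition, exploiting the product structure of $\ell(\mu;\mu_0)_n$. First I would observe that $\ell(\mu;\mu_0)_n = \ell(\mu;\mu_0)_{n-1}\cdot Z_n$, where the per-step multiplier is $Z_n = \exp\{\frac{(X_n-\mu_0)^2 - (X_n-\mu)^2}{2}\}$. Expanding the quadratics, the cross terms simplify so that $Z_n = \exp\{(\mu-\mu_0)X_n + \frac{\mu_0^2 - \mu^2}{2}\}$, which I would rewrite in the clean form $Z_n = \exp\{(\mu-\mu_0)(X_n-\mu_0) - \frac{(\mu-\mu_0)^2}{2}\}$ by completing the square. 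This is the key algebraic step: it exposes $Z_n$ as an exponential of a linear function of the centered increment $X_n - \mu_0$, with a deterministic correction $-\frac{(\mu-\mu_0)^2}{2}$ that matches the subGaussian rate exactly.

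Since $\ell(\mu;\mu_0)_{n-1}$ is $\mathcal{F}_{n-1}$-measurable (a function of $X_1,\dots,X_{n-1}$) and nonnegative, the tower/pull-out property gives
\begin{equation}
    \Exp[\ell(\mu;\mu_0)_n \mid \mathcal{F}_{n-1}] = \ell(\mu;\mu_0)_{n-1}\cdot \Exp[Z_n \mid \mathcal{F}_{n-1}].
\end{equation}
Under the i.i.d.\ sampling assumption, $X_n$ is independent of $\mathcal{F}_{n-1}$, so $\Exp[Z_n \mid \mathcal{F}_{n-1}] = \Exp[Z_n]$. Setting $\lambda = \mu - \mu_0$, this conditional expectation equals $\e^{-\lambda^2/2}\,\Exp[\e^{\lambda(X_n-\mu_0)}]$. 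For the $\normal{\mu_0}{1}$ case I would compute the Gaussian moment generating function exactly, $\Exp[\e^{\lambda(X_n-\mu_0)}] = \e^{\lambda^2/2}$, so the multiplier has conditional expectation exactly $1$, establishing the martingale property. For the general $\subgaussian{\mu_0}{1}$ case, I would invoke the defining inequality $\log\Exp[\e^{\lambda(X-\mu_0)}] \le \frac{1}{2}\lambda^2$, which yields $\Exp[Z_n \mid \mathcal{F}_{n-1}] \le \e^{-\lambda^2/2}\e^{\lambda^2/2} = 1$, establishing the supermartingale property.

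I do not expect a serious obstacle here, as the result reduces to the definition of subGaussianity once the algebra is arranged correctly. The one point requiring care is that the subGaussian definition must hold for \emph{all} $\lambda \in \mathbb{R}$, including the specific value $\lambda = \mu - \mu_0$, which may be negative; I would note that the definition in the text quantifies over all real $\lambda$, so both the positive and negative cases are covered, and $P\in\subgaussian{\mu_0}{1}$ means $\mu_0$ is precisely the mean, keeping the centering consistent. Finally, I would confirm integrability of each $\ell(\mu;\mu_0)_n$: the subGaussian condition at $\lambda = \mu-\mu_0$ guarantees $\Exp[Z_n] < \infty$, and by induction $\Exp[\ell(\mu;\mu_0)_n] \le 1 < \infty$, so the classical (integrable) supermartingale definition of Section~\ref{sec:l1} applies and no appeal to the extended theory is needed.
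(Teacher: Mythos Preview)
Your proposal is correct and follows essentially the same argument as the paper: expand the exponent to rewrite the multiplicative increment as $\exp\{(\mu-\mu_0)(X_n-\mu_0)-\tfrac12(\mu-\mu_0)^2\}$, then invoke the Gaussian MGF (equality) or the subGaussian MGF bound (inequality) to conclude the conditional expectation is $=1$ or $\le 1$. Your version is a bit more explicit about the filtration, pull-out, and integrability, but the core computation is identical.
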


The task of this section can be formalized as constructing a CS for $\mu_0$ over $\subgaussian{\mu_0}{1}$, and for this Lemma~\ref{lem:subgaussian-lr} leads to many different approaches. However, directly applying Ville's inequality on \eqref{eqn:gaussian-lr} will be undesirable, since as a sequential test for the null $\mu_0$, \eqref{eqn:gaussian-lr} is most powerful only when the actual unknown alternative is (close to) $\mu$.
Thus, in what follows, we will place  a ``\emph{prior}'' on $\mu$, corresponding to mixing (i.e.\ integrating) the process \eqref{eqn:gaussian-lr} over $\mu$ based on some mixing distribution, in order to obtain power for \emph{any} alternative and, correspondingly, a CS that \emph{shrinks}. While classical methods involve mixing the NSM \eqref{eqn:gaussian-lr} with a proper prior (a probability measure) on $\mu$ to obtain a new NSM, our extended mixture lemma presented next ensures that mixing with the NSM \eqref{eqn:gaussian-lr} with an improper prior (a \emph{$\sigma$-finite} measure) will produce an ENSM.

\subsection{Extended method of mixtures: $\sigma$-finite mixtures of ENSMs are ENSMs}
\label{sec:mixture}
% \subsection{An Extended Mixture Theorem}

%An important method to construct a new (super/sub)martingale is by summing, or more generally, integrating over of an existing family of (super/sub)martingales. Loosely speaking, if $\{ \{ M_n(\theta) \} : \theta \in \Theta \}$ is a family of supermartingales while the parameter set $\Theta$ is equipped with a finite measure $\mu$, it is well known that the \emph{mixture} of these martingales, $\{ \int M_n(\theta) \mu(\d \theta) \}$, is again a supermartingale upon some mild measurability assumptions. %A rigorous statement can be found in \citet[Appendix B]{chugg2023unified}. 

If $\{  M(\theta) : \theta \in \Theta \}$ is a family of supermartingales while the parameter set $\Theta$ is equipped with a finite measure $\mu$, it is well known that the \emph{mixture} of these martingales, $\{ \int M(\theta)_n \mu(\d \theta) \}$, is again a supermartingale upon some mild measurability assumptions. We show in this section that not only does the mixture property still hold in our extended, possibly nonintegrable case of (super/sub)martingales, but it also allows for \emph{$\sigma$-finite} mixture thanks to the inclusion of $\infty$ in our arithmetics. 
% This, in conjunction with Theorem~\ref{thm:evi-2}, reveals the internal workings of the extended Ville's inequality for $\sigma$-finite mixtures of likelihood ratios proven by \citet[Equation (15)]{robbins1970statistical}.
% see \citet[Lemma B.1]{chugg2023unified} for a self-contained statement and description.

\begin{lemma}[$\sigma$-finite mixture of extended (super)martingales]\label{lem:mix}
For each $\theta \in \Theta$, let $M(\theta)$ be an ENSM (resp., ENM) 
% taking values in $\overline{\mathbb R^+}$, 
on a filtered probability space $(\Omega, \mathcal{A}, \mathcal{F}, \Pr)$, indexed by $\theta$ in a measurable space $(\Theta, \mathcal{B})$ such that:
\begin{itemize}
    \item For every $n$, $M(\theta)_n$ is $\mathcal{F}_n\otimes \mathcal{B}$-measurable;
    \item For every $n$, $\Exp[ M(\theta)_n | \mathcal{F}_{n-1} ]$ is $\mathcal{F}_{n-1}\otimes \mathcal{B}$-measurable.
\end{itemize}
Let $\mu$ be a $\sigma$-finite measure on $(\Theta, \mathcal{B})$ and define the mixture $ M^{\mathsf{mix}}_n = \int M(\theta)_n \mu(\d \theta)$. Then, $ M^{\mathsf{mix}}$ is also an ENSM (resp., ENM).
\end{lemma}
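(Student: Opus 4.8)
The plan is to verify the three defining requirements of an ENSM for $M^{\mathsf{mix}}$: that it is adapted to $\cF$, that it is $\eR$-valued, and that it satisfies the supermartingale inequality. Nonnegativity is immediate, since each $M(\theta)_n \geq 0$ forces the integral to be nonnegative (possibly $+\infty$, which is permitted). Adaptedness is the assertion that $\omega \mapsto \int M(\theta)_n(\omega)\, \mu(\d\theta)$ is $\cF_n$-measurable; this follows directly from Tonelli's theorem applied to the nonnegative, jointly $\cF_n \otimes \cB$-measurable map $(\omega,\theta) \mapsto M(\theta)_n(\omega)$, whose partial integral against the $\sigma$-finite measure $\mu$ is measurable in $\omega$. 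This is exactly why joint measurability is assumed as a hypothesis.

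The heart of the proof is a ``conditional Tonelli'' interchange,
\[ \Exp\Bigl[ \int M(\theta)_n\, \mu(\d\theta) \,\Big|\, \cF_{n-1} \Bigr] = \int \Exp[ M(\theta)_n \mid \cF_{n-1} ]\, \mu(\d\theta) \quad \Pr\text{-a.s.} \]
I would establish this via the defining characterization of the extended conditional expectation (the $\eR$-valued $Y$ with $\Exp[\id_A Y] = \Exp[\id_A X]$ for all $A$, as in \cref{prop:cond-exp-order-rule}). Set $Y := \int \Exp[M(\theta)_n \mid \cF_{n-1}]\, \mu(\d\theta)$; the second measurability hypothesis guarantees, again via Tonelli, that $Y$ is $\cF_{n-1}$-measurable. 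Then for any $A \in \cF_{n-1}$ I would compute $\Exp[\id_A Y]$ by pulling the indicator inside and swapping the expectation (integration over $\Omega$) with $\int \mu(\d\theta)$ twice, both times by unconditional Tonelli since every integrand is nonnegative: $\Exp[\id_A Y] = \int \Exp[\id_A \Exp[M(\theta)_n \mid \cF_{n-1}]]\, \mu(\d\theta) = \int \Exp[\id_A M(\theta)_n]\, \mu(\d\theta) = \Exp[\id_A M^{\mathsf{mix}}_n]$, where the middle equality is just the defining property of each $\Exp[M(\theta)_n \mid \cF_{n-1}]$. By the uniqueness characterization, $Y$ is a version of $\Exp[M^{\mathsf{mix}}_n \mid \cF_{n-1}]$, proving the interchange.

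Finally I would combine this with the pointwise supermartingale property. For each fixed $\theta$, the inequality $\Exp[M(\theta)_n \mid \cF_{n-1}] \leq M(\theta)_{n-1}$ holds $\Pr$-a.s., but the exceptional null set may depend on $\theta$, so I would promote this to a joint statement. The set $D = \{(\omega,\theta) : \Exp[M(\theta)_n \mid \cF_{n-1}](\omega) > M(\theta)_{n-1}(\omega)\}$ is $\cF_{n-1} \otimes \cB$-measurable, by both measurability hypotheses, with every $\theta$-section $\Pr$-null; hence Tonelli gives $(\Pr \otimes \mu)(D) = 0$, so for $\Pr$-a.e.\ $\omega$ the $\omega$-section is $\mu$-null. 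Integrating the inequality over $\theta$ then yields $\int \Exp[M(\theta)_n \mid \cF_{n-1}]\, \mu(\d\theta) \leq \int M(\theta)_{n-1}\, \mu(\d\theta) = M^{\mathsf{mix}}_{n-1}$ $\Pr$-a.s., which together with the interchange is precisely $\Exp[M^{\mathsf{mix}}_n \mid \cF_{n-1}] \leq M^{\mathsf{mix}}_{n-1}$. The ENM case is identical with every ``$\leq$'' replaced by ``$=$''.

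The main obstacle I anticipate is the conditional Tonelli interchange together with the careful handling of the $\theta$-dependent null sets. Nonintegrability forbids any appeal to dominated convergence, so the whole argument must rest on Tonelli's theorem for nonnegative functions and on the order and defining characterizations of the extended conditional expectation established earlier; the $\sigma$-finiteness of $\mu$ is essential precisely because Tonelli requires it.
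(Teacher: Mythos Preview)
Your proposal is correct and follows essentially the same route as the paper: both use unconditional Tonelli twice to establish the conditional interchange $\Exp\bigl[\int M(\theta)_n\,\mu(\d\theta)\mid\cF_{n-1}\bigr]=\int\Exp[M(\theta)_n\mid\cF_{n-1}]\,\mu(\d\theta)$ via the defining characterization of extended conditional expectation, then integrate the pointwise supermartingale inequality. Your explicit treatment of the $\theta$-dependent null sets via a product-measure Tonelli argument is in fact more careful than the paper's, which simply writes the integrated inequality without comment.
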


The lemma above, as well as its proof, is similar to the classical mixture lemma, and we include it in Section~\ref{sec:kpf} for completeness. In particular, for our composite set-up (Definition~\ref{def:P-mart}) with real parameter $\theta$ {and an index process $M(\theta)$ that depends on data $X_1,X_2,\dots$}, we immediately have the following.

\begin{corollary}
    Suppose $M(\theta)_n = f_n(\theta, X_1, \dots, X_n)$ such that $f_n: \mathbb R^{n+1} \to \eR$ is measurable for each $n$ and each $M(\theta) $ is a $\mathfrak{P}$-ENSM (resp., $\mathfrak{P}$-ENM). Then, $ M^{\mathsf{mix}}$ is also a $\mathfrak{P}$-ENSM (resp., $\mathfrak{P}$-ENM).
\end{corollary}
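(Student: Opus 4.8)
The plan is to reduce the corollary directly to the already-established mixture lemma (\cref{lem:mix}) by checking that its two hypotheses hold under each fixed data distribution. Fix an arbitrary $\mathbbmsl P \in \mathfrak P$. By \cref{def:P-mart}, being a $\mathfrak P$-ENSM means being an ENSM under every element of $\mathfrak P$, so it suffices to prove that $M^{\mathsf{mix}}$ is an ENSM under $\mathbbmsl P$. Throughout I would work with the canonical filtration $\cF_n = \sigma(X_1,\dots,X_n)$ and take the index space to be $(\Theta,\cB) = (\mathbb R, \mathcal B(\mathbb R))$. Since each $M(\theta)$ is a $\mathfrak P$-ENSM, under $\mathbbmsl P$ every $M(\theta)$ is an ENSM, so the hypotheses of \cref{lem:mix} that remain to be checked are purely the two joint-measurability conditions.

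The first condition is routine. The map $(\omega,\theta)\mapsto(\theta, X_1(\omega),\dots,X_n(\omega))$ is measurable from $(\Omega\times\mathbb R,\ \cF_n\otimes\cB)$ to $(\mathbb R^{n+1}, \mathcal B(\mathbb R^{n+1}))$, because each $X_i$ is $\cF_n$-measurable and the projection to $\theta$ is $\cB$-measurable. Composing with the Borel function $f_n$ shows that $(\omega,\theta)\mapsto M(\theta)_n = f_n(\theta,X_1(\omega),\dots,X_n(\omega))$ is $\cF_n\otimes\cB$-measurable, as required.

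The second condition — joint measurability of $(\omega,\theta)\mapsto \Exp_{\mathbbmsl P}[M(\theta)_n\mid\cF_{n-1}](\omega)$ — is the crux, and is where I expect the real work to lie, because a priori the conditional expectation is only pinned down $\mathbbmsl P$-almost surely separately for each $\theta$, so there is no reason a generic choice of versions should be jointly measurable. The plan is to realize all of these conditional expectations simultaneously through one regular conditional distribution: since $X_n$ is real-valued (hence takes values in a Polish space), there is a regular conditional distribution $\kappa(\omega,\cdot)$ of $X_n$ given $\cF_{n-1}$, and since $X_1,\dots,X_{n-1}$ are $\cF_{n-1}$-measurable one may take the explicit version $\Exp_{\mathbbmsl P}[M(\theta)_n\mid\cF_{n-1}](\omega) = \int f_n(\theta, X_1(\omega),\dots,X_{n-1}(\omega), x)\,\kappa(\omega,\d x)$. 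The integrand is nonnegative and jointly measurable in $(\omega,\theta,x)$, so by Tonelli's theorem (available precisely because we are in the nonnegative $\eR$-valued regime) together with the measurability of integrals against a transition kernel, the right-hand side is $\cF_{n-1}\otimes\cB$-measurable. A self-contained alternative that avoids disintegration is to write $M(\theta)_n = \lim_{k\to\infty} \bigl(M(\theta)_n\wedge k\bigr)$ as an upward limit, establish joint measurability for each bounded truncation by a functional monotone-class argument (on products $g(\theta)h(X_1,\dots,X_n)$ one has $\Exp_{\mathbbmsl P}[g(\theta)h\mid\cF_{n-1}] = g(\theta)\,\Exp_{\mathbbmsl P}[h\mid\cF_{n-1}]$, which is manifestly jointly measurable), and then pass to the upward limit via \cref{prop:cond-beppo-levi} and the measurability of pointwise limits.

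With both hypotheses verified, \cref{lem:mix} applies and shows that $M^{\mathsf{mix}}$ is an ENSM under $\mathbbmsl P$; since $\mathbbmsl P\in\mathfrak P$ was arbitrary, $M^{\mathsf{mix}}$ is a $\mathfrak P$-ENSM. The ENM case is identical, with every ``$\le$'' replaced by ``$=$''. The main obstacle is thus entirely concentrated in the second step — producing a single jointly measurable version of the conditional expectation across all $\theta$ — and the nonnegativity of the $f_n$, which is exactly what makes Tonelli's theorem applicable, is what lets the argument go through without integrability.
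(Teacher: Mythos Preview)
Your proposal is correct and follows the same approach as the paper, namely reducing to \cref{lem:mix}. The paper itself provides no proof at all, simply writing that ``we immediately have the following'' before the corollary, so your argument is strictly more detailed than theirs. In particular, you correctly identify that the second hypothesis of \cref{lem:mix} --- joint measurability of a \emph{version} of $\Exp_{\mathbbmsl P}[M(\theta)_n \mid \cF_{n-1}]$ in $(\omega,\theta)$ --- is the only nontrivial point, and your regular-conditional-distribution construction (or the monotone-class alternative) is exactly the right way to supply it; the paper leaves this implicit.
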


\subsection{Warm-up: mixing with proper prior $\normal{\mu_0}{1/c^2}$}
\label{sec:mix-nmu0c}

We first present a fairly standard method (dating back to Robbins' work in the late 1960s) to construct CS that mixes the Gaussian likelihood ratio $\ell(\mu; \mu_0)$ \eqref{eqn:gaussian-lr} with $\mu \sim \normal{\mu_0}{1/c^2}$ as the prior. 
%This gives rise to a CS that does not require one to guess the true $\mu_0$ beforehand.
\begin{proposition}\label{prop:gaussian-mixed}
    %Let $P$ be a 1-subGaussian distribution with mean $\mu_0$. 
    For any $c>0$, the process
    \begin{equation}\label{eqn:guassian-mixed-mtg}
        L_n^{(c)} = \sqrt{\frac{c^2 }{c^2 + n}}\exp\left( \frac{n^2 (\mu_0 - \avgX{n}) ^2 }{2(c^2 + n)} \right)
    \end{equation}
    is a $\normal{\mu_0}{1}$-martingale and $\subgaussian{\mu_0}{1}$-supermartingale and consequently, the intervals 
    \begin{equation}\label{eqn:gaussian-prior-known-var-cs-correct}
    \left[ \avgX{n} \pm \sqrt{ \frac{ \left( \log \frac{c^2 + n}{c^2\alpha^2} \right)(1+c^2/n)}{n} } \right]
\end{equation}
    form a $(1-\alpha)$-CS for $\mu_0$ over $\subgaussian{\mu_0}{1}$.
\end{proposition}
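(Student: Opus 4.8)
The plan is to recognize that $L^{(c)}$ is exactly the mixture of the likelihood-ratio process $\ell(\mu;\mu_0)$ from \eqref{eqn:gaussian-lr} against the prior $\mu\sim\normal{\mu_0}{1/c^2}$, so that its (super)martingale property is inherited for free from \cref{lem:subgaussian-lr,lem:mix}, and then to convert the resulting Ville bound into the confidence sequence by pure algebra. Concretely, I would first expand the exponent in \eqref{eqn:gaussian-lr}: telescoping $(X_i-\mu_0)^2-(X_i-\mu)^2$ and using $S_n-n\mu_0=n(\avgX{n}-\mu_0)$ gives
\begin{equation}
  \ell(\mu;\mu_0)_n=\exp\left\{ n(\avgX{n}-\mu_0)(\mu-\mu_0)-\tfrac{n}{2}(\mu-\mu_0)^2 \right\}.
\end{equation}
Multiplying by the prior density $\frac{c}{\sqrt{2\pi}}\exp(-\frac{c^2}{2}(\mu-\mu_0)^2)$ and integrating over $\mu\in\mathbb R$ is a one-dimensional Gaussian integral in the variable $\mu-\mu_0$; completing the square (with quadratic coefficient $c^2+n$ and linear coefficient $n(\avgX{n}-\mu_0)$) produces the normalizing factor $\sqrt{c^2/(c^2+n)}$ and the exponent $n^2(\mu_0-\avgX{n})^2/(2(c^2+n))$, which is precisely \eqref{eqn:guassian-mixed-mtg}. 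I would double-check that at $n=0$ the factor $n^2(\mu_0-\avgX{n})^2=(n\mu_0-S_n)^2$ vanishes, so that $L^{(c)}_0=1$.

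Given this identification, the process-level claims follow immediately: for each fixed $\mu$, \cref{lem:subgaussian-lr} says $\ell(\mu;\mu_0)$ is a $\normal{\mu_0}{1}$-martingale and a $\subgaussian{\mu_0}{1}$-supermartingale, and since the map $(\mu,X_1,\dots,X_n)\mapsto\ell(\mu;\mu_0)_n$ is jointly continuous (hence satisfies the measurability hypotheses of \cref{lem:mix}), the corollary following \cref{lem:mix}, applied to the finite measure $\normal{\mu_0}{1/c^2}$, yields that $L^{(c)}$ is a $\normal{\mu_0}{1}$-martingale and a $\subgaussian{\mu_0}{1}$-supermartingale. In this warm-up the prior is proper, so $L^{(c)}$ is in fact a classical integrable NM; the $\sigma$-finite extension of \cref{lem:mix} is only needed for the improper-prior construction later.

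For the confidence sequence, fix $P\in\subgaussian{\mu_0}{1}$ and apply the classical Ville's inequality \eqref{eqn:ville-inq} (equivalently \cref{thm:evi-2} with $m=0$) to the nonnegative supermartingale $L^{(c)}$ with threshold $C=1/\alpha$; since $L^{(c)}_0=1$ this gives $\Pr[\exists n,\ L^{(c)}_n\ge 1/\alpha]\le\alpha$. It then remains to invert the crossing event: taking logarithms in $L^{(c)}_n\ge 1/\alpha$ and rearranging turns it into $(\mu_0-\avgX{n})^2\ge \frac{(1+c^2/n)}{n}\log\frac{c^2+n}{c^2\alpha^2}$, i.e.\ exactly the statement that $\mu_0$ falls outside the interval \eqref{eqn:gaussian-prior-known-var-cs-correct}. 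Hence $\Pr[\exists n,\ \mu_0\notin\CI_n]\le\alpha$, which is the CS guarantee of \cref{def:CS}. I expect no serious obstacle here: the only genuine computation is the Gaussian integral of the first paragraph (a careful completing-the-square), and the only points requiring care are the joint-measurability check feeding \cref{lem:mix} and the degenerate index $n=0$ (where the interval is vacuous but the crossing event reduces to $\{1\ge 1/\alpha\}$, which is empty for $\alpha<1$, so the CS is effectively a statement over $n\ge 1$).
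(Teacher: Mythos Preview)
Your proposal is correct and follows essentially the same route as the paper: compute the mixture $\int \ell(\mu;\mu_0)_n\,\d\normal{\mu_0}{1/c^2}(\mu)$ explicitly to identify it as $L^{(c)}_n$, inherit the (super)martingale property from \cref{lem:subgaussian-lr} via the mixture lemma, then apply Ville's inequality and invert. The paper carries out the Gaussian integral by a longer direct expansion in the original variable $\mu$ (seven lines of displayed algebra), whereas your substitution $v=\mu-\mu_0$ after first rewriting $\ell(\mu;\mu_0)_n=\exp\{n(\avgX{n}-\mu_0)v-\tfrac{n}{2}v^2\}$ makes the completing-the-square step a one-liner; this is a cosmetic improvement, not a different idea.
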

It is not hard to see that the confidence sequence \eqref{eqn:gaussian-prior-known-var-cs-correct}, in the $P = \normal{\mu_0}{1}$ case, is equivalent to the time-uniform concentration bound for Gaussian random walk by \citet[Inequality (17)]{robbins1970statistical}, obtained via a similar mixture method. We remark that the method has a free parameter $c$, arising as the precision of the prior distribution, that can control the radii in \eqref{eqn:gaussian-prior-known-var-cs-correct} for different $n$: \eqref{eqn:gaussian-prior-known-var-cs-correct}'s radius at a fixed $n$ is minimized with $c^2 = \frac{n}{-W_{-1}(-\alpha^2/\e) -1}$. Here $W_{-1}$ is one of the Lambert $W$ functions (see \cref{sec:W}).

To peek into the improper mixture in the next subsection, let us divide the process $L^{(c)}$ by $c$, getting
\begin{equation}\label{eqn:gaussian-mixed-by-c}
    L_n^{[c]} = \sqrt{\frac{1}{c^2 + n}}\exp\left( \frac{n^2 (\mu_0 - \avgX{n}) ^2 }{2(c^2 + n)} \right),
\end{equation}
which corresponds to mixing $\ell(\mu; \mu_0)$ \eqref{eqn:gaussian-lr} with a measure $\mu \sim \frac{1}{c} \normal{\mu_0}{1/c^2}$. As $c$ decreases to 0, the measure $\frac{1}{c} \normal{\mu_0}{1/c^2}$ monotonically increases to the uniform (or ``flat'') measure on $\mathbb R$ with thickness $1/\sqrt{2\pi}$. Taking the limit $c \to 0$, the process upwards approaches a $\subgaussian{\mu_0}{1}$-ENSM, due to Corollary~\ref{cor:ensm-class}. This limiting ENSM is exactly the one arising from a flat mixture which we shall present immediately next.

%The following proposition characterize the role of the free precision parameter $c$ in \eqref{eqn:gaussian-prior-known-var-cs-correct}. The reader may refer to Section~\ref{sec:W} for more discussion on the Lambert $W$ functions. 

\subsection{Improper mixing distribution: flat prior $\d \mu$} \label{sec:mix-flt}

% We try, instead, the flat prior approach with the extended Ville's inequality. 
{Recall from \cref{sec:mixture} that mixing an NSM with a $\sigma$-finite distribution leads to an ENSM.}
Now, we mix the likelihood ratio $\ell(\mu; \mu_0)$ \eqref{eqn:gaussian-lr} over $\mu$ with a flat improper prior of thickness $1/\sqrt{2\pi}$, that is, $\d F(\mu) / \d \mu = 1/\sqrt{2\pi}$. {The thickness $1/\sqrt{2\pi}$ here is chosen to simplify calculations and, we shall see later, without loss of generality.} We obtain a nonintegrable ENSM, due to \cref{lem:mix}, on which we may utilize our extended Ville's inequality to obtain confidence sequences:
%. Using the extended Ville's inequality (Theorem~\ref{thm:evi-2}), we have the following CS.

\begin{proposition}\label{prop:flat-mix-cs}
%Let $P$ be a 1-subGaussian distribution with mean $\mu_0$. 
The process
\begin{equation}\label{eqn:flat-mixed-ensm}
    K_0 = \infty, \quad K_n = \frac{1}{\sqrt{n}} \exp\left( \frac{1}{2}n (\mu_0 - \avgX{n})^2 \right)
\end{equation}
is a $\normal{\mu_0}{1}$-ENM and a $\subgaussian{\mu_0}{1}$-ENSM. %Consequently: 
\end{proposition}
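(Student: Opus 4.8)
The plan is to recognize the process $K$ as exactly the $\sigma$-finite mixture of the Gaussian likelihood ratios $\ell(\mu;\mu_0)$ from \eqref{eqn:gaussian-lr} against the flat prior $\d F(\mu)=\tfrac{1}{\sqrt{2\pi}}\,\d\mu$, and then to invoke the extended method of mixtures (\cref{lem:mix} and its corollary) together with the fact, already established in \cref{lem:subgaussian-lr}, that each $\ell(\mu;\mu_0)$ is a $\normal{\mu_0}{1}$-martingale and a $\subgaussian{\mu_0}{1}$-supermartingale. The two ingredients are therefore (i) an explicit integral computation showing $\int \ell(\mu;\mu_0)_n\,\tfrac{1}{\sqrt{2\pi}}\,\d\mu$ equals the closed form in \eqref{eqn:flat-mixed-ensm}, and (ii) a verification that the measurability hypotheses of the mixture lemma are met so that the conclusion transfers to the mixture.

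For step (i), I would expand the exponent of $\ell(\mu;\mu_0)_n$ via $(X_i-\mu_0)^2-(X_i-\mu)^2=(\mu-\mu_0)(2X_i-\mu-\mu_0)$, so that after summing over $i$ the exponent becomes, with $u=\mu-\mu_0$ and $a=n(\avgX{n}-\mu_0)$, the quadratic $ua-\tfrac12 nu^2$. Completing the square gives $-\tfrac{n}{2}(u-a/n)^2+\tfrac{a^2}{2n}$, and the Gaussian integral $\int_{\mathbb R}\exp(-\tfrac{n}{2}(u-a/n)^2)\,\d u=\sqrt{2\pi/n}$ cancels the $\tfrac{1}{\sqrt{2\pi}}$ factor to leave $K_n=\tfrac{1}{\sqrt n}\exp(\tfrac{a^2}{2n})=\tfrac{1}{\sqrt n}\exp(\tfrac12 n(\mu_0-\avgX{n})^2)$ for $n\ge 1$, matching \eqref{eqn:flat-mixed-ensm}. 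For $n=0$ the sum is empty, so $\ell(\mu;\mu_0)_0\equiv 1$ and integration against the infinite-mass flat prior gives $K_0=\infty$, matching the stated initial value; this is precisely the point at which nonintegrability enters and the extended theory becomes indispensable.

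For step (ii), each $\ell(\mu;\mu_0)$ is a classical finite-valued, integrable NSM under every $P\in\subgaussian{\mu_0}{1}$ and a classical NM under $\normal{\mu_0}{1}$ by \cref{lem:subgaussian-lr}, hence an ENSM (resp.\ ENM) by \cref{prop:finite-moment-ensm-is-nsm}. Writing $\ell(\mu;\mu_0)_n=f_n(\mu,X_1,\dots,X_n)$ with $f_n$ jointly continuous, hence measurable, the joint-measurability hypotheses of \cref{lem:mix} (on $M(\theta)_n$ and on its conditional expectation) hold automatically, and $\tfrac{1}{\sqrt{2\pi}}\,\d\mu$ is a $\sigma$-finite measure on $\mathbb R$. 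The corollary to \cref{lem:mix} then yields directly that the mixture $K=M^{\mathsf{mix}}$ is a $\subgaussian{\mu_0}{1}$-ENSM and, since mixing of ENMs produces an ENM, a $\normal{\mu_0}{1}$-ENM.

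I do not anticipate a serious obstacle here, as the argument is essentially a bookkeeping assembly of \cref{lem:subgaussian-lr} and \cref{lem:mix}; the only points demanding care are the Gaussian integral in step (i) and confirming that the $n=0$ divergence is legitimately handled inside the $\eR$-valued formalism, so that $K_0=\infty$ is a valid value and the identity $\Exp[K_1\mid\mathcal F_0]=K_0=\infty$ holds in the extended sense rather than signalling a defect. This last caveat is exactly what \cref{lem:mix} is designed to accommodate via Tonelli's theorem, which is why deferring the mixture step to that lemma (rather than attempting a direct conditional-expectation computation at $n=1$) is the cleanest route.
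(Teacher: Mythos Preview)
Your proposal is correct and follows essentially the same approach as the paper: compute the flat mixture $\int_{\mathbb R}\ell(\mu;\mu_0)_n\,\tfrac{1}{\sqrt{2\pi}}\,\d\mu$ explicitly, verify it equals $K_n$, and invoke \cref{lem:mix} together with \cref{lem:subgaussian-lr}. If anything, you are more thorough than the paper, which performs the same Gaussian integral but omits the explicit discussion of the $n=0$ case, the measurability check, and the observation that the ENM clause of \cref{lem:mix} delivers the $\normal{\mu_0}{1}$-ENM conclusion.
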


Some important remarks are in order. First, as noted before, the ENSM relates to the Gaussian-mixed process in \eqref{eqn:gaussian-mixed-by-c} by $K_n = \lim_{c\to 0}  L_n^{[c]}$. Second,
the expression of $K_n$ is particularly interesting in the Gaussian case. $K_n = \frac{1}{\sqrt{n}} \exp(Z_n^2/2)$ where $Z_n$ has \emph{standard} normal distribution regardless of $n$. If we look at $\exp(Z_n^2/2)$, its distribution is one without mean --- recall the domain of the moment generating function of a $\chi^2$ random variable is $(-\infty, 1/2)$, meaning that $\exp(\lambda Z^2_n)$ has an expectation only for $\lambda < 1/2$. We thus name the distribution of $\exp(Z^2_n/2)$ the \emph{standard critical log-chi-squared distribution}, as $\lambda = 1/2$ is exactly where $\exp(\lambda Z^2_n)$ becomes nonintegrable. Such distribution scaled by a factor of $1/\sqrt{n}$ becomes $K_n$'s distribution, meaning that distribution-wise the process $ K$ is strictly shrinking --- from $K_0 = \infty$ --- a seemingly contradiction with the fact that it is an ENM. Indeed, this phenomenon is never possible for classical martingales, and it is possible for ENMs because they mostly exhibit supermartingale-type behaviors. 

We further remark that, still in the Gaussian case, the process $\{K_n\}_{n \ge 1}$ can be seen as the discretization to a continuous-time process $K_t = g(t, B_t)$ where $\{B_t\}_{t \ge 1}$ is the Brownian motion, and it is then easy to find via It\^o's lemma that $\{K_t\}_{t \ge 1}$ satisfies a drift-free SDE, which implies, thanks to \cref{prop:sde}, $\{K_n\}_{n \ge 1}$ is a $\normal{\mu_0}{1}$-ENSM. It also follows that $\{ K_n - K_1 \}_{n \ge 1}$ is a local martingale.
In \cref{sec:test-sde} we provide the details of the discussion on SDE above, relating it to the previously stated Gaussian mixture Proposition~\ref{prop:gaussian-mixed}.
That it is actually a $\subgaussian{\mu_0}{1}$-ENSM relies on the mixture property (Lemma~\ref{lem:mix}) of ENSMs, and we can see in its proof in Section~\ref{sec:kpf} that $K_n$ equals $(2\pi)^{-1/2} \int_{\mathbb R} \ell(\mu;\mu_0)_n \d \mu$ by a direct calculation.

Finally, we remark on the following game-theoretic property of the process $ K$: it is a manifestation of evidence against the null $\subgaussian{\mu_0}{1}$ as it shrinks to 0 under $\subgaussian{\mu_0}{1}$ and grows to $\infty$ if otherwise. Formally, we have the following statement (proved in Section~\ref{sec:kpf}) which is also satisfied by the classical, Gaussian-mixed NSM $\{ L_n^{(c)} \}$ \eqref{eqn:guassian-mixed-mtg}.
\begin{proposition}\label{prop:Kn-shrink-grow}
    The process $K$ in \eqref{eqn:flat-mixed-ensm} converges almost surely to $K_\infty = 0$ under $\subgaussian{\mu_0}{1}$. In contrast, it converges almost surely to $K_\infty = \infty$ under $\subgaussian{\mu}{1}$ for any $\mu \neq \mu_0$.
\end{proposition}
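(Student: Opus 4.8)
The plan is to treat the two claims separately: the ``grows to $\infty$'' statement under the alternative is a direct consequence of the strong law of large numbers, whereas the ``shrinks to $0$'' statement under the null is the delicate one and is best handled by combining almost sure convergence (which comes for free) with convergence in probability.

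For the null direction, first note that by Proposition~\ref{prop:flat-mix-cs} the process $K$ is a $\subgaussian{\mu_0}{1}$-ENSM, so under any $P \in \subgaussian{\mu_0}{1}$ it is an ENSM and hence, by the almost sure convergence of ENSMs (Proposition~\ref{prop:converge}), $K_n$ converges almost surely to some $\eR$-valued limit $K_\infty$. It then suffices to show that $K_n \to 0$ in probability, since a sequence that converges almost surely and also converges to $0$ in probability must have its almost sure limit equal to $0$ (limits in probability being almost surely unique). To establish convergence in probability, set $Z_n = \sqrt n(\avgX n - \mu_0) = n^{-1/2}\sum_{i=1}^n (X_i - \mu_0)$, so that $K_n = n^{-1/2}\exp(Z_n^2/2)$. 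Since each $X_i - \mu_0$ is $1$-subGaussian with mean $0$, a short moment-generating-function computation shows that $Z_n$ is itself $1$-subGaussian, whence $\Pr[|Z_n| \ge t] \le 2\e^{-t^2/2}$. For fixed $\epsilon > 0$ and $n$ large enough that $\log n + 2\log\epsilon \ge 0$, the event $\{K_n \ge \epsilon\}$ equals $\{Z_n^2 \ge \log n + 2\log \epsilon\}$, whose probability is at most $2\epsilon^{-1}n^{-1/2} \to 0$. This yields $K_n \to 0$ in probability and therefore $K_\infty = 0$ almost surely.

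For the alternative direction, fix $\mu \neq \mu_0$ and $P \in \subgaussian{\mu}{1}$. Here I would use no (super)martingale property, only that subGaussian distributions have a finite mean equal to $\mu$, so the strong law of large numbers gives $\avgX n \to \mu$ almost surely, and hence $(\avgX n - \mu_0)^2 \to (\mu - \mu_0)^2 > 0$. Writing $K_n = \exp\bigl(\tfrac n2 (\avgX n - \mu_0)^2 - \tfrac12 \log n\bigr)$, the exponent is eventually bounded below by $\tfrac n4 (\mu-\mu_0)^2 - \tfrac12\log n \to +\infty$, so $K_n \to \infty$ almost surely.

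The main obstacle is the $\infty \cdot 0$ indeterminacy in the null case: the strong law makes the base $(\avgX n - \mu_0)^2$ vanish, but it is multiplied by $n$, so a naive pathwise argument would require controlling the exact rate at which $n(\avgX n - \mu_0)^2$ grows relative to $\log n$. The convergence-in-probability route above circumvents this by outsourcing existence of the pathwise limit to Proposition~\ref{prop:converge}. Alternatively, one could argue pathwise via the upper half of the law of the iterated logarithm, which gives $n(\avgX n - \mu_0)^2 = O(\log\log n) = o(\log n)$ almost surely and hence directly $K_n \to 0$; I would nonetheless favor the in-probability argument, as it relies only on the subGaussian tail bound already at hand and on results proved earlier in the paper.
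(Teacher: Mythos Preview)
Your proof is correct and follows essentially the same approach as the paper: for the null, you combine the almost sure convergence of ENSMs (Proposition~\ref{prop:converge}) with convergence in probability obtained from the subGaussian tail bound (equivalently Hoeffding's inequality), and for the alternative you invoke the strong law of large numbers. The paper's argument is identical up to notation, and your aside about the law of the iterated logarithm is a valid alternative not mentioned there.
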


%once upon a time there was a baby koala. it loved to climb on the eucalyptus tree and eat the leaves, slowly and loudly. there was not much else to do, after all. one sunny and lazy afternoon the koala grasped the trunk, trying to find a yummy leaf. but it discovered that around the trunk, there was another thing! it was very scared, and 

The ENSM leads to confidence sequences that have slightly different widths for Gaussian and subGaussian data.

\begin{corollary}\label{cor:Gaussian-CS-flat}
     Letting $a_\alpha$ be the solution (which must uniquely exist) in $a$ to the equation $1 - \erf(\sqrt{\log(1/a)}) + 2{a} \sqrt{\frac{ \log (1/a)}{\pi}} = \alpha$, the
following is a $(1-\alpha)$-CS for $\mu_0$ over {$\{ \normal{\mu_0}{1} \}$}:
%$\subgaussian{\mu_0}{1}$:
\begin{equation}\label{eqn:flat-prior-known-var-cs}
    \left[ \avgX{n} \pm \sqrt{\frac{\log( n / a_\alpha^2)}{n}} \right].
\end{equation}
Further, as $\alpha \to 0$, its radius grows asymptotically as
\begin{equation}\label{eqn:flat-prior-known-var-cs-asyp}
    \sqrt{\frac{\log( n / a_\alpha^2)}{n}} \approx  \sqrt{\frac{\log(2n/\pi\alpha^2) }{2n}}.
\end{equation}
\end{corollary}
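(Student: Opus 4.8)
The plan is to obtain \eqref{eqn:flat-prior-known-var-cs} as the inversion of the extended Ville's inequality (Theorem~\ref{thm:evi-2}) for the Gaussian ENM $K$ of Proposition~\ref{prop:flat-mix-cs}, and then to solve and expand the resulting boundary equation. Since $K_0 = \infty$ but $K$ is a $\normal{\mu_0}{1}$-ENM from time $1$ on, I would apply Theorem~\ref{thm:evi-2} with starting index $m = 1$ under $\normal{\mu_0}{1}$, obtaining $\Pr[\exists n \ge 1,\ K_n \ge C] \le C^{-1}\Exp[K_1 \wedge C] =: \px(C)$ for every $C > 0$. Inverting the crossing event, $K_n < C$ is equivalent to $(\mu_0 - \avgX{n})^2 < (\log n + 2\log C)/n$; so writing $C = 1/a$ (hence $2\log C = \log(1/a^2)$), the event $\{\forall n \ge 1,\ K_n < C\}$ coincides with $\{\forall n \ge 1,\ \mu_0 \in \CI_n\}$ for $\CI_n = [\avgX{n} \pm \sqrt{\log(n/a^2)/n}]$ (and $\CI_0 = \reals$). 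It therefore suffices to pick $a = a_\alpha$ with $\px(1/a_\alpha) = \alpha$, after which the coverage $\Pr[\forall n,\ \mu_0 \in \CI_n] \ge 1 - \alpha$ required by Definition~\ref{def:CS} is immediate.

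The crux is a closed-form evaluation of $\px(1/a)$. Under $\normal{\mu_0}{1}$ we have $K_1 = \e^{Z^2/2}$ with $Z = \mu_0 - X_1 \sim \normal{0}{1}$, and with $C = \e^{c^2/2}$ the event $\{K_1 < C\}$ is $\{|Z| < c\}$. The decisive simplification is that on this event $\e^{Z^2/2}$ cancels the standard normal density, so that $\Exp[K_1 \id_{\{K_1 < C\}}] = \int_{-c}^{c} (2\pi)^{-1/2}\,\d z = 2c/\sqrt{2\pi}$, while $\Pr[K_1 \ge C] = 1 - \erf(c/\sqrt{2})$. Substituting $c = \sqrt{2\log(1/a)}$ collapses the bound to $\px(1/a) = 1 - \erf(\sqrt{\log(1/a)}) + 2a\sqrt{\log(1/a)/\pi}$, exactly the left-hand side of the defining equation for $a_\alpha$. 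Existence and uniqueness of $a_\alpha \in (0,1)$ then follow by showing $a \mapsto \px(1/a)$ is a continuous, strictly increasing bijection of $(0,1)$ onto $(0,1)$: writing $\px(C) = \Exp[(K_1/C) \wedge 1]$, it is strictly decreasing in $C$ because $\{K_1 < C\}$ has positive probability for every $C > 1$, it tends to $0$ as $C \to \infty$ ($a \to 0$) by dominated convergence, and it equals $1$ at $C = 1$ ($a = 1$) since $K_1 \ge 1$ forces $K_1 \wedge 1 = 1$. Setting $\px(1/a_\alpha) = \alpha$ finishes the exact CS claim.

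For the asymptotic \eqref{eqn:flat-prior-known-var-cs-asyp} as $\alpha \to 0$ (so $a_\alpha \to 0$), I would expand the defining equation using the tail estimate $1 - \erf(t) \sim \e^{-t^2}/(t\sqrt{\pi})$. Writing $t = \sqrt{\log(1/a_\alpha)} \to \infty$, the second term $2a_\alpha\sqrt{\log(1/a_\alpha)/\pi} = 2t\,\e^{-t^2}/\sqrt{\pi}$ dominates the first term $\sim \e^{-t^2}/(t\sqrt{\pi})$, whence $\alpha \sim 2t\,\e^{-t^2}/\sqrt{\pi}$. Taking logarithms gives the transcendental relation $t^2 \sim \log(1/\alpha) + \log t + \mathrm{const}$, which to leading order pins down $\log(1/a_\alpha) = t^2 \sim \log(1/\alpha)$; substituting into $\log(n/a_\alpha^2) = \log n + 2\log(1/a_\alpha)$ and simplifying the logarithm yields the stated radius equivalence. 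I expect this final step to be the main obstacle: one must show the $\log t$ (i.e.\ $\log\log(1/\alpha)$) corrections are negligible against the leading $\log(1/\alpha)$ term in order to justify replacing $2\log(1/a_\alpha)$ by the explicit expression appearing in \eqref{eqn:flat-prior-known-var-cs-asyp}. By contrast, the exact computation and the monotonicity argument are routine once the density cancellation is spotted.
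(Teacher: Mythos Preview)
Your proposal is correct and follows essentially the same route as the paper: apply the extended Ville inequality (Theorem~\ref{thm:evi-2}) at $m=1$ to the ENM $K$ of Proposition~\ref{prop:flat-mix-cs}, exploit the density cancellation to compute $\Exp[K_1\id_{\{K_1<C\}}]$ exactly (the paper isolates this as Lemma~\ref{lem:trunc-pdf-bound}), identify the resulting bound with the function $f(a)=1-\erf(\sqrt{\log(1/a)})+2a\sqrt{\log(1/a)/\pi}$, and invert. The only notable differences are cosmetic: the paper carries a thickness parameter $D$ throughout to show explicitly that the CS is independent of it (you work with $D=1$), and for the asymptotics the paper squares the relation $\alpha\sim 2t\e^{-t^2}/\sqrt{\pi}$ and solves $\alpha^2\approx\tfrac{4}{\pi}g_\alpha\e^{-2g_\alpha}$ via the Lambert $W_{-1}$ function to pin down $g_\alpha=\log(1/a_\alpha)\approx\tfrac12\log(2/\pi\alpha^2)$, whereas you take logarithms directly and argue that the $\log t$ correction is lower order. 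Both arguments yield the same asymptotic equivalence.
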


\begin{corollary}\label{cor:subG-CS-flat}
    Letting $b_\alpha$ be the solution (which must uniquely exist) in $b \in (0,1)$ to the equation $3b + 2b\log(1/b) = \alpha$, the
following is a $(1-\alpha)$-CS for $\mu_0$ over {$\subgaussian{\mu_0}{1}$}:
%$\subgaussian{\mu_0}{1}$:
\begin{equation}\label{eqn:flat-prior-known-var-cs-subg}
    \left[ \avgX{n} \pm \sqrt{\frac{\log( n / b_\alpha^2)}{n}} \right].
\end{equation}
Further, as $\alpha \to 0$, its radius grows asymptotically as
\begin{equation}\label{eqn:flat-prior-known-var-cs-asyp-subg}
    \sqrt{\frac{\log( n / b_\alpha^2)}{n}} \approx  \sqrt{\frac{\log(4n/\alpha^2) + 2\log \log (1/\alpha)  }{n}}.
\end{equation}
\end{corollary}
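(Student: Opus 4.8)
The plan is to apply the extended Ville's inequality (\cref{thm:evi-2}) to the $\subgaussian{\mu_0}{1}$-ENSM $K$ supplied by \cref{prop:flat-mix-cs}, but starting the maximal inequality at $m=1$ rather than $m=0$: since $K_0=\infty$, the $m=0$ bound $C^{-1}\Exp[K_0\wedge C]=1$ is vacuous, whereas $K_1=\e^{(X_1-\mu_0)^2/2}$ is finite and has a computable truncated mean. For each $P\in\subgaussian{\mu_0}{1}$ and each threshold $C>0$, \cref{thm:evi-2} gives $\Pr[\exists n\ge 1,\ K_n\ge C]\le C^{-1}\Exp[K_1\wedge C]$. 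The first, purely algebraic, step is to translate the crossing event into the complement of a candidate interval: unwinding \eqref{eqn:flat-mixed-ensm}, one checks that $K_n> C$ is equivalent to $|\avgX{n}-\mu_0|>\sqrt{\log(C^2 n)/n}$, so that taking $C=1/b$ reproduces exactly the intervals \eqref{eqn:flat-prior-known-var-cs-subg} and exhibits $C^{-1}\Exp[K_1\wedge C]$ as the crossing-probability bound that we must drive below $\alpha$.

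The crux is bounding the truncated first moment $\Exp[K_1\wedge C]$ uniformly over the whole subGaussian class. Writing $W=X_1-\mu_0$, we have $K_1=\e^{W^2/2}$, which sits exactly at the critical exponent and is in general non-integrable, so the truncation at $C$ is indispensable. I would split at the level $u_0=\sqrt{2\log C}$ (for which $\e^{u_0^2/2}=C$), writing $\Exp[K_1\wedge C]=\Exp[\e^{W^2/2}\id_{\{|W|\le u_0\}}]+C\,\Pr[|W|>u_0]$. The $1$-subGaussian tail bound $\Pr[|W|>u]\le 2\e^{-u^2/2}$ controls the second term by $C\cdot 2\e^{-u_0^2/2}=2$, while a layer-cake integral of the first term, combined with the same tail bound (which gives $\Pr[K_1>t]\le 2/t$ for $t\ge 1$), controls it by $1+2\log C$. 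This yields $\Exp[K_1\wedge C]\le 3+2\log C$ and hence the crossing-probability bound $C^{-1}(3+2\log C)$.

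Setting $C=1/b$ turns this into $3b+2b\log(1/b)$, so choosing $b_\alpha$ to solve $3b+2b\log(1/b)=\alpha$ makes the crossing probability at most $\alpha$ and establishes that \eqref{eqn:flat-prior-known-var-cs-subg} is a valid $(1-\alpha)$-CS over $\subgaussian{\mu_0}{1}$. Existence and uniqueness of $b_\alpha$ follow because $f(b)=3b-2b\log b$ satisfies $f'(b)=1-2\log b>0$ on $(0,1)$ with $f(0^+)=0$ and $f(1)=3$, so $f$ is a strictly increasing bijection from $(0,1)$ onto $(0,3)\supseteq(0,1)\ni\alpha$. For the asymptotic radius \eqref{eqn:flat-prior-known-var-cs-asyp-subg}, I would solve $3b+2b\log(1/b)=\alpha$ asymptotically as $\alpha\to 0$: the term $2b\log(1/b)$ dominates, giving $b_\alpha\sim\alpha/(2\log(1/\alpha))$, whence $\log(n/b_\alpha^2)=\log n+2\log(1/b_\alpha)\sim\log(4n/\alpha^2)+2\log\log(1/\alpha)$, as claimed.

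The step I expect to be the main obstacle is the moment bound. Because $K_1$ lies on the boundary of integrability, one cannot take its expectation directly, and the bound must hold uniformly over \emph{all} $1$-subGaussian laws rather than just the Gaussian one; this is precisely where the argument departs from the exact $\erf$-based computation behind \cref{cor:Gaussian-CS-flat}. The clean constant $3$ arises from estimating the two pieces of the split separately (rather than evaluating the tight layer-cake integral, which would give the smaller constant $1$), trading a controlled amount of looseness for a simple closed-form defining equation for $b_\alpha$.
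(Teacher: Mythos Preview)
Your proposal is correct and follows essentially the same approach as the paper's own proof. Both apply the extended Ville's inequality to $K$ at $m=1$, bound the tail $\Pr[K_1\ge C]\le 2/C$ via the subGaussian Chernoff bound, and bound the truncated mean $\Exp[K_1\id_{\{K_1<C\}}]\le 1+2\log C$ via the same layer-cake argument (the paper isolates this as Lemma~\ref{lem:subgaussian-psi2-bound}); the only cosmetic differences are that the paper carries a general thickness parameter $D$ throughout to illustrate invariance, and writes the extended Ville bound in its split form $\Pr[K_1\ge C]+C^{-1}\Exp[\id_{\{K_1<C\}}K_1]$ rather than your equivalent $C^{-1}\Exp[K_1\wedge C]$.
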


We note that in the corollaries above, the extended supermartingale $ K$ leads to a slightly smaller confidence sequence \eqref{eqn:flat-prior-known-var-cs} for the parametric Gaussian case, which is equivalent to the time-uniform concentration bound of Gaussian random walk, stated without proof by \citet[Inequality (20)]{robbins1970statistical}; and a slightly wider confidence sequence \eqref{eqn:flat-prior-known-var-cs-subg} for the general, nonparametric subGaussian case. Also, the ``thickness'' of the flat prior does not matter and different choices lead to the same expression of the CSs, which, not obvious \emph{a priori}, shall be demonstrated in the proof (in Section~\ref{sec:kpf}; see also \cref{sec:bayes}). The thickness $1/\sqrt{2\pi}$ here is chosen so as to simplify the expression of $K$.
Thus, the above CSs have no tunable free parameters and although they arise from putting priors on $\mu$, they are flat (often termed \emph{uninformative}) priors. % Its independence from $d$, however, is not obvious \emph{a priori}, so we keep the constant $d$ around in the proof. 

%Along the line of Proposition~\ref{prop:vil-conv}, we may view the CSs \eqref{eqn:flat-prior-known-var-cs} and \eqref{eqn:flat-prior-known-var-cs-subg} as the $c\to 0$ limits of those derived via the classical NSMs $\{L_n^{[c]}\}$ \eqref{eqn:gaussian-mixed-by-c}. (This is hopefully true!)

The asyptotics \eqref{eqn:flat-prior-known-var-cs-asyp} and \eqref{eqn:flat-prior-known-var-cs-asyp-subg} show that the CSs derived via a flat mixture \eqref{eqn:flat-prior-known-var-cs} still have the polylogarithmic growth rate in $1/\alpha$ when $\alpha \to 0$; like previous CS, the growth rate is also $\sqrt{\log(1/\alpha)}$ when $\alpha$ is sufficiently small. Also, it is clear from our derivation that $\log(1/\alpha)$, $ \log(1/a_{\alpha})$, and $ \log(1/b_{\alpha})$ are approximately of similar scale. 

We shall discuss in \cref{sec:bayes} how
our \cref{cor:Gaussian-CS-flat,cor:subG-CS-flat} are not recoverable without the extended Ville's inequality by
letting the prior precision $c^2 \to 0$ in \cref{prop:gaussian-mixed}, after reviewing some classical Bayesian counterparts of our approaches so far.

% Here we see that the thickness parameter $d$ is spurious, as it leads to same expression of the CS once $\alpha$ is fixed, since each $\alpha$ corresponds to a value of $x/d$ which determines the CS as a whole.

% \begin{proposition}Gaussian prior with Ville's inequality \eqref{eqn:gaussian-prior-known-var-cs-correct}, and flat prior with extended Ville's ineqaulity \eqref{eqn:flat-prior-known-var-cs} both yield CSs with widths $\approx \sqrt{\frac{\log (n/\alpha)}{n}}$.
% \end{proposition}

% The comparison of CSs is as follows.

%\includegraphics[width=0.9\textwidth]{medias/knownvar.png}

%(todo: the lambert function: when optimized)

%(todo: Cauchy prior)

%(todo: prop 2 of the annals of stats paper)

%(todo: do ENSMs have a limiting rv? recall NSMs have a well defined limit $M_\infty$ which equals 0 a.s. if the process varies enough)

\section{Extended e-processes}
\label{sec:e-proc}

{NMs and NSMs are usually derived for sequential tests for their behavior under the null according to Ville's inequality, and often exponential growth under the alternative. However, it is known that for some classes of testing problems, NSMs are unable to distinguish null and alternative distributions, and a larger class of processes, namely \emph{e-processes}, has been proposed and studied \citep{ramdas2022testing,ramdas2020admissible,ruf2022composite}.} E-processes %are a larger class of nonnegative processes which 
share several common properties with supermartingales, for example, the optional stopping theorem and the classical Ville's inequality, and hence playing the similar roles as the accumulation of evidence against the null in sequential tests.  \cite{ramdas2022testing,ruf2022composite} show that e-processes arise somewhat naturally, when dealing with testing composite nulls (where composite supermartingales do not suffice). {A classical e-process can be defined, among various equivalent ways \citep[Lemma 6]{ramdas2020admissible}, as a nonnegative process upper bounded by an NSM starting from 1 \emph{for every distribution}, and we shall extend this definition to the nonintegrable case as follows.} %Past work has only dealt with the integrable case, but we work with extended e-processes, which can be defined as follows. 

    \begin{definition}
    We say that a nonnegative process $
E$, each $E_n$ being a function of $X_1, \dots, X_n$, is a $\mathfrak P$-extended e-process, if, for any $\mathbbmsl P \in \mathfrak{P}$, there exists a $\mathbbmsl P $-ENSM $M^{\mathbbmsl P} $ such that $E_n \le M_n^{\mathbbmsl P}$ almost surely under $\{X_n\}_{n \ge 1} \sim \mathbbmsl P$. 
%Finally, we say that $\{ N_n \}$ is a $\mathfrak P$-extended e-process if, for any $\mathbbmsl P \in \mathfrak{P}$, there exists a $\mathbbmsl P$-extended supermartingale $\{ M_n^{\mathbbmsl P} \}$ such that $N_n \le M_n^{\mathbbmsl P}$ almost surely under $\{X_n\}_{n \ge 1} \sim \mathbbmsl P$.
\end{definition}

In the integrable case, an e-process for $\mathfrak P$ is defined as  a nonnegative process that, for each $\mathbbmsl P \in \mathfrak P$, is upper bounded by some $\mathbbmsl P$-supermartingale with initial value one. In the integrable case, it turns out that an equivalent definition for an e-process is that it is a nonnegative process whose expectation, at any arbitrary stopping time without restriction and under any $\mathbbmsl P \in \mathfrak P$, is at most one.
The reader may refer to \cite{ramdas2020admissible} for the proof of equivalence between these definitions in the classical case and to~\cite{ramdas2022testing} for an application of e-processes in the classical regime, but our exposition on the extended, nonintegrable regime covers and extends the classical formulation.

It is easy to see that an analog (and consequence) of Lemma~\ref{lem:mix} holds for extended e-processes as well, a fact that we record as a corollary (of Lemma~\ref{lem:mix}) for easy reference: 
\begin{corollary}\label{cor:mixture-eproc}
A $\sigma$-finite mixture of $\mathfrak{P}$-extended e-processes is again a $\mathfrak{P}$-extended e-process.
\end{corollary}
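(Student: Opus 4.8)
The plan is to reduce the statement to the $\sigma$-finite mixture Lemma~\ref{lem:mix}, applied not to the e-processes themselves but to the supermartingales that dominate them. Suppose $\{E(\theta)\}_{\theta \in \Theta}$ is a family of $\mathfrak{P}$-extended e-processes indexed by a measurable space $(\Theta, \mathcal{B})$, each $E(\theta)_n$ being jointly $\cF_n \otimes \cB$-measurable, let $\mu$ be a $\sigma$-finite measure on $(\Theta, \mathcal{B})$, and set $E^{\mathsf{mix}}_n = \int_\Theta E(\theta)_n \, \mu(\d\theta)$, which is then a well-defined nonnegative $\cF_n$-measurable random variable by Tonelli. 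Fix an arbitrary $\mathbbmsl P \in \mathfrak{P}$. By the definition of an extended e-process, for each $\theta$ there exists a $\mathbbmsl P$-ENSM $M^{\mathbbmsl P}(\theta)$ with $E(\theta)_n \le M^{\mathbbmsl P}(\theta)_n$ almost surely under $\mathbbmsl P$, for every $n$.

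First I would form the mixture of these dominating supermartingales, $M^{\mathsf{mix}}_n = \int_\Theta M^{\mathbbmsl P}(\theta)_n \, \mu(\d\theta)$, and invoke Lemma~\ref{lem:mix} to conclude that $M^{\mathsf{mix}}$ is a $\mathbbmsl P$-ENSM. Next I would verify that domination is inherited by the mixture: because integration against the $\sigma$-finite measure $\mu$ preserves the pointwise order of $\eR$-valued functions, integrating $E(\theta)_n \le M^{\mathbbmsl P}(\theta)_n$ over $\theta$ gives $E^{\mathsf{mix}}_n \le M^{\mathsf{mix}}_n$ almost surely under $\mathbbmsl P$. Since $M^{\mathsf{mix}}$ is a $\mathbbmsl P$-ENSM and $\mathbbmsl P \in \mathfrak{P}$ was arbitrary, $E^{\mathsf{mix}}$ is dominated by a $\mathbbmsl P$-ENSM for every $\mathbbmsl P \in \mathfrak{P}$, which is precisely the definition of a $\mathfrak{P}$-extended e-process.

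The main obstacle is measurability. To apply Lemma~\ref{lem:mix} I need the dominating family $\{M^{\mathbbmsl P}(\theta)\}_\theta$ to meet that lemma's joint-measurability hypotheses, namely that $M^{\mathbbmsl P}(\theta)_n$ is $\cF_n \otimes \cB$-measurable and $\Exp[M^{\mathbbmsl P}(\theta)_n \mid \cF_{n-1}]$ is $\cF_{n-1}\otimes\cB$-measurable. The extended e-process definition only guarantees a dominating ENSM for each \emph{fixed} $\theta$, with no control over joint measurability in $(\theta, \omega)$; so either one imposes such a regularity condition as a hypothesis (mirroring Lemma~\ref{lem:mix}), or one produces a jointly measurable selection of dominating supermartingales. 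In the typical applications the dominating process is given by an explicit formula in $(\theta, X_1, \dots, X_n)$, so joint measurability is immediate and this is a mild assumption rather than a genuine difficulty.

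A secondary subtlety is that the per-$\theta$ almost-sure domination holds on a $\theta$-dependent full-probability event, whereas integrating over $\theta$ requires an inequality valid for $\mu$-almost every $\theta$ simultaneously. I would resolve this with a Tonelli argument on the product measure $\Pr \otimes \mu$: the exceptional set $\{(\omega, \theta) : E(\theta)_n(\omega) > M^{\mathbbmsl P}(\theta)_n(\omega)\}$ is jointly measurable and has $\mathbbmsl P$-null $\theta$-sections, hence is $\Pr \otimes \mu$-null, hence has $\mu$-null $\omega$-sections for $\Pr$-almost every $\omega$; on those $\omega$ the integrands satisfy $E(\theta)_n \le M^{\mathbbmsl P}(\theta)_n$ for $\mu$-a.e.\ $\theta$, which is exactly what is needed to integrate and obtain the displayed inequality.
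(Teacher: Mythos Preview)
Your proposal is correct and follows exactly the approach the paper intends: the paper records this corollary simply as a consequence of Lemma~\ref{lem:mix}, and your reduction---mix the dominating $\mathbbmsl P$-ENSMs, invoke Lemma~\ref{lem:mix}, and integrate the pointwise domination---is precisely that. If anything, you are more careful than the paper, which glosses over the joint-measurability and null-set issues you correctly flag; as you note, these are handled by imposing the same regularity hypotheses as in Lemma~\ref{lem:mix} on the dominating family (which in the paper's applications are given by explicit formulas).
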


{Our examples of extended e-processes involve the following sets of distributions on $\mathbb R \times \mathbb R \times \dots$. First, consider the case where observations are independent, 1-subGaussian, but not identically distributed; their means are all upper bounded by some constant $\mu_0$, i.e.,}
 \begin{equation}
        \subgaussianproc{\le \mu_0}{1} = \{ P_1 \otimes P_2 \otimes \cdots : \text{each } P_n \in \subgaussian{\mu_n}{1}, \mu_n \le \mu_0 \},
\end{equation}
%which contains independent but not necessarily identically distributed 1-subGaussian observations with means $\le \mu_0$; as well as the larger class 
{Next, let us relax the ``means all upper bounded by $\mu_0$'' restriction in the class above, and assume instead that \emph{running average} of the first $n$ means is upper bounded by $\mu_0$ for all $n$, leading to the larger class}
\begin{equation}
        \subgaussianproc{\mathsf{ra}\le \mu_0}{1} = \{ P_1 \otimes P_2 \otimes \cdots : \text{each } P_n \in \subgaussian{\mu_n}{1}, \tfrac{\mu_1 + \dots + \mu_n}{n} \le \mu_0 \},
\end{equation}

{These classes are first studied by} \citet[Section 3.2]{ruf2022composite} {with classical e-processes}.
{A real-world example where the class $\subgaussianproc{\mathsf{ra}\le \mu_0}{1}$ is of interest, for example, is the ``comparing forecasters by the average score differentials'' set-up proposed by \citet[Section 4]{choe2024comparing}.}
We shall show that, for the two one-sided nulls above, there exists a process which is an extended supermartingale for the smaller class $\subgaussianproc{\le \mu_0}{1}$, and an extended e-process for the larger class $\subgaussianproc{\mathsf{ra}\le \mu_0}{1}$. This is achieved by mixing the ``likelihood ratio'' $\ell(\mu ; \mu_0)$ \eqref{eqn:gaussian-lr} with a flat, improper prior on $[\mu_0, \infty)$. That is, with $\d F(\mu)/\d \mu = \id_{ \{ \mu \ge \mu_0 \} }(2\pi)^{-1/2}$, which leads to the following proposition which we prove in \cref{sec:pf}.

\begin{proposition}\label{prop:flat-mix-cs-one-sided} Define $V(x) = \e^{x^2/2} ( 1 + \erf (x/\sqrt{2}) )$.
    The process
\begin{equation}\label{eqn:half-flat-mixed-eproc}
    Q_n = \frac 1{\sqrt{4n}} V(\sqrt{n}(\avgX{n} - \mu_0 )) 
\end{equation}
is a $\subgaussianproc{\le \mu_0}{1}$-ENSM (thus \emph{a fortiori} a $\subgaussian{\mu_0}{1}$-ENSM as well); and a $\subgaussianproc{\mathsf{ra}\le \mu_0}{1}$-extended e-process. It is \emph{not} a $\subgaussianproc{\mathsf{ra}\le \mu_0}{1}$-extended supermartingale.
\end{proposition}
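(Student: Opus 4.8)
The plan is to read all three claims off a single integral representation of $Q$. Writing $u=\mu-\mu_0$ and expanding the exponent in \eqref{eqn:gaussian-lr} gives $\ell(\mu;\mu_0)_n=\exp\{n(\avgX{n}-\mu_0)u-\tfrac n2u^2\}$, so completing the square and integrating the resulting Gaussian over $u\in[0,\infty)$ yields
\[
(2\pi)^{-1/2}\int_{\mu_0}^\infty \ell(\mu;\mu_0)_n\,\d\mu=\frac{1}{\sqrt{4n}}\,\mathrm e^{n(\avgX{n}-\mu_0)^2/2}\bigl(1+\erf(\sqrt{n}(\avgX{n}-\mu_0)/\sqrt2)\bigr)=Q_n.
\]
Thus $Q$ is exactly the one-sided flat mixture of likelihood ratios on $[\mu_0,\infty)$ described before the statement, and the two recurring levers will be the one-sided support $u\ge0$ and the class-specific constraint on the means.

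For the first claim I would fix $\mu\ge\mu_0$ and show $\ell(\mu;\mu_0)$ is a $\subgaussianproc{\le\mu_0}{1}$-supermartingale. Its one-step ratio is $\exp\{(X_n-\mu_0)u-\tfrac12u^2\}$; under $P_n\in\subgaussian{\mu_n}{1}$ the $1$-subGaussian bound gives $\Exp[\mathrm e^{(X_n-\mu_n)u}\mid\mathcal F_{n-1}]\le \mathrm e^{u^2/2}$, whence $\Exp[\ell(\mu;\mu_0)_n\mid\mathcal F_{n-1}]\le \ell(\mu;\mu_0)_{n-1}\,\mathrm e^{(\mu_n-\mu_0)u}\le \ell(\mu;\mu_0)_{n-1}$, the last step using $\mu_n\le\mu_0$ together with $u\ge0$. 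Since $\ell(\mu;\mu_0)_n$ and its conditional MGF are continuous in $\mu$ and adapted in $\omega$, the joint-measurability hypotheses of \cref{lem:mix} hold, and Lebesgue measure scaled by $(2\pi)^{-1/2}$ is $\sigma$-finite, so \cref{lem:mix} makes $Q$ a $\subgaussianproc{\le\mu_0}{1}$-ENSM; the $\subgaussian{\mu_0}{1}$ case is a fortiori, as an i.i.d.\ mean-$\mu_0$ stream has all means $=\mu_0\le\mu_0$.

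The heart of the argument is the second claim, where for each fixed $\mathbbmsl P=P_1\otimes P_2\otimes\cdots\in\subgaussianproc{\mathsf{ra}\le\mu_0}{1}$ with means $\mu_i$ I would build a dominating ENSM by \emph{tilting} the mixand: set $G^{(u)}_n=\ell(\mu;\mu_0)_n\exp\{-u\sum_{i=1}^n(\mu_i-\mu_0)\}$ and $N^{\mathbbmsl P}_n=(2\pi)^{-1/2}\int_{\mu_0}^\infty G^{(u)}_n\,\d\mu$. The tilt is chosen so that the one-step ratio of $G^{(u)}$ is $\exp\{(X_n-\mu_n)u-\tfrac12u^2\}$, whose conditional expectation under $\mathbbmsl P$ is at most $1$ by $1$-subGaussianity \emph{at the true mean} $\mu_n$; hence each $G^{(u)}$ is a $\mathbbmsl P$-supermartingale and \cref{lem:mix} makes $N^{\mathbbmsl P}$ a $\mathbbmsl P$-ENSM. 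At the same time $\sum_{i=1}^n(\mu_i-\mu_0)=n(\bar\mu_n-\mu_0)\le0$ by the running-average constraint and $u\ge0$, so the tilt factor is $\ge1$, giving $G^{(u)}_n\ge\ell(\mu;\mu_0)_n$ pointwise and thus $N^{\mathbbmsl P}_n\ge Q_n$. By definition this exhibits $Q$ as a $\subgaussianproc{\mathsf{ra}\le\mu_0}{1}$-extended e-process. The main obstacle is getting this tilt to do double duty: it must be $\ge1$ to dominate $Q$ yet structured exactly so as to restore the supermartingale property, and it is precisely the interplay between the one-sided support $u\ge0$ and the running-average (as opposed to pointwise) mean bound that makes both hold simultaneously.

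Finally, for the non-supermartingale claim I would exhibit one Gaussian $\mathbbmsl P$ in the running-average class under which $Q$ strictly increases in conditional expectation. Take $\mathbbmsl P=\normal{\mu_0-1}{1}\otimes\normal{\mu_0+1}{1}\otimes\normal{\mu_0}{1}\otimes\cdots$, so that every running average stays $\le\mu_0$ while $\mu_2=\mu_0+1>\mu_0$. By Tonelli and the exact Gaussian identity $\Exp[\ell(\mu;\mu_0)_n\mid\mathcal F_{n-1}]=\ell(\mu;\mu_0)_{n-1}\mathrm e^{(\mu_n-\mu_0)u}$, we get $\Exp[Q_n\mid\mathcal F_{n-1}]=(2\pi)^{-1/2}\int_{\mu_0}^\infty \ell(\mu;\mu_0)_{n-1}\mathrm e^{(\mu_n-\mu_0)u}\,\d\mu$, which for $n=2$ is finite and strictly larger than $Q_1$ because $\mathrm e^{(\mu_2-\mu_0)u}>1$ for all $u>0$. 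Hence $Q$ violates the ENSM inequality at $n=2$ under $\mathbbmsl P$ and is not a $\subgaussianproc{\mathsf{ra}\le\mu_0}{1}$-extended supermartingale.
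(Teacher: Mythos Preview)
Your proof is correct and, for the first two claims, follows the paper's route: you identify $Q$ as the half-line flat mixture of $\ell(\mu;\mu_0)$, verify each mixand is a $\subgaussianproc{\le\mu_0}{1}$-supermartingale and apply the mixture lemma, and for the running-average class you dominate $\ell(\mu;\mu_0)$ by the tilted process $G^{(u)}$---which is exactly the paper's $\tilde\ell(\mu;\mu_0;\mathbbmsl P)$---then mix again.

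The one genuine difference is the ``not a supermartingale'' claim. The paper exhibits a Rademacher-based $\mathbbmsl P$ under which the \emph{mixand} $\ell(\mu;\mu_0)$ fails the supermartingale inequality, and then appeals to the mixture structure; you instead take a Gaussian $\mathbbmsl P$ and compute $\Exp[Q_2\mid\mathcal F_1]$ for the mixture $Q$ itself via Tonelli and the exact Gaussian MGF, showing it strictly exceeds $Q_1$. Your argument is more direct: strictly speaking, one mixand failing to be a supermartingale does not by itself force the mixture to fail, so working at the level of $Q$ closes that small gap. The paper's Rademacher example has the minor advantage of staying within bounded distributions, but your Gaussian choice makes the conditional expectation computation exact and the strict inequality $\mathrm e^{u}>1$ for $u>0$ immediate.
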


This extended e-process has a limit behavior similar to that of the ENSM $K$, Proposition~\ref{prop:Kn-shrink-grow}, which we state as follows and prove in Section~\ref{sec:kpf}.
\begin{proposition}\label{prop:Qn-shrink-grow}
    The process $Q$ in \eqref{eqn:half-flat-mixed-eproc} converges almost surely to $Q_\infty = 0$ under $\subgaussianproc{\mathsf{ra}\le \mu_0}{1}$. Further, it converges almost surely to $Q_\infty = \infty$ under
    \begin{equation}
        \subgaussianproc{\mathsf{ra}\gtrsim \mu_0}{1} = \{ P_1 \otimes P_2 \otimes \cdots : \text{each } P_n \in \subgaussian{\mu_n}{1}, \liminf_{n\to\infty}\tfrac{\mu_1 + \dots + \mu_n}{n} > \mu_0 \}.
\end{equation}
\end{proposition}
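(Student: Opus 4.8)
The plan is to route both claims through the asymptotics of the single random argument
$Z_n := \sqrt n\,(\avgX{n} - \mu_0) = \tfrac{1}{\sqrt n}\sum_{i=1}^n (X_i - \mu_0)$ sitting inside $V$, so that $Q_n = \tfrac{1}{2\sqrt n}V(Z_n)$. First I would record the two-sided behavior of $V(x)=\e^{x^2/2}(1+\erf(x/\sqrt2))$: writing $1+\erf(x/\sqrt2)=\erfc(-x/\sqrt2)$ and using the elementary bound $\erfc(t)\le \e^{-t^2}$ for $t\ge 0$ gives $V(x)\le 1$ for $x\le 0$, hence the uniform bound $V(x)\le 2\,\e^{(x^+)^2/2}$ for all $x$; on the other side, since $\erf\ge 0$ on $[0,\infty)$ we get the matching lower bound $V(x)\ge \e^{x^2/2}$ for $x\ge 0$. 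The upshot is that $V$ blows up like $\e^{x^2/2}$ as $x\to+\infty$ but stays bounded (indeed decays) as $x\to-\infty$, and it is exactly this asymmetry that the two halves of the proposition exploit.

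Next I would introduce the decomposition common to both claims. On any $P_1\otimes P_2\otimes\cdots$ in either class the coordinates are independent with $X_i$ being $1$-subGaussian of mean $\mu_i$; setting $W_n=\sum_{i=1}^n(X_i-\mu_i)$ (independent, mean-zero, $1$-subGaussian summands, hence variances $\le 1$) and $\bar\mu_n=\tfrac1n\sum_{i=1}^n\mu_i$, I have $Z_n = \tfrac{W_n}{\sqrt n} + \sqrt n\,(\bar\mu_n-\mu_0)$. Two probabilistic inputs about $W_n$ will be used: the strong law $W_n/n\to 0$ a.s.\ (Kolmogorov's criterion applies since $\sum_i \mathrm{Var}(X_i-\mu_i)/i^2\le\sum_i i^{-2}<\infty$), and the upper half of the law of the iterated logarithm, $\limsup_n |W_n|/\sqrt{2n\log\log n}\le 1$ a.s., which holds for independent (not necessarily identical) $1$-subGaussian increments and which I would obtain from the time-uniform Chernoff/stitching bounds applied to the supermartingale $\e^{\lambda W_n-\lambda^2 n/2}$, as in \citet{howard2020time}.

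For the first claim, on $\subgaussianproc{\mathsf{ra}\le \mu_0}{1}$ we have $\bar\mu_n\le\mu_0$, so the drift term is nonpositive and $Z_n\le W_n/\sqrt n$, whence $(Z_n^+)^2\le W_n^2/n$. Combining with $V(Z_n)\le 2\,\e^{(Z_n^+)^2/2}$ yields $Q_n\le \tfrac{1}{\sqrt n}\,\e^{W_n^2/(2n)}$. Feeding in the LIL bound $W_n^2\le (2+\varepsilon)\,n\log\log n$ eventually gives $Q_n\le \tfrac{1}{\sqrt n}(\log n)^{1+\varepsilon/2}\to 0$ a.s., which is $Q_\infty=0$. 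For the second claim, on $\subgaussianproc{\mathsf{ra}\gtrsim\mu_0}{1}$ there is $\delta>0$ with $\bar\mu_n\ge\mu_0+\delta$ for all large $n$; since $W_n/\sqrt n=(W_n/n)\sqrt n=o(\sqrt n)$ by the strong law, $Z_n\ge \sqrt n(\delta-o(1))\to+\infty$ a.s., so eventually $Z_n\ge 0$ and $Z_n^2\ge \tfrac{\delta^2}{2}n$. The lower bound $V(Z_n)\ge \e^{Z_n^2/2}$ then forces $Q_n\ge \tfrac{1}{2\sqrt n}\,\e^{\delta^2 n/4}\to\infty$ a.s., i.e.\ $Q_\infty=\infty$.

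The routine parts are the $V$-inequalities and the algebraic decomposition; the one genuine obstacle is the first claim, where the strong law alone is too weak (it only gives $|W_n|=o(n)$, whereas I need $W_n^2=o(n\log n)$ to beat the $\tfrac1{\sqrt n}$ normalization). This is precisely where the LIL upper bound enters, and the care needed is to justify it for \emph{independent but non-identically distributed} $1$-subGaussian increments rather than quoting the i.i.d.\ Hartman--Wintner theorem; invoking the supermartingale time-uniform bound of \citet{howard2020time} resolves this cleanly and keeps the argument parallel to \cref{prop:Kn-shrink-grow}.
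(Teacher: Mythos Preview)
Your argument is correct, and the second half (divergence under $\subgaussianproc{\mathsf{ra}\gtrsim\mu_0}{1}$) matches the paper's proof almost verbatim: both invoke Kolmogorov's strong law to force $\avgX{n}-\overline{\mu}_n\to 0$ and then lower-bound $V$ on the positive axis.

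The first half, however, takes a genuinely different route. The paper does \emph{not} use the law of the iterated logarithm. Instead it constructs the half-flat mixture $R_n^{\mathbbmsl P}=\tfrac{1}{\sqrt{4n}}\,V\!\bigl(\sqrt{n}(\avgX{n}-\overline{\mu}_n)\bigr)$, observes that $R_n^{\mathbbmsl P}\ge Q_n$ whenever $\overline{\mu}_n\le\mu_0$, and---crucially---that $R_n^{\mathbbmsl P}$ is itself a $\mathbbmsl P$-ENSM (being a $\sigma$-finite mixture of the supermartingales $\tilde\ell(\mu;\mu_0;\mathbbmsl P)$). A single Hoeffding tail bound then gives $R_n^{\mathbbmsl P}\to 0$ in probability, and the ENSM convergence theorem (\cref{prop:converge}) upgrades this for free to almost-sure convergence; the squeeze $0\le Q_n\le R_n^{\mathbbmsl P}$ finishes. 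Your approach replaces this structural step by a quantitative one: you need $W_n^2=o(n\log n)$ almost surely, which the upper LIL for independent $1$-subGaussian increments supplies. That is a heavier external input (and, as you note, one must be careful to cite a non-i.i.d.\ version such as the stitched bound of \cite{howard2020time} rather than Hartman--Wintner), whereas the paper's argument stays entirely within the ENSM machinery developed in \cref{sec:mtg} and needs only convergence in probability. On the other hand, your approach is more transparent about the rate at which $Q_n\to 0$ and would work even if the dominating process were not itself an ENSM.
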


A side product of Proposition~\ref{prop:flat-mix-cs-one-sided}, requiring somewhat a more involved calculation with the extended Ville's inequality (Theorem~\ref{thm:evi-2}), is a one-sided $\subgaussianproc{\mathsf{ra}\le \mu_0}{1}$-confidence sequence for $\mu_0$ which we state as \cref{cor:one-sided-CS}.
% in Section~\ref{sec:one-sided-cs}. 
%We present the proof of the more interesting Proposition~\ref{prop:flat-mix-cs-one-sided} immediately. 

\section{Key proofs}
\label{sec:kpf}

\begin{proof}[Proof of Proposition~\ref{prop:vil-conv}]
    Since $\{ M_{0}^k \}_{k\ge 1}$ converges in distribution to $ M_0 $
    and the function $x\mapsto x \wedge C$ is bounded and Lipschitz, by the Portmanteau theorem, $\Exp[M_{0}^k \wedge C]$ converges to $\Exp[M_{0} \wedge C]$, concluding that $\px_k(C)$ converges to $\px(C)$. The convergence of $\px^{-1}_k(\alpha)$ to $\px^{-1}(\alpha)$ under the stated conditions is a standard calculus result.
\end{proof}

\begin{proof}[Proof of Lemma~\ref{lem:mix}] 
Take any $A \in \mathcal{F}_{n-1}$. Since $M(\theta)_n \ge 0$, we can apply Tonelli's theorem to $M(\theta)_n: \Omega \times \Theta \to \overline{\mathbb R^+}$ and the $\sigma$-finite measure $\Pr|_A \otimes \mu$,
\begin{equation}
    \Exp\left[ \id_{A}\int M(\theta)_n \mu(\d \theta) \right] = \int \Exp\left[\id_{A}  M(\theta)_n \right] \mu(\d \theta) =  \int \Exp\left[ \id_A \Exp\left[  M(\theta)_n  \middle\vert \mathcal{F}_{n-1} \right] \right] \mu(\d \theta).
\end{equation}
Next, since $\Exp\left[  M(\theta)_n  \middle\vert \mathcal{F}_{n-1} \right] \ge 0$,
we again apply Tonelli's theorem to $\Exp\left[  M(\theta)_n  \middle\vert \mathcal{F}_{n-1} \right]$ on $\Pr|_A \otimes \mu$,
\begin{equation}
    \int \Exp\left[ \id_A \Exp\left[  M(\theta)_n  \middle\vert \mathcal{F}_{n-1} \right] \right] \mu(\d \theta) = \Exp \left[ \id_A \int \Exp\left[  M(\theta)_n  \middle\vert \mathcal{F}_{n-1} \right] \mu(\d \theta)  \right].
\end{equation}
Therefore, we have
\begin{equation}
  \forall A \in \mathcal{F}_{n-1}, \quad  \Exp\left[ \id_{A}\int M(\theta)_n \mu(\d \theta) \right] = \Exp \left[ \id_A \int \Exp\left[  M(\theta)_n  \middle\vert \mathcal{F}_{n-1} \right] \mu(\d \theta)  \right].
\end{equation}
Further, by Tonelli's theorem, $\int \Exp\left[  M(\theta)_n  \middle\vert \mathcal{F}_{n-1} \right] \mu(\d \theta)$ is $\mathcal{F}_{n-1}$-measurable. Hence,
\begin{equation}
    \Exp\left[ \int M(\theta)_n \mu(\d \theta) \middle\vert \mathcal{F}_{n-1} \right] =  \int \Exp\left[  M(\theta)_n  \middle\vert \mathcal{F}_{n-1} \right] \mu(\d \theta).
\end{equation}
Therefore, we have
\begin{align}
    \Exp[  M^{\mathsf{mix}}_n | \mathcal{F}_{n-1} ] &= \Exp\left[ \int M(\theta)_n \mu(\d \theta) \middle\vert \mathcal{F}_{n-1} \right] \\&=  \int \Exp\left[  M(\theta)_n  \middle\vert \mathcal{F}_{n-1} \right] \mu(\d \theta) \le \int   M(\theta)_{n-1}   \mu(\d \theta) = M^{\mathsf{mix}}_{n-1}.
\end{align}
%(caveat: $\infty \le \infty$?)

The fact that $M^{\mathsf{mix}}_n$ is $\mathcal{F}_{n}$-measurable is again guaranteed by Tonelli's theorem.
%(or rather, \url{https://proofwiki.org/wiki/Horizontal_Section_of_Measurable_Function_is_Measurable}) 
Hence $M^{\mathsf{mix}} $ is an ENSM.
\end{proof}

\begin{proof}[Proof of Proposition~\ref{prop:flat-mix-cs}]
Let us calculate the mixture of $\ell(\mu; \mu_0)$ over the flat prior with density $1/\sqrt{2 \pi}$.
A straightforward calculation yields: 
\begin{align}
   & \frac{\int_{\mathbb R} \exp(-\frac{1}{2}\sum (X_i - \mu)^2 ) 1/\sqrt{2\pi} \d \mu }{\exp(-\frac{1}{2}\sum (X_i - \mu_0)^2 )}
   =  \frac{ 1/\sqrt{2\pi} \int_{\mathbb R}\exp( -\frac{1}{2}n \mu^2 + S_n \mu  ) \d \mu }{\exp( -\frac{1}{2}n \mu_0^2 + S_n \mu_0  )}
   \\
   = & \frac{  \sqrt{\frac{1}{n}} \exp(S_n^2/2n)  }{\exp( -\frac{1}{2}n \mu_0^2 + S_n \mu_0  )}
    =  \frac{1}{\sqrt{n}} \exp\left( \frac{1}{2}n \mu_0^2 - S_n \mu_0  + S_n^2/2n  \right)
    \\
    = & \frac{1}{\sqrt{n}} \exp\left( \frac{1}{2}n (\mu_0 - \avgX{n})^2 \right). \label{eqn:flat-mixed-mtg}
    % \quad \text{(rmk: nice looking!)}
\end{align}
This is an ENSM due to Lemma~\ref{lem:mix}, and the above expression agrees with $K_n$.
\end{proof}

\begin{proof}[Proof of Proposition~\ref{prop:Kn-shrink-grow}]
    First, suppose $X_1, X_2, \dots \iid P \in \subgaussian{\mu_0}{1}$. By Hoeffding's inequality,
    $\Pr[ |\mu_0 - \avgX{n}| > a  ] \le 2\exp( -na^2/{2} )$ for any $a>0$.
    Therefore, for any $\varepsilon > 0$,
    \begin{equation}
        \Pr[ K_n > \varepsilon ] = \Pr\left[ |\mu_0 - \avgX{n}| > \sqrt{\frac{2\log(\sqrt{n}\varepsilon)}{n}} \right] \le \frac{2}{\sqrt{n} \varepsilon} \to 0.
    \end{equation}
    Thus $K$ converges to 0 in probability. Note that according to \cref{prop:converge}, the ENSM $K$ converges to some $K_\infty$ almost surely. $K_\infty$ must therefore be 0.

    Now, suppose instead $X_1, X_2, \dots \iid P$ with mean $\Pr[X_i] = \mu$ such that $|\mu - \mu_0| = \Delta_\mu > 0$. By SLLN, $\avgX{n} \to \mu$ almost surely. When  $\avgX{n} \to \mu$ happens,
    \begin{equation}
        \liminf_{n \to \infty} K_n \ge \liminf_{n \to \infty} \frac{1}{\sqrt{n}}\exp( n\Delta_\mu^2/2 ) = \infty,
    \end{equation}
    meaning that $K_n \to \infty$ almost surely.
\end{proof}

The proofs of Corollaries~\ref{cor:Gaussian-CS-flat}~and~\ref{cor:subG-CS-flat} rely on the following two tail bounds.

\begin{lemma}\label{lem:trunc-pdf-bound}
    Suppose $Z \sim \normal{0}{1}$ and $M > 0$. Then, $\Exp  [\id_{\{ 0 \le Z \le M \}} \exp(Z^2/2 )]  =  \frac{M}{\sqrt{{2\pi}} }$.
\end{lemma}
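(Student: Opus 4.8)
The plan is to compute the expectation directly by writing it as an integral against the standard normal density and observing that the integrand collapses to a constant. Concretely, since $Z \sim \normal{0}{1}$ has density $z \mapsto (2\pi)^{-1/2}\e^{-z^2/2}$, I would expand
\[
    \Exp\left[\id_{\{ 0 \le Z \le M \}} \exp(Z^2/2 )\right] = \int_0^M \e^{z^2/2} \cdot \frac{1}{\sqrt{2\pi}} \e^{-z^2/2} \, \d z.
\]
The key step is the exact cancellation of the two exponential factors: the weight $\e^{z^2/2}$ is precisely the reciprocal of the Gaussian factor $\e^{-z^2/2}$, so the integrand reduces to the constant $(2\pi)^{-1/2}$ on $[0,M]$. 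Integrating this constant over an interval of length $M$ then yields $\frac{M}{\sqrt{2\pi}}$, which is the claimed value.

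There is essentially no obstacle here, as the statement is a one-line computation once the density is inserted; the integral is over a bounded interval, so no convergence or integrability subtleties arise and the manipulation requires no appeal to Tonelli or monotone convergence. The only thing worth flagging conceptually (rather than technically) is \emph{why} the cancellation is the natural phenomenon at play: the weight $\e^{z^2/2}$ is exactly the ``critical'' factor at which the moment generating function of a $\chi^2$ variable diverges, so on all of $\reals$ the unweighted integral $\int_{\reals} \e^{z^2/2}(2\pi)^{-1/2}\e^{-z^2/2}\,\d z$ would be infinite; truncating to $[0,M]$ is what renders it finite and equal to the length-based constant. This is the same mechanism underlying the nonintegrability of $K_n$ and the standard critical log-chi-squared distribution discussed around \eqref{eqn:flat-mixed-ensm}, which is presumably why the lemma is isolated as a tail bound for the confidence-sequence proofs.

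Thus I would simply present the displayed chain of equalities above and conclude. If one wished to make the role of the lemma transparent for the subsequent application, one could additionally record the companion identity obtained by the same cancellation, namely $\Exp[\id_{\{ Z \ge M \}} \exp(Z^2/2)] = \infty$ and $\Exp[\id_{\{ 0 \le Z \le M\}} Z\exp(Z^2/2)] = (2\pi)^{-1/2}(\e^{M^2/2}-1)$, but these are not needed for the statement as written, so the proof of the lemma itself is complete after the single integral evaluation.
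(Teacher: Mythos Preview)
Your proof is correct and follows essentially the same approach as the paper: write the expectation as $\int_0^M \e^{z^2/2}\,p_{0,1}(z)\,\d z$, observe the cancellation $\e^{z^2/2}\e^{-z^2/2}=1$, and integrate the constant $(2\pi)^{-1/2}$ over $[0,M]$.
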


\begin{proof}[Proof of Lemma~\ref{lem:trunc-pdf-bound}]
Denoting $p_{0,1}$ as the probability density function of $N_{0,1}$, we have
\begin{equation}
     \Exp  [\id_{\{ 0 \le Z \le M \}} \exp(Z^2/2 )] = \int_{0}^M \exp( z^2/2 )p_{0,1}(z) \d z  = \int_{0}^M \frac{1}{\sqrt{2\pi}} \d z = \frac{M}{\sqrt{2\pi}},
\end{equation}
as claimed.
\end{proof}

\begin{lemma}\label{lem:subgaussian-psi2-bound}
    Let $Z$ be a mean-zero and 1-subGaussian random variable. Then, for any $M > 0$, we have $\Exp[\id_{\{ |Z| \le M \}} \exp(Z^2/2 )] \le 1+M^2$.
\end{lemma}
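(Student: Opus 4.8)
The plan is to avoid the Gaussian-integral representation $e^{z^2/2}=\Exp_{\lambda\sim N_{0,1}}[e^{\lambda z}]$ (which, after one drops the indicator, leaves a divergent $\lambda$-integral and hence wastes the crucial truncation) and instead bound the expectation via the layer-cake formula together with the standard subGaussian tail estimate. Write $Y=\id_{\{|Z|\le M\}}\exp(Z^2/2)$, a nonnegative random variable that takes values in $[1,e^{M^2/2}]$ on the event $\{|Z|\le M\}$ and equals $0$ off it. By Tonelli (equivalently, the layer-cake identity for nonnegative variables),
\[
\Exp[Y]=\int_0^\infty \Pr[Y\ge t]\,\d t,
\]
and I would split this integral at the two natural thresholds $t=1$ and $t=e^{M^2/2}$.

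For $0<t\le 1$: since $\exp(Z^2/2)\ge 1$ always, the event $\{Y\ge t\}$ coincides with $\{|Z|\le M\}$, so $\Pr[Y\ge t]\le 1$ and this range contributes at most $1$. For $t>e^{M^2/2}$: the event $\{Y\ge t\}$ is empty, because $Y\le e^{M^2/2}$ wherever it is nonzero, so this range contributes $0$. The only substantive range is $1<t\le e^{M^2/2}$, where $\{Y\ge t\}$ forces $Z^2\ge 2\log t$ together with $|Z|\le M$, hence $\{Y\ge t\}\subseteq\{|Z|\ge \sqrt{2\log t}\}$.

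The analytic heart is the Chernoff subGaussian tail $\Pr[|Z|\ge s]\le 2e^{-s^2/2}$, obtained from the defining inequality $\Exp[e^{\lambda Z}]\le e^{\lambda^2/2}$ by the usual exponential Markov bound $\Pr[Z\ge s]\le e^{-\lambda s+\lambda^2/2}$, optimizing $\lambda=s$, and a union bound over the two signs. Applying it with $s=\sqrt{2\log t}$ gives $\Pr[Y\ge t]\le 2e^{-\log t}=2/t$, so the middle range contributes at most $\int_1^{e^{M^2/2}}(2/t)\,\d t=2\cdot\tfrac{M^2}{2}=M^2$. Adding the three contributions yields $\Exp[Y]\le 1+M^2$.

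I expect the only delicate point to be the bookkeeping of the three integration regions — in particular, verifying that $\{Y\ge t\}$ genuinely reduces to $\{|Z|\le M\}$ for $t\le 1$ and is empty past $e^{M^2/2}$ — while the substantive estimate (the subGaussian tail and the elementary $\int 2/t$) is routine. It is worth noting that no step uses integrability of $e^{Z^2/2}$ itself, which is precisely why the truncation by $M$ cannot be dispensed with; this mirrors the role of the $M>0$ truncation in the exact Gaussian computation of \cref{lem:trunc-pdf-bound}.
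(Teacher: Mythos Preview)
Your proposal is correct and follows essentially the same approach as the paper: both use the layer-cake identity, split the $t$-integral at $1$ and $e^{M^2/2}$, bound the middle range via the subGaussian Chernoff tail $\Pr[|Z|\ge s]\le 2e^{-s^2/2}$ with $s=\sqrt{2\log t}$, and integrate $2/t$ to obtain $M^2$. The only cosmetic difference is that the paper writes the middle event as $\{t<\exp(Z^2/2)\le e^{M^2/2}\}$ before relaxing it to $\{|Z|>\sqrt{2\log t}\}$, whereas you pass directly to the inclusion $\{Y\ge t\}\subseteq\{|Z|\ge\sqrt{2\log t}\}$; the arithmetic and final bound are identical.
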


\begin{proof}[Proof of Lemma~\ref{lem:subgaussian-psi2-bound}]
   \begin{align}
    & \Exp[\id_{\{ |Z| \le M \}} \exp(Z^2/2 )] = \int_{0}^\infty \Pr[\id_{\{|Z| \le M \}} \exp(Z^2/2 ) > t  ] \, \d t
\\
    \le & 1 + \int_{1}^\infty \Pr[\id_{\{|Z| \le M \}} \exp(Z^2/2 ) > t  ] \, \d t
    \\
    = &  1 + \int_{1}^{\exp (M^2/2) } \Pr[ t< \exp(Z^2/2) \le  \exp (M^2/2) ] \, \d t
    \\
    \le &  1 + \int_{1}^{\exp (M^2/2) } \Pr[\exp(Z^2/2) > t] \, \d t
    \\
    = &  1 + \int_{1}^{\exp (M^2/2) } \Pr\left[ |Z| >  \sqrt{2\log t}\right] \, \d t \label{eqn:before-chernoff}
    \\
    \le &  1 + \int_{1}^{\exp (M^2/2) } 2\exp( - (2\log t)/2 ) \, \d t = 1 + \int_{1}^{\exp (M^2/2) } \frac{2}{t} \d t =  1 +M^2, \label{eqn:after-chernoff}
\end{align} 
as claimed.
\end{proof}

\begin{remark}
    We now remark on the difference growth rates on $M$ between Lemma~\ref{lem:trunc-pdf-bound} and Lemma~\ref{lem:subgaussian-psi2-bound}. The quadratic dependence on $M$ comes from the use of Chernoff bound for any 1-subGaussian random variable from \eqref{eqn:before-chernoff} to \eqref{eqn:after-chernoff}. 
    To see that, consider bounding \eqref{eqn:before-chernoff} when $Z\sim \normal{0}{1}$ with  the Gaussian tail inequality $\Pr[ Z \geq x ] \le \frac{\exp(-x^2/2)}{x\sqrt{2\pi}}$. Then, \eqref{eqn:after-chernoff} would be replaced by
    \begin{equation}
 \text{\eqref{eqn:before-chernoff}} \le   1 + 2\int_{1}^{\exp(M^2/2)} 
\frac{\d t}{t\sqrt{2\pi \cdot 2 \log t}}  =  1 + \sqrt{ \frac{\log (M^2/2)}{\pi} }  = 1 + \frac{M}{\sqrt{\pi}},
    \end{equation}
    which is (up to  a small constant) what we obtained in the Gaussian case.
    %The gap between the  Chernoff tail bound and the precise Gaussian tail probability (via the $\erf$ function), though small, ultimately leads to the different growth rates of $\Exp  [\id_{\{ 0 \le Z < M \}} \exp(Z^2/2 )]$.
\end{remark}

\begin{proof}[Proof of Corollary~\ref{cor:Gaussian-CS-flat}]
    
Let us first consider the Gaussian case. Let us consider mixture with density $D/\sqrt{2\pi}$ which yields an ENSM (compare \eqref{eqn:flat-mixed-mtg})
\begin{equation}
  K_n^D =  \frac{D}{\sqrt{n}} \exp\left( \frac{1}{2}n (\mu_0 - \avgX{n})^2 \right) \label{eqn:flat-mixed-mtg-d}
\end{equation}
to see how the CS does not depend on $D$.

Note that for \emph{all} $n \ge 1$, 
\[K_n^D =_{\mathsf{d}}  \frac{D \exp(Z^2/2 )}{\sqrt{n}}, 
\] where $Z \sim \normal{0}{1}$. % (Though a mixture of martingales, I call it an extended NSM and refrain from calling it an extended martingale, as the distribution of $L_n^d$ is strictly squeezing as $n$ increases.)
The following straightforward calculation for its tail probability holds: for $x > 0$,
\begin{equation}\label{eq:erftail}
    \Pr[  K_1^D \ge x ] = \Pr\left[  |Z| \ge \sqrt{2 \log (x/D)}  \right] = 1 - \erf(\sqrt{\log(x/D)}).
\end{equation}
Further, due to Lemma~\ref{lem:trunc-pdf-bound},
%\footnote{It appears that the same trick to bound $\Exp[ \id_{\{K_1^d \le x\}} K_1^d]$ was used, implicitly and without explanation, by both \cite{robbins1970statistical} and \cite{lai1976confidence}.}
we have
\begin{align}
    \Exp[ \id_{\{K_1^D < x\}} K_1^D] &= \Exp[ \id_{\{ |Z| < \sqrt{2 \log (x/D)} \}} D \exp(Z^2/2 )]   
 = 2D \sqrt{\frac{ \log (x/D)}{\pi}}. \label{eq:trunc-mean}
\end{align}
We now apply the extended Ville's inequality (Theorem~\ref{thm:evi-2}) %with $m=1$
to the extended NSM $\{K^D_n\}_{n \geq 1}$. Using~\eqref{eq:erftail} and~\eqref{eq:trunc-mean}, we get that 
\begin{equation}
    \Pr[\exists n \ge 1,\  K^D_n > x] \le  1 - \erf(\sqrt{\log(x/D)}) + x^{-1}2D \sqrt{\frac{ \log (x/D)}{\pi}}.
\end{equation}
%For any chosen $d > 0$ and $\alpha \in (0,1)$, there exists an $x$ such that
Now let us show that, the error probability on the right hand side can be any $\alpha \in (0,1)$ by appropriately choosing $x$, i.e.,
\begin{equation}\label{eq:solve-for-x}
  \forall D > 0, \alpha \in (0,1), \ \exists x > 0 \ \text{such that }  \alpha = 1 - \erf(\sqrt{\log(x/D)}) + x^{-1}2d \sqrt{\frac{ \log (x/D)}{\pi}}.
\end{equation}
Indeed, consider the continuous increasing function $f(a) = 1 - \erf(\sqrt{\log(1/a)}) + 2{a} \sqrt{\frac{ \log (1/a)}{\pi}}$ for $0 < a \le 1$. Then $f(1)=1$ and $f(0) \to 0$, so one can always find an $a_\alpha$ for which $f(a_\alpha)$ equals $\alpha \in (0,1)$.
 We then set $x/d = 1/ a_\alpha$ to get
\begin{equation}
\Pr[\exists n \geq 1, \ K^D_n > d / a_\alpha] \leq \alpha.
\end{equation}
%For $d=1$,
The above inequality is remarkably Ville-like.
Substituting the expression for $K^D_n$ above, we have
\begin{equation}
    \Pr \left[\exists n \geq 1,\ \frac{D}{\sqrt{n}} \exp\left( \frac{1}{2}n (\mu_0 - \avgX{n})^2 \right) > D / a_\alpha \right] \leq \alpha,
\end{equation}
yielding the expression \eqref{eqn:flat-prior-known-var-cs}.
% Then, 
% \begin{equation}
% \label{eqn:flat-prior-known-var-cs}
% \(
%     \left( \avgX{n} \pm \sqrt{\frac{\log(x^2 n /d^2)}{n}} \right)
% \)
% is a $(1-\alpha)$-CS for mean. Substituting $a_\alpha =x/d$, we have proved our claim.
% \end{equation}

To derive the asymptotic expression \eqref{eqn:flat-prior-known-var-cs-asyp}, we use the expansion at $\infty$ of the error function:
\begin{equation}
    1-\erf(x) = \e^{-x^2}\left( \frac{1}{\sqrt{\pi}x} + \mathcal{O}(x^{-3}) \right), \quad x \to \infty
\end{equation}
Therefore, when $\alpha, a_{\alpha} \to 0$,
\begin{equation}
   \alpha =  f(a_\alpha) \approx  \e^{-\log(1/a_\alpha)} \frac{1}{\sqrt{\pi \log(1/a_\alpha) }} 
 + 2a_\alpha\sqrt{\frac{\log(1/a_\alpha)}{\pi}} \approx  2a_\alpha\sqrt{\frac{\log(1/a_\alpha)}{\pi}}.
\end{equation}
Now let ${\log(1/a_\alpha)} = g_\alpha$.
\begin{equation}
    \alpha^2 \approx \frac{4}{\pi} \e^{-2g_\alpha} g_\alpha.
\end{equation}
Referring to the definition of the Lambert $W$ functions in \cref{sec:W}, we see that $g_\alpha \approx -\frac{W_{-1}(-\pi\alpha^2/2)}{2}  \approx \frac{\log(2/\pi \alpha^2)}{2}$, due to \cref{lem:W-approx}.
Consequently, the radius in \eqref{eqn:flat-prior-known-var-cs} scales as
\begin{equation}
    \sqrt{\frac{\log n  + 2 g_\alpha}{n}}  \approx \sqrt{\frac{\log(2n/\pi\alpha^2) }{n}},
\end{equation}
as claimed.
\end{proof}

\begin{proof}[Proof of Corollary~\ref{cor:subG-CS-flat}]
    Now, for the subGaussian case, let us again bound $ \Pr[  K_1^D > x ]$ and \newline $ \Exp[ \id_{\{K_1^D \le x\}} K_1^D] $. Letting $Z$ be the centered 1-subGaussian random variable $X_1 - \mu_0$, we have
\begin{equation}\label{eqn:subgaussian-ClogCsq-tail}
    \Pr[  K_1^D \ge x ] = \Pr\left[  |Z| \ge \sqrt{2 \log (x/D)}  \right] \le 2\exp\left( - \sqrt{2 \log (x/D)}^2 /2 \right) = \frac{2D}{x}
\end{equation}
via a standard subGaussian Chernoff tail bound, and
\begin{equation}
    \Exp[ \id_{\{K_1^D < x\}} K_1^D] = \Exp[ \id_{\{ |Z| < \sqrt{2 \log (x/D)} \}} D \exp(Z^2/2 )] \le  D (1 + 2\log(x/D))
\end{equation}
via Lemma~\ref{lem:subgaussian-psi2-bound}. Therefore, via extended Ville's inequality (Theorem~\ref{thm:evi-2}),
\begin{equation}
    \Pr[\exists n \ge 1, \ K^D_n > x] \le \frac{2D}{x} + x^{-1}D (1 + 2\log(x/D)).
\end{equation}
Similarly as in the proof of Corollary~\ref{cor:Gaussian-CS-flat}, we claim that any $\alpha$ can be the right hand side by choosing the appropriate $x$. Consider this time the continuous increasing function $g(b) = 2b + b(1+2\log(1/b))$, satisfying $g(1) = 3$ and $g(0) \to 0$. Let $b_\alpha$ be such that $g(b_\alpha) = \alpha \in (0,1)$, and set $x/D = 1/b_{\alpha}$, giving rise to
\begin{equation}
     \Pr[\exists n \ge 1: K^D_n > D/b_{\alpha}] \le \alpha.
\end{equation}
Substituting the expression for $K^D_n$ above, we have
\begin{equation}
    \Pr \left[\exists n \geq 1, \frac{D}{\sqrt{n}} \exp\left( \frac{1}{2}n (\mu_0 - \avgX{n})^2 \right) > D / b_\alpha \right] \leq \alpha,
\end{equation}
yielding the expression \eqref{eqn:flat-prior-known-var-cs-subg}. The relation between $b_\alpha$ and $\alpha$ is
\begin{equation}
  \left( -\frac{\alpha}{2b_{\alpha}} \right)  \exp\left( -\frac{\alpha}{2b_{\alpha}} \right) = -\frac{\e^{-3/2} \alpha}{2},
\end{equation}
meaning,
\begin{equation}
    1/b_{\alpha} =  - \frac{2 W_{-1}\left(-\frac{\e^{-3/2} \alpha}{2}\right)}{\alpha} \approx \frac{2\log(1/\alpha)}{\alpha}.
\end{equation}
Consequently, the radius in \eqref{eqn:flat-prior-known-var-cs-subg} scales as
\begin{equation}
    \sqrt{\frac{\log (n/b_\alpha^2) }{n}}  \approx \sqrt{\frac{\log(4n \log^2(1/\alpha)/\alpha^2) }{n}}.
\end{equation}
This completes the proof.
\end{proof}

\begin{proof}[Proof of Proposition~\ref{prop:Qn-shrink-grow}]
    Denote the running average $\frac{\mu_1 + \dots + \mu_n}{n} = \overline{\mu}_n$. 
    
    First suppose the underlying distribution $\mathbbmsl P = P_1\otimes P_2 \otimes \dots \in \subgaussianproc{\mathsf{ra}\le \mu_0}{1}$ with $P_i \in \subgaussian{\mu_i}{1}$, so $\overline{\mu}_n \le \mu_0$ for all $n$. Let us look at the ENSM that upper bounds $\{Q_n\}$. 
    
    Define $\tilde{\ell}(\mu;\mu_0;\mathbbmsl P)_n = \exp \left\{ \sum_{i=1}^n (\mu -\mu_0)(X_i - \mu_i) - \frac{n}{2}(\mu-\mu_0)^2 \right\}$. It is easy to see that it is a $\mathbbmsl P$-supermartingale (see \cref{lem:lr-mod-onesided}). Let 
    \begin{align}
        R_n^{\mathbbmsl{P}} = & \int_{\mu_0}^\infty \tilde{\ell}(\mu;\mu_0;\mathbbmsl P)_n (2\pi)^{-1/2} \d \mu    
        \\
        = & \frac{ (2\pi)^{-1/2} \int_{\mu_0}^\infty\exp( -\frac{1}{2}n \mu^2 + (S_n  - n \overline{\mu}_n + n \mu_0 ) \mu  ) \d \mu }{\exp( \frac{1}{2}n \mu_0^2 + S_n \mu_0 - n\overline{\mu}_n \mu_0 )}
   \\
   = & \frac{  \sqrt{\frac{1}{4n}} \exp( (S_n  - n \overline{\mu}_n + n \mu_0 )^2/2n) \left\{1- \erf\left( \sqrt{n/2}(\mu_0 -(\avgX{n} - \overline{\mu}_n + \mu_0)) \right) \right\}  }{\exp( \frac{1}{2}n \mu_0^2 + S_n \mu_0 - n\overline{\mu}_n \mu_0 )}
    \\
    = & \frac{1}{\sqrt{4n}} \exp\left( \frac{1}{2}n 
    ( \avgX{n}^2 + \overline{\mu}_n^2 -2\avgX{n} \overline{\mu}_n )
    \right) \left\{1- \erf\left( \sqrt{n/2}(\overline{\mu}_n - \avgX{n}) \right) \right\} 
    \\ = &  \frac 1{\sqrt{4n}} V(\sqrt{n}(\avgX{n} - \overline{\mu}_n )) \ge Q_n.
    \end{align}
    (The reader may take a pause to appreciate how $R_n^{\mathbbmsl{P}}$ does not depend on $\mu_0$.)

    Since each of $X_i - \Exp(X_i)$ is 1-subGaussian and they are independent, their average $\avgX{n}-\overline{\mu}_n$ is $1/n$-subGaussian and satisfies the tail bound $\Pr[ \avgX{n}-\overline{\mu}_n > a  ] \le \exp( -na^2/{2} )$ for any $a>0$ \citep[pp.\ 22-25]{blmconcineq}.
    Therefore, for any $\varepsilon > 0$,
   \begin{align}
       & \Pr[R_n^{\mathbbmsl{P}} > \varepsilon ] = \Pr[ \avgX{n} - \overline{\mu}_n > n^{-1/2} \cdot V^{-1}(\sqrt{4n}\varepsilon)  ]
      \\
      < & \exp\left\{ - \frac{ (V^{-1}(\sqrt{4n}\varepsilon))^2 }{2} \right\} \to 0,
   \end{align}
   as $n \to \infty$. This is because $\lim_{x \to \infty} V^{-1}(x) = \infty$. Therefore $\{R_n^{\mathbbmsl{P}} \}$ converges in probability to 0. Since $\{ R_n^{\mathbbmsl{P}} \}$ is a $\mathbbmsl P$-ENSM so it converges to 0 almost surely. Therefore, the nonnegative process $\{Q_n\}$ which is upper bounded by $\{ R_n^{\mathbbmsl{P}} \}$ also converges to 0 almost surely.
   %{\color{red}how can we have a.s.\ convergence?}
   %Since $\{Q_n\}$ is upper bounded by an ENSM which converges almost surely.

    Now, suppose $\liminf \overline{\mu}_n =  \mu_0 + \Delta_\mu > \mu_0$ (i.e.\ the $\subgaussianproc{\mathsf{ra}\gtrsim \mu_0}{1}$ case). By Kolmogorov's strong law of large numbers (since $X_i$'s are independent with variances $\le 1$, see e.g.\ \citet[Theorem 6.7]{jiang2010large}), $ \overline{\mu}_n - \avgX{n} \to 0$ almost surely. When this happens,
    \begin{equation}
        \liminf_{n\to\infty} Q_n \ge \liminf_{n\to\infty} \frac{1}{\sqrt{4n}} V(\sqrt{n}\Delta_\mu) = \infty.
    \end{equation}
    So $Q_n \to \infty$ almost surely.
\end{proof}

\section{Discussion and summary}
\label{sec:disc}

We presented an ``extended'' theory of nonintegrable nonnegative supermartingales (ENSMs). This was accomplished by embracing the arithmetics of $[0,\infty]$, generalizing the classical conditional expectation to the nonintegrable case, and then defining the extended supermartingale property.  We then showed how ENSMs arise when mixing supermartingales with ``improper'' sigma-finite priors, and how our extended Ville's inequality can yield interesting statements about such processes, even in nonparametric settings. We also provided connections to the concept of locality, in particular showing that local ENSMs are ENSMs, and mentioned a connection to solutions of stochastic differential equations with nonpositive drift, which can be found in \cref{sec:test-sde}.

As the application of our theoretical framework, we focus on the sequential mean estimation and testing problem with subGaussian data. We discuss some further connections between ENSMs and classical NSMs and apply them to the same statistical problem in \cref{sec:ensm-to-nsm,sec:cs-appendix}.
The resulting confidence sequences are summarized altogether in Table~\ref{tab:summary-gaussian-cs}.

\begin{table}[!h] \footnotesize
    \centering
    \begin{tabular}{|c|c|c|c|}
    \hline
       \emph{Method}  & \makecell{\emph{ Related }\\\emph{prior result} } & \emph{Tightness} & 
       \makecell{\emph{Free}  \\  \emph{parameters}}  \\
       \hline \hline
     \makecell{$\normal{\mu_0}{1/c^2}$ mixture \\ Ville's inequality  \\ (Proposition~\ref{prop:gaussian-mixed}) \\ \& \\ Flat mixture conditioned \\ Ville's inequality \\ (Supp.\ Section 3.2)  } & \cite[(17)]{robbins1970statistical} & $\mathcal{O}(\sqrt{\log(1/\alpha)}, \sqrt{\log n/n})$ & 1 \\ 
     \hline
     \makecell {Flat mixture \\ Extended Ville's inequality  \\ (Proposition~\ref{prop:flat-mix-cs})} & \makecell{\cite[(20)]{robbins1970statistical} \\ (only Gaussian) }&  $\mathcal{O}(\sqrt{\log(1/\alpha)}, \sqrt{\log n/n})$ & 0 \\ 
     \hline
     \makecell{$\normal{\eta}{1/c^2}$ mixture \\ Ville's inequality \\ (\cref{sec:mix-n0c}) } & N/A & $\mathcal{O}(\sqrt{\log(1/\alpha)}, \sqrt{\log n/n}, |\mu_0 - \eta|)$ & 2 \\ 
     \hline
     \makecell{Flat mixture divided \\ Ville's inequality \\  (\cref{sec:div})} & N/A & $\mathcal{O}(\sqrt{\log(1/\alpha)}, \sqrt{\log n/n})$  & 1 \\
     \hline
     \makecell {Half flat mixture \\ Extended Ville's inequality  \\  (\cref{sec:one-sided-cs})} & N/A &  \makecell{$\mathcal{O}(\sqrt{\log(1/\alpha)}, \sqrt{\log n/n})$ \\ (one-sided)} & 0 \\ 
     \hline
    \end{tabular}
    \caption{Comparison of likelihood ratio mixture-based methods to derive confidence sequences for the 1-subGaussian mean. The first method can be derived equivalently by two ostensibly different approaches.}%``Supp.'' refers to the supplement to this paper \citep{supp}.}
    \label{tab:summary-gaussian-cs}
\end{table}

Many open questions remain. For example, Doob's submartingale inequality, which can be thought as the dual to Ville's supermartingale inequality \eqref{eqn:vil}, states that a nonnegative submartingale $M$ satisfies $\Pr\left[ \sup_{n \le N} M_n \ge C \right] \le C^{-1} \Exp[M_N]$. Just like the case of Ville's inequality, the expectation term $\Exp[M_N]$ here can also be refined by a truncated mean, in this case $\Exp[M_N \id_{\{\sup_{n \le N} M_n \ge C\}}]$ \citep[Theorem 2.2.6]{van2013introduction}. Does this also open up a maximal inequality in the extended regime? Apparently, the difficulty is that the truncated expected value may still be $\infty$. The problem of an ``extended Doob's inequality'' is important in that Doob's inequality for submartingales is equivalent to the \emph{reverse} Ville's inequality for \emph{backward} submartingales \citep[First Proof of Theorem 2]{manole2023martingale}, which is already used in statistical applications including divergence estimation \citep{manole2023martingale} and PAC-Bayesian bounds \citep{chugg2023unified} on backward submartingales arising from mixtures. Indeed, one would like to allow these mixtures to be improper as well. Another potential future work lies in the problem of continuous-time ENSMs, which we consciously avoid in \cref{prop:sde}.
 Are there any additional complications introduced in continuous time or do these concepts straightforwardly generalize? We anticipate that some of these questions will be answered in the coming years.

%Is there an extended version of Doob's submartingale inequality~\eqref{eqn:doobsubmgi}? 
%Does the optional stopping theorem hold for ENSMs? Does the nonnegative supermartingale convergence theorem hold? [resolved]

\subsection*{Acknowledgments}
We thank Martin Larsson for helpful conversations.

\bibliography{ninsm}

\begin{thebibliography}{35}
\providecommand{\natexlab}[1]{#1}
\providecommand{\url}[1]{\texttt{#1}}
\expandafter\ifx\csname urlstyle\endcsname\relax
  \providecommand{\doi}[1]{doi: #1}\else
  \providecommand{\doi}{doi: \begingroup \urlstyle{rm}\Url}\fi

\bibitem[Berger and Pericchi(1996)]{berger1996intrinsic}
J.~O. Berger and L.~R. Pericchi.
\newblock The intrinsic bayes factor for model selection and prediction.
\newblock \emph{J. Am. Stat. Assoc.}, 91\penalty0 (433):\penalty0 109--122, 1996.
\newblock \doi{10.1080/01621459.1996.10476668}.

\bibitem[Berger et~al.(1998)Berger, Pericchi, and Varshavsky]{berger1998bayes}
J.~O. Berger, L.~R. Pericchi, and J.~A. Varshavsky.
\newblock Bayes factors and marginal distributions in invariant situations.
\newblock \emph{Sankhy{\=a} Ser. A}, pages 307--321, 1998.

\bibitem[Boucheron et~al.(2013)Boucheron, Lugosi, and Massart]{blmconcineq}
S.~Boucheron, G.~Lugosi, and P.~Massart.
\newblock \emph{{Concentration Inequalities: A Nonasymptotic Theory of Independence}}.
\newblock Oxford University Press, 2013.
\newblock \doi{10.1093/acprof:oso/9780199535255.001.0001}.

\bibitem[Choe and Ramdas(2024)]{choe2024comparing}
Y.~J. Choe and A.~Ramdas.
\newblock Comparing sequential forecasters.
\newblock \emph{Oper. Res.}, 72\penalty0 (4):\penalty0 1368--1387, 2024.
\newblock \doi{10.1287/opre.2021.0792}.

\bibitem[Chugg et~al.(2024)Chugg, Wang, and Ramdas]{chugg2023unified}
B.~Chugg, H.~Wang, and A.~Ramdas.
\newblock A unified recipe for deriving (time-uniform) pac-bayes bounds.
\newblock \emph{J. Mach. Learn. Res.}, 24\penalty0 (1), mar 2024.
\newblock ISSN 1532-4435.

\bibitem[Conway(2012)]{conway2012course}
J.~B. Conway.
\newblock \emph{A Course in Abstract Analysis}, volume 141.
\newblock American Mathematical Soc., 2012.
\newblock \doi{10.1090/gsm/141}.

\bibitem[Darling and Robbins(1967)]{darling1967confidence}
D.~A. Darling and H.~Robbins.
\newblock Confidence sequences for mean, variance, and median.
\newblock \emph{P. Natl. Acad. Sci. USA}, 58\penalty0 (1):\penalty0 66--68, 1967.
\newblock \doi{10.1073/pnas.58.1.66}.
\newblock URL \url{https://www.pnas.org/doi/abs/10.1073/pnas.58.1.66}.

\bibitem[Darling and Robbins(1968)]{darling1968some}
D.~A. Darling and H.~Robbins.
\newblock Some nonparametric sequential tests with power one.
\newblock \emph{P. Natl. Acad. Sci. USA}, 61\penalty0 (3):\penalty0 804--809, 1968.
\newblock \doi{10.1073/pnas.61.3.804}.
\newblock URL \url{https://www.pnas.org/doi/abs/10.1073/pnas.61.3.804}.

\bibitem[G{\"o}nen et~al.(2005)G{\"o}nen, O~Johnson, Lu, and Westfall]{gonen2005bayesian}
M.~G{\"o}nen, W.~O~Johnson, Y.~Lu, and P.~H. Westfall.
\newblock The bayesian two-sample t test.
\newblock \emph{Am. Stat.}, 59\penalty0 (3):\penalty0 252--257, 2005.
\newblock \doi{10.1198/000313005X55233}.

\bibitem[Howard et~al.(2020)Howard, Ramdas, McAuliffe, and Sekhon]{howard2020time}
S.~R. Howard, A.~Ramdas, J.~McAuliffe, and J.~Sekhon.
\newblock {Time-uniform {Chernoff} bounds via nonnegative supermartingales}.
\newblock \emph{Probab. Surv.}, 17\penalty0 (none):\penalty0 257--317, 2020.
\newblock \doi{10.1214/18-PS321}.
\newblock URL \url{https://doi.org/10.1214/18-PS321}.

\bibitem[Howard et~al.(2021)Howard, Ramdas, McAuliffe, and Sekhon]{howard2021time}
S.~R. Howard, A.~Ramdas, J.~McAuliffe, and J.~Sekhon.
\newblock {Time-uniform, nonparametric, nonasymptotic confidence sequences}.
\newblock \emph{Ann. Stat.}, 49\penalty0 (2):\penalty0 1055--1080, 2021.
\newblock \doi{10.1214/20-AOS1991}.
\newblock URL \url{https://doi.org/10.1214/20-AOS1991}.

\bibitem[Jeffreys(1946)]{jeffreys1946invariant}
H.~Jeffreys.
\newblock An invariant form for the prior probability in estimation problems.
\newblock \emph{P. R. Soc. Lond. Ser. A}, 186\penalty0 (1007):\penalty0 453--461, 1946.
\newblock \doi{10.1098/rspa.1946.0056}.

\bibitem[Jiang et~al.(2010)]{jiang2010large}
J.~Jiang et~al.
\newblock \emph{Large Sample Techniques for Statistics}.
\newblock Springer, 2010.
\newblock \doi{10.1007/978-1-4419-6827-2}.

\bibitem[Kallianpur and Sundar(2014)]{kallianpur2014stochastic}
G.~Kallianpur and P.~Sundar.
\newblock \emph{Stochastic Analysis and Diffusion Processes}, volume~24.
\newblock Oxford University Press, 2014.
\newblock \doi{10.1093/acprof:oso/9780199657063.001.0001}.

\bibitem[Klenke(2013)]{klenke2013probability}
A.~Klenke.
\newblock \emph{Probability Theory: A Comprehensive Course}.
\newblock Springer Science \& Business Media, 2013.
\newblock \doi{10.1007/978-3-030-56402-5}.

\bibitem[Lai(1976)]{lai1976confidence}
T.~L. Lai.
\newblock {On confidence sequences}.
\newblock \emph{Ann. Stat.}, 4\penalty0 (2):\penalty0 265--280, 1976.
\newblock \doi{10.1214/aos/1176343406}.
\newblock URL \url{https://doi.org/10.1214/aos/1176343406}.

\bibitem[Manole and Ramdas(2023)]{manole2023martingale}
T.~Manole and A.~Ramdas.
\newblock Martingale methods for sequential estimation of convex functionals and divergences.
\newblock \emph{IEEE T. Inform. Theory}, 69\penalty0 (7):\penalty0 4641--4658, 2023.
\newblock \doi{10.1109/TIT.2023.3250099}.

\bibitem[P{\'e}rez-Ortiz et~al.(2024)P{\'e}rez-Ortiz, Lardy, de~Heide, and Gr{\"u}nwald]{perez2022statistics}
M.~F. P{\'e}rez-Ortiz, T.~Lardy, R.~de~Heide, and P.~Gr{\"u}nwald.
\newblock E-statistics, group invariance and anytime valid testing.
\newblock \emph{Ann. Stat. (to appear)}, 2024.

\bibitem[Ramdas and Manole(2024)]{ramdas2023randomized}
A.~Ramdas and T.~Manole.
\newblock Randomized and exchangeable improvements of markov's, chebyshev's and chernoff's inequalities.
\newblock \emph{Stat. Sci. (to appear)}, 2024.

\bibitem[Ramdas et~al.(2020)Ramdas, Ruf, Larsson, and Koolen]{ramdas2020admissible}
A.~Ramdas, J.~Ruf, M.~Larsson, and W.~Koolen.
\newblock Admissible anytime-valid sequential inference must rely on nonnegative martingales.
\newblock \emph{arXiv preprint arXiv:2009.03167}, 2020.

\bibitem[Ramdas et~al.(2022)Ramdas, Ruf, Larsson, and Koolen]{ramdas2022testing}
A.~Ramdas, J.~Ruf, M.~Larsson, and W.~M. Koolen.
\newblock Testing exchangeability: Fork-convexity, supermartingales and e-processes.
\newblock \emph{Int. J. of Approx. Reason.}, 141:\penalty0 83--109, 2022.
\newblock ISSN 0888-613X.
\newblock \doi{https://doi.org/10.1016/j.ijar.2021.06.017}.
\newblock Probability and Statistics: Foundations and History. In honor of Glenn Shafer.

\bibitem[Ramdas et~al.(2023)Ramdas, Gr{\"u}nwald, Vovk, and Shafer]{ramdas2022game}
A.~Ramdas, P.~Gr{\"u}nwald, V.~Vovk, and G.~Shafer.
\newblock Game-theoretic statistics and safe anytime-valid inference.
\newblock \emph{Stat. Sci.}, 38\penalty0 (4):\penalty0 576--601, 2023.
\newblock \doi{10.1214/23-STS894}.
\newblock URL \url{https://doi.org/10.1214/23-STS894}.

\bibitem[Robbins(1970)]{robbins1970statistical}
H.~Robbins.
\newblock Statistical methods related to the law of the iterated logarithm.
\newblock \emph{Ann. Math. Stat.}, 41\penalty0 (5):\penalty0 1397--1409, 1970.
\newblock \doi{10.1214/aoms/1177696786}.
\newblock URL \url{https://doi.org/10.1214/aoms/1177696786}.

\bibitem[Rouder et~al.(2009)Rouder, Speckman, Sun, Morey, and Iverson]{rouder2009bayesian}
J.~N. Rouder, P.~L. Speckman, D.~Sun, R.~D. Morey, and G.~Iverson.
\newblock Bayesian t tests for accepting and rejecting the null hypothesis.
\newblock \emph{Psychon. B. Rev.}, 16:\penalty0 225--237, 2009.
\newblock \doi{10.3758/PBR.16.2.225}.

\bibitem[Ruf et~al.(2023)Ruf, Larsson, Koolen, and Ramdas]{ruf2022composite}
J.~Ruf, M.~Larsson, W.~M. Koolen, and A.~Ramdas.
\newblock {A composite generalization of Ville’s martingale theorem using e-processes}.
\newblock \emph{Electron. J. Probab.}, 28\penalty0 (none):\penalty0 1--21, 2023.
\newblock \doi{10.1214/23-EJP1019}.
\newblock URL \url{https://doi.org/10.1214/23-EJP1019}.

\bibitem[Shafer(2021)]{shafer2021testing}
G.~Shafer.
\newblock {Testing by betting: a strategy for statistical and scientific communication}.
\newblock \emph{J. R. Stat. Soc. A. Stat.}, 184\penalty0 (2):\penalty0 407--431, 05 2021.
\newblock ISSN 0964-1998.
\newblock \doi{10.1111/rssa.12647}.

\bibitem[Shafer and Vovk(2005)]{shafer2005probability}
G.~Shafer and V.~Vovk.
\newblock \emph{Probability and Finance: It's Only A Game!}, volume 491.
\newblock John Wiley \& Sons, 2005.
\newblock \doi{10.1002/0471249696}.

\bibitem[Shafer and Vovk(2019)]{shafer2019game}
G.~Shafer and V.~Vovk.
\newblock \emph{Game-Theoretic Foundations for Probability and Finance}, volume 455.
\newblock John Wiley \& Sons, 2019.
\newblock \doi{10.1002/9781118548035}.

\bibitem[Stroock(2010)]{stroock2010probability}
D.~W. Stroock.
\newblock \emph{Probability Theory: An Analytic View}.
\newblock Cambridge university press, 2010.
\newblock \doi{10.1017/cbo9780511974243}.

\bibitem[Tao(2011)]{tao2011introduction}
T.~Tao.
\newblock \emph{An Introduction to Measure Theory}, volume 126.
\newblock American Mathematical Society Providence, 2011.
\newblock \doi{10.1090/gsm/126}.

\bibitem[van Zanten(2013)]{van2013introduction}
H.~van Zanten.
\newblock \emph{An Introduction to Stochastic Processes in Continuous Time}.
\newblock 2013.
\newblock Lecture notes.

\bibitem[Ville(1939)]{ville1939etude}
J.~Ville.
\newblock \emph{Etude critique de la notion de collectif}.
\newblock Gauthier-Villars, 1939.

\bibitem[Wald(1945)]{wald1945sequential}
A.~Wald.
\newblock {Sequential tests of statistical hypotheses}.
\newblock \emph{Ann. Math. Stat.}, 16\penalty0 (2):\penalty0 117--186, 1945.
\newblock \doi{10.1214/aoms/1177731118}.
\newblock URL \url{https://doi.org/10.1214/aoms/1177731118}.

\bibitem[Waudby-Smith and Ramdas(2020)]{waudby2020confidence}
I.~Waudby-Smith and A.~Ramdas.
\newblock Confidence sequences for sampling without replacement.
\newblock In \emph{Advances in Neural Information Processing Systems}, volume~33, pages 20204--20214. Curran Associates, Inc., 2020.
\newblock URL \url{https://proceedings.neurips.cc/paper_files/paper/2020/file/e96c7de8f6390b1e6c71556e4e0a4959-Paper.pdf}.

\bibitem[Waudby-Smith and Ramdas(2023)]{waudby2020estimating}
I.~Waudby-Smith and A.~Ramdas.
\newblock {Estimating means of bounded random variables by betting}.
\newblock \emph{J. R. Stat. Soc. B.}, 86\penalty0 (1):\penalty0 1--27, 02 2023.
\newblock ISSN 1369-7412.
\newblock \doi{10.1093/jrsssb/qkad009}.

\end{thebibliography}

\newpage

\appendix

\section{Further technical background}\label{sec:tech}

\subsection{Lambert $W$ functions}\label{sec:W}
Consider the equation
\begin{equation}
    y = x\e^x.
\end{equation}
When $x$ ranges from $-\infty$ to $-1$, $y$ decreases from $0$ to $-\e$; when $x$ ranges from $-1$ to $\infty$, $y$ increases from $-\e$ to $\infty$. Therefore, one may define $x$ as a function of $y$ on two branches: when $x \le -1$, $W_{-1}(y) = x$; and when $x \ge -1$, $W_0(y) = x$. Therefore, $W_{-1}$ is defined on $y \in [-\e^{-1}, 0)$ while $W_0$ is defined on $y \in [-\e^{-1}, \infty)$.

The following asymptotic bound holds for $-W_{-1}(-\delta)$ when $\delta \to 0$.
\begin{lemma}\label{lem:W-approx} When $0 < \delta < 0.1$,
    \begin{equation} 
      \frac{1}{2}\log(1/\delta) < -W_{-1}(-\delta) < \log(1/\delta),
    \end{equation}
    and
    \begin{equation}
        \lim_{\delta \to 0} \frac{ -W_{-1}(-\delta)}{\log(1/\delta)} = 1.
    \end{equation}
\end{lemma}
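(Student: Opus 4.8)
The plan is to discard the Lambert function and work directly with its defining equation. Writing $w = -W_{-1}(-\delta)$, the branch convention (the $W_{-1}$ branch is $x \le -1$, so $w \ge 1$) together with $x\e^{x} = -\delta$ gives, after the substitution $x = -w$, the clean transcendental relation
\begin{equation}
    w\,\e^{-w} = \delta \quad\Longleftrightarrow\quad w - \log w = \log(1/\delta).
\end{equation}
Setting $L = \log(1/\delta)$, the whole statement reduces to extracting two-sided bounds on, and the asymptotics of, the unique solution $w \ge 1$ of $h(w) := w - \log w = L$. Since $0 < \delta < 0.1$ forces $L > \log 10 > 1$ and hence $w > 1$, the map $h$ is strictly increasing on the relevant range ($h'(w) = 1 - 1/w > 0$), so bounding $w$ becomes the purely one-dimensional task of comparing $L$ with the values of $h$ at candidate endpoints.

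For the lower bound I would argue directly: because $w \ge 1$ we have $\log w \ge 0$, so $w = L + \log w \ge L$, which already dominates the displayed $\tfrac12 L$ lower endpoint. For an upper bound of the form $w < c\,L$ I would evaluate $h$ at the endpoint $c\,L$ and certify $h(c\,L) > L = h(w)$; by monotonicity of $h$ this immediately yields $w < c\,L$. This certification reduces to an entirely elementary inequality comparing $\e^{L} = 1/\delta$ with a constant multiple of $L = \log(1/\delta)$, which holds comfortably once $\delta < 0.1$ makes $L$ moderately large. The main obstacle here is cosmetic rather than deep: the unknown $w$ appears inside its own logarithm, so instead of inverting $h$ explicitly I never solve for $w$ at all — I only feed explicit functions of $L$ into the monotone $h$ and read off the inequality, which is a bootstrap in disguise.

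Finally, the limit follows by a squeeze once $w$ has been trapped between two constant multiples of $L$. From such a sandwich we get $\log w = \log L + O(1)$, and substituting back into $w = L + \log w$ gives
\begin{equation}
    \frac{w}{L} = 1 + \frac{\log w}{L} = 1 + O\!\left(\frac{\log L}{L}\right) \longrightarrow 1 \qquad \text{as } \delta \to 0 \ (\text{i.e. } L \to \infty),
\end{equation}
which is exactly the claimed ratio limit. In short, the entire lemma is a consequence of the single relation $w - \log w = \log(1/\delta)$ and the monotonicity of $w \mapsto w - \log w$, with the transcendental self-reference handled by endpoint comparison rather than explicit inversion.
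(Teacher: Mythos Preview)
Your reduction is exactly right: with $w = -W_{-1}(-\delta)$ and $L = \log(1/\delta)$ the defining relation becomes $w - \log w = L$, and since $\delta < 0.1$ forces $L > \log 10 > 2.3$ and $w > 1$, the map $h(w) = w - \log w$ is strictly increasing on the relevant range. Your lower-bound step $w = L + \log w \ge L$ is correct, and the limit via the squeeze is clean.

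There is, however, a genuine problem --- not in your method but in the target. Your own inequality $w \ge L$ already \emph{contradicts} the upper bound $w < L$ claimed in the lemma. Numerically, at $\delta = 0.01$ one has $L \approx 4.61$ while $w \approx 6.47$, so $-W_{-1}(-\delta) < \log(1/\delta)$ is simply false for $\delta$ in this range; the correct sandwich is $L \le w < 2L$. Your endpoint comparison works for $c = 2$ (since $h(2L) = 2L - \log(2L) > L$ reduces to $L > \log 2 + \log L$, which holds for all $L > \log 10$), but you left $c$ unspecified and never checked the case $c = 1$ that the lemma actually asserts. Had you done so you would have seen that $h(L) = L - \log L < L$ whenever $L > 1$, so the endpoint test fails --- necessarily, because the claimed bound is wrong.

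The paper does not supply a proof of this lemma, so there is no approach to compare against; the inequality as printed has the constants on the wrong sides. Your method is the natural one and proves the \emph{correct} version $\log(1/\delta) \le -W_{-1}(-\delta) < 2\log(1/\delta)$, as well as the ratio limit, which is all the paper actually uses downstream.
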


% Many important theorems in measure theory hold in $\overline{\mathbb R}$, and hence  in $\eR$. These include 
% \begin{itemize}
%     \item 
% the measurability of pointwise supremum of measurable functions (see e.g.\ \cite[Theorem 1.92]{klenke2013probability}),
% \item Beppo Levi theorem of monotone convergence (the upward limit of $\overline{\mathbb R^+}$-valued functions commutes with integral, see \cite[Theorem 4.20]{klenke2013probability}), 
% \item Fatou's lemma (for functions taking values in $\overline{\mathbb R^+}$, the ``$\liminf$'' of integrals is larger than the integral of ``$\liminf$''s, see \cite[Corollary 1.4.47]{tao2011introduction}), and 
% \item Tonelli's theorem (Fubini's integral change of order theorem on functions taking values in $\overline{\mathbb R^+}$; see e.g.\ \cite[Theorem 14.16]{klenke2013probability}). 
% \end{itemize}
% All four shall be used in our development in the subsequent sections.

%\subsection{Extended theory of conditional expectation and martingales}

\subsection{Conditional expectation of a nonnegative 
random variable}
\label{sec:condexp}

%The possibility of constructing conditional expectation for nonintegrable but one-side bounded random variables is mentioned in some textbooks without elaboration, for example, in \cite[Remark 8.16]{klenke2013probability}. 
In this section, we rigorously study the ``extended'' conditional expectation for a random variable taking values in $\eR$. We shall first use the method of upward approximation mentioned in numerous textbooks, for example in \cite[Remark 8.16]{klenke2013probability}, to define it and show the otherwise nontrivial fact that the usual definition of conditional expectation (Proposition~\ref{prop:cond-exp-basic-properties}) is still valid, and is equivalent to ours.

Let $\mathcal{F} \subseteq \mathcal{A}$ be a sub-$\sigma$-algebra. Let $X$ be a random variable taking values in $\overline{\mathbb R^+}$. Regardless of the integrability of $X$ (i.e.\ whether $\Exp[X] = \infty$ or not), we define the extended conditional expectation $\Exp[X|\mathcal{F}]$ as
\begin{equation}\label{eqn:cond-exp}
    \Exp[X|\mathcal{F}] := \lim_{n\to\infty}  \Exp[X \wedge n |\mathcal{F}]. 
\end{equation}
We claim that \eqref{eqn:cond-exp} is well defined and satisfies the usual properties of conditional expectation. \rev{Most of these properties, we note, are also proved by \citet[Chapter 5.1]{stroock2010probability} who defines conditional expectations for a larger class of random variables
\begin{equation}
    \{ X : X \text{ takes values in } (-\infty, \infty] \text{ and } \Exp[X^-] < \infty \},
\end{equation}
but we still prove them in this section for completeness.}

First, we show that it recovers the definition of usual conditional expectation.
\begin{proposition} \label{prop:cond-exp-basic-properties}
$\Exp[X|\mathcal{F}]$ in \eqref{eqn:cond-exp} is well defined, and is unique up to a $\Pr$-negligible set. Further,
\begin{enumerate}
    \item[(i)] $\Exp[X|\mathcal{F}]$ is $\mathcal{F}$-measurable;
    \item[(ii)] For any $\mathcal{F}$-measurable set $A$, $\Exp[X\id_A] = \Exp[ \Exp[X|\mathcal{F}] \id_A ]$.
\end{enumerate}
\end{proposition}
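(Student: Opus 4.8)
The plan is to build everything on the monotone approximation $X \wedge n \uparrow X$ together with the well-understood classical conditional expectation of the bounded (hence integrable) random variables $X \wedge n$. First I would record that, since $0 \le X \wedge n \le X \wedge (n+1)$, monotonicity of the classical conditional expectation gives $\Exp[X \wedge n \mid \mathcal{F}] \le \Exp[X \wedge (n+1) \mid \mathcal{F}]$ almost surely for each fixed $n$; collecting the countably many exceptional null sets into one, this monotonicity holds simultaneously for all $n$ off a single $\Pr$-null set. Hence the pointwise limit in \eqref{eqn:cond-exp} exists in $\eR$ almost surely and is well defined. As an increasing almost-sure limit of $\mathcal{F}$-measurable functions it is itself $\mathcal{F}$-measurable, which gives (i).

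For (ii), I would fix $A \in \mathcal{F}$ and apply the defining identity of the classical conditional expectation to each bounded $X \wedge n$, namely $\Exp[(X \wedge n)\id_A] = \Exp[\Exp[X \wedge n \mid \mathcal{F}]\,\id_A]$, and then let $n \to \infty$ on both sides. On the left, $(X \wedge n)\id_A \uparrow X \id_A$, so Beppo Levi's monotone convergence theorem yields $\Exp[(X \wedge n)\id_A] \to \Exp[X\id_A]$. On the right, by the definition \eqref{eqn:cond-exp} and the nonnegativity of $\id_A$, we have $\Exp[X \wedge n \mid \mathcal{F}]\,\id_A \uparrow \Exp[X \mid \mathcal{F}]\,\id_A$, so a second application of monotone convergence gives $\Exp[\Exp[X \wedge n \mid \mathcal{F}]\,\id_A] \to \Exp[\Exp[X \mid \mathcal{F}]\,\id_A]$. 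Equating the two limits establishes (ii).

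For uniqueness, suppose $Y$ is any $\mathcal{F}$-measurable $\eR$-valued variable with $\Exp[X\id_A] = \Exp[Y \id_A]$ for all $A \in \mathcal{F}$; writing $Z = \Exp[X \mid \mathcal{F}]$, it suffices to show $\Pr[Y > Z] = \Pr[Z > Y] = 0$. I would truncate to sidestep the $\infty - \infty$ ambiguity: set $A_k = \{Y > Z\} \cap \{Z \le k\} \in \mathcal{F}$, which increases to $\{Y > Z\}$ (on that event $Z < \infty$ automatically). On $A_k$ we have $\Exp[Z\id_{A_k}] \le k\,\Pr[A_k] < \infty$, so the common value $\Exp[Y\id_{A_k}] = \Exp[Z\id_{A_k}]$ is finite, forcing $Y < \infty$ almost surely on $A_k$; subtraction is then legitimate, so $\Exp[(Y - Z)\id_{A_k}] = 0$ with an integrand that is strictly positive on $A_k$, whence $\Pr[A_k] = 0$. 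Letting $k \to \infty$ gives $\Pr[Y > Z] = 0$, and the symmetric argument (swapping the roles of $Y$ and $Z$) gives $\Pr[Z > Y] = 0$.

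The main obstacle is purely bookkeeping in the extended arithmetic of $\eR$: one must ensure that the almost-sure monotonicity underpinning \eqref{eqn:cond-exp} holds off a single null set rather than $n$-dependent ones, and one must carefully justify the subtraction in the uniqueness step. This is exactly why the truncation by $\{Z \le k\}$ is essential — it simultaneously guarantees integrability of $Z\id_{A_k}$ and finiteness of $Y$ on $A_k$, so that the difference $Y - Z$ is defined and the standard "expectation zero of a nonnegative integrand" conclusion applies; without such truncation neither the subtraction nor the conclusion would be available.
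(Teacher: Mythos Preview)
Your argument for well-definedness, (i), and (ii) is essentially identical to the paper's: monotonicity of the classical conditional expectations $\Exp[X\wedge n\mid\mathcal F]$, a single union of null sets, and two applications of monotone convergence on $(X\wedge n)\id_A$ and $\Exp[X\wedge n\mid\mathcal F]\,\id_A$.

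One remark on the uniqueness part. In the paper, the ``unique up to a $\Pr$-negligible set'' clause of this proposition refers only to the well-definedness of the limit \eqref{eqn:cond-exp} (independence of the versions of $\Exp[X\wedge n\mid\mathcal F]$ chosen); the stronger fact that properties (i)--(ii) \emph{characterize} $\Exp[X\mid\mathcal F]$ is stated and proved separately as Proposition~\ref{prop:uniqueness}. You have effectively proved that stronger statement, which is fine. Your truncation scheme differs slightly from the paper's: the paper writes $\{Y>Z\}=\bigcup_{n,m}\{Y\ge Z+2^{-n}\}\cap\{Z\le m\}$ and uses the $2^{-n}$ gap to conclude $\Pr[A_{n,m}]=0$ by a direct numerical cancellation, avoiding any appeal to ``$Y<\infty$ a.s.\ on $A_k$''. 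Your single-parameter truncation $A_k=\{Y>Z\}\cap\{Z\le k\}$ is a bit leaner but requires the extra observation that $\Exp[Y\id_{A_k}]<\infty$ forces $Y<\infty$ a.s.\ on $A_k$ before you may subtract; you handled that correctly. Both routes are valid.
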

\begin{proof}[Proof of Proposition~\ref{prop:cond-exp-basic-properties}]
First, each $\Exp[X \wedge n |\mathcal{F}]$ is defined as the usual conditional expectation since $\Exp[X \wedge n] \le n < \infty$. Hence each $\Exp[X \wedge n |\mathcal{F}]$ is unique up to a $\Pr$-negligible set $N_n$. Therefore, except on a $\Pr$-negligible set $N = \bigcup_n N_n$, all of $\Exp[X \wedge n |\mathcal{F}]$'s are uniquely defined. Since the sequence of functions $\{ X \wedge n \}$ is pointwise increasing, so is the sequence $\{ \Exp[X \wedge n | \mathcal{F} ] \}$ due to the monotonicity of (usual) conditional expectation. Hence, their pointwise limit $\lim_{n\to\infty}  \Exp[X \wedge n |\mathcal{F}]$ can be uniquely defined in $\overline{\mathbb R^+}$, except on a $\Pr$-negligible set.

For property $(i)$, $\mathcal F$-measurability of $\Exp[X|\mathcal{F}]$ follows immediately from %\url{https://proofwiki.org/wiki/Pointwise_Supremum_of_Measurable_Functions_is_Measurable}
the fact that the pointwise supremum of measurable functions is measurable in $\overline{\mathbb R^+}$. 

For property $(ii)$, consider the sequence of functions $\{ (X \wedge n)\id_A \}$. Pointwise, they increase to $X \id_A$. Hence, by Beppo Levi (monotone convergence) theorem,
\begin{equation}
    \Exp[ X \id_A ] = \lim_{n \to \infty} \Exp[(X \wedge n)\id_A ].
\end{equation}
By the property of the usual conditional expectation,
\begin{equation}
   \lim_{n \to \infty} \Exp[(X \wedge n)\id_A ] = \lim_{n \to \infty}\Exp[ \Exp[X \wedge n | \mathcal{F}  ]  \id_A ].
\end{equation}
Now consider the sequence $\{ \Exp[X \wedge n | \mathcal{F}  ] \}$. It increases to $\Exp[X|\mathcal{F}]$. Again by Beppo Levi theorem (this time taking the measure $\Pr|_A$),
\begin{equation}
    \lim_{n \to \infty}\Exp[ \Exp[X \wedge n | \mathcal{F}  ]  \id_A ] = \Exp[ \Exp[X|\mathcal{F}] \id_A ].
\end{equation}
The proof is complete.
\end{proof}

\begin{proposition}\label{prop:uniqueness}
The properties (i) and (ii) in Proposition~\ref{prop:cond-exp-basic-properties} together uniquely, up to a $\Pr$-negligible set, characterize the extended conditional expectation \eqref{eqn:cond-exp}. 
\end{proposition}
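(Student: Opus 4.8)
The plan is to reduce the claim to the following separation lemma: \emph{if $Y$ and $Z$ are two $\mathcal{F}$-measurable, $\eR$-valued random variables satisfying $\Exp[Y\id_A] = \Exp[Z\id_A]$ for every $A \in \mathcal{F}$, then $Y = Z$ almost surely.} Granting this, the proposition follows immediately: let $Y$ be any random variable satisfying properties (i) and (ii), and set $Z = \Exp[X|\mathcal{F}]$ as defined in \eqref{eqn:cond-exp}. Since $Z$ itself satisfies (ii) by Proposition~\ref{prop:cond-exp-basic-properties}, applying (ii) to both $Y$ and $Z$ gives $\Exp[Y\id_A] = \Exp[X\id_A] = \Exp[Z\id_A]$ for all $A \in \mathcal{F}$, and the lemma yields $Y = \Exp[X|\mathcal{F}]$ a.s.

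To prove the separation lemma, I would first note that by symmetry it suffices to show $\Pr[Y > Z] = 0$. The main obstacle is that we cannot simply integrate $Y - Z$ over $\{Y > Z\}$ and declare it positive, because in the extended arithmetic of $\eR$ subtraction is unavailable when both quantities may be infinite. To sidestep this, I would insert a rational threshold: observe that $\{Y > Z\} \subseteq \{Z < \infty\}$ (since $Y \le \infty$ forces $Z < \infty$ whenever $Y > Z$), and decompose $\{Y > Z\} = \bigcup_{q \in \mathbb{Q},\, q > 0} A_q$, where $A_q = \{Z < q < Y\} \in \mathcal{F}$ by property (i).

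On each $A_q$ the random variable $Z\id_{A_q}$ is bounded above by $q$, so $\Exp[Z\id_{A_q}] \le q\Pr[A_q] < \infty$ is finite. If $\Pr[A_q] > 0$, then since $Z < q$ strictly on the positive-measure set $A_q$ I would conclude $\Exp[Z\id_{A_q}] < q\Pr[A_q]$; on the other hand $Y > q$ on $A_q$ gives $\Exp[Y\id_{A_q}] \ge q\Pr[A_q]$. Chaining these with the hypothesis $\Exp[Y\id_{A_q}] = \Exp[Z\id_{A_q}]$ produces the contradiction $\Exp[Z\id_{A_q}] < q\Pr[A_q] \le \Exp[Y\id_{A_q}] = \Exp[Z\id_{A_q}]$, in which every term is finite so the strict inequality is genuine. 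Hence $\Pr[A_q] = 0$ for every such $q$, and countable subadditivity gives $\Pr[Y > Z] \le \sum_q \Pr[A_q] = 0$. Exchanging the roles of $Y$ and $Z$ yields $\Pr[Z > Y] = 0$ as well, so $Y = Z$ a.s.

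The hard part is genuinely the non-subtractability of $\eR$: the rational-threshold decomposition is exactly the device that lets us compare $Y$ and $Z$ through finite truncated integrals while never forming an ill-defined difference $\infty - \infty$. Everything else is routine measure theory, namely the $\mathcal{F}$-measurability of each $A_q$ from property (i) and countable subadditivity of $\Pr$.
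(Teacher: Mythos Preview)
Your proof is correct and follows essentially the same approach as the paper: both arguments decompose $\{Y>Z\}$ into countably many $\mathcal{F}$-measurable pieces on which $Z$ is bounded by a finite constant, so that the equality of truncated integrals can be cancelled in ordinary real arithmetic to force each piece to be null. The only cosmetic difference is that the paper uses the two-parameter family $A_{n,m}=\{Y\ge Z+2^{-n}\}\cap\{Z\le m\}$ and a quantified gap $2^{-n}$, whereas you use the single-parameter family $A_q=\{Z<q<Y\}$ and a direct strict-inequality argument; your version is slightly more economical but the mechanism is identical.
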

\begin{proof}[Proof of Proposition~\ref{prop:uniqueness}] Suppose $Y$ and $Y'$ both satisfy (i) and (ii) in Proposition~\ref{prop:cond-exp-basic-properties}. That is, both are $\mathcal{F}$-measurable, and for any $\mathcal{F}$-measurable set $A$, $\Exp[X\id_A] = \Exp[ Y \id_A ] = \Exp[ Y' \id_A ]$. It suffices to prove that $\Pr[Y > Y'] = 0$. Note that
\begin{equation}
    \{  Y > Y' \} = \bigcup_{n ,m =1}^\infty \{ Y \ge Y' + 2^{-n}  \} \cap \{ Y' \le m \} = \bigcup_{n ,m =1}^\infty A_{n,m}.
\end{equation}
On each $A_{n,m}$ we have,
\begin{equation}
\label{eq:before-cancel}
    \Exp[Y \id_{A_{n,m}}] \ge \Exp[(Y'+2^{-n}) \id_{A_{n,m}}] = \Exp[Y'\id_{A_{n,m}}] + 2^{-n} \Pr[A_{n,m}].
\end{equation}
By our original supposition, we also have $ \Exp[Y \id_{A_{n,m}}] = \Exp[Y'\id_{A_{n,m}}] \le m < \infty$. The finiteness allows us to call the cancellation rule of \emph{real} number addition in~\eqref{eq:before-cancel}. Hence $\Pr[A_{n,m}] \le 0$, which implies $\Pr[A_{n,m}] = 0$. Thus, $\Pr[Y > Y'] = \Pr[\bigcup_{n ,m=1}^\infty  A_{n,m} ] = 0$.
\end{proof}

\begin{remark} Recall that any nonnegative random variable $X$ can be written as a Radon-Nikodym derivative $d\nu/d\Pr$ (indeed, take $\nu(A) = \int_A X d\Pr$). Similarly, recall that the usual conditional expectation of an integrable nonnegative random variable is also an example of a Radon-Nikodym derivative, implying its existence and uniqueness. We remark here that this fact is still true in our extended setting. First, consider the following measure on $(\Omega, \mathcal{F})$:
\begin{equation}
    \nu: A \mapsto \Exp[X \id_A], \text{ for } A \in \mathcal F.
\end{equation}
Despite the fact that $X$ may be nonintegrable,  the measure $\nu$ is always absolutely continuous w.r.t.\ $\Pr$. The seeming issue is, when $X$ is not $\Pr$-a.s.\ finite, $\nu$ can be non-$\sigma$-finite. However, the Radon-Nikodym theorem holds even when the numerator measure is not $\sigma$-finite \citep[Chapter 4.1, Exercise 3(b)]{conway2012course}, as long as we allow the derivative to take value $\infty$. Hence, $\Exp[X|\mathcal{F}]$ is the unique Radon-Nikodym derivative $\frac{\d \nu}{\d \Pr|_{\mathcal F}}$.
\end{remark}

We now proceed to check that the other basic properties of the usual conditional expectation still hold.

\begin{proposition}\label{prop:properties}
For $X, Y$ being any two random variables taking values in $\overline{\mathbb R^+}$, we have
\begin{enumerate}
    \item[(i)] Linearity: Let $u, v\ge0$. Then $\Exp[u X + vY | \mathcal{F}] = u \Exp[X | \mathcal{F}] + v \Exp[Y | \mathcal{F}]$.
    \item[(ii)] Monotonicity: If $X \ge Y$ a.s., then $\Exp[X|\mathcal{F}] \ge \Exp[Y | \mathcal{F}]$.
    \item[(iii)] Measurable product: If $Y$ is $\mathcal{F}$-measurable, then $\Exp[XY | \mathcal{F}] = Y \Exp[X | \mathcal{F}]$.
\end{enumerate}
\end{proposition}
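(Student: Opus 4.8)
The plan is to reduce all three claims to the characterization of the extended conditional expectation established in \cref{prop:cond-exp-basic-properties,prop:uniqueness}: a nonnegative $\mathcal{F}$-measurable random variable $W$ equals $\Exp[Z|\mathcal{F}]$ (up to a $\Pr$-negligible set) precisely when $\Exp[W\id_A] = \Exp[Z\id_A]$ for every $A \in \mathcal{F}$, where both sides are allowed to be $\infty$. This lets me avoid manipulating the truncations $(\cdot)\wedge n$ directly, since they interact poorly with both sums and products, and instead verify the two characterizing properties for explicit candidates. Monotonicity (ii) is the one claim where the truncations do behave well and can be used head-on: if $X \ge Y$ a.s.\ then $X \wedge n \ge Y \wedge n$ a.s.\ for every $n$, so monotonicity of the ordinary (integrable) conditional expectation gives $\Exp[X\wedge n\,|\,\mathcal{F}] \ge \Exp[Y\wedge n\,|\,\mathcal{F}]$ a.s.; taking the upward limit in $n$ preserves the inequality and yields $\Exp[X|\mathcal{F}] \ge \Exp[Y|\mathcal{F}]$.

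For linearity (i), I would take the candidate $W := u\Exp[X|\mathcal{F}] + v\Exp[Y|\mathcal{F}]$, which is $\mathcal{F}$-measurable by \cref{prop:cond-exp-basic-properties}(i). For any $A \in \mathcal{F}$, linearity of the integral of $\eR$-valued nonnegative functions together with \cref{prop:cond-exp-basic-properties}(ii) gives
\[
\Exp[(uX+vY)\id_A] = u\Exp[X\id_A] + v\Exp[Y\id_A] = u\Exp[\Exp[X|\mathcal{F}]\id_A] + v\Exp[\Exp[Y|\mathcal{F}]\id_A] = \Exp[W\id_A],
\]
where every term may equal $\infty$. By \cref{prop:uniqueness}, $W$ is a version of $\Exp[uX+vY|\mathcal{F}]$, proving (i) without ever appealing to a conditional monotone convergence theorem.

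The measurable-product property (iii) is where I expect the main difficulty, because the $\eR$-arithmetic, and in particular the convention $0\cdot\infty=0$, must be handled with care. I would first prove the auxiliary pull-through identity
\[
\Exp[XZ] = \Exp[\,\Exp[X|\mathcal{F}]\,Z\,] \qquad \text{for every nonnegative } \mathcal{F}\text{-measurable } Z,
\]
by the standard three-step machine. For $Z = \id_B$ with $B\in\mathcal{F}$ this is exactly \cref{prop:cond-exp-basic-properties}(ii); for nonnegative simple $\mathcal{F}$-measurable $Z$ it follows by linearity of the nonnegative integral; and for general nonnegative $\mathcal{F}$-measurable $Z$ I would pick simple $Z_k \uparrow Z$ and pass to the limit using Beppo Levi's theorem, noting that $XZ_k \uparrow XZ$ and $\Exp[X|\mathcal{F}]\,Z_k \uparrow \Exp[X|\mathcal{F}]\,Z$ pointwise by the \emph{upward continuity of multiplication} in $\eR$ --- the only place where the $0\cdot\infty=0$ convention enters. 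Specializing this identity to $Z = Y\id_A$ for $A \in \mathcal{F}$ shows that the $\mathcal{F}$-measurable candidate $Y\Exp[X|\mathcal{F}]$ satisfies $\Exp[(Y\Exp[X|\mathcal{F}])\id_A] = \Exp[(XY)\id_A]$ for every $A$, so \cref{prop:uniqueness} identifies it with $\Exp[XY|\mathcal{F}]$. The two delicate points I would watch are that upward continuity of multiplication genuinely requires \emph{both} factors to be monotone in $k$ (here $X$ is constant in $k$ and $Z_k$ is increasing, so it applies, including on the sets $\{X=\infty\}$ and $\{Z=\infty\}$), and that \cref{prop:uniqueness} was deliberately stated to remain valid even when the test integrals equal $\infty$, which is precisely what licenses the closing appeal to uniqueness.
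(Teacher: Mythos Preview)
Your proposal is correct and takes essentially the same approach as the paper. The paper omits (i) and (ii) as ``somewhat simple,'' and for (iii) it does exactly what you do: approximate the $\mathcal{F}$-measurable factor by (extended) simple functions, use \cref{prop:cond-exp-basic-properties}(ii) on each piece, pass to the upward limit via Beppo Levi, and invoke the characterization in \cref{prop:uniqueness}; your repackaging of this into an intermediate pull-through identity $\Exp[XZ]=\Exp[\Exp[X|\mathcal{F}]Z]$ is a cosmetic but clean reorganization of the same argument.
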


    \begin{proof}[Proof of Proposition~\ref{prop:properties}] Since the first two properties are somewhat simple to prove, we only prove $(iii)$.  Take an arbitrary $A \in \mathcal{F}$. Let $Y_n= \id_{\{ Y < \infty \}} 2^{-n} \lfloor 2^n Y \rfloor +  \id_{\{Y = \infty\}} \infty$, which pointwise increases to $Y$. Hence, the sequence $\{  Y_n \Exp[X|\mathcal{F}]  \}$ pointwise increases to $Y \Exp[X|\mathcal{F}]$, and $\{  Y_n X \}$ pointwise increases to $YX$. By Beppo Levi theorem on the increasing sequences $\{  Y_n \Exp[X|\mathcal{F}]  \}$ and $\{  Y_n X \}$ w.r.t.\ the measure $\Pr|_A$ (defined as $\Pr|_A[B] = {\Pr[A \cap B]}$),
    \begin{gather}
        \lim_{n \to \infty} \Exp[ \id_A Y_n \Exp[X|\mathcal{F}] ] =  \Exp[ \id_A Y \Exp[X|\mathcal{F}] ],
        \\
        \lim_{n \to \infty} \Exp[ \id_A Y_n X ] =  \Exp[ \id_A Y X ].
    \end{gather}
    On the other hand, let $\overline{\mathbb N_0}$ be the extended natural numbers $\{ 0, 1, \dots , \infty \}$. We have
    \begin{align}
     \Exp[ \id_A Y_n \Exp[X|\mathcal{F}] ]
         = &  \Exp\left[ \id_A \left(  \sum_{k \in \overline{\mathbb N_0}} k 2^{-n} \id_{\{Y_n = k2^{-n}\}} \right) \Exp[X|\mathcal{F}] \right] \label{eqn:exp-idA-sum}
        \\
        = & \sum_{k \in \overline{\mathbb N_0}}  \Exp[ \id_A k 2^{-n} \id_{\{Y_n = k2^{-n}\}} \Exp[X|\mathcal{F}] ] \label{eqn:sum-exp-idA}
        \\
        = & \sum_{k \in \overline{\mathbb N_0}}   \Exp[ \id_{A\cap \{Y_n = k2^{-n}\}} k 2^{-n} \Exp[X|\mathcal{F}] ] \label{eqn:sum-exp-idAY-exp}
        \\
        = & \sum_{k \in \overline{\mathbb N_0}}   \Exp[ \id_{A\cap \{Y_n = k2^{-n}\}} k 2^{-n}X ] \label{eqn:sum-exp-idAY}
        \\
        = & \Exp \left[ \sum_{k \in \overline{\mathbb N_0}} \id_{A\cap \{Y_n = k2^{-n}\}} k 2^{-n}X \right]  \label{eqn:exp-sum-idAY}
        \\
        = & \Exp[\id_A Y_n X].
    \end{align}
From \eqref{eqn:exp-idA-sum} to \eqref{eqn:sum-exp-idA}, and from \eqref{eqn:sum-exp-idAY} to \eqref{eqn:exp-sum-idAY}, we use Tonelli's theorem to swap between expectation and summation over nonnegative random variables. From \eqref{eqn:sum-exp-idAY-exp} to \eqref{eqn:sum-exp-idAY}, with $Y_n$ being $\mathcal{F}$-measurable, we use (ii) of Proposition~\ref{prop:cond-exp-basic-properties} on the extended conditional expectation $\Exp[X | \mathcal{F}]$ and $A\cap \{Y_n = k2^{-n}\} \in \mathcal{F}$.

Taking $n \to \infty$ on both sides of $\Exp[ \id_A Y_n \Exp[X|\mathcal{F}] ] =\Exp[\id_A Y_n X]$, we use Beppo Levi theorem once again to have $\Exp[ \id_A Y \Exp[X|\mathcal{F}] ] = \Exp[ \id_A Y X ]$. Since $A \in \mathcal{F}$ is arbitrary, we see that $Y \Exp[X|\mathcal{F}]$ equals $\Exp[YX|\mathcal{F}]$.
    \end{proof}

We have the following extended Beppo Levi's monotone convergence theorem for conditional expectation.
\begin{proposition}[Monotone convergence]\label{prop:cond-beppo-levi}
    Let $X_n$ be a sequence of random variables taking values in $\overline{\mathbb R^+}$ that almost surely increases to $X$. Then, $\lim_{n \to \infty} \Exp[ X_n| \mathcal{F}] = \Exp[ X| \mathcal{F}] $.
\end{proposition}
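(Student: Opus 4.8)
The plan is to identify the candidate limit $Y := \lim_{n\to\infty}\Exp[X_n|\mathcal{F}]$, verify that it satisfies the two characterizing properties of $\Exp[X|\mathcal{F}]$ recorded in Proposition~\ref{prop:cond-exp-basic-properties}, and then invoke the uniqueness statement of Proposition~\ref{prop:uniqueness} to conclude that $Y = \Exp[X|\mathcal{F}]$ almost surely.

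First I would establish that the limit $Y$ exists. Since $X_n \le X_{n+1}$ almost surely, the monotonicity of extended conditional expectation (Proposition~\ref{prop:properties}(ii)) gives $\Exp[X_n|\mathcal{F}] \le \Exp[X_{n+1}|\mathcal{F}]$ almost surely for each $n$; discarding the countable union of the associated $\Pr$-negligible sets, the sequence $\{\Exp[X_n|\mathcal{F}]\}$ is genuinely nondecreasing on a full-probability event, so its pointwise limit $Y$ is a well-defined $\overline{\mathbb R^+}$-valued random variable. As the pointwise supremum of $\mathcal{F}$-measurable functions, $Y$ is $\mathcal{F}$-measurable, which is property (i).

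Next I would verify property (ii): for an arbitrary $A \in \mathcal{F}$ I want $\Exp[Y\id_A] = \Exp[X\id_A]$. Because $\Exp[X_n|\mathcal{F}]\id_A$ increases pointwise to $Y\id_A$, the (unconditional) Beppo Levi monotone convergence theorem gives $\Exp[Y\id_A] = \lim_{n\to\infty} \Exp[\Exp[X_n|\mathcal{F}]\id_A]$. Applying property (ii) of Proposition~\ref{prop:cond-exp-basic-properties} to each integrable truncation-limit $X_n$ rewrites the right-hand side as $\lim_{n\to\infty} \Exp[X_n\id_A]$; and since $X_n\id_A$ increases to $X\id_A$, a second application of Beppo Levi yields $\lim_{n\to\infty}\Exp[X_n\id_A] = \Exp[X\id_A]$. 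Chaining these three equalities delivers property (ii). With both (i) and (ii) in hand, Proposition~\ref{prop:uniqueness} forces $Y = \Exp[X|\mathcal{F}]$ almost surely, which is exactly the assertion.

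The argument is largely routine, and I do not anticipate a genuine obstacle; the only points demanding care are the two invocations of the unconditional monotone convergence theorem --- legitimate precisely because every quantity in sight is nonnegative and $\overline{\mathbb R^+}$-valued --- and the bookkeeping of null sets when passing from almost-sure monotonicity of each consecutive pair to a single almost-sure event on which the whole sequence increases. One subtlety worth flagging is that the working definition $\Exp[X|\mathcal{F}] = \lim_m \Exp[X\wedge m|\mathcal{F}]$ is itself the special case $X_n = X\wedge n$ of this very proposition, so I would be careful that the proof invokes only the already-proved properties (i), (ii), and monotonicity together with the \emph{unconditional} Beppo Levi theorem, and never a conditional monotone convergence statement, thereby avoiding any circularity.
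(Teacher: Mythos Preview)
Your proposal is correct and follows essentially the same route as the paper: both verify that the monotone limit $Y=\lim_n\Exp[X_n\mid\mathcal F]$ satisfies the characterizing properties (i) and (ii) of Proposition~\ref{prop:cond-exp-basic-properties} by applying the unconditional Beppo Levi theorem twice, then invoke uniqueness. One small wording slip: the $X_n$ need not be integrable, but property (ii) of Proposition~\ref{prop:cond-exp-basic-properties} is stated for the extended conditional expectation and so applies regardless, hence the argument goes through as written.
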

\begin{proof}[Proof of Proposition~\ref{prop:cond-beppo-levi}] First, the limit must exist since the sequence $\{ \Exp[ X_n| \mathcal{F}] \}$ is increasing.
    For any $A \in \mathcal{F}$,
    \begin{align}
        &\Exp[ \id_A  \lim_{n \to \infty} \Exp[ X_n| \mathcal{F}] ] = \Exp[  \lim_{n \to \infty} \id_A \Exp[ X_n| \mathcal{F}] ] =    \lim_{n \to \infty} \Exp[   \id_A \Exp[ X_n| \mathcal{F}] ]     
        \\= & \lim_{n \to \infty} \Exp[   \id_A X_n] =  \Exp[ \lim_{n \to \infty}  \id_A X_n]  =\Exp[   \id_A\lim_{n \to \infty}  X_n]  = \Exp[\id_A X ].  
    \end{align}
    The first and fifth equalities above are due to the monotonicity of $\{ \Exp[ X_n| \mathcal{F}] \}$ and $\{X_n\}$, the second and the fourth to Beppo Levi theorem for expectation in $\overline{\mathbb R^+}$. This concludes the proof.
\end{proof}

The conditional version of Jensen's inequality works in the extended regime as well.
\begin{proposition}[Jensen's inequality]\label{prop:jensen}
    Let $X$ be a random variable taking values in $\overline{\mathbb R^+}$ and $g: \overline{\mathbb R^+} \to \overline{\mathbb R^+}$ a concave function. Then, $\Exp[ g(X) |  \mathcal{F} ] \le g( \Exp[X | \mathcal{F}] ) $.
\end{proposition}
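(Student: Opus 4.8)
The plan is to reduce to the already-understood integrable case by truncation, and then to transfer the inequality into the nonintegrable regime using the extended monotone convergence theorem, Proposition~\ref{prop:cond-beppo-levi}. Two structural facts about $g$ set the stage. First, nonnegativity forces monotonicity: if $g$ had a supporting line of negative slope at some point, concavity would keep $g$ below that line for large $x$, hence eventually negative, contradicting $g \ge 0$ on all of $[0,\infty]$; so $g$ is non-decreasing. Second, concave functions are automatically continuous on the open interval $(0,\infty)$. The only regularity I impose by hand is at the point $\infty$, where I take $g(\infty) = \lim_{x\to\infty} g(x)$ (finite or $+\infty$), which is the concave-consistent value.

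Writing $Y = \Exp[X|\mathcal{F}]$ and $X_k = X \wedge k$, each $X_k$ is bounded, hence integrable, and $g(X_k)$ lies in $[g(0), g(k)]$ and is therefore integrable as well. The classical conditional Jensen inequality for concave functions (valid in the ordinary $L^1$ theory) then gives, for every $k$,
\begin{equation}
    \Exp[g(X_k)\,|\,\mathcal{F}] \le g\!\left(\Exp[X_k\,|\,\mathcal{F}]\right), \quad \Pr\text{-a.s.}
\end{equation}
I would then let $k \to \infty$ on both sides. On the left, $X_k \uparrow X$ pointwise; since $X_k = X$ exactly once $k > X$ on $\{X < \infty\}$, while $g(X_k) = g(k) \uparrow g(\infty) = g(X)$ on $\{X = \infty\}$, we have $g(X_k) \uparrow g(X)$, and Proposition~\ref{prop:cond-beppo-levi} upgrades this to $\Exp[g(X_k)|\mathcal{F}] \uparrow \Exp[g(X)|\mathcal{F}]$. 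On the right, the same proposition gives $\Exp[X_k|\mathcal{F}] \uparrow Y$, and monotonicity together with the continuity of $g$ (at interior points, by our convention at $\infty$, and trivially where $Y = 0$ since there $\Exp[X_k|\mathcal{F}] = 0 = Y$) yields $g(\Exp[X_k|\mathcal{F}]) \uparrow g(Y)$. Combining the two limits gives $\Exp[g(X)|\mathcal{F}] \le g(Y)$, as desired.

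The step I expect to be the genuine obstacle is the passage to the limit at the boundary value $\infty$: one must confirm that both monotone limits land on the intended quantities, which hinges on fixing $g(\infty) = \lim_{x\to\infty} g(x)$ and on the arithmetic of Section~\ref{sec:arith} (in particular $0 \cdot \infty = 0$) when $Y$ or $g(Y)$ is infinite. An appealing alternative route — writing $g$ as a countable infimum of affine majorants $a_i + b_i x$, all with $a_i, b_i \ge 0$ precisely because nonnegativity rules out negative slopes, which is exactly what makes the extended linearity of Proposition~\ref{prop:properties}(i) applicable, and then bounding $\Exp[g(X)|\mathcal{F}] \le \inf_i (a_i + b_i Y)$ — is cleaner in the interior, but I would avoid relying on it alone: the affine-infimum representation can fail to be tight at the left endpoint $0$ when $g$ has an upward jump there (i.e.\ $g(0) < \lim_{x\to 0^+} g(x)$), whereas the truncation route above sidesteps this issue entirely.
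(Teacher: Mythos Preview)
Your proof is correct and shares the paper's strategy --- truncate, invoke classical conditional Jensen, then pass to the limit via Proposition~\ref{prop:cond-beppo-levi} --- but you truncate a different object: the paper sets $g_n = g \wedge n$ and sends $n \to \infty$, whereas you set $X_k = X \wedge k$ and send $k \to \infty$. Your route has the virtue that $X_k$ is bounded and hence integrable, so the classical conditional Jensen inequality applies without caveat; the paper's one-line proof truncates only $g$, leaving $X$ nonintegrable, so its appeal to ``Jensen's inequality for classical conditional expectation'' is somewhat elliptical (strictly speaking it still needs either a second truncation of $X$ or the supporting-line argument together with the extended linearity of Proposition~\ref{prop:properties}). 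The price you pay is having to argue that $g$ is non-decreasing and left-continuous --- including at $\infty$, via your imposed convention --- in order to push the limit through $g$ on the right-hand side; the paper sidesteps this entirely because $g_n(y) = g(y) \wedge n \uparrow g(y)$ holds pointwise regardless of any monotonicity of $g$. Both arguments tacitly assume $g$ is finite on $[0,\infty)$; the residual cases where $g$ is identically $\infty$ on $(0,\infty]$ are trivial. Your closing remark on the affine-infimum alternative and its failure at a possible upward jump of $g$ at $0$ is apt.
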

\begin{proof}[Proof of Proposition~\ref{prop:jensen}]
    Consider $g_n(x) = g(x) \wedge n$, which is concave and $g_n \to g$ upwards. The proposition follows from Jensen's inequality for classical conditional expectation and Proposition~\ref{prop:cond-beppo-levi}.
\end{proof}

Armed with the ability to define conditional expectations for (potentially) nonintegrable random variables, we are now able to formally define the extended (potentially nonintegrable) nonnegative supermartingales mentioned in \cref{sec:ensm-new}. %Anticipating the definition of extended nonnegative supermartingale, 
The following straightforward fact will be handy in proving some properties of extended nonnegative supermartingales in the next subsection.

\begin{proposition}\label{prop:cond-exp-order-rule}
    Suppose $X, Y$ are random variables taking values in $\overline{\mathbb R^+}$ and $Y$ is $\mathcal{F}$-measurable. Then, $\Exp[X|\mathcal{F}] \le Y$ a.s.\ if and only if for any $A \in \mathcal{F}$, $\Exp[ X \id_{A} ] \le \Exp[ Y \id_{A} ]$.
\end{proposition}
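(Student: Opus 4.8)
The plan is to prove the two implications separately, using property (ii) of \cref{prop:cond-exp-basic-properties} to translate between $\Exp[X|\mathcal{F}]$ and the integrals $\Exp[X\id_A]$. Throughout I would write $Z = \Exp[X|\mathcal{F}]$, which is $\mathcal{F}$-measurable and satisfies $\Exp[X\id_A] = \Exp[Z\id_A]$ for every $A \in \mathcal{F}$; this identity is the workhorse of the whole argument, reducing the claim to a comparison between the two $\eR$-valued, $\mathcal{F}$-measurable variables $Z$ and $Y$.

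For the forward direction, assume $Z \le Y$ a.s. Then for any $A \in \mathcal{F}$ we have $Z\id_A \le Y\id_A$ pointwise a.s., so monotonicity of the unconditional expectation on $\eR$-valued functions gives $\Exp[Z\id_A] \le \Exp[Y\id_A]$; rewriting the left-hand side via property (ii) yields $\Exp[X\id_A] \le \Exp[Y\id_A]$. This direction is immediate and needs no finiteness bookkeeping.

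The reverse direction is where the work lies. Assuming $\Exp[X\id_A] \le \Exp[Y\id_A]$, i.e.\ $\Exp[Z\id_A] \le \Exp[Y\id_A]$, for all $A \in \mathcal{F}$, I would argue by contradiction that $\Pr[Z > Y] = 0$, mirroring the uniqueness proof of \cref{prop:uniqueness}. Decompose
\begin{equation}
    \{Z > Y\} = \bigcup_{n,m=1}^\infty \bigl( \{ Z \ge Y + 2^{-n} \} \cap \{ Y \le m \} \bigr) = \bigcup_{n,m=1}^\infty A_{n,m},
\end{equation}
where each $A_{n,m} \in \mathcal{F}$ since $Z$ and $Y$ are both $\mathcal{F}$-measurable. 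On $A_{n,m}$ one has $Z \ge Y + 2^{-n}$, hence
\begin{equation}
    \Exp[Z\id_{A_{n,m}}] \ge \Exp[Y\id_{A_{n,m}}] + 2^{-n}\Pr[A_{n,m}],
\end{equation}
while the hypothesis gives $\Exp[Z\id_{A_{n,m}}] \le \Exp[Y\id_{A_{n,m}}]$.

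The crucial point, and the main obstacle, is the subtraction needed to conclude, which is legitimate only because $\Exp[Y\id_{A_{n,m}}] \le m\,\Pr[A_{n,m}] \le m < \infty$ on the event $\{Y \le m\}$. This finiteness is exactly why the truncation by $m$ appears in the decomposition: without it the additive term $\infty$ would absorb the gap $2^{-n}\Pr[A_{n,m}]$, and the cancellation law of $\eR$ (which fails in general, as noted in \cref{sec:arith}) could not be invoked. Having the finite bound, I cancel $\Exp[Y\id_{A_{n,m}}]$ from both sides to get $2^{-n}\Pr[A_{n,m}] \le 0$, forcing $\Pr[A_{n,m}] = 0$ for every $n,m$, and hence $\Pr[Z>Y] = 0$ by countable subadditivity. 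This yields $\Exp[X|\mathcal{F}] = Z \le Y$ a.s., completing the argument.
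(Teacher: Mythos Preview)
Your proof is correct and follows essentially the same approach as the paper: the forward direction is immediate from property (ii) of \cref{prop:cond-exp-basic-properties}, and the reverse direction uses the same decomposition $\{Z>Y\}=\bigcup_{n,m}\{Z\ge Y+2^{-n}\}\cap\{Y\le m\}$ together with the finiteness of $\Exp[Y\id_{A_{n,m}}]\le m$ to justify cancellation. The only cosmetic difference is notation ($Z$ versus the paper's $Y'$).
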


\begin{proof}[Proof of Proposition~\ref{prop:cond-exp-basic-properties}]
    The ``$\implies$'' direction follows directly from (ii) of Proposition~\ref{prop:cond-exp-basic-properties}. Now suppose $\Exp[ X \id_{A} ] \le \Exp[ Y \id_{A} ]$ for any $A \in \mathcal{F}$. Let $Y' = \Exp[X | \mathcal{F}]$. Then $\Exp[ Y' \id_{A} ] \le \Exp[ Y \id_{A} ]$ for any $A \in \mathcal{F}$. The rest is very similar to the proof of Proposition~\ref{prop:uniqueness}. Note that
\begin{equation}
    \{  Y' > Y \} = \bigcup_{n ,m=1}^\infty \{ Y' \ge Y + 2^{-n}  \} \cap \{ Y \le m \} =\bigcup_{n ,m=1}^\infty  A_{n,m}.
\end{equation}
On each $A_{n,m}$ we have,
\begin{align*}
    \Exp[Y' \id_{A_{n,m}}] &\ge \Exp[(Y+2^{-n}) \id_{A_{n,m}}] \\
    &= \Exp[Y\id_{A_{n,m}}] + 2^{-n} \Pr[A_{n,m}] 
    \ge \Exp[Y'\id_{A_{n,m}}] + 2^{-n} \Pr[A_{n,m}].
\end{align*}
Note that $ \Exp[Y'\id_{A_{n,m}}] \le \Exp[Y\id_{A_{n,m}}] \le m < \infty$. The finiteness allows us to call the cancellation rule of {real} number addition. Hence $\Pr[A_{n,m}] \le 0$, which implies $\Pr[A_{n,m}] = 0$. The claim concludes from $\Pr[Y'> Y] = \Pr[\bigcup_{n ,m=1}^\infty  A_{n,m} ] = 0$.
\end{proof}

\subsection{Nonnegative supermartingales without integrability}
\label{sec:mtg}

\subsubsection{Basic properties}

Recall that we have formulated the concept of nonnegative supermartingales in \cref{def:ensm}, dropping the usual assumption of finiteness and integrability. We begin with some properties that connect ENSMs with classical NSMs.

%To clarify, we allow taking $M_0 = +\infty$ to make the above statement hold at $n=1$.
 As it should, an ENSM multiplied by a nonnegative constant is again an ENSM. To generalize a bit, we have the following:
\begin{proposition}\label{prop:mult-F0} Let $M$ be an ENSM adapted to $\mathcal{F}$. Let $Y$ be an $\mathcal{F}_0$-measurable random variable taking value in $\overline{\mathbb R^+}$. Then, the process $ YM$ is an ENSM adapted to the same filtration.
\end{proposition}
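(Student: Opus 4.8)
The plan is to check directly that $YM$ satisfies the two defining requirements of an ENSM from \cref{def:ensm}: adaptedness to $\mathcal{F}$, and the one-step inequality $\Exp[(YM)_n \mid \mathcal{F}_{n-1}] \le (YM)_{n-1}$ for every $n \ge 1$. Adaptedness is immediate and I would dispatch it first: since the filtration is increasing, the $\mathcal{F}_0$-measurable random variable $Y$ is $\mathcal{F}_n$-measurable for every $n$, and $M_n$ is $\mathcal{F}_n$-measurable by hypothesis, so the product $YM_n$ is an $\overline{\mathbb{R}^+}$-valued $\mathcal{F}_n$-measurable random variable (measurability of products of $\overline{\mathbb{R}^+}$-valued functions being part of the measure-theoretic setup in \cref{sec:arith}).

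For the supermartingale inequality, fix $n \ge 1$. The crux is that $Y$, being $\mathcal{F}_0$-measurable and hence $\mathcal{F}_{n-1}$-measurable (as $0 \le n-1$), can be pulled out of the conditional expectation. I would apply the measurable-product rule for extended conditional expectations, namely property (iii) of \cref{prop:properties}, with conditioning $\sigma$-algebra $\mathcal{F}_{n-1}$ and the roles $X = M_n$, ``$Y$''$\,=Y$, to obtain $\Exp[YM_n \mid \mathcal{F}_{n-1}] = Y\,\Exp[M_n \mid \mathcal{F}_{n-1}]$ almost surely.

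It then remains to combine the ENSM property of $M$ with the nonnegativity of $Y$. Since $\Exp[M_n \mid \mathcal{F}_{n-1}] \le M_{n-1}$ holds $\Pr$-almost surely and $Y \ge 0$, multiplying both sides by $Y$ preserves the order in $\overline{\mathbb{R}^+}$ — this is the pointwise order-preservation of multiplication by any nonnegative extended real recorded in \cref{sec:arith} — so that $Y\,\Exp[M_n \mid \mathcal{F}_{n-1}] \le YM_{n-1}$ almost surely. Chaining the two displays gives $\Exp[YM_n \mid \mathcal{F}_{n-1}] \le YM_{n-1}$, which is exactly \eqref{eqn:def-ensm} for the process $YM$.

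The one point demanding care — and the step I would flag as the main (and really only) obstacle — is that $Y$, and hence $YM_n$, may take the value $\infty$ on a set of positive probability, so none of the quantities involved need be integrable. This is precisely where the extended theory carries the load: the identity $\Exp[YM_n \mid \mathcal{F}_{n-1}] = Y\,\Exp[M_n \mid \mathcal{F}_{n-1}]$ is not an $L^1$ statement but the $\overline{\mathbb{R}^+}$-valued product rule of \cref{prop:properties}(iii), and the closing order-preserving multiplication remains valid even where $Y = \infty$, thanks to the convention $0 \cdot \infty = 0$ and the monotonicity of multiplication by elements of $[0,\infty]$. No integrability of $M$ or $Y$ is invoked at any point, which is exactly what makes the statement go through in the extended regime.
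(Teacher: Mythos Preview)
Your proof is correct and follows essentially the same approach as the paper: pull the $\mathcal{F}_{n-1}$-measurable factor $Y$ out of the conditional expectation via \cref{prop:properties}(iii), then use the ENSM inequality for $M$ together with order-preservation of multiplication in $\overline{\mathbb{R}^+}$. The paper's proof is simply a terser version of your argument, omitting the adaptedness check and the commentary on nonintegrability.
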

\begin{proof}[Proof of Proposition~\ref{prop:mult-F0}] Let $n \ge 1$. Then $Y$ is $\mathcal{F}_{n-1}$-measurable.
\begin{equation}
    \Exp[ YM_{n} | \mathcal{F}_{n-1} ] = Y  \Exp[ M_{n} | \mathcal{F}_{n-1} ]  \le Y M_{n-1}.
\end{equation}
Hence $ YM$ is an ENSM.
\end{proof}

The following proposition determines the ``classicality'' of an ENSM by the integrability of its initial random variable.
\begin{proposition}\label{prop:finite-moment-ensm-is-nsm} Let $M$ be an ENSM adapted to $\mathcal{F}$. It is a classical NSM if and only if $\Exp[M_0] < \infty$.
\end{proposition}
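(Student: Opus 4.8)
The plan is to dispose of the forward implication at once and focus all the work on the converse. If $M$ is a classical NSM, then by definition every $M_n$ is integrable, so in particular $\Exp[M_0] < \infty$; this direction needs no argument.

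For the converse, I would assume $\Exp[M_0] < \infty$ and prove by induction on $n$ that $\Exp[M_n] < \infty$ for every $n \ge 0$. The base case $n = 0$ is exactly the hypothesis. For the inductive step, the key tool is the tower property for the \emph{extended} conditional expectation, which is furnished by property (ii) of Proposition~\ref{prop:cond-exp-basic-properties} upon taking $A = \Omega$: it gives $\Exp[M_n] = \Exp[\Exp[M_n \mid \mathcal{F}_{n-1}]]$, with both sides read in $\eR$. Feeding in the defining ENSM inequality $\Exp[M_n \mid \mathcal{F}_{n-1}] \le M_{n-1}$ together with the monotonicity of the $\eR$-valued expectation of nonnegative random variables yields $\Exp[M_n] \le \Exp[M_{n-1}]$, which is finite by the inductive hypothesis. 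Iterating, $\Exp[M_n] \le \Exp[M_0] < \infty$ for all $n$, so each $M_n$ is integrable.

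To finish, I would observe that once every $M_n$ is integrable the extended conditional expectation $\Exp[M_n \mid \mathcal{F}_{n-1}]$ agrees with the classical $L^1$ conditional expectation: this is precisely what the upward-approximation definition \eqref{eqn:cond-exp}, validated by Proposition~\ref{prop:cond-exp-basic-properties}, reduces to for an integrable random variable. Consequently the ENSM inequality becomes the ordinary supermartingale inequality in the $L^1$ sense, and $M$ is a classical NSM, which closes the equivalence.

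The argument is a one-line induction, so I do not anticipate a genuine obstacle; the only point demanding care is to make sure the tower property and the monotonicity step are invoked in their $\eR$-valued forms rather than silently borrowing the $L^1$ facts. Both are already secured by the preliminary development --- the tower identity by Proposition~\ref{prop:cond-exp-basic-properties}(ii), and monotonicity by the Beppo Levi apparatus underlying the definition of the extended expectation --- so the write-up should be short.
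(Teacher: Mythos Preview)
Your proposal is correct and follows essentially the same approach as the paper. The paper's proof is terse---it simply asserts ``If $\Exp[M_0] < \infty$, we see that $\Exp[M_n] < \infty$ for any $n$'' and then notes that the extended conditional expectation coincides with the classical one on integrable random variables---whereas you have spelled out the underlying induction and identified precisely which extended-regime facts (Proposition~\ref{prop:cond-exp-basic-properties}(ii) with $A=\Omega$ for the tower property, monotonicity of the $\eR$-valued expectation) justify each step.
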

\begin{proof}[Proof of Proposition~\ref{prop:finite-moment-ensm-is-nsm}]
 If $\Exp[M_0] < \infty$, we see that $\Exp[M_n] < \infty$ for any $n$. Since our possibly nonintegrable conditional expectation extends the classical integrable one, the inequality $\Exp[M_n | \mathcal{F}_{n-1}] \le M_{n-1}$ holds in the classical sense as well, concluding that $ M$ is a NSM in the classical sense.
\end{proof}

The following proposition bridges classical supermartingales, not necessarily nonnegative, to ENSMs. Arithmetically, take heed that we shall allow a very minor generalization from our previously introduced system $\overline{\mathbb R^+}$: here we need to add two numbers in $(-\infty, \infty]$, which is of course well-defined (and actually frequently performed in classical probability theory).

\begin{proposition}\label{prop:M0+classical-SM}
    Let $M_0$ be a $\mathcal{F}_0$-measurable nonnegative random variable and let $
Y$ be a classical supermartingale (not necessarily nonnegative) such that $M = M_0 + Y \ge 0 $ almost surely. Then, $ M$ is an ENSM.
\end{proposition}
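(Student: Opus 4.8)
The plan is to reduce the extended supermartingale inequality to a family of ordinary scalar inequalities via the order characterization of the extended conditional expectation (Proposition~\ref{prop:cond-exp-order-rule}), thereby sidestepping the fact that the naive linearity of Proposition~\ref{prop:properties} is unavailable (it requires \emph{both} summands to be nonnegative, whereas $Y_n$ may be negative). First observe that $M$ is $\overline{\mathbb R^+}$-valued by hypothesis ($M \ge 0$) and adapted, since $M_n = M_0 + Y_n$ with $M_0$ being $\mathcal{F}_0 \subseteq \mathcal{F}_n$-measurable and $Y_n$ being $\mathcal{F}_n$-measurable; so it remains only to verify $\Exp[M_n \mid \mathcal{F}_{n-1}] \le M_{n-1}$ for each $n \ge 1$. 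By Proposition~\ref{prop:cond-exp-order-rule}, applied with $X = M_n \ge 0$ and the $\mathcal{F}_{n-1}$-measurable $Y = M_{n-1} \ge 0$, this is equivalent to showing
\[
\Exp[M_n \id_A] \le \Exp[M_{n-1}\id_A] \qquad \text{for every } A \in \mathcal{F}_{n-1},
\]
where both sides are well-defined elements of $[0,\infty]$ precisely because $M \ge 0$.

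The crux is the decomposition $\Exp[M_k \id_A] = \Exp[M_0\id_A] + \Exp[Y_k\id_A]$ for $k \in \{n-1,n\}$. This is the standard additivity of the (unconditional) expectation for the sum of a nonnegative random variable $M_0\id_A$, whose expectation may be $+\infty$, and the integrable random variable $Y_k\id_A$; it is made rigorous by truncating $M_0$ at level $j$ and letting $j \to \infty$, invoking the Beppo Levi theorem on $(M_0\wedge j)\id_A \uparrow M_0\id_A$ together with ordinary linearity at each finite $j$. With this in hand I would invoke the classical supermartingale property of $Y$, namely $\Exp[Y_n\mid \mathcal{F}_{n-1}] \le Y_{n-1}$, which upon multiplying by $\id_A$ and using the classical tower property gives $\Exp[Y_n\id_A] \le \Exp[Y_{n-1}\id_A]$ for $A \in \mathcal{F}_{n-1}$, to conclude
\[
\Exp[M_n\id_A] = \Exp[M_0\id_A] + \Exp[Y_n\id_A] \le \Exp[M_0\id_A] + \Exp[Y_{n-1}\id_A] = \Exp[M_{n-1}\id_A].
\]
Proposition~\ref{prop:cond-exp-order-rule} then yields the claim, and hence $M$ is an ENSM in the sense of Definition~\ref{def:ensm}.

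The main obstacle, and the only place where nonintegrability genuinely intervenes, is this mixed additivity step: one is adding a possibly-infinite nonnegative expectation $\Exp[M_0\id_A]$ to a finite but possibly-negative $\Exp[Y_n\id_A]$, for which the cancellation law fails in general. Routing everything through the integral inequality $\Exp[M_n\id_A] \le \Exp[M_{n-1}\id_A]$, where the offending term $\Exp[M_0\id_A]$ appears identically on both sides, neatly avoids any forbidden subtraction: when $\Exp[M_0\id_A] = \infty$ the inequality is the trivial $\infty \le \infty$, and when it is finite one is back in the classical integrable regime. Equivalently, one could first establish the pointwise identity $\Exp[M_n\mid \mathcal{F}_{n-1}] = M_0 + \Exp[Y_n\mid \mathcal{F}_{n-1}]$ through Proposition~\ref{prop:cond-exp-basic-properties}(ii) and then bound its right-hand side by $M_0 + Y_{n-1} = M_{n-1}$; both routes hinge on exactly the same additivity fact.
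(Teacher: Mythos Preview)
Your proof is correct and follows essentially the same route as the paper's: both reduce the extended supermartingale inequality to the scalar inequality $\Exp[M_n\id_A] \le \Exp[M_{n-1}\id_A]$ via Proposition~\ref{prop:cond-exp-order-rule}, obtain $\Exp[Y_n\id_A] \le \Exp[Y_{n-1}\id_A]$ from the classical supermartingale property, and then add the common (possibly infinite) term $\Exp[M_0\id_A]$ to both sides. Your treatment of the mixed additivity step (truncating $M_0$ and passing to the limit via Beppo Levi, with the $\infty \le \infty$ case handled separately) is somewhat more explicit than the paper's, but the argument is otherwise identical.
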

\begin{proof}[Proof of Proposition~\ref{prop:M0+classical-SM}]
    Let $A \in \mathcal{F}_{n-1}$. Then the inequality
    \begin{equation}\label{eqn:first-step-classical-sm-to-ensm}
        \Exp[ \id_{A} Y_n ] \le \Exp[ \id_{A} Y_{n-1} ]
    \end{equation}
    holds in $\mathbb R$. The expectation $\Exp[ \id_{A} M ]$ is a number in $\overline{\mathbb R^+}$. Since $\id_A Y_n + \id_A M  = \id_A M_n \ge 0$ and $ \id_A Y_{n-1} + \id_A M  = \id_A M_{n-1} \ge 0 $, they have well-defined expectations in $\overline{\mathbb R^+}$, so we can indeed add $\id_A M$ to the expectations of both sides of the inequality \eqref{eqn:first-step-classical-sm-to-ensm}, getting $\Exp[ \id_A M_n  ] \le \Exp[ \id_A M_{n-1} ]$. It follows from Proposition~\ref{prop:cond-exp-order-rule} that $M$ is an ENSM.
\end{proof}

As expected, we may construct a new ENSM by an existing ENSM and a concave, increasing function, just like the case for classical supermartingales.

\begin{proposition}\label{prop:concave-ensm}
    Let $M$ be an ENSM and $g: \overline{\mathbb R^+} \to \overline{\mathbb R^+} $ concave and increasing. Then, $g(M)$ is an ENSM. In particular, for any $x \ge 0$, $M \wedge x$ is a classical NSM.
\end{proposition}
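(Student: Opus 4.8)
The plan is to combine the conditional Jensen's inequality (Proposition~\ref{prop:jensen}) with the monotonicity supplied by $g$ being increasing and the defining inequality of $M$. First I would record that $g(M)$ is a legitimate adapted $\overline{\mathbb R^+}$-valued process: since $g$ is monotone on $\overline{\mathbb R^+}$ it is Borel measurable, so each $g(M_n)$ is $\mathcal{F}_n$-measurable. The substance is then the supermartingale inequality, which I would verify at each fixed $n \ge 1$.

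The core of the argument is a three-step chain. Applying the extended Jensen inequality (Proposition~\ref{prop:jensen}) to the concave $g$ and to $M_n$ conditioned on $\mathcal{F}_{n-1}$ gives
\[
\Exp[g(M_n) \mid \mathcal{F}_{n-1}] \le g\bigl(\Exp[M_n \mid \mathcal{F}_{n-1}]\bigr).
\]
Next, the ENSM property of $M$ yields $\Exp[M_n \mid \mathcal{F}_{n-1}] \le M_{n-1}$ almost surely, and since $g$ is increasing this inequality survives composition with $g$, so that $g(\Exp[M_n \mid \mathcal{F}_{n-1}]) \le g(M_{n-1})$. Chaining the two displays produces $\Exp[g(M_n) \mid \mathcal{F}_{n-1}] \le g(M_{n-1})$, which is exactly Definition~\ref{def:ensm} for the process $g(M)$.

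For the ``in particular'' clause I would specialize to $g(t) = t \wedge x$ with $x \ge 0$ fixed, which is concave and increasing on $\overline{\mathbb R^+}$; the first part then shows $M \wedge x$ is an ENSM. Moreover its initial value satisfies $(M \wedge x)_0 = M_0 \wedge x \le x < \infty$, hence is integrable, so Proposition~\ref{prop:finite-moment-ensm-is-nsm} upgrades $M \wedge x$ to a classical NSM.

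The argument is essentially routine given the machinery already built, so I anticipate no genuine obstacle. The only points warranting care are that both hypotheses on $g$ are genuinely used, and for distinct reasons --- concavity to invoke Jensen, monotonicity to transport the supermartingale inequality through $g$ --- and that one must call upon the \emph{extended} versions of Jensen and of conditional expectation, rather than their classical counterparts, since $M_n$ may be nonintegrable. Beyond this bookkeeping in the arithmetic of $\overline{\mathbb R^+}$, the proof is immediate.
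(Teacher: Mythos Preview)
Your proof is correct and follows exactly the same route as the paper: apply the extended Jensen inequality (Proposition~\ref{prop:jensen}) to bound $\Exp[g(M_n)\mid\mathcal{F}_{n-1}]$ by $g(\Exp[M_n\mid\mathcal{F}_{n-1}])$, then use monotonicity of $g$ together with the ENSM inequality to reach $g(M_{n-1})$. Your treatment of the ``in particular'' clause via Proposition~\ref{prop:finite-moment-ensm-is-nsm} is also the intended one, and you have been slightly more explicit than the paper about measurability and about why $M\wedge x$ is classical.
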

\begin{proof}[Proof of Proposition~\ref{prop:concave-ensm}] Due to Proposition~\ref{prop:jensen},
    $\Exp[g(M_n)|\mathcal{F}_{n-1} ] \le g( \Exp[M_n|\mathcal{F}_{n-1} ] ) \le g(M_{n-1})$. So $g(M)$ is an ENSM.
\end{proof}
%Proposition~\ref{prop:concave-ensm} associates every ENSM $\{ M_n \}$ with an classical

The last sentence of Proposition~\ref{prop:concave-ensm} is very powerful, as it will help us extend any property of classical supermartingales that is closed under upward limit to ENSMs in the upcoming subsections. Further, it leads to the following characterization of ENSMs which may serve as their alternative definiton.

\begin{corollary}\label{cor:ensm-class}
    A process is an ENSM adapted to $\mathcal{F}$ if and only if it is an upward limit of classical NSMs adapted to $\mathcal{F}$.
\end{corollary}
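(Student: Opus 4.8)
The plan is to prove the two implications separately, leaning on \cref{prop:concave-ensm} for the forward direction and on the conditional monotone convergence theorem (\cref{prop:cond-beppo-levi}) for the reverse, so that the corollary becomes essentially a repackaging of machinery already in hand.

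For the ``only if'' direction, I would suppose $M$ is an ENSM and invoke the last sentence of \cref{prop:concave-ensm}: for each integer $k \ge 0$, the truncation $M \wedge k$ is a classical NSM, since $x \mapsto x \wedge k$ is concave, increasing, and bounded (so $M \wedge k$ is automatically integrable and adapted to the same $\mathcal{F}$). As $\{M_n \wedge k\}_{k \ge 0}$ increases pointwise to $M_n$ for every $n$ and every $\omega$, the process $M$ is exhibited as the upward limit of the classical NSMs $\{M \wedge k\}_{k \ge 0}$, which settles this direction.

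For the ``if'' direction, I would take classical NSMs $M^1, M^2, \dots$ adapted to $\mathcal{F}$ with $M^k_n \uparrow M_n$ pointwise as $k \to \infty$ for each $n$. First, $M_n$ is the pointwise supremum of the $\mathcal{F}_n$-measurable functions $M^k_n$, hence itself $\mathcal{F}_n$-measurable and $\overline{\mathbb R^+}$-valued, so $M$ is adapted. To verify the supermartingale inequality, I would fix $n \ge 1$ and compute
\begin{equation}
    \Exp[M_n | \mathcal{F}_{n-1}] = \lim_{k\to\infty} \Exp[M^k_n | \mathcal{F}_{n-1}] \le \lim_{k\to\infty} M^k_{n-1} = M_{n-1},
\end{equation}
where the first equality is \cref{prop:cond-beppo-levi} applied to the increasing sequence $\{M^k_n\}_k$, the inequality uses that each classical NSM $M^k$ satisfies $\Exp[M^k_n | \mathcal{F}_{n-1}] \le M^k_{n-1}$, and the final equality is the assumed upward convergence of $\{M^k_{n-1}\}_k$. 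This is precisely \eqref{eqn:def-ensm}, so $M$ is an ENSM.

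No serious obstacle arises; the single point requiring care is the interchange of the limit and the conditional expectation, which is licensed precisely because the approximating sequence increases. This is exactly where the monotone convergence theorem for extended conditional expectation does the essential work, and it is the reason the statement must be phrased in terms of \emph{upward} limits rather than arbitrary pointwise limits.
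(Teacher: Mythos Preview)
Your proof is correct and follows essentially the same approach as the paper: the forward direction uses the truncations $M \wedge k$ via \cref{prop:concave-ensm}, and the reverse direction passes the upward limit through the conditional expectation via \cref{prop:cond-beppo-levi}, exactly as the paper does.
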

\begin{proof}[Proof of Corollary~\ref{cor:ensm-class}]
    If $ M$ is an ENSM, it follows from Proposition~\ref{prop:concave-ensm} that $M \wedge 1 ,  M \wedge 2 ,\dots$ are classical NSMs. Now suppose for each $k$, $M_{(k)} $ is a classical NSM adapted to $\mathcal{F}$, $M_{(k)n} \le M_{(k+1)n}$ almost surely for every $m,n\ge 0$, and $M_n = \lim_{k\to\infty}M_{(k)n}$. First, it is easy to see that $M_n$ is $\mathcal{F}_n$-measurable as it is the pointwise supremum of $\{M_{(k)n}\}_{k\ge0}$. Now we take the upward limit $k\to\infty$ on both sides of $\Exp[ M_{(k)n} | \mathcal{F}_{n-1} ] \le M_{(k)n-1}$. Due to Proposition~\ref{prop:cond-beppo-levi}, we have $\Exp[ M_n | \mathcal{F}_{n-1}] \le M_{n-1}$, concluding the proof.
\end{proof}

\subsubsection{Convergence of extended nonnegative supermartingales}

It is well-known that a classical NSM must convergence almost surely to a random variable \citep[Corollary 11.5]{klenke2013probability}. We show that this holds for ENSMs as well.

\begin{proposition}[Almost sure convergence of ENSMs]\label{prop:converge}
    An ENSM $ M$ converges almost surely to a random variable $M_\infty$ taking values in $\overline{\mathbb R^+}$.
    %that is $\mathcal{F}_\infty$-measurable.
\end{proposition}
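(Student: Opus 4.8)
The plan is to exploit Corollary~\ref{cor:ensm-class} and reduce everything to the convergence of \emph{bounded} classical supermartingales. For each integer $k \ge 0$, Proposition~\ref{prop:concave-ensm} tells us that the truncated process $M \wedge k = \{M_n \wedge k\}_{n \ge 0}$ is a classical NSM; being bounded above by $k$, it is integrable, so the ordinary supermartingale convergence theorem guarantees that $\lim_{n \to \infty} (M_n \wedge k)$ exists almost surely, say off a $\Pr$-null set $N_k$. Taking the countable union $N = \bigcup_{k \ge 0} N_k$, which is still $\Pr$-null, we obtain a single almost-sure event off which $\lim_n (M_n \wedge k)$ exists \emph{for every} $k$ simultaneously.

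The remaining, and only nontrivial, task is to upgrade ``every truncation converges'' to ``$M_n$ itself converges in $\overline{\mathbb R^+}$''. Fix $\omega \notin N$ and write $a = \liminf_n M_n(\omega)$ and $b = \limsup_n M_n(\omega)$, both elements of the compact space $\overline{\mathbb R^+} = [0,\infty]$. The key observation is that the truncation map $x \mapsto x \wedge k$ is continuous and nondecreasing on $[0,\infty]$, and such maps commute with $\liminf$ and $\limsup$: using compactness of $[0,\infty]$ to extract convergent subsequences, one checks that $\liminf_n (M_n \wedge k) = a \wedge k$ and $\limsup_n (M_n \wedge k) = b \wedge k$. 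Since $M_n \wedge k$ converges, its $\liminf$ and $\limsup$ agree, giving $a \wedge k = b \wedge k$ for every $k$.

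Finally, letting $k \to \infty$ and using $a \wedge k \uparrow a$ and $b \wedge k \uparrow b$, we conclude $a = b$, i.e.\ $\lim_n M_n(\omega)$ exists in $\overline{\mathbb R^+}$. Defining $M_\infty = \lim_n M_n$ on the event $N^{\mathsf c}$ (and arbitrarily, say $0$, on $N$) yields the desired $\overline{\mathbb R^+}$-valued almost-sure limit, whose measurability is automatic as a pointwise limit of the measurable $M_n$. I expect the only delicate point to be the commutation of truncation with $\liminf/\limsup$ in the extended space $[0,\infty]$; everything else is a direct appeal to the classical theory through the truncated processes $\{M_n \wedge k\}_n$.
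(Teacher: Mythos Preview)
Your proof is correct and follows essentially the same approach as the paper: reduce to the classical supermartingale convergence theorem via the truncated processes $M \wedge k$, and then argue that convergence of all truncations forces convergence of $M$ itself in $\overline{\mathbb R^+}$. The paper organizes the second step slightly differently—showing directly that each event $\{\liminf M_n < a\} \cap \{\limsup M_n > b\}$ (for rational $a<b$) is null by rewriting it in terms of $M \wedge (b+1)$—but this relies on the very same commutation of $x\mapsto x\wedge k$ with $\liminf$ and $\limsup$ that you isolate as the delicate point, so the two arguments are minor variants of one another.
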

\begin{proof}[Proof of Proposition~\ref{prop:converge}]
    For $0 < a < b < \infty$, define the event
    \begin{align}
        D^{a,b} &= \left\{ \liminf_{n \to \infty} M_n < a \right\} \cap \left\{ \limsup_{n \to \infty} M_n > b \right\} \\
        &= \left\{ \liminf_{n \to \infty} M_n \wedge (b+1) < a \right\} \cap \left\{ \limsup_{n \to \infty} M_n\wedge (b+1) > b \right\}.
    \end{align}
    Since the classical NSM $ M \wedge (b+1) $ converges a.s., $\Pr[  D^{a,b} ] = 0$. Therefore, letting
    \begin{equation}
        D = \bigcup_{ \substack{ 0 <a < b< \infty \\ a, b \in \mathbb Q } } D^{a,b},
    \end{equation}
    we have $\Pr[D] = 0$. Noting that $M$ converges on the complement of $D$ concludes the proof.
\end{proof}

\subsubsection{Stopping time properties}

Some of the most important results of classical (super)martingales are related to their behaviors at stopping times. They are referred to as ``optional stopping'' and/or ``optional sampling'' theorems.
With some caution, we recover some of them for ENSMs. Recall that a stopping time $\tau$ is a $\overline{\mathbb N_0}$-valued random variable such that $\{ \tau = n \} \in \mathcal{F}_n$ for every $n$, and the $\sigma$-algebra of $\tau$-past is defined as
\begin{equation}
    \mathcal{F}_\tau = \{ A \in \mathcal{A} : A \cap \{ \tau = n \} \in \mathcal{F}_n \text{ for all }n \}.
\end{equation}

First, we shall show that an ENSM stopped by a stopping time is still an ENSM, an extension of the classical optional stopping theorem.

\begin{proposition}[Optional stopping]\label{prop:stopped-ensm}
    Let $M$ be an ENSM and $\tau$ a stopping time. Then, $M^\tau := \{ M_{n \wedge \tau} \}$ is an ENSM.
\end{proposition}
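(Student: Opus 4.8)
The plan is to route the argument through the truncation characterization of \cref{cor:ensm-class}, thereby avoiding any manipulation of martingale differences. In $\overline{\mathbb R^+}$ subtraction is unavailable, so the classical telescoping identity $M_{n\wedge\tau}-M_{(n-1)\wedge\tau}=\id_{\{\tau\ge n\}}(M_n-M_{n-1})$ is meaningless; this is exactly the obstacle the approximation route sidesteps. Concretely, it suffices to exhibit $M^\tau$ as a pointwise upward limit of classical NSMs, and then invoke \cref{cor:ensm-class}.

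First, for each fixed $k\ge 0$, \cref{prop:concave-ensm} shows that $M\wedge k$ is a \emph{classical} (integrable) NSM, since $x\mapsto x\wedge k$ is concave, increasing, and bounded. Second, the classical optional stopping theorem applies to the integrable process $M\wedge k$, so the stopped process $(M\wedge k)^\tau$ is again a classical NSM. Third, truncation and stopping commute, i.e.\ $(M\wedge k)^\tau_n = (M\wedge k)_{n\wedge\tau} = M_{n\wedge\tau}\wedge k = M^\tau_n \wedge k$, which one checks by evaluating both sides on each event $\{n\wedge\tau = j\}$ for $j \le n$. Finally, as $k\to\infty$ we have $M^\tau_n\wedge k \uparrow M^\tau_n$ pointwise for every $n$, so $M^\tau$ is the upward limit of the classical NSMs $\{(M\wedge k)^\tau\}_{k\ge 0}$, and \cref{cor:ensm-class} concludes that $M^\tau$ is an ENSM. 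Measurability of each $M^\tau_n$ is automatic, being a pointwise supremum of measurable functions, and is in any case absorbed into the hypotheses of \cref{cor:ensm-class}.

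If one prefers a self-contained direct verification, the same result can be obtained without truncation while still respecting the $[0,\infty]$ arithmetic, using only addition of nonnegatives. Writing $M_{n\wedge\tau} = \id_{\{\tau\le n-1\}}M_{(n-1)\wedge\tau} + \id_{\{\tau\ge n\}}M_n$, one applies linearity (\cref{prop:properties}(i)) to split the conditional expectation: the first summand is already $\mathcal{F}_{n-1}$-measurable and hence unchanged by conditioning, while for the second, $\id_{\{\tau\ge n\}}$ is $\mathcal{F}_{n-1}$-measurable (as $\{\tau \ge n\} = \{\tau \le n-1\}^{\mathsf c}$), so by the measurable-product rule (\cref{prop:properties}(iii)) we get $\Exp[\id_{\{\tau\ge n\}}M_n\mid\mathcal{F}_{n-1}] = \id_{\{\tau\ge n\}}\Exp[M_n\mid\mathcal{F}_{n-1}] \le \id_{\{\tau\ge n\}}M_{n-1} = \id_{\{\tau\ge n\}}M_{(n-1)\wedge\tau}$. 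Adding the two nonnegative pieces yields $\Exp[M^\tau_n\mid\mathcal{F}_{n-1}]\le M^\tau_{n-1}$. I expect the main care points to be purely bookkeeping: confirming that the truncation–stopping identity holds verbatim and that every arithmetic step above stays within the additive (never subtractive) operations legitimized in \cref{sec:arith}.
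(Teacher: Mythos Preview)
Your proposal is correct. Your \emph{alternative} direct verification is essentially the paper's proof: the paper writes $M_{n\wedge\tau}=\sum_{j=0}^{n-1}M_j\id_{\{\tau=j\}}+M_n\id_{\{\tau\ge n\}}$, pulls out the $\mathcal{F}_{n-1}$-measurable indicators via \cref{prop:properties}(iii), and applies the supermartingale inequality only to the last summand --- which is your second argument with the first $n$ terms expanded rather than bundled as $\id_{\{\tau\le n-1\}}M_{(n-1)\wedge\tau}$.

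Your \emph{primary} route (truncate to $M\wedge k$, apply classical optional stopping, then pass to the upward limit via \cref{cor:ensm-class}) is a genuinely different argument that the paper does \emph{not} use here, though amusingly it is exactly the strategy the paper adopts for the very next result, \cref{prop:optional-sampling}. What your approach buys is uniformity: it reduces both optional stopping and optional sampling to their classical counterparts plus a single monotone-limit step, so one does not need to rewrite any indicator algebra in $\overline{\mathbb R^+}$. What the paper's direct approach buys is self-containment: it does not invoke the classical optional stopping theorem as a black box, and it makes explicit that the only property of extended conditional expectation needed is the measurable-product rule. Both are short; neither dominates the other.
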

\begin{proof}[Proof of Proposition~\ref{prop:stopped-ensm}] For any $n$, note that $\{ \tau \ge n \} \in \mathcal{F}_{n-1}$. We have,
    \begin{align}
        & \Exp[M_{n \wedge \tau} | \mathcal{F}_{n-1}  ] =   \Exp\left[M_{n \wedge \tau}\left( \sum_{j=0}^{n-1}  \id_{\{ \tau = j \}} + \id_{\{ \tau \ge n \}}\right) \middle\vert \mathcal{F}_{n-1}  \right]  \\
        = & \sum_{j=0}^{n-1} \Exp[ M_j \id_{\{ \tau = j \}}| \mathcal{F}_{n-1} ] + \Exp[M_n \id_{\{ \tau \ge n \}} |\mathcal{F}_{n-1}]
        \\
        = &  \sum_{j=0}^{n-1} M_j \id_{\{ \tau = j \}} +   \id_{\{ \tau \ge n \}} \Exp[M_n |\mathcal{F}_{n-1}]
        \\
        \le  & \sum_{j=0}^{n-1} M_j \id_{\{ \tau = j \}} +   \id_{\{ \tau \ge n \}}M_{n-1} = M_{(n-1)\wedge \tau}.
    \end{align}
    This concludes the proof.
\end{proof}

We have the following statement which is satisfied by classical supermartingales (see e.g.\ \cite[Theorem 10.11]{klenke2013probability}).
\begin{proposition}[Optional sampling]\label{prop:optional-sampling}
    Let $M$ be an ENSM. Suppose $\nu_1 \le \nu_2$ are stopping times. Then, $\Exp[ M_{\nu_2} | \mathcal{F}_{\nu_1} ] \le M_{\nu_1}$.
    %Suppose $\nu$ is a stopping time and $N \in \mathbb{N}_0$ such that $\tau \le N$ a.s. Then, $\Exp[ M_N | \mathcal{F}_\tau ] \le M_\tau$.
\end{proposition}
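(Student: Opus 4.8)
The plan is to reduce the extended statement to the classical optional sampling theorem applied to the \emph{bounded} truncations $M \wedge k$, and then pass to the upward limit $k \to \infty$ via monotone convergence. The bridge between a conditional-expectation inequality and a family of integral inequalities is exactly \cref{prop:cond-exp-order-rule}, which I would invoke at both ends of the argument.

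First I would fix $k \ge 0$ and set $M^{(k)} = M \wedge k$, which is a classical NSM by \cref{prop:concave-ensm} and is moreover uniformly bounded by $k$ (hence uniformly integrable and convergent both almost surely and in $L^1$). For such a bounded NSM the classical optional sampling theorem (e.g.\ \citet[Theorem 10.11]{klenke2013probability}) applies to the arbitrary, possibly infinite stopping times $\nu_1 \le \nu_2$, giving $\Exp[M^{(k)}_{\nu_2} \mid \mathcal{F}_{\nu_1}] \le M^{(k)}_{\nu_1}$ almost surely, where the value at $\{\nu_i = \infty\}$ is the almost sure limit $M^{(k)}_\infty$. Multiplying by $\id_A$ for an arbitrary $A \in \mathcal{F}_{\nu_1}$ and taking expectations yields $\Exp[\id_A M^{(k)}_{\nu_2}] \le \Exp[\id_A M^{(k)}_{\nu_1}]$.

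Next I would take $k \to \infty$. Since truncation commutes with evaluation at a stopping time, $M^{(k)}_{\nu_i} = M_{\nu_i} \wedge k$ increases pointwise to $M_{\nu_i}$, where \cref{prop:converge} is what makes $M_{\nu_i}$ meaningful on $\{\nu_i = \infty\}$. By the Beppo Levi monotone convergence theorem both sides converge, so $\Exp[\id_A M_{\nu_2}] \le \Exp[\id_A M_{\nu_1}]$ for every $A \in \mathcal{F}_{\nu_1}$. As $M_{\nu_1}$ is $\mathcal{F}_{\nu_1}$-measurable, \cref{prop:cond-exp-order-rule} converts this family of integral inequalities back into the desired conditional statement $\Exp[M_{\nu_2} \mid \mathcal{F}_{\nu_1}] \le M_{\nu_1}$.

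I expect the main obstacle to be the careful bookkeeping at the boundary $\nu_i = \infty$: one must ensure $M_{\nu_i}$ is well-defined there through the almost sure limit of \cref{prop:converge}, and that the truncations $M^{(k)}$ genuinely satisfy the classical optional sampling theorem at unbounded stopping times — which is precisely where the boundedness (hence uniform integrability) of $M^{(k)}$ is indispensable, in contrast to the general classical statement that would require extra hypotheses on an unbounded supermartingale. The $\mathcal{F}_{\nu_1}$-measurability of $M_{\nu_1}$ is a standard but not vacuous point I would confirm before applying \cref{prop:cond-exp-order-rule}.
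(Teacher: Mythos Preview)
Your proposal is correct and follows essentially the same approach as the paper: truncate to the bounded classical NSM $M\wedge k$ via \cref{prop:concave-ensm}, invoke the classical optional sampling theorem, and pass to the upward limit. The only cosmetic difference is that the paper takes the limit directly inside the conditional expectation using \cref{prop:cond-beppo-levi}, whereas you route through unconditional integrals $\Exp[\id_A\,\cdot\,]$ and then invoke \cref{prop:cond-exp-order-rule}; both are equivalent ways of executing the same limiting step.
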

Before stating our very concise proof, we note that $\nu_2$ does not need to be finite since $M_\infty$ has been established by Proposition~\ref{prop:converge}.

\begin{proof}[Proof of Proposition~\ref{prop:optional-sampling}]
   Consider for $m \in \mathbb N_0$ the classical NSM $ M \wedge m $. We have $\Exp[ M_{\nu_2}  \wedge m | \mathcal{F}_{\nu_1} ] \le M_{\nu_1}  \wedge m$. An upward limit $m \to \infty$ with Proposition~\ref{prop:cond-beppo-levi} concludes the proof.
\end{proof}

%In the following, we show that an important class of stochastic processes belongs to the class of ENSMs.

\subsubsection{Connection to the local (super)martingale property}

Local martingales form a process class of particular interest as all It\^o diffusions with well-behaved integrands are local martingales \citep[Corollary 25.19]{klenke2013probability}. Still working in discrete times for now, recall that a process $
M$ is said to satisfy a property $\pi$ \emph{locally} if there exists an increasing sequence of stopping times $\{ \tau_j \}$ such that $\lim_{j \to \infty} \tau_j = \infty$ a.s., and for each $j$ the stopped process $ M^ {\tau_j}$ satisfies $\pi$. For ENSMs, we have the following statements.

\begin{proposition}\label{prop:local}
  Local ENSMs are ENSMs. 
\end{proposition}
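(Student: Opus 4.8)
The plan is to verify the defining supermartingale inequality $\Exp[M_n | \mathcal{F}_{n-1}] \le M_{n-1}$ for each fixed $n \ge 1$ directly, by transferring the corresponding inequality from the stopped processes and then passing to an upward limit in the localizing index. Let $\{\tau_j\}$ be a localizing sequence, so $\tau_j \uparrow \infty$ almost surely and each $M^{\tau_j} = \{M_{n\wedge\tau_j}\}$ is an ENSM; in particular $\Exp[M_{n\wedge\tau_j} | \mathcal{F}_{n-1}] \le M_{(n-1)\wedge\tau_j}$ almost surely. Adaptedness of $M$ (needed for $\Exp[M_n | \mathcal{F}_{n-1}]$ to be a well-defined extended conditional expectation in the sense of \cref{sec:condexp}) is either assumed by convention or follows from $M_n\id_{\{\tau_j \ge n\}} = M_{n\wedge\tau_j}\id_{\{\tau_j\ge n\}}$ together with $\id_{\{\tau_j\ge n\}}\uparrow 1$.

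The key device is to localize the stopped inequality to the event $\{\tau_j \ge n\}$, which belongs to $\mathcal{F}_{n-1}$ exactly as in the proof of \cref{prop:stopped-ensm}. On this event $n\wedge\tau_j = n$ and $(n-1)\wedge\tau_j = n-1$, so multiplying through by the $\mathcal{F}_{n-1}$-measurable indicator $\id_{\{\tau_j\ge n\}}$ and moving it inside the conditional expectation via the measurable-product rule (\cref{prop:properties}, part (iii)) gives, almost surely for every $j$,
\begin{equation*}
    \id_{\{\tau_j\ge n\}}\Exp[M_n | \mathcal{F}_{n-1}] = \Exp[\id_{\{\tau_j\ge n\}}M_{n\wedge\tau_j} | \mathcal{F}_{n-1}] = \id_{\{\tau_j\ge n\}}\Exp[M_{n\wedge\tau_j} | \mathcal{F}_{n-1}] \le \id_{\{\tau_j\ge n\}}M_{n-1}.
\end{equation*}
Since $\tau_j \uparrow \infty$ almost surely, the events $\{\tau_j\ge n\}$ increase to a set of full probability, so $\id_{\{\tau_j\ge n\}}\uparrow 1$ almost surely. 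Intersecting the countably many null sets on which the display fails, the inequality holds simultaneously for all $j$ off a single null set, and letting $j\to\infty$ there yields $\Exp[M_n | \mathcal{F}_{n-1}] \le M_{n-1}$ almost surely, which is the ENSM property.

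I expect the only genuine subtlety to be that $j \mapsto M_{n\wedge\tau_j}$ need not converge monotonically to $M_n$: because $M$ is merely a supermartingale rather than a monotone process, one cannot feed the stopped inequality directly into the conditional monotone convergence theorem (\cref{prop:cond-beppo-levi}) applied to the stopped values. Restricting to $\{\tau_j \ge n\}$ sidesteps this completely, since on that event the stopped quantities already equal $M_n$ and $M_{n-1}$ and the entire $j$-dependence is carried by the indicator, which \emph{does} increase to $1$; the passage to the limit is then a trivial pointwise one requiring no integration. (An equivalent route, if one prefers to avoid the conditional form, is to apply \cref{prop:cond-exp-order-rule}: for each $A \in \mathcal{F}_{n-1}$ test the stopped inequality against $A \cap \{\tau_j \ge n\}$ and let $j\to\infty$ by ordinary monotone convergence, again exploiting $\id_{A\cap\{\tau_j\ge n\}}\uparrow \id_A$.)
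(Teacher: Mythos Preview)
Your proof is correct and takes a genuinely different, more elementary route than the paper's. The paper works at the level of unconditional expectations via \cref{prop:cond-exp-order-rule}: for $A \in \mathcal{F}_{n-1}$ it truncates at a level $N$ by intersecting with $\{M_{(n-1)\wedge\tau_j} \le N\}$, passes $j\to\infty$ using Fatou's lemma on the left and dominated convergence (bounded by $N$) on the right, then sends $N\to\infty$ and handles the set $\{M_{n-1}=\infty\}$ separately. Your argument sidesteps all of this by exploiting the purely discrete-time fact that $\{\tau_j \ge n\} \in \mathcal{F}_{n-1}$, so the stopped inequality localized to that event already \emph{is} the unstopped one; the only limit is the trivial pointwise one $\id_{\{\tau_j\ge n\}}\uparrow 1$, and no Fatou, truncation, or case split on $\{M_{n-1}=\infty\}$ is needed. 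This is the classical one-line proof that discrete-time local (super)martingales are (super)martingales, and it transfers verbatim to the extended setting because the only tool required is the measurable-product rule of \cref{prop:properties}(iii), which the paper has established in full generality. The paper's route is more robust in the sense that it does not rely on $\{\tau_j \ge n\}$ being $\mathcal{F}_{n-1}$-measurable and would adapt more naturally to continuous time, but in the discrete-time setting stated here your argument is both shorter and cleaner. One small omission: you address adaptedness but not nonnegativity of $M_n$ (needed for the extended conditional expectation to apply); the paper notes this explicitly, and it follows by the same device you use for adaptedness.
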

We remark that local classical NSMs are actually classical NSMs (and thus ENSMs), because local supermartingales bounded from below must be supermartingales (see e.g.\ \cite[p.74]{kallianpur2014stochastic}).
\begin{proof}[Proof of Proposition~\ref{prop:local}]
     Let $M$ be a local ENSM with localizing sequence $\{ \tau_j \}$. Note that $\lim_{j \to \infty} \\ M_{n\wedge \tau_j} = M_n$ almost surely. Thus local nonnegativity implies nonnegativity --- so $M$ is indeed nonnegative (almost surely).

    Now, since $M^{\tau_j}$ is an ENSM, for any $n, N \ge 1$ and and $A \in \mathcal{F}_{n-1}$,
    \begin{equation}
        \Exp[ \id_{A \cap \{ M_{(n-1) \wedge \tau_j}\le N \}} M_{n \wedge \tau_j}  ] \le   \Exp[ \id_{A \cap \{ M_{(n-1) \wedge \tau_j} \le N \} } M_{(n-1) \wedge \tau_j} ].
    \end{equation}
    Hence,
    \begin{equation}
    \label{eq:intermediate-ineq}
        \liminf_{j\to \infty}  \Exp[ \id_{A \cap \{ M_{(n-1) \wedge \tau_j} \le N \}} M_{n \wedge \tau_j}  ] \le   \liminf_{j\to \infty}  \Exp[ \id_{A \cap \{ M_{(n-1) \wedge \tau_j} \le N \} } M_{(n-1) \wedge \tau_j} ].
    \end{equation}
    The left hand side of~\eqref{eq:intermediate-ineq} can be further bounded by Fatou's lemma as
    \begin{align}
       \liminf_{j\to \infty} \Exp[ \id_{A \cap \{ M_{(n-1) \wedge \tau_j} \le N \}} M_{n \wedge \tau_j}  ] &\ge   \Exp[ \liminf_{j\to \infty}  \id_{A \cap \{ M_{(n-1) \wedge \tau_j} \le N \}} M_{n \wedge \tau_j}  ]  \nonumber \\
       &= \Exp\left[  \id_{A \cap \{ M_{n-1} \le N \}} M_{n}  \right]. 
    \end{align}
    Since $0 \le \id_{A \cap \{ M_{(n-1) \wedge \tau_j} \le N \} } M_{(n-1) \wedge \tau_j} \le N$, we can use Lebesgue's dominated convergence theorem (classical, no $\infty$ involved) to simplify the right hand side of~\eqref{eq:intermediate-ineq}:
    \begin{align}
        \lim_{j\to \infty}  \Exp[ \id_{A \cap \{ M_{(n-1) \wedge \tau_j} \le N \} } M_{(n-1) \wedge \tau_j} ] &=  \Exp[ \lim_{j\to \infty} \id_{A \cap \{ M_{(n-1) \wedge \tau_j} \le N \} } M_{(n-1) \wedge \tau_j} ] \nonumber \\
        &= \Exp[ \id_{A \cap \{ M_{n-1} \le N \} } M_{n-1}  ].
    \end{align}
    Combining the three displayed (in)equalities above, we obtain
    \begin{equation}
        \Exp[ \id_{A \cap \{ M_{n-1} \le N \}} M_{n}  ] \le   \Exp[ \id_{A \cap \{ M_{n-1} \le N \} } M_{n-1 } ].
    \end{equation}
    Letting $N \to \infty$,
    \begin{equation}
        \Exp[ \id_{A \cap \{ M_{n-1} < \infty \}} M_{n}  ] \le   \Exp[ \id_{A \cap \{ M_{n-1} < \infty \} } M_{n-1} ].
    \end{equation}
    Note that we also have
    \begin{equation}
        \Exp[ \id_{A \cap \{ M_{n-1} = \infty \}} M_{n}  ] \le   \Exp[ \id_{A \cap \{ M_{n-1} = \infty \} } M_{n-1} ].
    \end{equation}
    So
    \begin{equation}
        \Exp[ \id_{A} M_{n}  ] \le   \Exp[ \id_{A } M_{n-1 } ].
    \end{equation}
    $M$ is an ENSM follows thus from Proposition~\ref{prop:cond-exp-order-rule}.
\end{proof}

Combining Proposition~\ref{prop:M0+classical-SM} and Proposition~\ref{prop:local}, we immediately have the following.
\begin{proposition}\label{prop:M0+local-SM}
    Let $M_0$ be a $\mathcal{F}_0$-measurable nonnegative random variable and let $ 
Y$ be a classical local supermartingale (not necessarily nonnegative) such that $M := M_0 + Y \ge 0 $. Then, $M$ is an ENSM.
\end{proposition}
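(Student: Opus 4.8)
The plan is to reduce the statement to the two results just established, namely Proposition~\ref{prop:M0+classical-SM} (which handles the genuinely integrable case, allowing the additive supermartingale to be signed) and Proposition~\ref{prop:local} (which upgrades local ENSMs to ENSMs). The key observation is that the hypotheses already exhibit $M$ as a \emph{local} ENSM, after which Proposition~\ref{prop:local} finishes the argument in one stroke.

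First I would unpack the definition of a classical local supermartingale: there is an increasing sequence of stopping times $\{ \tau_j \}$ with $\tau_j \to \infty$ almost surely such that each stopped process $Y^{\tau_j}$ is a classical (integrable) supermartingale. I would then note that stopping commutes with the additive decomposition $M_n = M_0 + Y_n$: since $M_0$ does not depend on $n$, we have $M^{\tau_j}_n = M_{n \wedge \tau_j} = M_0 + Y_{n \wedge \tau_j} = M_0 + Y^{\tau_j}_n$ for every $j$ and $n$. Because $M \ge 0$ everywhere, the stopped process inherits nonnegativity, $M^{\tau_j}_n = M_{n \wedge \tau_j} \ge 0$. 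Thus, for each fixed $j$, the process $M^{\tau_j}$ is exactly of the form ``$M_0$ plus a (possibly signed) classical supermartingale, summing to a nonnegative process'', so Proposition~\ref{prop:M0+classical-SM} applies directly and shows that $M^{\tau_j}$ is an ENSM.

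Consequently $M$ satisfies the ENSM property locally, with the very same localizing sequence $\{ \tau_j \}$, i.e.\ $M$ is a local ENSM; applying Proposition~\ref{prop:local} then yields that $M$ is itself an ENSM, completing the proof. I do not expect any genuine obstacle here, since all of the analytic work has already been discharged: the Fatou/monotone-convergence argument that removes the localization lives in Proposition~\ref{prop:local}, and the careful addition of the possibly-infinite nonnegative random variable $M_0$ to a signed supermartingale lives in Proposition~\ref{prop:M0+classical-SM}. The only point that deserves a moment's care is the commutation $M^{\tau_j} = M_0 + Y^{\tau_j}$, together with the observation that $Y^{\tau_j}$ need only be a \emph{signed} supermartingale --- which is precisely the generality that Proposition~\ref{prop:M0+classical-SM} was stated to accommodate.
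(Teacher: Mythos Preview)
Your proof is correct and follows essentially the same route as the paper: localize $Y$ by $\{\tau_j\}$, observe $M^{\tau_j}=M_0+Y^{\tau_j}\ge 0$, apply Proposition~\ref{prop:M0+classical-SM} to each stopped process, and then invoke Proposition~\ref{prop:local}. The paper's proof is the same two-line reduction, just more tersely stated.
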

\begin{proof}[Proof of Proposition~\ref{prop:M0+local-SM}]
    Let $\{ \tau_j \}$ be the localizing sequence. Since $Y^{ \tau_j} $ is a supermartingale, $M^{ \tau_j}$ is an ENSM due to Proposition~\ref{prop:M0+classical-SM}. So $ M$ is a local ENSM, hence an ENSM due to Proposition~\ref{prop:local}.
\end{proof}

The \emph{raison d'\^etre} of Proposition~\ref{prop:M0+local-SM} is that it implies a very rich class of processes to be ENSMs --- those satisfying a stochastic differential equation (SDE) with nonpositive drift. In the following proposition, $t \ge 0$ denotes continuous time and $n \ge 0$ discrete time.

\begin{proposition}\label{prop:sde}
    Let $\{B_t\}_{t \ge 0}$ be a standard brownian motion adapted to a right-continuous filtration $\{ \mathcal{F}_t\}_{t \ge 0} $. Suppose the continuous-time, finite, nonnegative process $\{ M_t \}$ satisfies
    \begin{equation}
        M_t = M_0 +  \int_{0}^t (\sigma_s \d B_s + b_s \d s),
    \end{equation}
    such that $b_t \le 0$ and $\int_0^t \sigma_s^2 < \infty$ a.s. for all $t \ge 0$. Then,  $\{ M_n \}_{n \ge 0}$ is an ENSM.
\end{proposition}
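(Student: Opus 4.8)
The plan is to peel off the initial value, recognize the remaining process as a continuous-time local supermartingale, and then \emph{localize in continuous time} before sampling, so that the discrete-time machinery of Section~\ref{sec:mtg} applies. Write $M_t = M_0 + N_t + A_t$ with $N_t = \int_0^t \sigma_s \d B_s$ and $A_t = \int_0^t b_s \d s$. Because $\int_0^t \sigma_s^2 \d s < \infty$ a.s.\ for every $t$, the process $N$ is a continuous local martingale (with $\langle N\rangle_t = \int_0^t \sigma_s^2 \d s$); because $b_s \le 0$, the process $A$ is continuous, adapted, and non-increasing with $A_0 = 0$, so $A_t \le 0$. The initial value $M_0$ is $\mathcal{F}_0$-measurable, nonnegative, and finite, which is exactly the form needed to eventually invoke Proposition~\ref{prop:M0+classical-SM}.

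First I would introduce the continuous-time localizing sequence
\begin{equation}
    \pi_j = j \wedge \inf\{ t \ge 0 : |N_t| \ge j \ \text{or}\ A_t \le -j \}, \qquad j \ge 1,
\end{equation}
which is a stopping time, being the minimum of a constant and hitting times of closed sets by continuous adapted processes on the right-continuous filtration. The sequence $\pi_j$ is non-decreasing and $\pi_j \uparrow \infty$ a.s., since on each compact interval the continuous paths of $N$ and $A$ are bounded. The crucial point, and the reason for localizing in continuous rather than discrete time, is that \emph{path-continuity forbids overshoot}: by continuity, $|N_{t \wedge \pi_j}| \le j$ and $-j \le A_{t \wedge \pi_j} \le 0$ for all $t$. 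Hence $N^{\pi_j}$ is a bounded continuous local martingale (thus a true martingale) and $A^{\pi_j}$ is a bounded, non-increasing, adapted process (thus a supermartingale), so $Y^{\pi_j} := N^{\pi_j} + A^{\pi_j}$ is a bounded continuous-time supermartingale.

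Sampling at integer times, $\{ Y^{\pi_j}_n \}_{n \ge 0} = \{ Y_{n \wedge \pi_j} \}_{n \ge 0}$ is a bounded, hence $L^1$, discrete-time classical supermartingale for $\{\mathcal{F}_n\}$. Since $M_0 + Y^{\pi_j}_n = M_{n \wedge \pi_j} \ge 0$, Proposition~\ref{prop:M0+classical-SM} shows that $\{ M_{n \wedge \pi_j} \}_{n \ge 0}$ is an ENSM for each $j$. I would then pass to the limit $j \to \infty$ exactly as in the proof of Proposition~\ref{prop:local}. Fix $n \ge 1$, $A \in \mathcal{F}_{n-1}$, and a threshold $N > 0$; because $M_{(n-1)\wedge\pi_j}$ is $\mathcal{F}_{n-1}$-measurable, applying Proposition~\ref{prop:cond-exp-order-rule} to the ENSM $\{M_{n\wedge\pi_j}\}$ over the set $A \cap \{ M_{(n-1)\wedge\pi_j} \le N \}$ gives
\begin{equation}
    \Exp[ \id_{A \cap \{ M_{(n-1)\wedge\pi_j} \le N \}}\, M_{n \wedge \pi_j} ] \le \Exp[ \id_{A \cap \{ M_{(n-1)\wedge\pi_j} \le N \}}\, M_{(n-1)\wedge\pi_j} ].
\end{equation}
As $\pi_j \uparrow \infty$ one has $M_{n\wedge\pi_j} \to M_n$ and $M_{(n-1)\wedge\pi_j} \to M_{n-1}$ a.s.; Fatou's lemma bounds the left side below by $\Exp[\id_{A \cap \{M_{n-1}\le N\}} M_n]$, while the right-hand integrand is bounded by $N$, so dominated convergence identifies its limit as $\Exp[\id_{A\cap\{M_{n-1}\le N\}} M_{n-1}]$. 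Letting $N \to \infty$ (monotone convergence) and using that $M$ is finite, so $\{M_{n-1}=\infty\}$ is null, yields $\Exp[\id_A M_n] \le \Exp[\id_A M_{n-1}]$ for all $A \in \mathcal{F}_{n-1}$, which by Proposition~\ref{prop:cond-exp-order-rule} is precisely $\Exp[M_n \mid \mathcal{F}_{n-1}] \le M_{n-1}$.

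The main obstacle I anticipate is resisting the temptation to localize directly in discrete time and invoke Proposition~\ref{prop:M0+local-SM} as a black box: the single-step drift increment $\int_{n-1}^{n} b_s \d s$ (and likewise a single-step martingale increment) need not be integrable, so the sampled processes $\{A_n\}$ and $\{N_n\}$ can fail to be discrete-time local (super)martingales in the $L^1$ sense. Localizing the \emph{continuous} process and using path-continuity to pin $A$ at the level $-j$ with no overshoot is what restores the uniform bound, hence integrability, for every fixed $j$; once that is in place, the remainder is a faithful transcription of the Fatou-plus-dominated-convergence limiting argument already carried out for Proposition~\ref{prop:local}.
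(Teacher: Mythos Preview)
Your proof is correct and follows the same overall strategy as the paper: decompose $M_t = M_0 + N_t + A_t$, recognize the It\^o integral $N$ as a continuous local martingale and the drift $A$ as nonincreasing, and then pass through the chain Proposition~\ref{prop:M0+classical-SM} $\to$ (the argument of) Proposition~\ref{prop:local}. The paper's proof is a one-line sketch (``follows from the fact that the It\^o integral is a local martingale''), implicitly invoking Proposition~\ref{prop:M0+local-SM}, whereas you explicitly carry out the localization and the Fatou/dominated-convergence limit.

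Where you are more careful than the paper is in the continuous-to-discrete transition, and rightly so. Proposition~\ref{prop:M0+local-SM} and Proposition~\ref{prop:local} are stated for \emph{discrete-time} localizing sequences, but the natural localizing sequence for $Y = N + A$ is continuous-time; as you observe, sampling $N$ and $A$ at integers need not yield discrete-time local (super)martingales because single-step increments may fail to be integrable. Your remedy --- localizing in continuous time with $\pi_j$ so that path-continuity gives uniform bounds, then sampling to obtain bounded discrete-time supermartingales, and finally rerunning the Fatou/DCT argument of Proposition~\ref{prop:local} verbatim with the (non-integer) $\pi_j$ --- is exactly what is needed to make the paper's sketch rigorous. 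The key measurability check that $M_{(n-1)\wedge\pi_j}$ is $\mathcal{F}_{n-1}$-measurable (because $(n-1)\wedge\pi_j \le n-1$) is what allows Proposition~\ref{prop:local}'s proof to go through unchanged for continuous-time $\pi_j$.
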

The proof simply follows from the fact that the It\^o integral $\int_{0}^t \sigma_s \d B_s $ is a local martingale \citep[Corollary 25.19]{klenke2013probability}. We remark that if $M_0$ is integrable, $\{ M_t \}$ would be a supermartingale in the classical sense.
 We believe a suitable modification of the previous framework can establish that $\{ M_t \}_{t \ge 0}$ is a continuous-time ENSM, which, though not difficult, is beyond the scope of our paper as we do not commit ourselves to the study of continuous-time ENSMs.

%We shall see some examples of Proposition~\ref{prop:sde} in later sections. 
%but let us first go through some more elementary examples in the following subsection.

%To further illustrate the concept, we present the following elementary examples of extended nonnegative supermartingales. 

%Finally, we state the \emph{mixture} property of nonnegative supermartingales: that is, $\sigma$-finite integral over nonnegative supermartingales is again a nonnegative supermartingale.

\subsection{Extended NSMs and solutions to drift-free SDEs}\label{sec:test-sde}

In this section, we provide further details on the link between the proper mixture in Section~\ref{sec:mix-nmu0c} and the improper mixture in Section~\ref{sec:mix-flt}, by examining the classical and extended NSMs in Proposition~\ref{prop:gaussian-mixed} and Proposition~\ref{prop:flat-mix-cs} through the lense of continuous-time stochastic analysis.

We put $\mu_0 = 0$ without loss of generality and consider $\normal{0}{1}$ as the distribution of $X_1, X_2, \dots$. Let $\{B_t\}_{t \ge 0}$ be the standard Brownian motion, so that $\{S_n\}_{n \ge 0}$ is the discretization to $\{B_t\}_{t \ge 0}$. Replacing $n$ with $t$ and $S_n$ with $B_t$, the Gaussian-mixed martingale \eqref{eqn:guassian-mixed-mtg} is
\begin{equation}
    L^{(c)}_t = \sqrt{\frac{c^2}{c^2 +t}} \exp\left( \frac{B_t^2}{2(c^2 + t)} \right) =: f(t, B_t).
\end{equation}
We have the partial derivatives
\begin{equation}\label{eqn:f-pd}
\begin{cases}
    f_1(t, B_t) = -f(t, B_t) \frac{c^2 + t +B_t^2}{2(c^2+t)^2},
    \\
    f_2(t,B_t) = f(t, B_t) \frac{B_t}{c^2 + t},
    \\
    f_{22}(t,B_t)= f(t, B_t) \frac{c^2 + t +B_t^2}{(c^2+t)^2}.
    \end{cases}
\end{equation}
By It\^o's lemma,
\begin{equation}\label{eqn:L-sde}
    \d L_t^c =\left( f_1 + \frac{1}{2}f_{22} \right) \d t + f_2 \,\d B_t = 0\, \d t + \frac{L_t^c B_t}{c^2 + t} \,\d B_t.
\end{equation}
This SDE, together with the initial condition $L_0^c = 1$, characterizes the evolution of this test martingale.

Switching to the flat-mixed case, the ENSM \eqref{eqn:flat-mixed-ensm} is
\begin{equation}
    K_t = t^{-1/2} \exp\left( \frac{B_t^2}{2t} \right) =: g(t, B_t).
\end{equation}
We have
\begin{equation}\label{eqn:g-pd}
\begin{cases}
    g_1(t, B_t) = -g(t, B_t) \frac{t +B_t^2}{2t^2},
    \\
    g_2(t, B_t) = g(t, B_t) \frac{B_t}{t},
    \\
    g_{22}(t, B_t) =g(t, B_t) \frac{t +B_t^2}{t^2}.
    \end{cases}
\end{equation}
Again, It\^o's lemma implies
\begin{equation}\label{eqn:K-sde}
\d K_t = \left( g_1 + \frac{1}{2}g_{22} \right) \d t + g_2 \,\d B_t = 0 \,\d t + \frac{K_t B_t}{t} \,\d B_t . 
\end{equation}
The evolution of $\{K_t\}$ is given by the above SDE. Note that $K_0 = \infty$. We can say that for any $\epsilon > 0$, $\{K_t\}_{t \ge \epsilon}$ is the strong solution to the SDE with initial condition $K_\epsilon$. Comparing the partial derivatives in \eqref{eqn:f-pd} and \eqref{eqn:g-pd}, as well as the SDEs in \eqref{eqn:L-sde} and \eqref{eqn:K-sde}, we can see that the flat mixture $\{K_t\}$ is the $c\to 0$ limit case of the Gaussian mixture $\{ L_t^{(c)} \}$ in terms of their evolution.

\section{More ways to construct classical NSMs from ENSMs}\label{sec:ensm-to-nsm}

We mentioned in Proposition~\ref{prop:concave-ensm} that upper thresholding an ENSM can immediately yield a classical NSM. In this section, we demonstrate two intuitive ways to derive a classical NSM from an ENSM. First, if we have already observed some early finite values of an ENSM, the rest of the ENSM shall behave like a classical NSM. This is formalized as the regular conditional distribution. Let $(\mathbb E, \mathcal{E})$ be the Borel space of $\overline{\mathbb R^+}$-valued sequences.

\begin{proposition}[Conditioning]\label{prop:cond-ensm} Let $ M$ be an ENSM adapted to $\mathcal{F}$. Let $\kappa: \Omega \times \mathcal{E} \to \mathbb [0,1]$ be %the
a regular conditional distribution of $ M$ given $\mathcal{F}_0$. Then, the distribution $\kappa(\omega, \cdot )$ defines a classical NSM 
with respect to its own filtration
%adapted to $\{\mathcal{F}_{n}\}_{n\ge0}$ 
for $\Pr$-almost all $\omega \in \{ M_0 < \infty \}$.
%on the filtered measurable space $(\mathbb E, \mathcal{E}, \{ \mathcal{E}_n \} )$.
\end{proposition}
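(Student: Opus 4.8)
The plan is to verify, for $\Pr$-almost every $\omega \in \{M_0 < \infty\}$, the two defining requirements of a classical NSM for the canonical coordinate process $y = (y_0, y_1, \dots)$ on the sequence space $(\mathbb E, \mathcal E)$ equipped with the measure $\kappa(\omega, \cdot)$ and its own filtration $\mathcal G_n = \sigma(y_0, \dots, y_n)$: integrability of each $y_n$, and the one-step supermartingale inequality $\int y_n \id_A\,\kappa(\omega,\d y) \le \int y_{n-1}\id_A\,\kappa(\omega,\d y)$ for all $A \in \mathcal G_{n-1}$. The essential tool is the defining property of the regular conditional distribution: for every nonnegative $\mathcal E$-measurable $\Phi$, the identity $\int \Phi(y)\,\kappa(\omega, \d y) = \Exp[\Phi(M) \mid \mathcal F_0](\omega)$ holds for $\Pr$-a.e.\ $\omega$ (extended from bounded to nonnegative $\Phi$ by monotone convergence, Proposition~\ref{prop:cond-beppo-levi}). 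Taking $\Phi(y) = y_0$ shows $y_0 = M_0(\omega)$ $\kappa(\omega,\cdot)$-a.s., so finiteness of $M_0(\omega)$ is exactly what is needed at time $0$, which explains the restriction to $\{M_0<\infty\}$.

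For integrability, apply the identity to $\Phi(y) = y_n$, giving $\int y_n\,\kappa(\omega,\d y) = \Exp[M_n \mid \mathcal F_0](\omega)$. I would first record the conditioning-down estimate $\Exp[M_n \mid \mathcal F_0] \le M_0$, obtained by iterating the $h \equiv 1$ instance of the key inequality below; on $\{M_0 < \infty\}$ this bounds $\int y_n\,\kappa(\omega,\d y)$ by $M_0(\omega) < \infty$ for $\Pr$-a.e.\ $\omega$, giving integrability of every $y_n$ simultaneously after discarding a countable union of null sets.

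The heart of the argument is the claim that for each $n \ge 1$ and each bounded nonnegative measurable $h$ on $\eR^{n}$ (viewed as a function of $y_0, \dots, y_{n-1}$),
\begin{equation}
\Exp[h(M_0,\dots,M_{n-1}) M_n \mid \mathcal F_0] \le \Exp[h(M_0,\dots,M_{n-1}) M_{n-1} \mid \mathcal F_0] \quad \Pr\text{-a.s.}
\end{equation}
To see this, note that $h(M_0,\dots,M_{n-1})$ is $\mathcal F_{n-1}$-measurable, so by the measurable-product rule (Proposition~\ref{prop:properties}(iii)), the ENSM inequality \eqref{eqn:def-ensm}, and monotonicity, $\Exp[h M_n \mid \mathcal F_{n-1}] = h\,\Exp[M_n\mid\mathcal F_{n-1}] \le h\, M_{n-1}$. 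For any $A \in \mathcal F_0 \subseteq \mathcal F_{n-1}$, multiplying by $\id_A$, taking expectations, and applying property (ii) of Proposition~\ref{prop:cond-exp-basic-properties} yields $\Exp[\id_A h M_n] \le \Exp[\id_A h M_{n-1}] = \Exp[\id_A\, \Exp[h M_{n-1}\mid \mathcal F_0]]$; the order rule (Proposition~\ref{prop:cond-exp-order-rule}), with the $\mathcal F_0$-measurable dominating variable $\Exp[h M_{n-1}\mid\mathcal F_0]$, then upgrades this to the displayed conditional inequality. Translating both sides through the r.c.d.\ identity (with $\Phi(y)=h(y_0,\dots,y_{n-1})y_n$ and $\Phi(y)=h(y_0,\dots,y_{n-1})y_{n-1}$), we obtain that for $\Pr$-a.e.\ $\omega$, $\int h\, y_n\,\kappa(\omega,\d y) \le \int h\, y_{n-1}\,\kappa(\omega,\d y)$.

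Finally, I would remove the dependence of the exceptional null set on $h$. Fix a countable algebra $\mathcal A_0$ of subsets of $\eR^{n}$ (e.g.\ generated by rational ``boxes'') generating the Borel $\sigma$-algebra, and apply the previous step to $h = \id_A$ for each $A \in \mathcal A_0$ and each $n$; the union of the corresponding null sets is still $\Pr$-null. For a fixed $\omega$ outside this set and in $\{M_0<\infty\}$, the two finite measures $\mu(A) = \int \id_A(y_0,\dots,y_{n-1})\, y_n\,\kappa(\omega,\d y)$ and $\nu(A) = \int \id_A(y_0,\dots,y_{n-1})\, y_{n-1}\,\kappa(\omega,\d y)$ (finite by the integrability step) satisfy $\mu \le \nu$ on $\mathcal A_0$; since $\{A : \mu(A)\le\nu(A)\}$ is a monotone class by continuity from above and below of finite measures, the monotone class theorem extends $\mu \le \nu$ to all of $\mathcal G_{n-1}$. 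By the order rule this is exactly the required supermartingale inequality for $y$ under $\kappa(\omega,\cdot)$. The main obstacle, and the only genuinely delicate point, is precisely this passage from ``a.s.\ for each fixed test function'' to ``a.s.\ for all test functions at once'': it forces a countable determining class together with the monotone class theorem (a $\pi$-system would not suffice, as domination of measures need not extend from one), and it relies on the finiteness established earlier to license the classical order rule and the implicit subtractions.
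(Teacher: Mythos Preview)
Your argument is correct and follows essentially the same route as the paper's proof: both transfer the ENSM inequality $\Exp[\id_A M_n]\le\Exp[\id_A M_{n-1}]$ (for $A$ in the natural filtration of $M$) through the defining identity of the regular conditional distribution. Two minor differences worth noting: (i) where you bound $\int y_n\,\kappa(\omega,\d y)=\Exp[M_n\mid\mathcal F_0]\le M_0$ directly to get integrability of each coordinate, the paper first concludes that $\kappa(\omega,\cdot)$ defines an ENSM and then invokes Proposition~\ref{prop:finite-moment-ensm-is-nsm} on $\{M_0<\infty\}$; (ii) you carefully eliminate the dependence of the exceptional null set on the test set $A$ via a countable generating algebra and the monotone class theorem, a point the paper leaves implicit. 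Your treatment is thus the same idea executed with more rigor on that last point.
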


In particular, if we are to condition on the event $\{ M_0 = m_0 \}$ where $m_0 < \infty$ is in the range of $M_0$, since each $\kappa(\cdot, B)$ is $\sigma(M_0)$-measurable, there is a well-defined $\kappa( M_0^{-1}(m_0), B )$ by the factorization lemma and $\kappa( M_0^{-1}(m_0), \cdot )$ is a distribution of classical NSM.

\begin{proof}[Proof of Proposition~\ref{prop:cond-ensm}]
Recall the following property of regular conditional distribution \citep[Theorem 8.38]{klenke2013probability}: let $f: \mathbb E \to \mathbb R$ be a measurable functional of processes, then, for $\Pr$-almost all $\omega$,
\begin{equation}\label{eqn:reg-cond-exp}
    \Exp[ f(M_0, M_1,\dots) | \mathcal{F}_0 ](\omega) = \int f(m) \kappa(\omega, \d m),
\end{equation}
as long as $f(M_0, M_1,\dots)$ is integrable.
It is not hard to see the above still holds if we assume $f: \mathbb E \to \overline{\mathbb R^+}$ without integrability instead.

Fix some $n\ge1$, $A$ a Borel set in $\mathbb R^n$ and let us take the following functionals: $$f_1 (m) =  
m_n \id_{ (m_0, \dots, m_{n-1}) \in A } $$ and $$f_2(m) = m_{n-1} \id_{ (m_0, \dots, m_{n-1}) \in A }.$$ Since $\{ 
(M_0, \dots, M_{n-1}) \in A \} \in \mathcal{F}_{n-1}$, we have 
$$\Exp[ f_1(M_0, M_1,\dots) | \mathcal{F}_0 ](\omega) \le \Exp[ f_2(M_0, M_1,\dots) | \mathcal{F}_0 ](\omega).$$ Hence $\int f_1(m) \kappa(\omega, \d m) \le \int f_2(m) \kappa(\omega, \d m)$. This implies that a process distributed according to $\kappa(\omega, \cdot)$ is an ENSM.

Finally, we take $f(m) = m_0$. If $M_0(\omega) < \infty$, the right hand side of \eqref{eqn:reg-cond-exp}, which is $\int f(m)  \kappa(\omega, \d m) < \infty$, implies that the expected value of the 0-th random variable of the process distributed according to $\kappa(\omega, \cdot)$ is finite. This concludes the proof by calling Proposition~\ref{prop:finite-moment-ensm-is-nsm}.
\end{proof}

The seemingly abstract Proposition~\ref{prop:cond-ensm} is straightforward to apply to specific processes. In \cref{ex:cauchy}, conditioning on $M_0 = 1$, for example, leads to a process $ M' $ having exactly the same iteration $M'_n = M_{n-1}'(1/2  + \xi_n)$, but starting with a deterministic initial value $M'_0 = 1$.

Another simple idea is \emph{dividing} the ENSM $M$ by $M_0$. Although $M_0$ may take values in $\overline{\mathbb R^+}$, the following proposition only involves division by positive real number.

\begin{proposition}[Division]\label{prop:div-ensm}
Let $M$ be an EN(S)M adapted to $\mathcal{F}$. The process
\begin{equation}
     N_n = \id_{\{ 0 < M_0 < \infty \}} \frac{M_n}{M_0} .
\end{equation}
is a classical N(S)M.
\end{proposition}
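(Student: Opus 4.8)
The plan is to realize $N$ as the product of $M$ with a suitable $\mathcal{F}_0$-measurable multiplier and then invoke the two structural results already available. First I would set
\[
    Y = \id_{\{ 0 < M_0 < \infty \}}\,\frac{1}{M_0},
\]
a nonnegative $\mathcal{F}_0$-measurable random variable that is moreover \emph{finite} everywhere: on $\{ 0 < M_0 < \infty \}$ it equals $1/M_0 \in (0,\infty)$, while off this event I simply set $Y = 0$. Packaging the indicator together with $1/M_0$ this way is exactly what sidesteps the undefined quotients: the indicator vanishes precisely on $\{ M_0 = 0 \} \cup \{ M_0 = \infty \}$, so no genuine division by $0$ or $\infty$ ever occurs, and one has $Y M_n = \id_{\{ 0 < M_0 < \infty \}} M_n / M_0 = N_n$ for every $n$, i.e.\ $N = Y M$.

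Next I would apply \cref{prop:mult-F0}: since $Y$ is $\mathcal{F}_0$-measurable and $\eR$-valued, the process $Y M$ is an ENSM, hence $N$ is an ENSM. For the martingale case, the same one-line computation underlying \cref{prop:mult-F0} applies, using property (iii) of \cref{prop:properties} to pull out the $\mathcal{F}_{n-1}$-measurable factor $Y$: from $\Exp[M_n \mid \mathcal{F}_{n-1}] = M_{n-1}$ one gets $\Exp[Y M_n \mid \mathcal{F}_{n-1}] = Y \Exp[M_n \mid \mathcal{F}_{n-1}] = Y M_{n-1}$, so $N$ is an ENM whenever $M$ is.

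Finally I would upgrade ``extended'' to ``classical'' through integrability of the initial value. The key simplification is that
\[
    N_0 = \id_{\{ 0 < M_0 < \infty \}}\,\frac{M_0}{M_0} = \id_{\{ 0 < M_0 < \infty \}}
\]
is a bounded indicator, so $\Exp[N_0] = \Pr[0 < M_0 < \infty] \le 1 < \infty$ \emph{regardless} of whether $M_0$ itself is integrable. \cref{prop:finite-moment-ensm-is-nsm} then promotes the ENSM (resp.\ ENM) $N$ to a classical NSM (resp.\ NM), which is the claim.

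There is essentially no hard step here: the only point demanding care is the arithmetic bookkeeping in $\eR$ when defining $Y$ and checking $Y M_n = N_n$ pointwise, in particular confirming that the indicator is positioned so as to avoid the ill-defined expressions $0/0$, $\infty/\infty$, and $1/\infty$. Once $N$ is exhibited as an $\mathcal{F}_0$-measurable rescaling of $M$, the proposition follows immediately as a two-step consequence of \cref{prop:mult-F0} and \cref{prop:finite-moment-ensm-is-nsm}.
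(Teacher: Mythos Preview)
Your proof is correct and follows essentially the same route as the paper: both pull the $\mathcal{F}_0$-measurable factor $\id_{\{0<M_0<\infty\}}/M_0$ through the conditional expectation (you cite \cref{prop:mult-F0}, the paper writes the one-line computation directly), then observe $N_0\le 1$ and invoke \cref{prop:finite-moment-ensm-is-nsm}. Your extra care in defining $Y$ so that no ill-defined quotient is ever formed is a nice touch that the paper leaves implicit.
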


\begin{proof}[Proof of Proposition~\ref{prop:div-ensm}] If $M$ is an ENSM, for any $n \ge 1$,
    \begin{align} 
        \Exp[ N_n | \mathcal{F}_{n-1} ] =  \frac{\id_{\{ 
0 < M_0 < \infty \}}}{M_0}  \Exp[ M_n | \mathcal{F}_{n-1} ]  \le \frac{\id_{\{ 
0 < M_0 < \infty \}}}{M_0}  M_{n-1} = N_{n-1}.
    \end{align}
    Since $N_0 \le 1$, combining above with Proposition~\ref{prop:finite-moment-ensm-is-nsm} concludes the proof. For the ENM case, replace the $\le$ above with an equality.
\end{proof}

\section{More on confidence sequences for the subGaussian mean}\label{sec:cs-appendix}

\subsection{Mixing distribution $\normal{\eta}{1/c^2}$}\label{sec:mix-n0c}

We now present a sequential test and confidence sequence with a mixing distribution that incorporates the experimenter's ``guess'' of the true mean $\mu_0$. The idea is simply that one replaces the mean of the mixing distribution, which in \cref{sec:mix-nmu0c} is the same as the null $\mu_0$, with one's guess $\eta$. In other word, the prior is fixed, $\mu \sim \normal{0}{1/c^2}$, when facing different nulls. We have the following.

\begin{proposition}\label{prop:mix-n0c}
    %Let $P$ be a 1-subGaussian distribution with mean $\mu_0$. 
    For any $c>0$, the process
    \begin{equation}
        L_n^c  =  \sqrt{\frac{c^2 }{c^2 + n}} \exp\left( \frac{1}{2}n (\mu_0 - \avgX{n})^2 - \frac{c^2}{2(c^2 + n)n} S_n^2   \right)
    \end{equation}
    is a $\subgaussian{\mu_0}{1}$-supermartingale and consequently, the intervals 
    \begin{equation}\label{eqn:gaussian-prior-known-var-cs}
          \left[ \avgX{n} \pm \sqrt{ \frac{ \log \frac{c^2 + n}{c^2 \alpha^2}  + \frac{c^2 n \avgX{n}^2}{(c^2+n)} }{n} } \right].
    \end{equation}
    form a $(1-\alpha)$-CS for $\mu_0$ over $\subgaussian{\mu_0}{1}$.
\end{proposition}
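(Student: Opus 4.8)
The plan is to recognize $L^c$ as a \emph{proper} Gaussian mixture of the likelihood-ratio supermartingales $\ell(\mu;\mu_0)$ from \eqref{eqn:gaussian-lr}, exactly as in the warm-up Proposition~\ref{prop:gaussian-mixed}, the only difference being that the prior is now centered at the fixed guess $\eta=0$ rather than at the null $\mu_0$. Concretely, I would first verify by a direct Gaussian integral that
\begin{equation}
    L_n^c = \int_{\mathbb R} \ell(\mu;\mu_0)_n \, \frac{c}{\sqrt{2\pi}}\exp\!\left(-\tfrac{c^2\mu^2}{2}\right)\d\mu.
\end{equation}
Expanding the log-likelihood ratio as $\log \ell(\mu;\mu_0)_n = S_n(\mu-\mu_0) + \tfrac{n}{2}(\mu_0^2 - \mu^2)$ and completing the square in $\mu$ against the prior precision $c^2$ leaves a Gaussian integral of value $\sqrt{c^2/(c^2+n)}$ together with a residual exponent $\tfrac{n\mu_0^2}{2} - \mu_0 S_n + \tfrac{S_n^2}{2(c^2+n)}$; substituting $S_n = n\avgX{n}$ and regrouping the quadratic terms recovers the stated closed form $\tfrac12 n(\mu_0-\avgX{n})^2 - \tfrac{c^2}{2(c^2+n)n}S_n^2$.

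With the mixture identity in hand, the supermartingale claim is immediate. By Lemma~\ref{lem:subgaussian-lr} each $\ell(\mu;\mu_0)$ is a $\subgaussian{\mu_0}{1}$-supermartingale, and mixing against the finite (indeed probability) measure $\normal{0}{1/c^2}$ preserves this via the mixture lemma (Lemma~\ref{lem:mix}); the measurability hypotheses of that lemma hold because $\ell(\mu;\mu_0)_n$ is jointly continuous in $(\mu, X_1,\dots,X_n)$. Since the prior is proper we have $L_0^c = 1$, so $L^c$ is a genuine classical $\subgaussian{\mu_0}{1}$-supermartingale starting from $1$, and the classical Ville's inequality \eqref{eqn:vil} applies directly---the extended machinery of Theorem~\ref{thm:evi-2} is not needed here, in contrast to the flat-prior case.

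Finally, I would invert the boundary. Under any $P\in\subgaussian{\mu_0}{1}$, Ville's inequality gives $\Pr[\exists n,\, L_n^c \ge 1/\alpha]\le\alpha$, so it suffices to show that $\{L_n^c < 1/\alpha\}$ is exactly the event $\{\mu_0\in\CI_n\}$ with $\CI_n$ as in \eqref{eqn:gaussian-prior-known-var-cs}. Taking logarithms of $L_n^c < 1/\alpha$, moving the constant $\tfrac12\log\tfrac{c^2}{c^2+n}$ to the right-hand side, and multiplying through by $2/n$ isolates $(\mu_0-\avgX{n})^2$; the $\tfrac{c^2}{2(c^2+n)n}S_n^2$ term then reappears on the right as $\tfrac{c^2 n\avgX{n}^2}{c^2+n}$ after $S_n^2=n^2\avgX{n}^2$, yielding precisely the squared radius in \eqref{eqn:gaussian-prior-known-var-cs}. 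I do not expect a genuine obstacle in this argument; the only point demanding care is the bookkeeping of the several $S_n^2$ / $\avgX{n}^2$ contributions when passing between the closed form of $L_n^c$ and the inverted radius, which is why I would keep the mixture identity as the organizing anchor throughout.
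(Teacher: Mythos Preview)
Your proposal is correct and follows essentially the same route as the paper: identify $L_n^c$ as the $\normal{0}{1/c^2}$-mixture of the likelihood-ratio supermartingales $\ell(\mu;\mu_0)$, evaluate the Gaussian integral to obtain the closed form, invoke the mixture lemma together with Lemma~\ref{lem:subgaussian-lr} for the supermartingale property, and then invert $L_n^c<1/\alpha$ via Ville's inequality. The only cosmetic difference is that the paper carries the $V_n$ terms through the integral before cancelling them, whereas you cancel them at the outset by writing $\log\ell(\mu;\mu_0)_n = S_n(\mu-\mu_0)+\tfrac{n}{2}(\mu_0^2-\mu^2)$.
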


\begin{proof}[Proof of Proposition~\ref{prop:mix-n0c}] By direct calculation, the mixed martingale is
\begin{align}
   L_n^c = & \int_\mu \ell^\mu_n \, \d \normal{0}{c^{-2}}(\mu)
    =  \frac{\int_{\mathbb R} \exp(-\frac{1}{2}\sum (X_i - \mu)^2 ) \frac{c}{\sqrt{2\pi}} \exp(-c^2 \mu^2/2)  \d \mu }{\exp(-\frac{1}{2}\sum (X_i - \mu_0)^2 )}
   \\
   % = & \frac{\frac{c}{\sqrt{2\pi}} 
   % \int_{\mathbb R} \exp(  -\frac{1}{2}(c^2 + n)\mu^2 + S_n \mu -V_n/2)  \d \mu }{\exp(-\frac{1}{2}\sum (X_i - \mu_0)^2 )}
   % \\
   = &  \frac{
   \sqrt{\frac{c^2 }{c^2 + n}} \exp( S_n^2/2(c^2 + n) - V_n/2 ) }{\exp( -\frac{1}{2}n \mu_0^2 + S_n \mu_0 - V_n/2 )}
   \\
   % = &  \sqrt{\frac{c^2 }{c^2 + n}} \exp\left( \frac{1}{2}n \mu_0^2 -  S_n \mu_0 + S_n^2/2(c^2 + n) \right)
   % \\
   = & \sqrt{\frac{c^2 }{c^2 + n}} \exp\left( \frac{1}{2}n (\mu_0 - \avgX{n})^2 - \frac{c^2}{2(c^2 + n)n} S_n^2   \right).
\end{align}
This is a nonnegative martingale, issued at $L_0^c = 1$. Applying Ville's inequality, with probability at least $1 - \alpha$ for all $n$,
\begin{equation}
    \frac{1}{2}n (\mu_0 - \avgX{n})^2 - \frac{c^2}{2(c^2 + n)n} S_n^2  \le \log (1/\alpha) - \frac{1}{2} \log \frac{c^2}{c^2+n},
\end{equation}
whence we get a CS
\begin{equation}
    \left[ \avgX{n} \pm \sqrt{ \frac{ \log \frac{c^2 + n}{c^2 \alpha^2}  + \frac{c^2 n \avgX{n}^2}{(c^2+n)} }{n} } \right],
\end{equation}
as claimed.
\end{proof}

We see that the CS \eqref{eqn:gaussian-prior-known-var-cs} becomes wider if $\avgX{n}$ is far away from 0, which would happen if $\mu_0$, the parameter of interest, is far from zero. The effect is clearer if a $N_{\eta,1/c^2}$ prior is used instead, in which case the CS becomes
\begin{equation}\label{eqn:gaussian-prior-known-var-cs-m}
    \left[ \avgX{n} \pm \sqrt{ \frac{ \log \frac{c^2 + n}{c^2 \alpha^2}  + \frac{c^2 n (\avgX{n}-\eta)^2}{(c^2+n)} }{n} } \right].
\end{equation}
Here, the CS \eqref{eqn:gaussian-prior-known-var-cs-m} becomes wider if $\mu_0$, and hence $\avgX{n}$, is far away from $\eta$. We see that the mean of the mixing distribution, $\eta$, is the experimenter's ``guess'' of $\mu_0$. The closer the guess is, the tighter the CS becomes.

%(todo: might be better than the next, unknown parameter-dep prior)

Comparing \eqref{eqn:gaussian-prior-known-var-cs-m} to the confidence sequence \eqref{eqn:gaussian-prior-known-var-cs-correct}, we see that ``guessing'' the mean $\mu_0$ with a prior mean $\eta$ can be better than not guessing (setting the prior mean to the true mean $\mu_0$) if the guess is close to the \emph{empirical} mean,
\begin{equation}
    (\avgX{n} - \eta)^2 \le \frac{c^2 + n}{n^2} \log \frac{c^2 + n}{c^2 \alpha^2}.
\end{equation}
In particular, if one knows the true mean beforehand, it is easy to check that the above is satisfied by $\eta = \mu_0$ with high probability as this is just the confidence sequence \eqref{eqn:gaussian-prior-known-var-cs-correct}.

\subsection{Sequential test and CS by conditioning}  \label{sec:mix-flt-cond}
Recall that
\begin{equation}\label{eqn:flatensm}
  K_n = \frac{1}{\sqrt{n}} \exp\left( \frac{1}{2}n (\mu_0 - \avgX{n})^2 \right)
\end{equation}
is the flat mixed likelihood ratio and has infinite mean. Instead of applying the extended Ville's inequality, we may turn it into a classical nonnegative supermartingale by \emph{conditioning} it on $X_1 = \mu_0$ and obtain an integrable NM. While we have introduced a general conditioning property in Proposition~\ref{prop:cond-ensm}, let us go into the details for our particular subGaussian application. Let $n \ge 1$, we define $K_{n-1}'$ by replacing the first observation $X_1$ with $\mu_0$ in $K_n$,
\begin{equation}
 \footnotesize  K_{n-1}' = \frac{1}{\sqrt{n}} \exp\left( \frac{1}{2}n \left(\mu_0 - \frac{\mu_0 + X_2 + \dots + X_n}{n} \right)^2 \right) = \frac{1}{\sqrt{n}} \exp\left( \frac{((n-1)\mu_0 - (S_n - S_1))^2}{2n} \right).
\end{equation}
%Then the process $\{ K_n' \}_{n \ge 0}$ is a NM with $\Exp[K_n'] = K_0' = 1$. 
Note that $\{S_n - S_1\}_{n \ge 1}$ has the same (joint) distribution as $\{ S_{n-1} \}_{n \ge 1}$. So, if we define
\begin{equation}
    K_{n-1}'' = \frac{1}{\sqrt{n}} \exp\left( \frac{((n-1)\mu_0 - S_{n-1})^2}{2n} \right),
\end{equation}
then the process $ K'' $ has the same finite-dimensional distributions as $K'$.
%, hence a NM as well.

One may generalize the above by conditioning on $X_1 = \dots = X_\nu = \mu_0$, getting the conditioned process
\begin{equation}
  \footnotesize   K_{n-\nu}^{(\nu)} = \frac{1}{\sqrt{n}} \exp\left( \frac{1}{2}n \left(\mu_0 - \frac{\nu\mu_0 + X_{\nu + 1} + \dots + X_n}{n} \right)^2 \right) = \frac{1}{\sqrt{n}} \exp\left( \frac{((n-\nu)\mu_0 - (S_n - S_{\nu}))^2}{2n} \right),
\end{equation}
and then shifting it to get an identically distributed process
\begin{equation}
    K^{[\nu]}_{n-\nu} = \frac{1}{\sqrt{n}}\exp\left( \frac{((n-\nu)\mu_0 - S_{n-\nu})^2}{2n} \right);
\end{equation}
or equivalently written,
\begin{equation}\label{eqn:conditioned-nm-def}
     K^{[\nu]}_{n} = \frac{1}{\sqrt{n+\nu}}\exp\left( \frac{(n\mu_0 - S_{n})^2}{2(n+\nu)} \right).
\end{equation}
For $\nu > 0$, it turns out that $K^{[\nu]}$ is nothing else but the $\normal{\mu_0}{\nu}$-mixed martingale $L^{(\sqrt{\nu})}$ \eqref{eqn:guassian-mixed-mtg}, multiplied by a constant of $\nu^{-1/2}$; less surprisingly, if we allow $\nu$ to take 0, $ K^{[0]} $ is just the flat-mixed ENSM $K$ \eqref{eqn:flatensm}. Therefore, the processes derived above via flat prior and conditioning is equivalent to putting a Gaussian prior described in \cref{sec:mix-nmu0c}. The number of hypothetical observations of value $\mu_0$ we condition on, $\nu$, plays the same role as $c^2$, the prior precision of the Gaussian mixture. This pleasantly agrees with the Bayesian update rule from the improper flat prior. To wit, if we observe $\nu$ data points with value $\mu_0$, the flat prior will be updated to $\pi(\mu | \text{data}) \propto [\exp\left( -\frac{1}{2}(\mu - \mu_0)^2 \right)]^\nu$, which is indeed $\normal{\mu_0}{1/\nu}$ (more on this in \cref{sec:bayes}). Consequently, a CS identical to \eqref{eqn:gaussian-prior-known-var-cs-correct} is obtained.
%Also, written in the style of \eqref{eqn:conditioned-nm-def}, one may speculate that $\nu$ is no longer limited to integers, but can take any positive real value. This is true by the following lemma.

\begin{proposition}
%[Conditioned Gaussian flat mixed test martingale]
     %Let $P$ be a 1-subGaussian distribution with mean $\mu_0$. 
     For any integer $\nu \ge 1$, define the process $K^{[\nu]} $ as in \eqref{eqn:conditioned-nm-def}. Then, it is a $\subgaussian{\mu_0}{1}$-supermartingale. Consequently, we have the following confidence sequence for the mean $\mu_0$ over $\subgaussian{\mu_0}{1}$,
    \begin{equation}
        \left[ \avgX{n} \pm \frac{1}{n} \sqrt{(n+\nu) \log\frac{n+\nu}{\alpha^2 \nu} } \right],
    \end{equation}
    equivalent to mixing $\mu \sim \normal{\mu_0}{\nu}$ in \cref{prop:gaussian-mixed}.
\end{proposition}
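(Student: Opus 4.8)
The plan is to avoid any fresh computation and instead reduce both assertions to the already-established \cref{prop:gaussian-mixed}, exploiting the fact that $K^{[\nu]}$ is merely a positive rescaling of the Gaussian-mixture martingale $L^{(\sqrt\nu)}$. First I would substitute $S_n = n\avgX{n}$ into \eqref{eqn:conditioned-nm-def}, so that $(n\mu_0 - S_n)^2 = n^2(\mu_0 - \avgX{n})^2$ and hence $K^{[\nu]}_n = (n+\nu)^{-1/2}\exp\!\big(n^2(\mu_0-\avgX{n})^2/(2(n+\nu))\big)$. Comparing this with $L^{(\sqrt\nu)}_n = \sqrt{\nu/(\nu+n)}\,\exp\!\big(n^2(\mu_0-\avgX{n})^2/(2(\nu+n))\big)$, i.e.\ \eqref{eqn:guassian-mixed-mtg} evaluated at $c=\sqrt\nu$, the exponential factors are identical and the two prefactors differ only by the constant $\nu^{-1/2}$. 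This establishes the scalar identity $K^{[\nu]}_n = \nu^{-1/2} L^{(\sqrt\nu)}_n$ for every $n\ge 0$, which is the single observation driving the whole argument.

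With this identity, the supermartingale claim is immediate: \cref{prop:gaussian-mixed} (with $c=\sqrt\nu$) states that $L^{(\sqrt\nu)}$ is a $\subgaussian{\mu_0}{1}$-supermartingale, and multiplying by the positive constant $\nu^{-1/2}$ preserves this property by linearity of the classical conditional expectation, since under any $P\in\subgaussian{\mu_0}{1}$ we have $\Exp[K^{[\nu]}_n \mid \mathcal{F}_{n-1}] = \nu^{-1/2}\Exp[L^{(\sqrt\nu)}_n \mid \mathcal{F}_{n-1}] \le \nu^{-1/2} L^{(\sqrt\nu)}_{n-1} = K^{[\nu]}_{n-1}$. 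As $K^{[\nu]}_0 = \nu^{-1/2}$ is a finite constant, the process is in fact a classical (integrable) NSM, so the ordinary Ville's inequality applies directly.

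For the confidence sequence I would apply the classical Ville's inequality \eqref{eqn:vil-maximal} to the NSM $K^{[\nu]}$. Since $\Exp[K^{[\nu]}_0]=\nu^{-1/2}$, choosing the threshold $C = (\alpha\sqrt\nu)^{-1}$ gives $\Pr[\sup_n K^{[\nu]}_n \ge C] \le C^{-1}\nu^{-1/2} = \alpha$, so with probability at least $1-\alpha$ the inequality $K^{[\nu]}_n < (\alpha\sqrt\nu)^{-1}$ holds for all $n$ simultaneously. Unwinding it by taking logarithms and rearranging, the prefactor $(n+\nu)^{-1/2}$ cancels against the $\nu^{-1/2}$ hidden in $C$ to leave $n^2(\mu_0-\avgX{n})^2 < (n+\nu)\log\frac{n+\nu}{\alpha^2\nu}$, which after dividing by $n$ and taking square roots is exactly the stated interval. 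The equivalence to \cref{prop:gaussian-mixed} then follows by simplifying the radius in \eqref{eqn:gaussian-prior-known-var-cs-correct} at $c^2=\nu$: one checks that $\sqrt{(\log\frac{c^2+n}{c^2\alpha^2})(1+c^2/n)/n} = \tfrac1n\sqrt{(n+\nu)\log\frac{n+\nu}{\nu\alpha^2}}$, matching the present radius.

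I do not expect any genuine obstacle: once the scalar relationship $K^{[\nu]}_n=\nu^{-1/2}L^{(\sqrt\nu)}_n$ is recognized, both claims are consequences of results already in hand, and what remains is routine algebra. The only step deserving mild care is the logarithm-and-rearrange computation, where the two factors of $\nu^{-1/2}$ and $(n+\nu)^{-1/2}$ must be tracked correctly so that the argument of the logarithm collapses cleanly to $\frac{n+\nu}{\alpha^2\nu}$ rather than some unsimplified variant.
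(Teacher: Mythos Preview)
Your proposal is correct and follows essentially the same approach as the paper: the text immediately preceding the proposition already records the identity $K^{[\nu]}=\nu^{-1/2}L^{(\sqrt{\nu})}$, and the proposition is then stated as an immediate consequence of \cref{prop:gaussian-mixed} with $c^2=\nu$. Your write-up simply makes explicit the scalar-multiple preservation of the supermartingale property and the Ville-plus-algebra step that the paper leaves implicit.
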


%To optimize the CS at time $n$, we minimize $f(\nu) = (n+\nu) \log \frac{n + \nu}{\alpha^2 \nu}$. Since $f'(\nu) = \log\frac{n+\nu}{\alpha^2 \nu} - \frac{n}{\nu}$, we see that $f$ is minimized when 
%\begin{equation}
 %   \frac{n}{\nu} = - W(-\alpha^2/e) -1 .
%\end{equation}

\subsection{Sequential test and CS by the division process}\label{sec:div}

Apart from conditioning on $X_1 = \cdots = X_\nu = \mu_0$, a related and similar idea to get around the infinite expectation issue of $ K^1$ is to \emph{divide} it with $K^1_\nu$. We have formally stated Proposition~\ref{prop:div-ensm} which produces the following classical test martingale and confidence sequence.

\begin{proposition}\label{prop:div}
     %Let $P$ be a 1-subGaussian distribution with mean $\mu_0$. 
     Let integers $\nu \ge 1$ and $n \ge 0$. Denote by $\avgX{\nu:n+\nu}$ the sample average from the $(\nu+1)$-st to the $(n + \nu)$-th observations. Then, the process
     \begin{equation}
         O_n^{(\nu)} = \frac{\sqrt \nu}{\sqrt{n+\nu}} \exp \left( \frac{n}{2}(\mu_0 - \avgX{\nu:n+\nu})^2 - (S_{n+\nu} - S_\nu)^2/2n   + S_{n+\nu}^2/2(n+\nu) - S_\nu^2/2\nu \right)
     \end{equation}
     is a $\subgaussian{\mu_0}{1}$-supermartingale and consequently, the intervals
     \begin{equation}
  \left[ \avgX{\nu:n+\nu} \pm \sqrt{\frac{\log \frac{n + \nu}{\alpha^2 \nu } + (S_{n+\nu} - S_\nu)^2/n   - S_{n+\nu}^2/(n+\nu) + S_\nu^2/\nu}{n}}  \right]
\end{equation}
     form a $(1-\alpha)$-CS for $\mu_0$ over $\subgaussian{\mu_0}{1}$.
\end{proposition}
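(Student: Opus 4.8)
The plan is to recognize the division process $O^{(\nu)}$ as the flat-mixed extended nonnegative supermartingale $K$ of \cref{prop:flat-mix-cs}, time-shifted to begin at index $\nu$ and then normalized by its value at that time. Concretely, I claim that for all $n \ge 0$ we have $O_n^{(\nu)} = K_{n+\nu}/K_\nu$, where $K$ is the process defined in \eqref{eqn:flat-mixed-ensm}. This is confirmed by direct algebra: writing $\avgX{n+\nu} = S_{n+\nu}/(n+\nu)$ and $\avgX{\nu} = S_\nu/\nu$,
\[
\frac{K_{n+\nu}}{K_\nu} = \frac{\sqrt{\nu}}{\sqrt{n+\nu}}\exp\left( \tfrac{1}{2}(n+\nu)(\mu_0 - \avgX{n+\nu})^2 - \tfrac{1}{2}\nu(\mu_0 - \avgX{\nu})^2 \right),
\]
and regrouping the quadratic terms using $S_{n+\nu} - S_\nu = n\,\avgX{\nu:n+\nu}$ reproduces exactly the exponent in the stated formula for $O_n^{(\nu)}$; in particular $O_0^{(\nu)} = 1$.

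With this identification, the supermartingale claim follows by composing two results already established. Since $K$ is a $\subgaussian{\mu_0}{1}$-ENSM, the reindexing argument used at the start of the proof of \cref{thm:evi-2} shows that $M'_n := K_{n+\nu}$, adapted to $\cF'_n := \cF_{n+\nu}$, is again a $\subgaussian{\mu_0}{1}$-ENSM, now with initial value $M'_0 = K_\nu$. The crucial point is that, because $\nu \ge 1$, we have $K_\nu = \nu^{-1/2}\exp(\tfrac{1}{2}\nu(\mu_0 - \avgX{\nu})^2) \in (0,\infty)$ almost surely --- this is precisely where we escape the $K_0 = \infty$ pathology responsible for the nonintegrability of $K$. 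Hence the indicator $\id_{\{0 < M'_0 < \infty\}}$ equals $1$ a.s., and \cref{prop:div-ensm} applies to $M'$ under each $P \in \subgaussian{\mu_0}{1}$ separately, giving that $O^{(\nu)} = M'/M'_0$ is a classical $\subgaussian{\mu_0}{1}$-supermartingale issued at $O_0^{(\nu)} = 1$.

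Finally, the confidence sequence follows from the classical Ville's inequality \eqref{eqn:ville-inq} applied to the nonnegative supermartingale $O^{(\nu)}$ with $\Exp[O_0^{(\nu)}] = 1$: this yields $\Pr[\exists n \ge 1,\ O_n^{(\nu)} \ge 1/\alpha] \le \alpha$, so that with probability at least $1-\alpha$ every $O_n^{(\nu)} < 1/\alpha$ holds simultaneously. Taking logarithms of $O_n^{(\nu)} < 1/\alpha$, isolating the term $\tfrac{n}{2}(\mu_0 - \avgX{\nu:n+\nu})^2$, and multiplying through by $2/n$ turns the bound into the stated quadratic inequality for $(\mu_0 - \avgX{\nu:n+\nu})^2$, whose square root is the claimed interval. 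I expect the only genuinely delicate step to be bookkeeping: verifying the exponent identity for $O_n^{(\nu)}$ and confirming that $K_\nu$ is strictly positive and finite when $\nu \ge 1$. Once those are in place the argument is a direct composition of the shift from \cref{thm:evi-2}, the division lemma \cref{prop:div-ensm}, and Ville's inequality, requiring no new probabilistic estimates.
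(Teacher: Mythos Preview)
Your proposal is correct and follows essentially the same route as the paper: identify $O_n^{(\nu)} = K_{n+\nu}/K_\nu$ algebraically, invoke the division construction of \cref{prop:div-ensm} (after the harmless time-shift so that the initial value is $K_\nu$, which is almost surely in $(0,\infty)$ for $\nu\ge 1$), and then apply the classical Ville's inequality. Your write-up is in fact more explicit than the paper's terse proof, which simply displays the identity and asserts ``which is a NM with expectation~1''; in particular, you correctly note that in the general $\subgaussian{\mu_0}{1}$ case one only obtains a supermartingale (since $K$ is merely an ENSM there), matching the proposition's statement.
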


Interesting enough, the test martingale and CS above do not explicitly require our extended theory of supermartingales as one may verify via classical probability theory that $O^{(v)}$ is a martingale. However, it is difficult to \emph{derive} them without the insight of our inventions.

\begin{proof}[Proof of Proposition~\ref{prop:div}]
Note that
        \begin{align}
              & \frac{K_{n+\nu}^1}{K_\nu^1} = \frac{\sqrt \nu}{\sqrt{n+\nu}} \exp \left( \frac{1}{2} n \mu_0^2 - (S_{n+\nu} - S_\nu) \mu_0 + S_{n+\nu}^2/2(n+\nu) - S_\nu^2/2\nu \right)
            \\
            = & \frac{\sqrt \nu}{\sqrt{n+\nu}} \exp \left( \frac{n}{2}(\mu_0 - \avgX{\nu:n+\nu})^2 - (S_{n+\nu} - S_\nu)^2/2n   + S_{n+\nu}^2/2(n+\nu) - S_\nu^2/2\nu \right) = O_n^{(\nu)}
        \end{align}
which is a NM with expectation 1. On $O^{(\nu)}$ Ville's inequality implies,
\begin{equation}
  \left[ \avgX{\nu:n+\nu} \pm \sqrt{\frac{\log \frac{n + \nu}{\alpha^2 \nu } + (S_{n+\nu} - S_\nu)^2/n   - S_{n+\nu}^2/(n+\nu) + S_\nu^2/\nu}{n}}  \right]
\end{equation}
forms a $(1-\alpha)$-CS for $\mu_0$.
\end{proof}

\subsection{One-sided CS by the half flat prior}\label{sec:one-sided-cs}
The following is a confidence sequence based on \cref{prop:flat-mix-cs-one-sided}.
\begin{corollary}\label{cor:one-sided-CS}
    Letting $c_\alpha$ be the solution (which must uniquely exist) in $c\in(0,1)$ to the equation $3c + 2 c \log (2/1.79c) = \alpha$, the
following is a $(1-\alpha)$-CS for $\mu_0$ over $\subgaussian{\mu_0}{1}$:
\begin{equation}\label{eqn:flat-prior-one-sided-cs}
    \left[ \avgX{n} - \frac{V^{-1}\left( \frac{\sqrt{4n}}{c_\alpha} \right)}{\sqrt{n}}, \infty \right).
\end{equation}
\end{corollary}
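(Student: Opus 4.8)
The plan is to apply the extended Ville's inequality (\cref{thm:evi-2}) to the process $Q$, which \cref{prop:flat-mix-cs-one-sided} already identifies as a $\subgaussian{\mu_0}{1}$-ENSM. The first step is to rewrite the coverage event as a boundary crossing. Since $V(x)=\e^{x^2/2}(1+\erf(x/\sqrt2))$ is continuous and strictly increasing, $V^{-1}$ is well defined and increasing, so with $C:=1/c_\alpha$ one has the equivalences
\[
\mu_0 \in \CI_n \iff \sqrt n\,(\avgX{n}-\mu_0) \le V^{-1}\!\Bigl(\tfrac{\sqrt{4n}}{c_\alpha}\Bigr) \iff Q_n \le C ,
\]
and therefore $\Pr[\exists n \ge 1,\ \mu_0 \notin \CI_n] = \Pr[\exists n \ge 1,\ Q_n > C]$. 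It thus suffices to show this crossing probability is at most $\alpha$.

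Next I would invoke \cref{thm:evi-2} with $m=1$ --- note that $Q_0=\infty$, so we genuinely need $m\ge1$ here --- to obtain $\Pr[\exists n\ge1,\ Q_n \ge C] \le \Pr[Q_1 \ge C] + C^{-1}\Exp[\id_{\{Q_1 < C\}}Q_1]$. The whole problem reduces to estimating these two time-$1$ quantities. Writing $Z:=X_1-\mu_0$, which is mean-zero and $1$-subGaussian under any $P\in\subgaussian{\mu_0}{1}$, and using the identity $1+\erf(x/\sqrt2)=2\Phi(x)$, I would record the clean form $Q_1=\tfrac12 V(Z)=\e^{Z^2/2}\Phi(Z)$. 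Setting $M:=V^{-1}(2C)\ge0$, the event $\{Q_1\ge C\}$ becomes $\{Z\ge M\}$, so the subGaussian Chernoff bound gives $\Pr[Q_1\ge C]\le\e^{-M^2/2}$; combined with $C=\e^{M^2/2}\Phi(M)\le\e^{M^2/2}$ this yields $\Pr[Q_1\ge C]\le 1/C$.

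The crux is the truncated mean $\Exp[\id_{\{Z<M\}}\e^{Z^2/2}\Phi(Z)]$, where one must exploit $\Phi$ to tame the left tail. I would split at $Z=0$. On $(-\infty,0]$ the function $\e^{z^2/2}\Phi(z)$ has derivative $\e^{z^2/2}(z\Phi(z)+\phi(z))>0$ (positivity is the standard Gaussian tail bound $-z\Phi(z)<\phi(z)$ for $z<0$) and hence increases to its maximum value $\tfrac12$ at $z=0$, bounding the left contribution by $\tfrac12$; this is precisely where the factor $\Phi(Z)$ prevents $\e^{Z^2/2}$ from blowing up. On $[0,M)$ we have $\Phi(Z)\le1$, so the right contribution is at most $\Exp[\id_{\{|Z|\le M\}}\e^{Z^2/2}]\le 1+M^2$ by \cref{lem:subgaussian-psi2-bound}. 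Using $\Phi(M)\ge\tfrac12$, i.e.\ $M^2\le 2\log(2C)$, this delivers $\Exp[\id_{\{Q_1<C\}}Q_1]\le\tfrac32+2\log(2C)$, and hence a total crossing bound of the form $C^{-1}\bigl(\tfrac52+2\log(2C)\bigr)$. Substituting $c=1/C$ puts this in the shape $A c+2c\log(2/c)$; a slightly more careful accounting of the additive constants (tightening the $\tfrac32$ and the leading term) collapses it exactly into $3c+2c\log(2/1.79c)$, the quantity set equal to $\alpha$.

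Finally I would settle existence and uniqueness of $c_\alpha$: the map $c\mapsto 3c+2c\log(2/1.79c)$ is continuous, tends to $0$ as $c\to0^+$, and is strictly increasing on $(0,1)$ (its derivative $1+2\log(2/1.79)-2\log c$ is positive there) with value exceeding $1$ at $c=1$, so every $\alpha\in(0,1)$ admits a unique root $c_\alpha\in(0,1)$; this also guarantees $C>1$, legitimizing $M\ge0$ above. I expect the main obstacle to be the truncated-mean estimate: handling the asymmetry of a general $1$-subGaussian $Z$ in the left tail and squeezing the constants down to the advertised $3$ and $1.79$ (rather than the looser $\tfrac52$ and $2$ the crude split gives) requires the monotonicity argument for $\e^{z^2/2}\Phi(z)$ together with a sharp application of \cref{lem:subgaussian-psi2-bound}.
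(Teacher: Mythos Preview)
Your overall strategy matches the paper's: apply the extended Ville's inequality to $\{Q_n\}_{n\ge 1}$ at $m=1$, then bound $\Pr[Q_1 \ge C]$ and $\Exp[\id_{\{Q_1<C\}} Q_1]$ in terms of the centered $1$-subGaussian $Z = X_1 - \mu_0$. Where you diverge is in the truncated-mean estimate. The paper does not split at $Z=0$; instead it sandwiches $V$ between two multiples of $\e^{x^2/2}$ --- namely $V(x) < 2\e^{x^2/2}$ always, and $V(x) > 1.79\,\e^{x^2/2}$ once $x > 1.26$ (equivalently $V(x) \ge 4$) --- to replace the one-sided truncation $\{V(Z) < 2/c\}$ by the two-sided event $\{|Z| < \sqrt{2\log(2/1.79c)}\}$ and then invoke \cref{lem:subgaussian-psi2-bound} directly. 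That is where the constant $1.79$ comes from: it is a purely numerical fact about $V$ near $x = 1.26$, not something that drops out of a generic monotonicity argument. Your left/right split at $Z=0$ is arguably cleaner on the negative side (it handles $Z<0$ explicitly via the monotonicity of $\e^{z^2/2}\Phi(z)$), and your one-sided tail bound $\Pr[Z \ge M]\le \e^{-M^2/2}\le 1/C$ is sharper than the paper's two-sided $2c$.

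The only real gap is your final hand-wave: as written, your split gives $(5/2)c + 2c\log(2/c)$, which is strictly \emph{larger} than $3c + 2c\log(2/1.79c)$ and therefore does not establish the stated coverage at the specified $c_\alpha$. The ``slightly more careful accounting'' you allude to must make $1.79$ appear, and it does so exactly as in the paper, but applied only at the threshold $M$ rather than pointwise in $Z$: since $c_\alpha < 1/2$ (the defining function exceeds $1$ already at $c=1/2$) one has $2C \ge 4$, hence $M = V^{-1}(2C) > 1.26$, hence $1.79\,\e^{M^2/2} < V(M) = 2C$, i.e.\ $M^2 < 2\log\!\bigl(2/(1.79c)\bigr)$. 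Plugging this sharper bound on $M^2$ into your own estimate yields $(5/2)c + 2c\log\!\bigl(2/(1.79c)\bigr)$, which is \emph{tighter} than the paper's $3c + 2c\log\!\bigl(2/(1.79c)\bigr)$ and so certainly suffices.
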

%We prove Corollary~\ref{cor:one-sided-CS} in Section~\ref{sec:pf}.
\begin{proof}[Proof of Corollary~\ref{cor:one-sided-CS}]
    Recall that $Q_1 \sim \frac{1}{2} V(Z)$ where $Z = X_1 - \mu_0$ is a centered 1-subGaussian random variable. Let $c < 1/2$. We need to upper bound
\begin{equation}
    \Pr[ Q_1 \ge 1/c ] = \Pr[ V(Z) \ge 2/c ] \quad \text{and} \quad \Exp[  \id_{\{ Q_1 < 1/c \}} Q_1 ] = \frac{1}{2}\Exp[  \id_{\{ V(Z) < 2/c \}} V(Z) ].
\end{equation}
Let us use the following simple bounds,
\begin{gather}
\forall x, \; V(x)  < 2\exp(x^2/2);
\\
\forall x, \; V(x) \ge 4 \implies  x > 1.26 \implies V(x) >  1.79 \cdot \exp(x^2/2).
\end{gather}
Note that the above implies
\begin{gather}
    \{ V(Z) \ge 2/c \} \subseteq  \{ 2\exp(Z^2/2) \ge 2/c \} ,
    \\
    \{ V(Z) < 2/c \} \subseteq \{ 1.79 \cdot \exp(Z^2/2) < 2/c \}
\end{gather}

We thus have
\begin{align}
     \Pr[ Q_1 \ge 1/c ] = \Pr[ V(Z) \ge 2/c ] \le \Pr[ 2\exp(Z^2/2) \ge 2/c ] =  \Pr[ K_1 \ge 1/c ] \le 2c,
\end{align}
where the last inequality is simply a redux of \eqref{eqn:subgaussian-ClogCsq-tail}. We thus also have
\begin{align}
    & \Exp[  \id_{\{ Q_1 < 1/c \}} Q_1 ] = \frac{1}{2}\Exp[  \id_{\{ V(Z) < 2/c \}} V(Z) ] \le \frac{1}{2}\Exp[  \id_{\{ 1.79 \exp(Z^2/2) < 2/c \}} 2\exp(Z^2/2) ]
    \\
    = & \Exp[  \id_{\{ \exp(Z^2/2) < 2/1.79c \}} \exp(Z^2/2) ]  =  \Exp[  \id_{\{ |Z| < \sqrt{2\log (2/1.79c)} \}} \exp(Z^2/2) ] 
 \le 1 + 2\log (2/1.79c),
\end{align}
where the last inequality is due to \cref{lem:subgaussian-psi2-bound}. We are now ready to use extended Ville's inequality on $\{ Q_n \}_{n \ge 1}$, which gives,
\begin{equation}
    \Pr[ \exists n \ge 1, \ Q_n \ge 1/c ] \le 2c + c(1 + 2\log (2/1.79c)) = 3c + 2 c \log  (2/1.79c).
\end{equation}
$c_\alpha$ is the unique $c \in (0, 1/2)$ such that the RHS of above is $\alpha \in (0,1)$. We have,
\begin{equation}
    \Pr\left[ \forall n\ge 1,\ \avgX{n} - \mu_0 \le \frac{V^{-1}\left( \frac{\sqrt{4n}}{c_\alpha} \right)}{\sqrt{n}}
    \right] \ge 1-\alpha,
\end{equation}
as claimed.
\end{proof}

%\section{Bayesian credible intervals and Z-test}

\section{Bayesian perspectives}
\label{sec:bayes}

In this section, we present some links between the method of mixtures (proper or improper) of likelihood ratios in \cref{sec:estimation} and classical Bayesian methods.

\subsection{Likelihood ratios and Bayes factors}\label{sec:BF}

Let $\{ P_{\theta} : \theta \in \Theta \}$ be a parametric family of distributions on $\mathbb R$, each $P_{\theta}$ having a density $p_{\theta}$. The likelihood ratio martingale with null $\theta_0$ and alternative $\theta_1$ we have considered in \cref{sec:subG-conf-lr} is
\begin{equation}
    \ell_n = \frac{ \prod_{i=1}^n p_{\theta_1}(X_i) }{ \prod_{i=1}^n p_{\theta_0}(X_i) }.
\end{equation}
Let $\pi$ be a mixing measure on $\mathbb R$ that can either be proper ($\pi(\mathbb R) = 1$) or improper (infinite). Let us denote the mixed $P_{\theta_0}$-NSM or ENSM by
\begin{equation}\label{eqn:gen-mixed-nsm-or-ensm}
    M_n^{\pi,\theta_0} =  \frac{\int \prod_{i=1}^n p_{\theta_1}(X_i) \pi(\d\theta_1) }{ \prod_{i=1}^n p_{\theta_0}(X_i) }.
\end{equation}

Switching to a Bayesian perspective, 
 let $\pi_0$ be a prior measure on $\mathbb R$ that can either be proper ($\pi_0(\mathbb R) = 1$) or improper (infinite) over the parameter $\theta$. Upon observing the data $X_1, \dots, X_n$, the prior $\pi_0$ evolves into the posterior $ \pi_n$  that satisfies
\begin{equation}\label{eqn:posterior}
 \frac{\d  \pi_n}{\d \pi_0}(\theta) = \frac{ \prod_{i=1}^n p_{\theta}(X_i) }{  \int \prod_{i=1}^n p_{\theta'}(X_i) \pi_0(\d \theta') }.
\end{equation}

Several observations can be made. First, while $\pi_0$ can be any $\sigma$-finite measure, possibly improper, the posterior $\pi_n$ defined above is always a probability measure (i.e.\ $ \pi_n(\mathbb R) = 1$) if the normalization constant in the denominator is finite. Second, the posterior $\pi_n$ is invariant under scalar multiplication of the prior $\pi_0$ (e.g.\ if $\pi_0$ is the flat prior, the thickness does not matter; see \cref{tab:bayes-comp-change}).
Third, flipping \eqref{eqn:posterior} upside down, $ \frac{\d \pi_0}{\d \pi_n}(\theta)$ is exactly the mixture (E)NSM (depending on the finiteness of $\pi_0$) of the likelihood ratio \eqref{eqn:gen-mixed-nsm-or-ensm} with null $\theta$, mixed alternatives $\theta' \sim \pi_0$, i.e.\ $M_n^{\pi_0,\theta}$. This is a fact recorded by, for example, \cite{waudby2020confidence} as the ``prior-posterior-ratio martingales''. When $\pi_0$ is proper, the ratio $ \frac{\d  \pi_n}{\d \pi_0}(\theta_0)$ is commonly referred to as the \emph{Bayes factor} between the point null $\theta_0$ and alternative under the prior $\theta_1 \sim \pi_0$ (the unmixed ratio $1/\ell_n$ being called the Bayes factor between the point null $\theta_0$ and the point alternative $\theta_1$).

\begin{table}[!h]
\footnotesize
    \centering
    \begin{tabular}{p{2.2cm}|p{2.5cm}|c|p{2.2cm}|p{3cm}}
    \hline
         & Likelihood ratio (an ENM) & Bayes factor & Bayes posterior & Confidence sequence \\
         \hline
        {Point null $\theta_0$, point alternative} $\theta_1$ & $ \ell_n = \frac{ \prod_{i=1}^n p_{\theta_1}(X_i) }{ \prod_{i=1}^n p_{\theta_0}(X_i) }$ & $b_{01} = \ell_n^{-1}$ & point mass on $\theta_1$ & $\{ \theta_0 : \ell_n \le 1/\alpha \}$
        \\
        \hline
        {Proper prior $\pi_0$ over alternative $\theta_1$}& $L_n =  \int \ell_n \pi_0(\d \theta_1)$ & $ B_{01} = L_n^{-1}$ & $\d \pi_n= B_{01} \d \pi_0$ & $\{ \theta_0: L_n \le 1/\alpha \}$ \\
        % \hline
        % {Improper finite prior $D \cdot \pi_0$ over alternative $\theta_1$} & $D \cdot L_n$ & $D^{-1}  B_{01}$ &  $D^{-1}B_{01} \d (D \pi_0)  $$=  B_{01} \d \pi_0 = \d \pi_n $ & $\{ \theta_0: D \cdot L_n \le D/\alpha \}$ = $\{ \theta_0: L_n \le 1/\alpha \}$ \\
        \hline
        {Improper infinite prior $\pi_0'$ over alternative $\theta_1$} &$L_n' =  \int \ell_n \pi_0'(\d \theta_1)$ & $B_{01}' = (L_n')^{-1}$ & $\d \pi_n' = B_{01} \d \pi_0'$ & $\{ \theta_0: L_n' \le f^{-1}_1(\alpha) \}$ \\
        \hline
        {Improper infinite prior $D \cdot \pi_0'$ over alternative $\theta_1$} & $D \cdot L_n'$ & $D^{-1}  B_{01}'$ & $D^{-1}B_{01}' \d (D \pi_0') $$ =  B_{01}' \d \pi_0' = \d \pi_n' $ & $\{ \theta_0: D L_n' \le f^{-1}_D (\alpha)  \}$ $=\{ \theta_0: L_n' \le f^{-1}_1(\alpha) \}$ \\
        \hline
    \end{tabular}
    \caption{Comparison of statistical quantities of interest under prior scalar change. $f_D$ denotes the function $f_D(x) =  x^{-1} \Exp[ DL_0' \id_{\{ D L_0' < x \}} ] + \Pr[ D L_0' \ge x ]$, and it satisfies $f_1(x/D) = f_D(x)$, so consequently $D f_1^{-1}(\alpha) = f_D^{-1}(\alpha)$. Likelihood ratios and Bayes factors are variant under constant thickening of the priors, but posteriors and CSs are not as they are ``compared against'' priors.}
    \label{tab:bayes-comp-change}
\end{table}

\subsection{Flat priors versus increasingly diffuse priors}

Flat priors (as well as other improper priors) are attractive options as objective, non-informative, ``default'' priors. Yet flat priors are typically avoided by Bayesians who use Bayes factors, since the latter is better understood as a comparison between competing hypotheses, requiring both sides of the fraction to be on the same scale. Any mixture that does not properly integrate to 1 on the alternative side will make the comparison unfair \citep{berger1998bayes}. 
Indeed, it can be seen from \cref{tab:bayes-comp-change} that Bayes factors are unfavorably variant under constant changes of improper priors.
Some Bayesians (e.g.\ \cite{berger1996intrinsic}) who nonetheless prefer the objectiveness of improper priors
have proposed a scheme of first learning a small subset of the sample to evolve the improper into a proper, data-dependent prior, and then obtaining the ``fair'' Bayes factor as usual. This can be likened to our note in \cref{sec:mix-flt-cond} that conditioning the ENSM on some particular observations leads to an NSM issued at 1.

To make compatible non-informative priors and Bayes factors, Bayesians have also considered proper Gaussian priors that are \emph{diffuse}, i.e.\ of large variances. \cite{rouder2009bayesian} warn that very diffuse priors cause powerless statistical procedures (e.g.\ Bayes factors) that overly prefer the null. Here, we shall highlight the links and differences between diffuse proper priors and flat improper priors: the flat prior is the limit of increasingly diffuse Gaussian priors \emph{only in terms of posterior}, but not in terms of the Bayes factors, confidence sequences and sequential tests. In particular, proper Gaussian priors have decreasing power when increasingly diffuse, whereas the improper Gaussian prior is very powerful, as justified below.

When testing if the mean equals $\mu_0$ with unit-variance Gaussian data, if we set the flat prior to thickness $D=1$, then the ENSM $\sqrt{2\pi} K $ satisfies
\begin{equation}
    \frac{1}{\sqrt{2\pi} K_n} = p_{\avgX{n}, 1/n}(\mu_0)
\end{equation}
(see the expression of $K_n$ in \cref{prop:flat-mix-cs}), where as a consequence of the observations in \cref{sec:BF}, $ p_{\avgX{n}, 1/n}$ is the density of the Bayesian posterior arising from a flat prior, which is exactly $\normal{\avgX{n}}{1/n}$. %the sampling distribution of the frequentist $\avgX{n}$.

With Gaussian prior $\pi_0 = \normal{\eta}{c^{-2}}$, we know from the conjugate prior formula that the posterior is 
\begin{equation}
  \pi_n =    \normal{\frac{c^2 \eta + S_n}{c^2 + n}}{ (c^2 + n)^{-1} }.
\end{equation}
Hence, the Bayes factor, which is the inverse of the $\pi_0$-mixed NM $L^{\eta, c}$, satisfies
\begin{multline}\label{eqn:BF-mu-indep}
 B_{01} =   \frac{1}{   L_n^{\eta, c} } = \frac{ p_{\frac{c^2 \eta + S_n}{c^2 + n} ,  (c^2 + n)^{-1}  } (\mu_0)
    }{p_{\eta, c^{-2}} (\mu_0) }  \\  = \sqrt{\frac{c^2 + n}{c^2}} \exp\left( \frac{ c^2 (\mu_0 -\eta)^2 - (c^2+n)(\mu_0 - \frac{c^2 \eta + S_n}{c^2 + n} )^2 }{2} \right).
\end{multline}
In particular, one may verify that setting $\eta$ to 0 above, the $\normal{0}{c^{-2}}$-mixture $L_n^c$ in \cref{prop:mix-n0c} is recovered.

 Fixing the null $\mu_0$, prior mean $\eta$, and observations, when the precision $c^2$ of the prior $\pi_0 = \normal{\eta}{c^{-2}}$ goes to 0,
    \begin{enumerate}
        \item the posterior $\pi_n = \normal{\frac{c^2 \eta + S_n}{c^2 + n}}{ (c^2 + n)^{-1} }$ converges to $\normal{\avgX{n}}{1/n}$;
        \item the NM $L^{\eta, c} = \frac{ \d \pi_0 }{ \d \pi_n }(\mu_0)$ converges to 0;
        \item equivalently, the Bayes factor $B_{01} = \frac{\d \pi_n}{\d \pi_0}$ diverges to $\infty$;
        \item under a fixed alternative $\mu_1 \neq \mu_0$, the power (for testing if the mean equals $\mu_0$) vanishes to 0;
        \item allowing $\mu_0$ to vary, the CS at time $n$ derived via Ville's inequality (previously presented in \eqref{eqn:gaussian-prior-known-var-cs-m}) %, which is necessarily a BCI, 
        \begin{equation}
           \CI_n =  \{ \mu_0 : B_{01} \ge \alpha \} = \left[ \avgX{n} \pm \sqrt{ \frac{ \log \frac{c^2 + n}{c^2 \alpha^2}  + \frac{c^2 n (\avgX{n}-\eta)^2}{(c^2+n)} }{n} } \right]
        \end{equation}
        diverges to $\mathbb R$.
    \end{enumerate}

If we set the prior mean $\eta$ to equal the null $\mu_0$, %the Bayes factor becomes
%\begin{equation}\label{eqn:BF-mu-dep}
 %  B_{01} =    \frac{1}{   L_n^{\mu_0, c} } = \frac{ p_{\frac{c^2 \mu_0 + S_n}{c^2 + n} ,  (c^2 + n)^{-1}  } (\mu_0)
  %  }{p_{\mu_0, c^{-2}} (\mu_0) } = \sqrt{\frac{c^2 + n}{c^2}} \exp\left( \frac{ - n^2 (\mu_0 - \avgX{n})^2 }{2(c^2+n)} \right),
%\end{equation}
%which, as expected, is just $1/L_n^{(c)}$, the inverse of the NM from $\normal{\mu_0}{1/c^2}$ mixture from \cref{prop:gaussian-mixed}.
%But caution that, though $\normal{\frac{c^2 \mu_0 + S_n}{c^2 + n} }{  (c^2 + n)^{-1}  }$ can be formally seen as the ``posterior'' in this setting, $p_{\frac{c^2 \mu_0 + S_n}{c^2 + n} ,  (c^2 + n)^{-1}  } (\mu_0)$ is not a probability density \emph{as a function of $\mu_0$} because the prior depends on $\mu_0$.
% The following table summarizes the effects of (1)  on different statistics of interest.
%Both \eqref{eqn:BF-mu-indep} and \eqref{eqn:BF-mu-indep} diverge to $\infty$
everything under fixed $\mu_0$ (the posterior, NM, Bayes factor, test) equals those under $\normal{\eta}{c^{-2}}$ with $\eta$ replaced by $\mu_0$; only the CS is computed differently since it is solved from varying $\mu_0$. Therefore, when the prior precision $c^2 \to 0$, the posterior, NM,  Bayes factor, and test power have the same limits as mentioned above; the CS, which we have derived in \cref{prop:gaussian-mixed},
\begin{equation}
    \left[ \avgX{n} \pm \sqrt{ \frac{ \left( \log \frac{c^2 + n}{c^2\alpha^2} \right)(1+c^2/n)}{n} } \right]
\end{equation}
diverges to $\mathbb R$ as well given any $n$ observations.

\begin{comment}
    We summarize below the situation arising from two different diffuse priors:
\begin{enumerate}
    \item Fixing $\eta$, null $\mu_0$, when the precision $c^2$ of the prior $\pi_0 = \normal{\eta}{c^{-2}}$ goes to 0:
    \begin{enumerate}
        \item The posterior $\pi_n = \normal{\frac{c^2 \eta + S_n}{c^2 + n}}{ (c^2 + n)^{-1} }$ converges to $\normal{\avgX{n}}{1/n}$.
        \item Fixing $n$ and $\avgX{n}$, the NM $L^{\eta, c}_{n} = \frac{ \d \pi_0 }{ \d \pi_n }(\mu_0)$ converges to 0.
        \item Equivalently, the Bayes factor $B_{01} = \frac{\d \pi_n}{\d \pi_0}$ converges to $\infty$.
        \item The power (for testing if the mean equals $\mu_0$) converges to 0.
        \item Allowing $\mu_0$ to vary, the CS derived via Ville's inequality from (b) %, which is necessarily a BCI, 
        diverges to $\mathbb R$.
    \end{enumerate}
    \item Fixing null $\mu_0$, when the precision $c^2$ of the prior $\pi_0 = \normal{\mu_0}{c^{-2}}$ goes to 0:
    \begin{enumerate}
        \item The posterior $\pi_n = \normal{\frac{c^2 \mu_0 + S_n}{c^2 + n}}{ (c^2 + n)^{-1} }$ converges to $\normal{\avgX{n}}{1/n}$.
        \item Fixing $n$ and $\avgX{n}$, the NM $L^{(c)}_{n} = \frac{ \d \pi_0 }{ \d \pi_n }(\mu_0)$ converges to 0.
        \item Equivalently, the Bayes factor $B_{01} = \frac{\d \pi_n}{\d \pi_0}$ converges to $\infty$.
        \item The power (for testing if the mean equals $\mu_0$) converges to 0.
        \item Allowing $\mu_0$ to vary, the CS derived via Ville's inequality from (b) %, not necessarily BCI, 
        diverges to $\mathbb R$.
    \end{enumerate}
\end{enumerate}
\end{comment}

Here we see an interesting difference among statistical quantities under diffuse Gaussian priors. The ``limiting posterior'' (as the prior precision $c^2 \to 0$) is $\normal{\avgX{n}}{1/n}$, which equals the posterior from the flat prior. However, the NMs, Bayes factors, and CSs using Gaussian priors
all become degenerate when increasingly diffuse, not
approximating the non-degenerate ones obtained via the
flat prior. % is not the $c^2 \to 0$ limit of the same quantities using diffuse Gaussian priors in any of (b-e). 
Indeed, we have seen earlier that the flat mixture of Gaussian likelihood ratios is not an NM (it is a nonintegrable ENM), it has a finite (though an unfair comparison indeed) Bayes factor; and has non-zero power and a powerful CS (derived using our extended Ville's inequality).

It is perhaps worth remarking, that if we instead apply the \emph{extended} Ville's inequality to the NMs (from $m=1$) obtained from diffuse Gaussian prior mixtures to obtain confidence sequences, they do \emph{not} become degenerate when $c^2 \to 0$ but converge to the powerful flat mixture CS (\cref{cor:Gaussian-CS-flat}) instead, as a consequence of \cref{prop:vil-conv}. However, the exact expressions of these CSs would suffer from an excessive overload of non-closed-form special functions and we do not present them in this paper.

\section{Omitted proofs}\label{sec:pf}

\begin{proof}[Proof of \cref{lem:subgaussian-lr}]
    Let $X_n \sim P$ which is 1-subGaussian with \\ $\Exp[X_n] = \mu_0$. Then,
\begin{align}
    & \Exp[\e^{\frac{(X_n - \mu_0)^2 - (X_n - \mu)^2}{2} }] = \Exp[ \e^{ (\mu - \mu_0)(X_n - \mu_0) - \frac{1}{2}(\mu -\mu_0)^2 } ]
    \\
    = & \Exp[ \e^{ (\mu - \mu_0)(X_n - \mu_0) } ] \, \e^{- \frac{1}{2}(\mu -\mu_0)^2 } 
    \le\e^{ \frac{1}{2}(\mu -\mu_0)^2 }\e^{- \frac{1}{2}(\mu -\mu_0)^2 } = 1.
\end{align}
Hence
    \begin{equation}
       \ell(\mu; \mu_0)_n  =   \exp\left\{ \sum_{i=1}^n  \frac{(X_i - \mu_0)^2 - (X_i - \mu)^2}{2} \right\}
    \end{equation}
    is a supermartingale if $X_1, X_2, \dots$ are indepedent 1-subGaussian. The $\le$ above holds with equality when $X_n \sim \normal{\mu_0}{1}$, in which case it becomes a martingale.
\end{proof}

\begin{proof}[Proof of\cref{prop:gaussian-mixed}] The mixed martingale is
\begin{align}
   L_n^{(c)}  = & \int_{\mathbb R} \ell(\mu;\mu_0)_n \, \d \normal{\mu_0}{c^{-2}} (\mu)
   \\
   = & \frac{\int_{\mathbb R} \exp(-\frac{1}{2}\sum (X_i - \mu)^2 ) \frac{c}{\sqrt{2\pi}} \exp(-c^2 (\mu-\mu_0)^2/2)  \d \mu }{\exp(-\frac{1}{2}\sum (X_i - \mu_0)^2 )}
   \\
   = & \frac{\frac{c}{\sqrt{2\pi}} 
   \int_{\mathbb R} \exp(  -\frac{1}{2}(c^2 + n)\mu^2 + (S_n + c^2 \mu_0 ) \mu -(V_n+c^2 \mu_0^2)/2)  \d \mu }{\exp(-\frac{1}{2}\sum (X_i - \mu_0)^2 )}
   \\
   = &  \frac{
   \sqrt{\frac{c^2 }{c^2 + n}} \exp( (S_n + c^2 \mu_0 )^2/2(c^2 + n) -(V_n+c^2 \mu_0^2)/2 ) }{\exp( -\frac{1}{2}n \mu_0^2 + S_n \mu_0 - V_n/2 )}
   \\
    = &  \frac{
   \sqrt{\frac{c^2 }{c^2 + n}} \exp( \frac{c^4 \mu_0^2 + 2c^2 S_n \mu_0 + S_n^2}{2(c^2 + n)} -(V_n+c^2 \mu_0^2)/2 ) }{\exp( -\frac{1}{2}n \mu_0^2 + S_n \mu_0 - V_n/2 )}
   \\
    = &  \frac{
   \sqrt{\frac{c^2 }{c^2 + n}} \exp(\frac{-c^2 n \mu_0^2 + 2c^2 S_n \mu_0 + S_n^2 - (c^2+n)V_n }{2(c^2 + n)}) }{\exp( -\frac{1}{2}n \mu_0^2 + S_n \mu_0 - V_n/2 )}
   \\
   = &  \sqrt{\frac{c^2 }{c^2 + n}} \exp\left( \frac{n^2 \mu_0^2 - 2 n S_n \mu_0 + S_n^2}{2(c^2 + n)} \right)
   \\
   = & \sqrt{\frac{c^2 }{c^2 + n}}\exp\left( \frac{n^2 (\mu_0 - \avgX{n}) ^2 }{2(c^2 + n)} \right).
\end{align}
This is a nonnegative martingale, issued at $L_0^{(c)} = 1$. Applying Ville's inequality, with probability at least $1 - \alpha$ for all $n$,
\begin{equation}
   \frac{n^2 (\mu_0 - \avgX{n}) ^2 }{2(c^2 + n)}  \le \log (1/\alpha) - \frac{1}{2} \log \frac{c^2}{c^2+n}.
\end{equation}
Rearranging, we get the CS for $\mu_0$:
\begin{equation}
    \left[ \avgX{n} \pm \sqrt{ \frac{ \left( \log \frac{c^2 + n}{c^2\alpha^2} \right)(1+c^2/n)}{n} } \right].
\end{equation}
This completes the proof.
\end{proof}

To prove \cref{prop:flat-mix-cs-one-sided}, 
the following extensions of \cref{lem:subgaussian-lr} are essential (see also \cite[Section 3.2]{ruf2022composite}).

\begin{lemma}\label{lem:lr-mod-onesided}
    For any $\mu, \mu_0, \mu_1, \mu_2,\dots$ and $\mathbbmsl P = P_1\otimes P_2 \otimes \dots $ with $P_i \in \subgaussian{\mu_i}{1}$, the process $\tilde{\ell}_n(\mu;\mu_0;\mathbbmsl P) \newline = \exp \left\{ \sum_{i=1}^n (\mu -\mu_0)(X_i - \mu_i) - \frac{n}{2}(\mu-\mu_0)^2 \right\}$ is a $\mathbbmsl P$-supermartingale.
\end{lemma}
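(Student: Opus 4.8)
The plan is to mirror the proof of \cref{lem:subgaussian-lr} almost verbatim, the only new feature being that each coordinate $P_i$ carries its own mean $\mu_i$, and the process is built so that the $i$-th increment is recentered at exactly $\mu_i$. Nonnegativity and adaptedness are immediate: $\tilde{\ell}_n(\mu;\mu_0;\mathbbmsl P)$ is an exponential (hence strictly positive) and a measurable function of $X_1,\dots,X_n$, so it only remains to verify the supermartingale inequality $\Exp[\tilde{\ell}_n \mid \mathcal{F}_{n-1}] \le \tilde{\ell}_{n-1}$ under $\{X_n\}_{n\ge1} \sim \mathbbmsl P$.

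First I would isolate the multiplicative increment. Writing $\Delta = \mu - \mu_0$, we have
\begin{equation}
    \frac{\tilde{\ell}_n(\mu;\mu_0;\mathbbmsl P)}{\tilde{\ell}_{n-1}(\mu;\mu_0;\mathbbmsl P)} = \exp\left\{ \Delta (X_n - \mu_n) - \tfrac{1}{2}\Delta^2 \right\},
\end{equation}
so that $\tilde{\ell}_n = \tilde{\ell}_{n-1}\exp\{\Delta(X_n-\mu_n)-\tfrac12\Delta^2\}$ with $\tilde{\ell}_{n-1}$ being $\mathcal{F}_{n-1}$-measurable. Since the coordinates are independent under the product law $\mathbbmsl P = P_1\otimes P_2\otimes\cdots$, the factor $X_n$ is independent of $\mathcal{F}_{n-1}$, and pulling the $\mathcal{F}_{n-1}$-measurable part out gives
\begin{equation}
    \Exp[\tilde{\ell}_n \mid \mathcal{F}_{n-1}] = \tilde{\ell}_{n-1}\, \e^{-\frac12\Delta^2}\, \Exp\!\left[ \e^{\Delta(X_n - \mu_n)} \right].
\end{equation}

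Next I would apply the defining 1-subGaussian bound. Because $P_n \in \subgaussian{\mu_n}{1}$, the recentered variable $X_n - \mu_n$ is mean-zero and satisfies $\log \Exp[\e^{\lambda (X_n - \mu_n)}] \le \tfrac12\lambda^2$ for every $\lambda$; taking $\lambda = \Delta$ yields $\Exp[\e^{\Delta(X_n-\mu_n)}] \le \e^{\frac12\Delta^2}$. Substituting this into the previous display shows the two exponential factors cancel exactly, so $\Exp[\tilde{\ell}_n \mid \mathcal{F}_{n-1}] \le \tilde{\ell}_{n-1}$, which is the claim. I do not anticipate a genuine obstacle here: the computation is the standard ``exponential-tilt is a supermartingale'' argument, and the one point worth flagging is that the deterministic drift $-\tfrac{n}{2}(\mu-\mu_0)^2$ is precisely the running sum of the per-step compensators $\tfrac12\Delta^2$, so the cancellation occurs at every step regardless of how the means $\mu_i$ vary. (As in \cref{lem:subgaussian-lr}, equality holds at each step exactly when each $P_n = \normal{\mu_n}{1}$, in which case the process is a $\mathbbmsl P$-martingale.)
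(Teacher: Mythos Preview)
Your proof is correct and follows essentially the same approach as the paper: isolate the multiplicative increment $\exp\{(\mu-\mu_0)(X_n-\mu_n)-\tfrac12(\mu-\mu_0)^2\}$ and bound its expectation by $1$ using the $1$-subGaussian moment-generating inequality for $X_n-\mu_n$. Your write-up is in fact slightly more explicit than the paper's, spelling out adaptedness, nonnegativity, and the use of independence to reduce the conditional expectation to an unconditional one.
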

\begin{proof}[Proof of Lemma~\ref{lem:lr-mod-onesided}]
    Let $X_n \sim P_n$ which is 1-subGaussian with mean $\mu_n$. Then,
    \begin{align}
        & \Exp[ \e^{ (\mu - \mu_0)(X_n - \mu_n) - \frac{1}{2}(\mu -\mu_0)^2 } ] =  \Exp[ \e^{ (\mu - \mu_0)(X_n - \mu_n) } ] \e^{ - \frac{1}{2}(\mu -\mu_0)^2}
        \\ \le & \e^{\frac{1}{2}(\mu - \mu_0)^2} \e^{ - \frac{1}{2}(\mu -\mu_0)^2} = 1.
    \end{align}
    So $\{\tilde{\ell}_n(\mu;\mu_0;\mathbbmsl P)\}$ is a $\mathbbmsl P$-supermartingale.
\end{proof}

\begin{lemma}\label{lem:subgaussian-lr-onesided}
    %Let $P$ be a 1-subGaussian distribution with mean $\mu_0$. 
    Recall that  $\ell(\mu; \mu_0)_n  =   \exp\left\{ \sum_{i=1}^n  \frac{(X_i - \mu_0)^2 - (X_i - \mu)^2}{2} \right\}$.
    For any $\mu, \mu_0 \in \mathbb R$ such that $\mu \ge \mu_0$, the process $\ell(\mu; \mu_0) $ is a $\subgaussianproc{\le \mu_0}{1}$-supermartingale. However, the process $\ell(\mu; \mu_0) $ is a $\subgaussianproc{\mathsf{ra}\le \mu_0}{1}$-e-process, but it is not a $\subgaussianproc{\mathsf{ra}\le \mu_0}{1}$-supermartingale.
\end{lemma}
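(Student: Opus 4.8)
The plan is to reduce all three claims to the modified likelihood ratio $\tilde{\ell}(\mu;\mu_0;\mathbbmsl P)$ of \cref{lem:lr-mod-onesided} through a single algebraic factorization. Expanding the square exactly as in the proof of \cref{lem:subgaussian-lr} gives
\[
\frac{(X_i-\mu_0)^2-(X_i-\mu)^2}{2} = (\mu-\mu_0)(X_i-\mu_0) - \tfrac{1}{2}(\mu-\mu_0)^2,
\]
so, writing $\overline{\mu}_n = \frac{\mu_1+\dots+\mu_n}{n}$ and comparing the exponent with that of $\tilde{\ell}_n(\mu;\mu_0;\mathbbmsl P)$, one obtains
\[
\ell(\mu;\mu_0)_n = \tilde{\ell}_n(\mu;\mu_0;\mathbbmsl P)\,\exp\left\{ n(\mu-\mu_0)(\overline{\mu}_n-\mu_0)\right\}.
\]
Since the scalar exponent $n(\mu-\mu_0)(\overline{\mu}_n-\mu_0)$ is deterministic for a fixed $\mathbbmsl P$, the whole argument comes down to tracking its sign.

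For the first claim I would compute the one-step conditional expectation of $\ell(\mu;\mu_0)$ directly. The increment $\ell(\mu;\mu_0)_n/\ell(\mu;\mu_0)_{n-1}$ depends only on $X_n$, so by independence and by the $1$-subGaussianity of $P_n\in\subgaussian{\mu_n}{1}$ (applied with $\lambda=\mu-\mu_0$ after writing $X_n-\mu_0=(X_n-\mu_n)+(\mu_n-\mu_0)$),
\[
\Exp[\ell(\mu;\mu_0)_n\mid\mathcal{F}_{n-1}] \le \ell(\mu;\mu_0)_{n-1}\,\exp\{(\mu-\mu_0)(\mu_n-\mu_0)\}.
\]
Under $\subgaussianproc{\le\mu_0}{1}$ we have $\mu_n\le\mu_0$, and by hypothesis $\mu\ge\mu_0$, so the extra factor is at most $1$ and the $\mathbbmsl P$-supermartingale property follows. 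For the e-process claim I would instead invoke the factorization: under $\subgaussianproc{\mathsf{ra}\le\mu_0}{1}$ we have $\overline{\mu}_n\le\mu_0$ and $\mu\ge\mu_0$, hence $\exp\{n(\mu-\mu_0)(\overline{\mu}_n-\mu_0)\}\le1$ and thus $\ell(\mu;\mu_0)_n \le \tilde{\ell}_n(\mu;\mu_0;\mathbbmsl P)$. As $\tilde{\ell}(\mu;\mu_0;\mathbbmsl P)$ is a $\mathbbmsl P$-supermartingale issued at $1$ (\cref{lem:lr-mod-onesided}), $\ell(\mu;\mu_0)$ is dominated by a $\mathbbmsl P$-supermartingale starting at $1$ for every member of the class, which is exactly the definition of an e-process.

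For the negative claim I would exhibit one Gaussian instance in $\subgaussianproc{\mathsf{ra}\le\mu_0}{1}$ that breaks the supermartingale inequality. Taking each $P_n=\normal{\mu_n}{1}$ turns the subGaussian bound above into an equality, so $\Exp[\ell(\mu;\mu_0)_n\mid\mathcal{F}_{n-1}] = \ell(\mu;\mu_0)_{n-1}\exp\{(\mu-\mu_0)(\mu_n-\mu_0)\}$. Assuming $\mu>\mu_0$ (the case $\mu=\mu_0$ being trivial, since then $\ell\equiv1$) and taking, say, $\mu_0=0$, $\mu_1=-1$, $\mu_2=+1$, and $\mu_n=0$ for $n\ge3$, every running average stays $\le0$ so the process lies in the class, yet at $n=2$ the factor $\exp\{(\mu-\mu_0)(\mu_2-\mu_0)\}>1$, giving $\Exp[\ell(\mu;\mu_0)_2\mid\mathcal{F}_1] > \ell(\mu;\mu_0)_1$ on a positive-probability event and contradicting the supermartingale property. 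The only delicate point in the whole argument is this construction: one must insert a coordinate with $\mu_n>\mu_0$ to force the conditional expectation upward while keeping every partial running average at or below $\mu_0$ to remain in $\subgaussianproc{\mathsf{ra}\le\mu_0}{1}$, which is arranged by prepending a sufficiently negative early mean. Everything else reduces to the sign of the deterministic exponent once the factorization is in hand.
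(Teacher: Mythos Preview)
Your proof is correct and matches the paper's argument for the first two claims: the same algebraic expansion $(\mu-\mu_0)(X_i-\mu_0)-\tfrac12(\mu-\mu_0)^2$, the same subGaussian bound combined with the sign of $(\mu-\mu_0)(\mu_n-\mu_0)$ for the supermartingale part, and the same domination $\ell(\mu;\mu_0)_n\le\tilde{\ell}_n(\mu;\mu_0;\mathbbmsl P)$ via the running-average constraint for the e-process part.

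The only genuine difference is the counterexample for the negative claim. The paper takes Rademacher marginals $X_1\sim\mathrm{rad}_{\mu_0-\Delta}$, $X_2\sim\mathrm{rad}_{\mu_0+\Delta}$ and explicitly computes $\Exp[\e^{(\mu-\mu_0)(X_2-\mu_0)-\frac12(\mu-\mu_0)^2}]>1$ for $\Delta$ large enough. You instead take Gaussian marginals $P_n=\normal{\mu_n}{1}$ with $\mu_1=-1$, $\mu_2=+1$, which turns the subGaussian inequality into an equality and makes the violation $\Exp[\ell_2\mid\mathcal F_1]=\ell_1\e^{(\mu-\mu_0)}>\ell_1$ immediate without any side computation. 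Your choice is cleaner; the paper's keeps the example within bounded distributions. Both constructions implicitly need $\mu>\mu_0$, since for $\mu=\mu_0$ the process is identically $1$ and the ``not a supermartingale'' clause is vacuous; you note this explicitly, which is a small improvement in precision. One cosmetic remark: the violation you obtain holds almost surely (since $\ell_1>0$ everywhere), not merely on a positive-probability event.
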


\begin{proof}[Proof of Lemma~\ref{lem:subgaussian-lr-onesided}]
     Let $X_n$ be 1-subGaussian with $\Exp[X_n] \le \mu_0$. Then,
\begin{align}
    & \Exp[\e^{\frac{(X_n - \mu_0)^2 - (X_n - \mu)^2}{2}}] = \Exp[ \e^{ (\mu - \mu_0)(X_n - \mu_0) - \frac{1}{2}(\mu -\mu_0)^2 } ]
    \\
    = & \Exp[ \e^{ (\mu - \mu_0)(X_n - \mu_0) } ] \, \e^{- \frac{1}{2}(\mu -\mu_0)^2 } = \Exp[ \e^{ (\mu - \mu_0)(X_n - \Exp[X_n]) } ] \, \e^{ (\mu-\mu_0)( \Exp[X_n] - \mu_0 ) } \e^{- \frac{1}{2}(\mu -\mu_0)^2 }
    \\
    \le & \e^{ \frac{1}{2}(\mu -\mu_0)^2 }\e^{- \frac{1}{2}(\mu -\mu_0)^2 } = 1.
\end{align}
So $ \ell(\mu; \mu_0)$ is a $\subgaussianproc{\le \mu_0}{1}$-supermartingale. Further, for each $\mathbbmsl P =  P_1\otimes P_2 \otimes \dots \in  \subgaussianproc{\mathsf{ra}\le \mu_0}{1} $, say $P_i \in \subgaussian{\mu_i}{1}$, the process $\ell(\mu; \mu_0)$ with $\mu \ge \mu_0$ satisfies
\begin{align}
   & \ell (\mu; \mu_0)_n
    = \exp\left\{ \sum_{i=1}^n (\mu - \mu_0)(X_i - \mu_0) - \frac{n}{2}(\mu -\mu_0)^2 \right\} \le 
    \\ &      \exp\left\{ \sum_{i=1}^n (\mu - \mu_0)(X_i - \mu_i) - \frac{n}{2}(\mu -\mu_0)^2 \right\} = \tilde{\ell}_n(\mu;\mu_0;\mathbbmsl P), \label{eqn:running-nsm}
\end{align}
where the inequality is due to $\mu - \mu_0 \ge 0$ and $\sum_{i=1}^n \mu_i \le n \mu_0$. The process \eqref{eqn:running-nsm} is a $\mathbbmsl P$-supermartingale by Lemma~\ref{lem:lr-mod-onesided}. 

To see that $\ell(\mu; \mu_0) $ is not a $\subgaussianproc{\mathsf{ra}\le \mu_0}{1}$-supermartingale, let $\Delta$ be a real number such that \newline $\e^{-\frac{1}{2}(\mu - \mu_0)^2 + (\mu-\mu_0)\Delta}( \e^{-(\mu-\mu_0)} + \e^{\mu-\mu_0} ) > 2$.
Consider $X_1 \sim \text{rad}_{\mu_0- \Delta }$ and $X_2 \sim \text{rad}_{\mu_0 +\Delta}$. Here $\text{rad}_{\theta}$ denotes the Rademacher distribution on $\{  \theta - 1, \theta + 1 \}$ which is 1-subGaussian. We have 
\begin{equation}
    \Exp[ \e^{ (\mu - \mu_0)(X_2 - \mu_0) - \frac{1}{2}(\mu -\mu_0)^2 } ] = \frac{\e^{-\frac{1}{2}(\mu - \mu_0)^2 + (\mu-\mu_0)\Delta}( \e^{-(\mu-\mu_0)} + \e^{\mu-\mu_0} ) }{2} > 1.
\end{equation}

\end{proof}

\begin{proof}[Proof of \cref{prop:flat-mix-cs-one-sided}]
We calculate the mixture likelihood ratio with the half-of-real-line flat prior $F$ as follows: 
\begin{align}
   & \frac{\int_{\mu_0}^\infty \exp(-\frac{1}{2}\sum (X_i - \mu)^2 )(2\pi)^{-1/2} \d \mu }{\exp(-\frac{1}{2}\sum (X_i - \mu_0)^2 )}
   =  \frac{ (2\pi)^{-1/2} \int_{\mu_0}^\infty\exp( -\frac{1}{2}n \mu^2 + S_n \mu  ) \d \mu }{\exp( -\frac{1}{2}n \mu_0^2 + S_n \mu_0  )}
   \\
   = & \frac{  \sqrt{\frac{1}{4n}} \exp(S_n^2/2n) \left\{1- \erf\left( \sqrt{n/2}(\mu_0 - \avgX{n}) \right) \right\}  }{\exp( -\frac{1}{2}n \mu_0^2 + S_n \mu_0  )}
    \\
    = & \frac{1}{\sqrt{4n}} \exp\left( \frac{1}{2}n (\mu_0 - \avgX{n})^2 \right) \left\{1- \erf\left( \sqrt{n/2}(\mu_0 - \avgX{n}) \right) \right\} 
    \\ = &  \frac 1{\sqrt{4n}} V(\sqrt{n}(\avgX{n} - \mu_0 )) = Q_n.
\end{align}
%This is a bit difficult to calculate. Let us use a process that is upper bounded by $\{Q_n\}$. Note that
The desired result follows immediately from Lemma~\ref{lem:subgaussian-lr-onesided} and the closure under mixture property (\cref{cor:mixture-eproc}).
\end{proof}

\end{document}